\theoremstyle{plain}
\newtheorem{thm}{Theorem}[section]
\newtheorem{lem}[thm]{Lemma}
\newtheorem{prop}[thm]{Proposition}
\def\@rst #1 #2other{#1}
\newcommand\MR[1]{\relax\ifhmode\unskip\spacefactor3000 \space\fi
  \MRhref{\expandafter\@rst #1 other}{#1}}
\newcommand{\MRhref}[2]{\href{http://www.ams.org/mathscinet-getitem?mr=#1}{MR#2}}
\theoremstyle{definition}
\newtheorem{remark}[thm]{Remark}
\numberwithin{equation}{section}
\newcommand{\dsb}{\begin{adjustwidth}{2.5em}{0pt}
\begin{footnotesize}}
\newcommand{\dse}{\end{footnotesize}
\end{adjustwidth}}
\newcommand{\ssb}{\begin{adjustwidth}{2.5em}{0pt}}
\newcommand{\sse}{\end{adjustwidth}}
\newcommand{\aryb}{\begin{eqnarray*}}
\newcommand{\arye}{\end{eqnarray*}}
\def\alb#1\ale{\begin{align*}#1\end{align*}}
\def\allb#1\alle{\begin{align}#1\end{align}}
\newcommand{\eqb}{\begin{equation}}
\newcommand{\eqe}{\end{equation}}
\newcommand{\eqbn}{\begin{equation*}}
\newcommand{\eqen}{\end{equation*}}
\newcommand{\BB}{\mathbbm}
\newcommand{\ol}{\overline}
\newcommand{\op}{\operatorname}
\newcommand{\frk}{\mathfrak}
\newcommand{\ep}{\varepsilon}
\newcommand{\rta}{\rightarrow}
\newcommand{\wt}{\widetilde}
\newcommand{\wh}{\widehat} 
\newcommand{\mcl}{\mathcal}
\newcommand{\bdy}{\partial}
\newcommand{\rng}{\mathring}
\let\originalleft\left
\let\originalright\right
\renewcommand{\left}{\mathopen{}\mathclose\bgroup\originalleft}
\renewcommand{\right}{\aftergroup\egroup\originalright}
\title{Confluence of geodesics in\\ Liouville quantum gravity for $\gamma \in (0,2)$}
\date{ }
\author{Ewain Gwynne and Jason Miller \\ {\it University of Cambridge}}
\begin{document}

\maketitle

\begin{abstract}
We prove that for any metric which one can associate with a Liouville quantum gravity (LQG) surface for $\gamma \in (0,2)$ satisfying certain natural axioms, its geodesics exhibit the following confluence property.  For any fixed point $z$, a.s.\ any two $\gamma$-LQG geodesics started from distinct points other than $z$ must merge into each other and subsequently coincide until they reach $z$.  This is analogous to the confluence of geodesics property for the Brownian map proven by Le Gall (2010).  Our results apply for the subsequential limits of Liouville first passage percolation and are an important input in the proof of the existence and uniqueness of the LQG metric for all $\gamma\in (0,2)$.  
\end{abstract}

\tableofcontents

\section{Introduction}
\label{sec-intro}

\subsection{Overview}
\label{sec-overview}

Fix $\gamma \in (0,2)$, let $U\subset\BB C$, and let $h$ be a variant of the Gaussian free field (GFF) on $U$.
The theory of Liouville quantum gravity (LQG) is concerned with the random Riemannian metric 
\eqb \label{eqn-lqg-def}
e^{\gamma h} (dx^2 + dy^2), 
\eqe
on $U$, where $dx^2 + dy^2$ denotes the Euclidean Riemannian metric tensor. The surface parameterized by this Riemannian metric is called a \emph{$\gamma$-LQG surface}.

LQG was first introduced in the physics literate by Polyakov~\cite{polyakov-qg1} in the context of bosonic string theory.  
One reason why LQG surfaces are interesting mathematically is that they arise as the scaling limits of random planar maps: the special case when $\gamma=\sqrt{8/3}$ (called ``pure gravity") corresponds to the scaling limit of uniform random planar maps, and other values of $\gamma$ (sometimes referred to as ``gravity coupled to matter") correspond to random planar maps weighted by the partition function of an appropriate critical statistical mechanics model on the map.

The definition of $\gamma$-LQG given in~\eqref{eqn-lqg-def} does not make literal sense since the GFF is only a distribution, not a function. In particular, the GFF can be integrated against a smooth test function, but it does not have well-defined pointwise values so it cannot be exponentiated. 
Consequently, one needs a regularization procedure to make rigorous sense of this object. 
Previously, this has been accomplished for the associated volume form, i.e., the $\gamma$-LQG area measure.
This is a random measure $\mu_h$ on $U$ which is a limit of regularized versions of $e^{\gamma h} \, d^2 z$, where $d^2 z$ denotes Lebesgue measure~\cite{kahane,shef-kpz,rhodes-vargas-review}.  

It is a long-standing open problem to construct a canonical \emph{metric} associated with a $\gamma$-LQG surface, i.e., a random metric $D_h$ on $U$ which is obtained, in some sense, by exponentiating the GFF~$h$. The random metric space $(U,D_h)$ (for certain special variants of the GFF) should correspond to the scaling limit of random planar maps, equipped with their graph distance, with respect to the Gromov-Hausdorff topology. 
Miller and Sheffield~\cite{lqg-tbm1,lqg-tbm2,lqg-tbm3} constructed the LQG metric in the special case when $\gamma=\sqrt{8/3}$ using various special symmetries which are unique to this case. 
They also showed that for certain special choices of the pair $(U,h)$, the random metric space $(U,D_h)$ agrees in law with a \emph{Brownian surface}. 
Brownian surfaces, such as the Brownian map~\cite{legall-uniqueness,miermont-brownian-map} or the Brownian disk~\cite{bet-mier-disk} are continuum random metric spaces which arise as the scaling limits of uniform random planar maps in the Gromov-Hausdorff topology. 

This paper is part of a series of works which aims to construct the $\gamma$-LQG metric for all $\gamma \in (0,2)$.   
Analogously to the $\gamma$-LQG measure $\mu_h$, this metric will be defined as the limit of metrics induced by continuous approximations of the GFF. 
To describe these approximations, we first need to define the $\gamma$-LQG dimension exponent $d_\gamma > 2$ from~\cite{dg-lqg-dim}.
The value of this exponent is not known explicitly, but it can be defined in terms of various discrete approximations of LQG distances (such as random planar maps or Liouville first passage percolation, as discussed just below). 
Once the $\gamma$-LQG metric is constructed, it can be shown that $d_\gamma$ is its Hausdorff dimension~\cite{gp-kpz}. 
Let
\eqb \label{eqn-xi-def}
\xi := \frac{\gamma}{d_\gamma} .
\eqe

\begin{figure}[ht!]
\begin{center}
\includegraphics[width=0.6\textwidth]{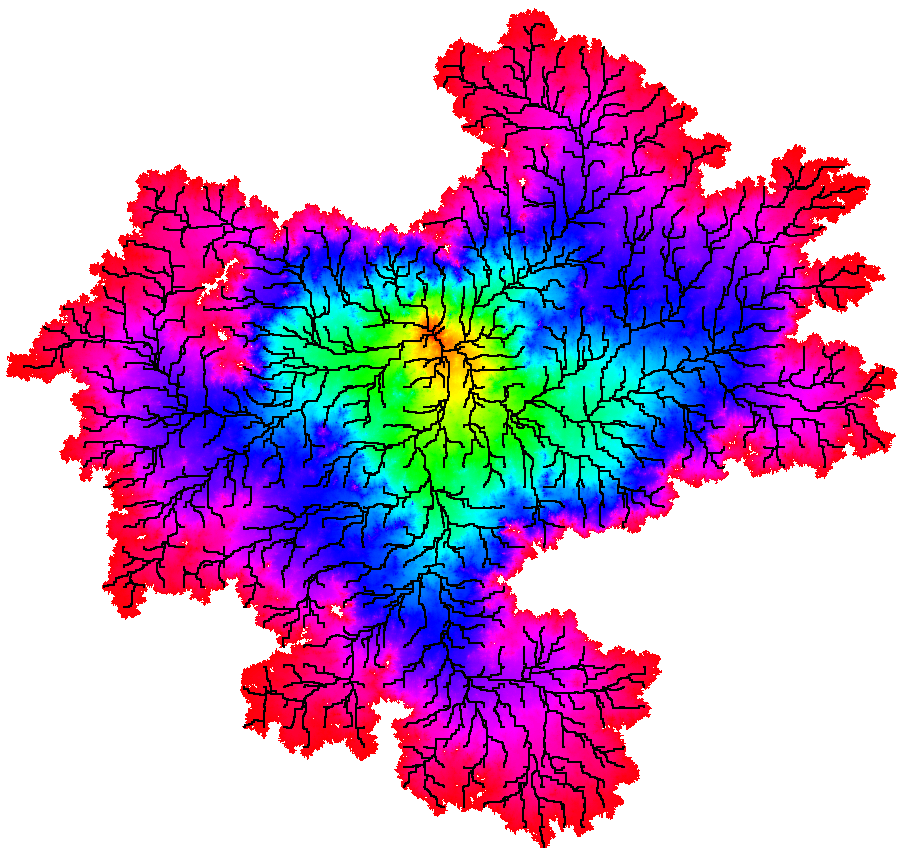}	
\end{center}
\vspace{-0.03\textheight}
\caption{\label{fig:geodesic_sim} A metric ball generated from a discrete GFF on a $1024 \times 1024$ subset of $\BB Z^2$ using the metric where the weight of each path $P$ is given by the sum of $e^{\xi h(x)}$ over those $x$ visited by $P$ where $\xi=1/\sqrt{6} = \gamma/d_\gamma$ for $\gamma=\sqrt{8/3}$ and $d_\gamma = 4$.  It is believed that these approximations fall into the same universality class as for other forms of LFPP, for example~\eqref{eqn-lfpp}.  All of the geodesics from the center of the metric ball to points in the intersection of the metric ball with a grid of spacing $20$ (i.e., $(20 \BB Z)^2$) are shown.} 
\end{figure}

For concreteness, we will primarily focus on the whole-plane case (but see Remark~\ref{remark-other-domains}). 
We say that a random distribution $h$ on $\BB C$ is a \emph{whole plane GFF plus a continuous function} if there exists a coupling of $h$ with a random continuous function $f : \BB C \rta \BB R$ such that the law of $h-f$ is that of a whole-plane GFF.
We similarly define a \emph{whole-plane GFF plus a bounded continuous function}, except that we also require that $f$ is bounded.
Let $p_s(z,w) := \frac{1}{2\pi s} \exp\left( - \frac{|z-w|^2}{2s} \right)$ be the heat kernel, so that $p_s(z,\cdot)$ approximates a point mass at $z$ when $s$ is small.
If $h$ is a whole-plane GFF plus a bounded continuous function, we define the convolution
\eqb \label{eqn-gff-convolve}
h_\ep^*(z) := (h*p_{\ep^2/2})(z) = \int_{\BB C}  h(w) p_{\ep^2/2}^U(z,w) \, d^2w  ,\quad\forall z\in\BB C,\quad\forall \ep > 0 ,
\eqe
where the integral is interpreted as a distributional pairing. 
For $z,w\in\BB C$ and $\ep> 0$, we define the \emph{$\ep$-Liouville first passage percolation (LFPP) metric} by\footnote{The intuitive reason why we look at $e^{\xi h_\ep(z)}$ instead of $e^{\gamma h_\ep(z)}$ to define the metric is as follows. Since $\mu_h$ is obtained by exponentiating $\gamma h$, we can scale LQG areas by a factor of $C>0$ by adding $\gamma^{-1}\log C$ to the field. By~\eqref{eqn-lfpp}, this results in scaling distances by $C^{\xi/\gamma} = C^{1/d_\gamma}$, which is consistent with the fact that the ``dimension" should be the exponent relating the scaling of areas and distances.}
\eqb \label{eqn-lfpp}
D_h^\ep(z,w) := \inf_{P : z\rta w} \int_0^1 e^{\xi h_\ep^*(P(t))} |P'(t)| \,dt 
\eqe
where the infimum is over all piecewise continuously differentiable paths from $z$ to $w$.  It was shown by Ding, Dub\'edat, Dunlap, and Falconet~\cite{dddf-lfpp} that the laws of the LFPP metrics, re-scaled by the median distance across a square, are tight and every subsequential limit induces the Euclidean topology on $\BB C$.  Subsequently, it was shown in~\cite{local-metrics,lqg-metric-estimates} that every subsequential limit can be realized as a measurable function of $h$, so in fact the convergence occurs in probability.  Our eventual aim is to show that the subsequential limit of LFPP is unique, so can be legitimately called \emph{the} $\gamma$-LQG metric.  

The contribution of the present paper is to study the geometry of geodesics for subsequential limits of LFPP. In particular, we will prove that (in contrast to geodesics for smooth metrics) such geodesics satisfy the following property. For any fixed point $z$, a.s.\ any two geodesics from distinct points and to $z$ will merge into each other and coincide for an interval of time before reaching $z$. 
Hence the LQG geodesics towards $z$ form a tree-like structure. 
This property is called \emph{confluence of geodesics}.
The analogous property for geodesics in the Brownian map was proven in~\cite{legall-geodesics}, and played an important role in the proof of the uniqueness of the Brownian map in~\cite{legall-uniqueness,miermont-brownian-map} and in the identification of the $\sqrt{8/3}$-LQG metric with the Brownian map~\cite{lqg-tbm1,lqg-tbm2,lqg-tbm3,tbm-characterization}. Likewise, the results of this paper are an important tool in the proof that the subsequential limit of LFPP is unique in~\cite{gm-uniqueness}. 

Our proofs do \emph{not} use very much external input. 
Indeed, aside from basic properties of the GFF (as can be found, e.g., in~\cite{shef-gff} and the introductory sections of~\cite{ss-contour,ig1,ig4}) we will only use a few lemmas from~\cite{mq-geodesics,local-metrics,lqg-metric-estimates} which can be easily taken as black boxes. 
In fact, even the proofs of these external lemmas do not use much beyond basic facts about the GFF. 
\bigskip

\noindent\textbf{Acknowledgments.} We thank Jian Ding, Julien Dub\'edat, Alex Dunlap, Hugo Falconet, Josh Pfeffer, Scott Sheffield, and Xin Sun for helpful discussions. We thank an anonymous referee for helpful comments on an earlier version of this paper. EG was supported by a Herchel Smith fellowship and a Trinity College junior research fellowship. JM was supported by ERC Starting Grant 804166.

\subsection{Weak LQG metrics}
\label{sec-weak-lqg-metric}

We will not work with LFPP directly in this paper. Instead, we will work in a more general framework involving a random metric which satisfies a list of axioms which are known to be satisfied for every subsequential limit of LFPP (see~\cite[Theorem 1.2]{lqg-metric-estimates}). To state these axioms precisely, we will need some preliminary definitions concerning metric spaces.
Let $(X,D)$ be a metric space.  
\medskip

\noindent
For $r>0$, we write $\mcl B_r(A;D)$ for the open ball consisting of the points $x\in X$ with $D (x,A) < r$.  
If $A = \{y\}$ is a singleton, we write $\mcl B_r(\{y\};d_X) = \mcl B_r(y;d_X)$.
\medskip

\noindent
For a curve $P : [a,b] \rta X$, the \emph{$D$-length} of $P$ is defined by 
\eqbn
\op{len}\left( P ; D  \right) := \sup_{T} \sum_{i=1}^{\# T} D(P(t_i) , P(t_{i-1})) 
\eqen
where the supremum is over all partitions $T : a= t_0 < \dots < t_{\# T} = b$ of $[a,b]$. Note that the $D$-length of a curve may be infinite.
\medskip

\noindent
For $Y\subset X$, the \emph{internal metric of $D$ on $Y$} is defined by
\eqb \label{eqn-internal-def}
D(x,y ; Y)  := \inf_{P \subset Y} \op{len}\left(P ; D \right) ,\quad \forall x,y\in Y 
\eqe 
where the infimum is over all paths $P$ in $Y$ from $x$ to $y$. 
Then $D(\cdot,\cdot ; Y)$ is a metric on $Y$, except that it is allowed to take infinite values.  
\medskip
 
\noindent
We say that $(X,D)$ is a \emph{length space} if for each $x,y\in X$ and each $\ep > 0$, there exists a curve of $D$-length at most $D(x,y) + \ep$ from $x$ to $y$. 
\medskip

\noindent
A \emph{continuous metric} on an open domain $U\subset\BB C$ is a metric $D$ on $U$ which induces the Euclidean topology on $U$, i.e., the identity map $(U,|\cdot|) \rta (U,D)$ is a homeomorphism. 
We equip the space of continuous metrics on $U$ with the local uniform topology for functions from $U\times U$ to $[0,\infty)$ and the associated Borel $\sigma$-algebra.
We allow a continuous metric to satisfy $D(u,v) = \infty$ if $u$ and $v$ are in different connected components of $U$.
In this case, in order to have $D^n\rta D$ w.r.t.\ the local uniform topology we require that for large enough $n$, $D^n(u,v) = \infty$ if and only if $D(u,v)=\infty$.
\medskip

\noindent
We are now ready to state the axioms under which we will work throughout the rest of the paper. 
Let $\mcl D'(\BB C)$ be the space of distributions (generalized functions) on $\BB C$, equipped with the usual weak topology.
For $\gamma \in (0,2)$, a \emph{weak $\gamma$-LQG metric} is a measurable function $h\mapsto D_h$ from $\mcl D'(\BB C)$ to the space of continuous metrics on $\BB C$ such that the following is true whenever $h$ is a whole-plane GFF plus a continuous function.
\begin{enumerate}[I.] 
\item \textbf{Length space.} Almost surely, $(\BB C , D_h)$ is a length space, i.e., the $D_h$-distance between any two points of $\BB C$ is the infimum of the $D_h$-lengths of $D_h$-continuous paths (equivalently, Euclidean continuous paths) between the two points. \label{item-metric-length}
\item \textbf{Locality.} Let $U\subset\BB C$ be a deterministic open set. 
The $D_h$-internal metric $D_h(\cdot,\cdot ; U)$ is a.s.\ determined by $h|_U$.  \label{item-metric-local}. 
\item \textbf{Weyl scaling.} Let $\xi$ be as in~\eqref{eqn-xi-def} and for each continuous function $f : \BB C\rta \BB R$, define \label{item-metric-f}  
\eqb \label{eqn-metric-f}
(e^{\xi f} \cdot D_h) (z,w) := \inf_{P : z\rta w} \int_0^{\op{len}(P ; D_h)} e^{\xi f(P(t))} \,dt , \quad \forall z,w\in \BB C ,
\eqe
where the infimum is over all continuous paths from $z$ to $w$ parameterized by $D_h$-length. Then a.s.\ $ e^{\xi f} \cdot D_h = D_{h+f}$ for every continuous function $f : \BB C\rta\BB R$.  
\item \textbf{Translation invariance.} For each deterministic point $z \in \BB C$, a.s.\ $D_{h(\cdot + z)} = D_h(\cdot+ z , \cdot+z)$.  \label{item-metric-translate}
\item \textbf{Tightness across scales.} Suppose that $h$ is a whole-plane GFF and let $\{h_r(z)\}_{r > 0, z\in\BB C}$ be its circle average process.\footnote{We refer to~\cite[Section 3.1]{shef-kpz} for the basic properties of the circle average process.}
For each $r > 0$, there is a deterministic constant $\frk c_r > 0$ such that the set of laws of the metrics $\frk c_r^{-1} e^{-\xi h_r(0)} D_h (r \cdot , r\cdot)$ for $r > 0$ is tight (w.r.t.\ the local uniform topology). Furthermore, the closure of this set of laws w.r.t.\ the Prokhorov topology on continuous functions $\BB C\times \BB C \rta [0,\infty)$ is contained in the set of laws on continuous metrics on $\BB C$ (i.e., every subsequential limit of the laws of the metrics $\frk c_r^{-1} e^{-\xi h_r(0)} D_h (r \cdot  , r \cdot )$ is supported on metrics which induce the Euclidean topology on $\BB C$). Finally, there exists  \label{item-metric-coord} 
$\Lambda > 1$ such that for each $\delta \in (0,1)$, 
\eqb \label{eqn-scaling-constant}
\Lambda^{-1} \delta^\Lambda \leq \frac{\frk c_{\delta r}}{\frk c_r} \leq \Lambda \delta^{-\Lambda} ,\quad\forall r  > 0.
\eqe
\end{enumerate}

The following theorem is~\cite[Theorem 1.2]{lqg-metric-estimates}, and is proven building on~\cite{dddf-lfpp,local-metrics}.

\begin{thm}[\!\!\cite{dddf-lfpp,local-metrics,lqg-metric-estimates}] \label{thm-lfpp-axioms}
Let $\gamma \in (0,2)$. For every sequence of positive $\ep$'s tending to zero, there is a weak $\gamma$-LQG metric $D$ and a subsequence $\{\ep_k\}_{k\in\BB N}$ for which the following is true. Let $h$ be a whole-plane GFF plus a bounded continuous function. Then the re-scaled LFPP metrics $\frk a_{\ep_k}^{-1} D_h^{\ep_k}$ from~\eqref{eqn-lfpp} converge in probability to $D_h$. 
\end{thm}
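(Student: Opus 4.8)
\emph{Proof idea (sketch).} This is a restatement of~\cite[Theorem~1.2]{lqg-metric-estimates}, so I only indicate how I would assemble it from the works it builds on; the substantial inputs come from~\cite{dddf-lfpp} and~\cite{local-metrics}. The argument splits into two parts: (i)~producing a subsequential limit of the rescaled LFPP metrics which is a measurable function of $h$, and (ii)~verifying Axioms~\ref{item-metric-length}--\ref{item-metric-coord} for this limit. For part~(i) the starting point is the tightness theorem of Ding--Dub\'edat--Dunlap--Falconet~\cite{dddf-lfpp}: with $\frk a_\ep$ a suitable normalizing constant --- say, the median of $D_h^\ep\big(0,\bdy\mcl B_1(0;|\cdot|)\big)$ for a whole-plane GFF $h$ --- the laws of $\frk a_\ep^{-1}D_h^\ep$ are tight for the local uniform topology, and every subsequential limit is a.s.\ a continuous metric inducing the Euclidean topology on $\BB C$. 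Fix a sequence of $\ep$'s tending to $0$ and extract a subsequence $\{\ep_k\}$ along which $\frk a_{\ep_k}^{-1}D_h^{\ep_k}$ converges in law, to a limit we call $D_h$.

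To upgrade this to convergence \emph{in probability} and to realize $D_h$ as a measurable function of $h$, I would invoke the local-metrics machinery of~\cite{local-metrics}. The key input is that $\ep$-LFPP is approximately local: $D_h^\ep(\cdot,\cdot;U)$ is a function of $h_\ep^*|_U$, hence, in the limit $\ep\to 0$, of $h$ restricted to an arbitrarily small neighbourhood of $U$. By~\cite{local-metrics,lqg-metric-estimates} this locality is inherited by subsequential limits, so $D_h$ is a.s.\ determined by $h$ and the convergence $\frk a_{\ep_k}^{-1}D_h^{\ep_k}\to D_h$ in fact holds in probability. One then extends $h\mapsto D_h$ from whole-plane GFF's to all of $\mcl D'(\BB C)$ using translation invariance, Weyl scaling by continuous functions, and locality, and checks measurability; the convergence in probability in the statement (for $h$ a whole-plane GFF plus a \emph{bounded} continuous function $f$) then follows, with general $f$ reducing to $f=0$ via $D_{h}^\ep=e^{\xi(f*p_{\ep^2/2})}\cdot D_{h-f}^\ep$ and $f*p_{\ep^2/2}\to f$ locally uniformly.

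For part~(ii): Axioms~\ref{item-metric-length} (length space) and~\ref{item-metric-local} (locality) pass to the limit from the corresponding exact properties of LFPP --- each $D_h^\ep$ is literally a length metric and is local in the above sense --- once one checks these are not destroyed in the limit, using for~\ref{item-metric-length} that the rescaled near-geodesics of $D_h^\ep$ between fixed points form an equicontinuous, hence relatively compact, family whose subsequential limits are near-geodesics for $D_h$, and for~\ref{item-metric-local} that the neighbourhood of $U$ on which $D_h^\ep(\cdot,\cdot;U)$ depends shrinks to $U$. Axiom~\ref{item-metric-f} (Weyl scaling) follows by passing to the limit in~\eqref{eqn-lfpp}, since $(h+f)_\ep^*=h_\ep^*+f*p_{\ep^2/2}$ and $f*p_{\ep^2/2}\to f$ locally uniformly for continuous $f$. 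Axiom~\ref{item-metric-translate} (translation invariance) is immediate from translation invariance of the law of the whole-plane GFF and of the heat kernel.

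Axiom~\ref{item-metric-coord} (tightness across scales) is the one requiring a genuine computation, and here I would use the exact scaling relation for LFPP. A change of variables in the integral defining $h_\ep^*$ in~\eqref{eqn-gff-convolve} gives $h(r\cdot)_{\ep/r}^*(z)=h_\ep^*(rz)$, so substituting into~\eqref{eqn-lfpp} gives $D_h^\ep(rz,rw)=r\,D_{h(r\cdot)}^{\ep/r}(z,w)$. Now $\hat h:=h(r\cdot)-h_r(0)$ has the law of a whole-plane GFF and is independent of $h_r(0)$, and adding the constant $h_r(0)$ to a field multiplies $D_\cdot^\ep$ by $e^{\xi h_r(0)}$, so $e^{-\xi h_r(0)}D_h^\ep(r\cdot,r\cdot)=r\,D_{\hat h}^{\ep/r}(\cdot,\cdot)$. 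Dividing by $\frk a_\ep$ and letting $\ep\to 0$ along the subsequence --- using that $\frk a_{\ep/r}^{-1}D_{\hat h}^{\ep/r}$ is tight, that $\frk a_{\ep/r}/\frk a_\ep$ stays in a compact subset of $(0,\infty)$ for fixed $r$ by the polynomial bounds on $\ep\mapsto\frk a_\ep$, and that $\frk a_\ep^{-1}D_h^\ep(r\cdot,r\cdot)\to D_h(r\cdot,r\cdot)$ --- one gets that $\frk c_r^{-1}e^{-\xi h_r(0)}D_h(r\cdot,r\cdot)$ has the law of a subsequential limit of rescaled LFPP, for $\frk c_r$ the corresponding limit of $r\,\frk a_{\ep/r}/\frk a_\ep$. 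Tightness over $r$ then follows from the uniform-in-$\ep$ DDDF tightness, the subsequential limits are continuous metrics for the same reason, and~\eqref{eqn-scaling-constant} follows from the a priori polynomial bounds on $\ep\mapsto\frk a_\ep$ from~\cite{dddf-lfpp,lqg-metric-estimates}. The main obstacle in all of this is the DDDF tightness of step~(i) itself --- proven there by a delicate multi-scale analysis of the fluctuations of $\log D_h^\ep$ that rules out degeneration of LFPP distances --- together with the local-metrics framework of~\cite{local-metrics} needed for the measurability and the upgrade to convergence in probability; it is because these inputs are substantial that the theorem is imported here as a black box.
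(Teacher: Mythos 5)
The paper does not prove this theorem at all: it is imported verbatim as~\cite[Theorem~1.2]{lqg-metric-estimates} and used as a black box, which you explicitly acknowledge. Your sketch is a reasonable high-level summary of how the cited works~\cite{dddf-lfpp,local-metrics,lqg-metric-estimates} actually proceed (DDDF tightness, the local-metrics measurability/convergence-in-probability upgrade, limit passage for the axioms, and the exact LFPP scaling identity $D_h^\ep(r\cdot,r\cdot)=rD_{h(r\cdot)}^{\ep/r}$ for Axiom~V), so it is consistent with the paper's treatment; the only cosmetic discrepancy is that you normalize $\frk a_\ep$ by the median distance from $0$ to the unit circle rather than by the median distance across a square as the paper states, but this is immaterial.
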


In light of Theorem~\ref{thm-lfpp-axioms}, to prove theorems about subsequential limits of LFPP it suffices to prove theorems about weak $\gamma$-LQG metrics.
A particular advantage of this approach is that the Miller-Sheffield $\sqrt{8/3}$-LQG metric~\cite{lqg-tbm1,lqg-tbm2,lqg-tbm3} is a weak $\sqrt{8/3}$-LQG metric (see~\cite[Section 2.5]{gms-poisson-voronoi} for a careful explanation of why this is the case). So, all of our results also apply to this metric. 
 
Let us now briefly comment on the above axioms. 
From the perspective that LQG is the random two-dimensional Riemannian manifold obtained by exponentiating the GFF, it is clear that Axioms~\ref{item-metric-length}, \ref{item-metric-local}, and~\ref{item-metric-f} should be true for any reasonable notion of a metric on LQG. 
It is also not hard to see, at least heuristically, why these axioms should be satisfied for subsequential limits of LFPP. 
 
It is expected that the LQG metric should satisfy a coordinate change formula when we apply a complex affine map which is directly analogous to the coordinate change formula for the LQG measure~\cite[Proposition 2.1]{shef-kpz}. 
In particular, it should be the case that for any $a,b\in\BB C$, $a \not=0$, a.s.\
\eqb \label{eqn-lqg-metric-coord}
 D_h \left( a\cdot + b , a \cdot + b \right) = D_{h(a\cdot + b)  +Q\log |a| }(\cdot,\cdot)  ,\quad \text{for} \quad Q =\frac{2}{\gamma} + \frac{\gamma}{2} .
\eqe
It is not known at this point that subsequential limits of LFPP satisfy~\eqref{eqn-lqg-metric-coord} (this will be proven in~\cite{gm-uniqueness}, using the result of the present paper). Axioms~\ref{item-metric-translate} and~\ref{item-metric-coord} are a substitute for the formula~\eqref{eqn-lqg-metric-coord}, which is why we differentiate between the case of a true (strong) LQG metric and a weak LQG metric.  
These axioms imply the tightness of various functionals of $D_h$. For example, if $U\subset\BB C$ is open and $K\subset\BB C$ is compact, then the laws of 
\eqb
\left( \frk c_r^{-1} e^{-\xi h_r(0)} D_h (r K , r\bdy U) \right)^{-1} \quad \text{and} \quad \frk c_r^{-1} e^{-\xi h_r(0)} \sup_{u,v\in r K} D_h (  u ,  v ; r U )
\eqe
as $r$ varies are tight. 
 

\begin{remark}[Metrics associated with other fields] \label{remark-other-domains}
Suppose that $D$ is a weak $\gamma$-LQG metric. 
Then $D_h$ is defined whenever $h$ is a whole-plane GFF plus a continuous function. 
It is not hard to see that one can also define the metric if $h$ is equal to a whole-plane GFF plus a continuous function plus a finite number of logarithmic singularities of the form $-\alpha\log|\cdot - z|$ for $z\in\BB C$ and $\alpha  < Q$; see~\cite[Theorem 1.10 and Proposition 3.17]{lqg-metric-estimates}. 

We can also define metrics associated with GFF's on proper sub-domains of $\BB C$. To this end, let $U\subset\BB C$ be open and let $h$ be a whole-plane GFF. Due to Axiom~\ref{item-metric-local}, we can define for each open set $U\subset \BB C$ the metric $D_{h|_U} := D_h(\cdot,\cdot ;U)$ as a measurable function of $h|_U$. 
We can write $h|_U = \rng h^U + \frk h^U$, where $\rng h^U$ is a zero-boundary GFF on $U$ and $\frk h^U$ is a random harmonic function on $U$ independent from $\rng h^U$.
In the notation~\eqref{eqn-metric-f}, we define
\eqb
D_{\rng h^U} := e^{-\xi \frk h^U} \cdot D_{h|_U} .
\eqe
It is easily seen from Axioms~\ref{item-metric-local} and~\ref{item-metric-f} that $D_{\rng h^U}$ is a measurable function of $\rng h^U$. 
Indeed, if we are given an open set $V\subset U$ with $\ol V\subset U$, choose a smooth compactly supported bump $f : U\rta [0,1]$ which is identically equal to 1 on $V$. 
Then Axiom~\ref{item-metric-local} applied to the field $h - f \frk h^U$ implies that the internal metric of $D_{\rng h^U}$ on $V$, which equals $D_{h-f\frk h^U}(\cdot,\cdot ; V)$, is a.s.\ determined by $(h-f\frk h^U)|_V = \rng h^U|_V$. Letting $V$ increase to all of $U$ gives the desired measurability of $D_{\rng h^U}$ w.r.t.\ $\rng h^U$. 
This defines the $\gamma$-LQG metric for a zero-boundary GFF. Using Axiom~\ref{item-metric-f}, we can similarly define the metric for a zero-boundary GFF plus a continuous function on $U$.  
\end{remark}

\subsection{Main results}
\label{sec-main-results}

Let $\gamma \in (0,2)$, let $D$ be a weak $\gamma$-LQG metric, and let $h$ be a whole-plane GFF. For concreteness, we can fix the additive constant for $h$ so that its average on the unit circle is equal to $0$ but we note that by Axiom~\ref{item-metric-f} the geodesics associated with $D_h$ do not depend on how the additive constant is fixed.
It is easy to check that $D_h$ is boundedly compact, in the sense that closed, $D_h$-bounded subsets of $\BB C$ are compact~\cite[Lemma 3.8]{lqg-metric-estimates}. By Axiom~\ref{item-metric-length} and a standard result in metric geometry~\cite[Corollary 2.5.20]{bbi-metric-geometry} this implies that for any two points in $\BB C$ can be joined by at least one path of minimal $D_h$-length, which we call a \emph{$D_h$-geodesic}. 

Recall that for $s > 0$ and $z\in\BB C$, we write $\mcl B_s(z;D_h)$ for the $D_h$-metric ball of radius $s$ centered at $z$. 
The simplest form of confluence which we establish in this paper is the following statement. See Figure~\ref{fig-thm-illustration}, left, for an illustration. 
 
\begin{thm}[Confluence of geodesics at a point] \label{thm-clsce}
 Almost surely, for each radius $s > 0$ there exists a radius $t \in (0,s)$ such that any two $D_h$-geodesics from $z$ to points outside of $\mcl B_s(z;D_h)$ coincide on the time interval $[0,t]$.
\end{thm}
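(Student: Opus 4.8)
The plan is to argue by contradiction via a "budget" argument about how geodesics can branch, combined with the tightness across scales (Axiom V) to control distances in small Euclidean balls near $z$. First, it is convenient to reduce to geodesics emanating from $z$ and to reparametrize by $D_h$-length. Given a radius $s>0$, consider for each $t\in(0,s)$ the set $\mcl G_t$ of points $x$ such that $x=P(t)$ for some $D_h$-geodesic $P$ from $z$ to a point outside $\mcl B_s(z;D_h)$; equivalently, $\mcl G_t$ is the set of "$t$-branch points" of the geodesic tree toward $z$ restricted to this family. The statement to prove is that a.s.\ there exists $t>0$ with $\#\mcl G_t=1$. Note that $\mcl G_t$ is non-increasing in a suitable sense: if two geodesics agree at time $t$ they agree on $[0,t]$ (a standard fact, since an initial segment of a geodesic is a geodesic and geodesics from $z$ to a fixed point that agree somewhere must agree up to that point, which can be upgraded using uniqueness of geodesics between Lebesgue-typical points plus a countable density argument). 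So it suffices to rule out that $\#\mcl G_t\ge 2$ for all $t>0$, i.e.\ that there are (at least) two disjoint geodesic ``threads'' leaving $z$.

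The core of the argument is a quantitative estimate, proven using Axiom V (and the translation invariance Axiom IV and Weyl scaling Axiom III), of the following shape: for $\ep$ small, conditionally on a favorable event, the $D_h$-distance from $\bdy \mcl B_\ep(z;D_h)$ back to $z$ through the complement of a small Euclidean ball around $z$, or more precisely the number of disjoint ``corridors'' of definite $D_h$-width through which a geodesic to $z$ can pass within $\mcl B_\ep(z;D_h)$, is bounded. Concretely, I would cover a Euclidean neighborhood of $z$ by dyadic annuli $A_n$ (Euclidean radii $2^{-n}$) and show, using the scaling in Axiom V together with estimates of the type in \cite{lqg-metric-estimates} (distances across and around annuli have uniformly bounded-away-from-$0$-and-$\infty$ moments after the $\frk c_r e^{\xi h_r(0)}$ normalization), that the probability that two geodesics to $z$ remain at $D_h$-distance $\ge \delta$ apart ``all the way in'' decays as we pass through more and more annuli. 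The key geometric input is that if two geodesics toward $z$ are disjoint on $[0,t]$ then for every $t'<t$ the two points $P_1(t'),P_2(t')$ are separated by a $D_h$-``bottleneck'' that must be crossed, and crossing a thin annulus in the LQG metric is expensive with high probability; iterating over the (infinitely many) scales between a fixed $s$ and $0$ forces the separation to be $0$, i.e.\ confluence.

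More precisely, I expect to set this up as follows. Step 1: establish the ``agree at a time $\Rightarrow$ agree before'' monotonicity and reduce to showing $\mathbb P[\#\mcl G_t\ge 2\ \forall t>0]=0$. Step 2: prove a one-scale estimate --- there is $p<1$ (uniform in the scale, by Axiom V) such that, given that two disjoint geodesic threads toward $z$ reach into the ball $\mcl B_{s_n}(z;D_h)$ for a suitable decreasing sequence $s_n\to 0$ chosen comparable to the $D_h$-diameter of the $n$-th dyadic Euclidean annulus around $z$, the conditional probability that they are still disjoint inside $\mcl B_{s_{n+1}}(z;D_h)$ is at most $p$; here ``disjoint threads'' should be formalized using the metric net / the fact that a geodesic toward $z$ must cross each annulus, and the estimate uses that with positive probability a single small Euclidean ball around $z$ of definite relative size (in $D_h$) contains an initial segment of \emph{every} geodesic toward $z$ that enters deep enough --- this is the ``trap'' the geodesics fall into. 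Step 3: use independence across scales of the GFF (more precisely, the near-independence of $h$ restricted to disjoint annuli, and Axiom II locality) to multiply the one-scale estimates, concluding $\mathbb P[\text{disjoint at scale }n]\le p^{cn}\to 0$, hence a.s.\ the threads merge, giving $t>0$ with $\#\mcl G_t=1$ and a uniform such $t$ for all $s$ after taking a countable intersection over rational $s$ and using monotonicity in $s$.

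The main obstacle, and where the real work lies, is Step 2: producing the scale-uniform ``trapping'' event of positive probability. The naive statement ``a small metric ball around $z$ swallows initial segments of all geodesics'' is exactly a baby version of confluence, so one must be careful not to be circular. The way I would break the circularity is to not ask for all geodesics at once, but rather to show: with probability bounded below (uniformly across scales, via Axiom V plus translation invariance to rescale so $z=0$ and the scale is $O(1)$), there is a Euclidean ball $B$ around $z$, and a smaller ball $B'\subset B$, such that (i) $D_h$-distances inside $B$ are comparable to those in the rescaled model, and (ii) there is a single point $w\in B'$ through which \emph{any} $D_h$-geodesic from $z$ to $\bdy B$ must pass --- this last being a purely metric-geometry consequence of the existence of a ``thin neck'' $\bdy B'$ whose $D_h$-length is much smaller than $D_h(z,\bdy B')$, combined with a local uniqueness-of-geodesic input at a fixed interior point (available from \cite{mq-geodesics} or \cite{local-metrics} as a black box). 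Constructing this thin-neck event with scale-uniform positive probability, and verifying that it indeed forces passage through a single point rather than just through a thin annulus, is the delicate part; everything else is a soft iteration.
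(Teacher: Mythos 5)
Your high-level plan (reduce to showing two geodesic threads emanating from $z$ must merge; run a scale-by-scale positive-probability ``trapping'' argument near $z$; conclude by iterating across scales using approximate independence) matches the overall strategy of the paper, and you correctly identify the danger of circularity. However, there are two genuine gaps in the way you propose to break that circularity.

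First, the paper does \emph{not} try to produce the single-point trapping event in one shot. It first proves Theorem~\ref{thm-finite-geo0} (restated quantitatively as Theorems~\ref{thm-finite-geo} and~\ref{thm-finite-geo-quant}): there are only \emph{finitely many} points of $\bdy\mcl B_{\tau}^\bullet$ hit by leftmost geodesics from $z$ to $\bdy\mcl B_{\tau+t\frk c_{\BB r} e^{\xi h_{\BB r}(0)}}^\bullet$. This finiteness statement is the core of the paper, and the reduction to a single branch point is a separate, subsequent step (Section~\ref{sec-one-geodesic}). The reason you cannot skip it is that the ``trapping'' or ``shielding'' construction in Lemma~\ref{lem-pos-kill} only has scale-uniform positive probability if the arc being shielded is sufficiently \emph{exposed} (in the sense made precise in Lemma~\ref{lem-exposed-arc-invert}). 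To know that at least one of the arcs $I_x$ (the arc of $\bdy\mcl B_{\tau_{2\BB r}}^\bullet$ whose geodesics pass through a given branch point $x$) is exposed with the required uniformity, you need to know that the arcs $\{I_x\}$ partition $\bdy\mcl B_{\tau_{2\BB r}}^\bullet$ into a \emph{bounded} number of pieces; that is exactly what the finiteness theorem provides. Without it, you could have a pathological arrangement of infinitely many tiny arcs, none of which can be shielded with positive probability.

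Second, the mechanism you propose --- a $\bdy B'$ ``thin neck'' whose $D_h$-length is much smaller than $D_h(z,\bdy B')$, forcing all geodesics to pass through a single point $w$ --- does not yield a single point. If the $D_h$-length of $\bdy B'$ is $\ell \ll D_h(z,\bdy B')$, and two geodesics $P_1,P_2$ from $z$ cross $\bdy B'$ at $w_1 \neq w_2$ at times $t_1 \leq t_2$, then the detour along $\bdy B'$ gives only $t_2 \leq t_1 + \ell$ (and symmetrically), i.e.\ the crossing points are $D_h$-close and cross at nearby times, but nothing forces $w_1 = w_2$. (Indeed the conclusion that geodesics can pass through only one point of a given set is precisely the kind of statement that needs a separate argument: in the paper it is deduced from Lemma~\ref{lem-geo-unique}, uniqueness of geodesics to rational points, plus the fact that the arc $I_x$ was \emph{defined} as the set of targets whose leftmost geodesic passes through $x$.) A related issue: your Step~3 proposes to multiply one-scale estimates using ``independence across scales,'' but the trapping events at different scales are not well-suited to such a product bound (they are not measurable with respect to disjoint pieces of $h$ in any clean way). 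The paper instead uses a scale-uniform lower bound $\frk q > 0$ on the trapping probability (Lemma~\ref{lem-one-geo}) together with the triviality of the tail $\sigma$-algebra $\bigcap_{\BB r>0}\sigma(h|_{B_{\BB r}(0)})$ to conclude that the trapping event occurs at arbitrarily small scales a.s., which is both cleaner and actually correct.
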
 

We emphasize that the property of $D_h$-geodesics described in Theorem~\ref{thm-clsce} is very different from the behavior of geodesics w.r.t.\ a smooth Riemannian metric on $\BB C$: indeed, in the latter situation geodesics from two different points targeted at $z$ intersect only at $z$. 

It was shown by Le Gall that the analog of Theorem~\ref{thm-clsce} holds for geodesics in the Brownian map based at a typical point sampled from the volume measure on the Brownian map~\cite[Corollary~7.7]{legall-geodesics}.  
Due to the equivalence of Brownian and $\sqrt{8/3}$-LQG surfaces, this is equivalent to the statement that one a.s.\ has confluence of $\sqrt{8/3}$-LQG geodesics at a typical sampled uniformly from the $\sqrt{8/3}$-LQG area measure.
However, Theorem~\ref{thm-clsce} has new content even in the case of the $\sqrt{8/3}$-LQG metric since it gives confluence at a \emph{Lebesgue} typical point, rather than a quantum typical point.

\begin{figure}[t!]
 \begin{center}
\includegraphics[scale=.85]{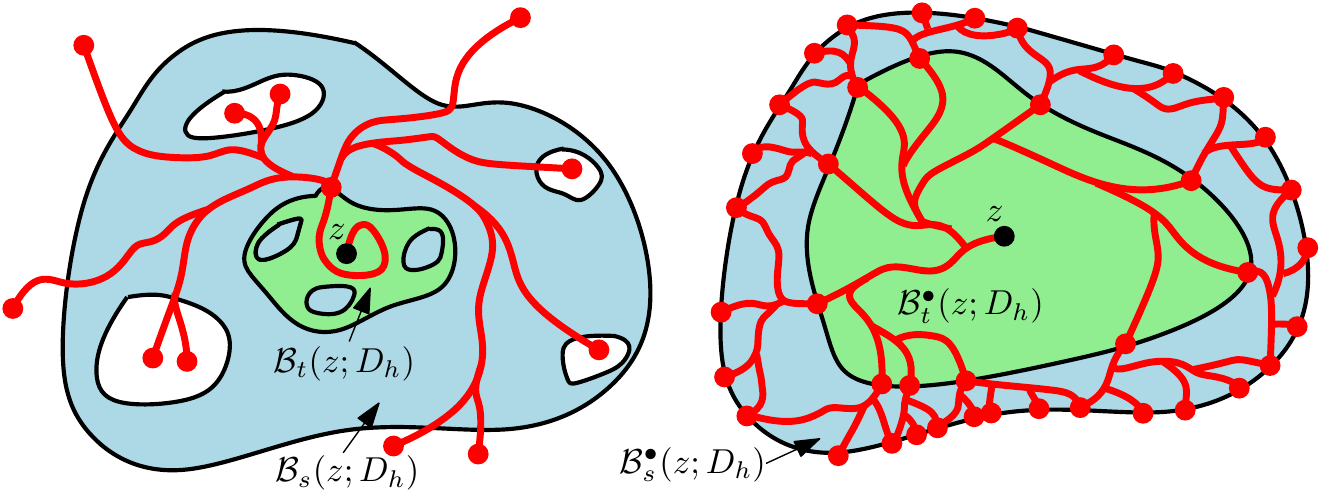}
\vspace{-0.01\textheight}
\caption{\textbf{Left:} Theorem~\ref{thm-clsce} asserts that for a fixed $z\in\BB C$, it is a.s.\ the case that for each $s>0$ and each small enough $t>0$ (depending on $s$), all of the $D_h$-geodesics from $z$ to points outside of $\mcl B_s(z;D_h)$ coincide until they exit $\mcl B_t(z;D_h)$. Several such $D_h$-geodesics are shown in red.
\textbf{Right:} Theorem~\ref{thm-finite-geo0} asserts that a.s.\ for \emph{any} $0<t<s<\infty$ there are only finitely many possibilities for the restriction of a $D_h$-geodesic from 0 to $\bdy\mcl B_s^\bullet(0;D_h)$ to the time interval $[0,t]$. Several such leftmost $D_h$-geodesics are shown in red. 
}\label{fig-thm-illustration}
\end{center}
\vspace{-1em}
\end{figure} 

Another form of confluence concerns geodesics across an annulus between two filled $D_h$-metric balls. 
For $s > 0$ and $z\in\BB C$, we define the \emph{filled metric ball} $\mcl B_s^\bullet(z;D_h)$ to be the union of $\ol{\mcl B_s(z;D_h)}$ and the set of points in $\BB C$ which are disconnected from $\infty$ by $\ol{\mcl B_s (z;D_h)}$. 
Each point $x\in \bdy \mcl B_s^\bullet(z;D_h)$ lies at $D_h$-distance exactly $s$ from $z$, so every $D_h$-geodesic from $z$ to $x$ stays in $\mcl B_s^\bullet(z;D_h)$. For some atypical points $x$ there might be many such $D_h$-geodesics, but there is always a distinguished $D_h$-geodesic from $z$ to $x$, called the \emph{leftmost geodesic}, which lies (weakly) to the left of every other $D_h$-geodesic from 0 to $x$ if we stand at $x$ and look outward from $ \mcl B_s^\bullet(z;D_h) $ (see Lemma~\ref{lem-leftmost-geodesic}). 

\begin{thm}[Confluence of geodesics across a metric annulus] \label{thm-finite-geo0}
Fix $z\in\BB C$. Almost surely, for each $0 < t < s < \infty$ there is a finite set of $D_h$-geodesics from 0 to $\bdy\mcl B_t^\bullet(z;D_h)$ such that every leftmost $D_h$-geodesic from 0 to $\bdy\mcl B_s^\bullet(z;D_h)$ coincides with one of these $D_h$-geodesics on the time interval $[0,t]$. In particular, there are a.s.\ only finitely many points of $\bdy\mcl B_t^\bullet(z;D_h)$ which are hit by leftmost $D_h$-geodesics from 0 to $\bdy\mcl B_s^\bullet(z;D_h)$. 
\end{thm}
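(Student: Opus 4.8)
The plan is to deduce the "confluence across an annulus" statement from the pointwise confluence of Theorem~\ref{thm-clsce}, together with a compactness argument and the locality axiom. The key geometric observation is this: if $\eta$ is a leftmost $D_h$-geodesic from $0$ to some $x \in \bdy\mcl B_s^\bullet(z;D_h)$, then $\eta$ must pass through $\bdy\mcl B_t^\bullet(z;D_h)$ (since $x$ is at distance $s > t$ from $z$ and $\eta$ is continuous, and $0$ is — on the relevant event — also outside $\mcl B_t^\bullet(z;D_h)$; one handles the case $0 \in \mcl B_t^\bullet(z;D_h)$ separately or just notes it's vacuous since then the ball $\mcl B_s^\bullet$ contains $0$ and the initial segment is trivial). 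Let $y$ be the last point of $\bdy\mcl B_t^\bullet(z;D_h)$ hit by $\eta$ before it leaves $\mcl B_s^\bullet(z;D_h)$ for good. The portion of $\eta$ from $y$ onward is a $D_h$-geodesic from $y$ to $x$; the portion of $\eta$ from $0$ to $y$ is a $D_h$-geodesic from $0$ to $y$, and in fact it is the leftmost $D_h$-geodesic from $0$ to $\bdy\mcl B_t^\bullet(z;D_h)$ that ends at $y$ — or more precisely, it agrees with such a geodesic up to the first time it hits $\bdy\mcl B_t^\bullet$. So it suffices to show that only finitely many points $y \in \bdy\mcl B_t^\bullet(z;D_h)$ can arise as "entry/exit points" in this way.

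The main step is therefore: \emph{a.s.\ only finitely many points of $\bdy\mcl B_t^\bullet(z;D_h)$ are hit by leftmost $D_h$-geodesics from $0$ to $\bdy\mcl B_s^\bullet(z;D_h)$.} I would prove this by contradiction. Suppose there were infinitely many such points; by compactness of $\bdy\mcl B_t^\bullet(z;D_h)$ (which is compact since $D_h$ is boundedly compact) they accumulate at some point $w \in \bdy\mcl B_t^\bullet(z;D_h)$. Pick a sequence of leftmost geodesics $\eta_n$ from $0$ to $x_n \in \bdy\mcl B_s^\bullet(z;D_h)$ whose hitting points $y_n \in \bdy\mcl B_t^\bullet(z;D_h)$ are distinct and converge to $w$. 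Using boundedly-compactness again, pass to a subsequence so that $\eta_n$ converges uniformly to a limiting path $\eta_\infty$, which is a $D_h$-geodesic from $0$ to some $x_\infty \in \bdy\mcl B_s^\bullet(z;D_h)$ passing through $w$. Now apply Theorem~\ref{thm-clsce} with the base point being $z$ — wait, that's the wrong base point; rather, apply confluence of geodesics \emph{toward $0$}: by Theorem~\ref{thm-clsce} applied at the point $0$ (using the reversals of the $\eta_n$, which are geodesics \emph{from} points outside a small ball around $0$ \emph{to} $0$), for small enough radius all these reversed geodesics coincide near $0$. That gives confluence near $0$ but we need confluence near $w$, which doesn't follow directly. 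So instead I would run the Theorem~\ref{thm-clsce}-type argument directly at $w$: the geodesics $\eta_n$, restricted to a neighborhood of $w$, behave like geodesics emanating from $w$ (in both directions), and one shows — mimicking the proof of Theorem~\ref{thm-clsce}, or better, invoking it after a suitable localization via Axiom~\ref{item-metric-local} and the near-independence of the field in an annulus around $w$ — that for all small $\rho$, all the $\eta_n$ agree on $\mcl B_\rho(w; D_h)$ except possibly for finitely many of them. But the $y_n$ are distinct points on $\bdy\mcl B_t^\bullet(z;D_h)$, and if $\eta_n$ and $\eta_m$ agree on a neighborhood of $w$ that contains both $y_n$ and $y_m$... this forces a contradiction with leftmostness and the distinctness of the $y_n$ (two leftmost geodesics that coincide near $w$ and then both hit $\bdy\mcl B_t^\bullet$ can't hit it at different points without crossing, violating the leftmost property).

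Once the finiteness of the set of hitting points $\{y_1,\dots,y_k\}$ is established, the rest is bookkeeping: for each $y_i$, the leftmost $D_h$-geodesic from $0$ to $y_i$ is a single well-defined path (by Lemma~\ref{lem-leftmost-geodesic}, or rather we take \emph{a} geodesic from $0$ to $y_i$ realizing the leftmost behavior relative to $\bdy\mcl B_t^\bullet$), and every leftmost $D_h$-geodesic from $0$ to $\bdy\mcl B_s^\bullet(z;D_h)$, restricted to $[0,t]$ — here $t$ should be read as the time at which the geodesic first reaches $\bdy\mcl B_t^\bullet(z;D_h)$, or one reparametrizes — must agree with the corresponding geodesic from $0$ to one of the $y_i$ up to that time. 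One subtlety to address is that a leftmost geodesic to $\bdy\mcl B_s^\bullet$ need not be a leftmost geodesic to $\bdy\mcl B_t^\bullet$; but it does coincide, on its initial segment, with \emph{some} $D_h$-geodesic from $0$ to $y_i$, and one argues there are only finitely many possible such initial segments, using uniqueness of geodesics from $0$ to Lebesgue-typical points plus the finiteness of the $y_i$ together with a separate argument that among geodesics from $0$ to a \emph{fixed} point $y_i$ there are only finitely many distinct initial behaviors relevant here — or more cleanly, one replaces each $\eta$ by the leftmost geodesic from $0$ to $y_i$ where $y_i$ is $\eta$'s hitting point, and shows $\eta$ agrees with it on $[0,t]$ by a leftmost-versus-leftmost non-crossing argument.

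\textbf{Main obstacle.} The crux is transferring the pointwise-confluence statement of Theorem~\ref{thm-clsce} (which is about geodesics emanating from a \emph{fixed deterministic} point $z$) to confluence near the \emph{random} accumulation point $w \in \bdy\mcl B_t^\bullet(z;D_h)$. Theorem~\ref{thm-clsce} as stated cannot be applied at a random point, so the real work is to re-run its proof in the present setting — presumably isolating a local confluence statement that holds simultaneously for all points in a dense set or uniformly over the relevant random configuration, likely via a covering/union-bound argument over deterministic points combined with Axiom~\ref{item-metric-local} to localize to annuli and the Weyl scaling Axiom~\ref{item-metric-f} to handle the field's fluctuations. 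I expect the actual paper proves an intermediate "confluence toward a point" or "confluence across annuli centered at deterministic points" result first and then bootstraps; the bootstrap from deterministic to random centers is where the quantitative estimates (tightness across scales, Axiom~\ref{item-metric-coord}) will be needed.
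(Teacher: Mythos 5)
The logical dependency in your proposal runs in the wrong direction. In the paper, Theorem~\ref{thm-finite-geo0} is proved \emph{first} — the paper states explicitly that its proof ``is in some sense the core of the paper'' — and Theorem~\ref{thm-clsce} is then \emph{deduced} from it in Section~\ref{sec-one-geodesic} (via the auxiliary Lemmas~\ref{lem-pos-kill}--\ref{lem-one-geo} and a zero-one law). Your plan to deduce Theorem~\ref{thm-finite-geo0} from Theorem~\ref{thm-clsce} is therefore circular within the paper's architecture, and the paper's actual proof of Theorem~\ref{thm-finite-geo0} (really of the quantitative Theorem~\ref{thm-finite-geo}) makes no use of any pointwise confluence statement at all.

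Even bracketing circularity, the compactness argument you sketch has a genuine gap at exactly the place you flag. Passing to a uniformly convergent subsequence $\eta_n \to \eta_\infty$ gives you a limiting geodesic through $w$, but it gives no coalescence: the $\eta_n$ can converge uniformly to $\eta_\infty$ while remaining pairwise disjoint away from $0$, and they need not pass through $w$ itself. The step you call ``run the Theorem~\ref{thm-clsce}-type argument directly at $w$'' is not a repair but a restatement of the whole problem; no such confluence statement at an arbitrary random boundary point is available, and Remark~\ref{remark-other-points} explicitly notes that confluence fails simultaneously at all points (for instance at interior points of a geodesic). Your closing guess — that the paper proves deterministic-center confluence and bootstraps to random centers — is also not what happens.

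The paper's actual route is direct and does not pass through confluence at any point. Fix a stopping radius $\tau$ and cover $\bdy\mcl B_\tau^\bullet$ by a large number $n$ of disjoint arcs. Two ingredients then drive an iteration: a deterministic complex-analysis estimate (Lemma~\ref{lem-disconnect-set-infty}) showing that all but $O(n^{1/2})$ of the arcs can be disconnected from $\infty$ by sets of diameter $\lesssim n^{-1/4}$; and a probabilistic ``shield'' construction (Lemmas~\ref{lem-cond-diam-small}, \ref{lem-geo-kill-pt}, \ref{lem-geo-kill}, built from the FKG inequality, the Markov property of the GFF, and Lemma~\ref{lem-annulus-iterate-inverse}) which, given such a small disconnecting set around an arc $I$, produces with probability $1 - O(n^{-\alpha})$ a pair of concentric annuli that every geodesic to $0$ from far away must avoid, forcing it to miss $I$. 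Iterating across a geometric sequence of radii (Lemmas~\ref{lem-half-count}, \ref{lem-count-radius}) kills off all but a bounded number of arcs in a controlled $D_h$-time, yielding Theorem~\ref{thm-finite-geo-quant} and hence Theorem~\ref{thm-finite-geo}; Theorem~\ref{thm-finite-geo0} then follows by the monotonicity/approximation properties of leftmost geodesics (Lemmas~\ref{lem-leftmost-geodesic}, \ref{lem-geo-arc}) and uniqueness of geodesics to rational points (Lemma~\ref{lem-geo-unique}). This is a fundamentally different mechanism from what you propose, and the reversal of direction is not cosmetic: the quantitative finiteness across annuli is precisely what is needed to localize geodesics enough to run the zero-one law proving the pointwise statement.
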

 
See Figure~\ref{fig-thm-illustration}, right, for an illustration of the statement of Theorem~\ref{thm-finite-geo0}. 
The proof of Theorem~\ref{thm-finite-geo0}, given in Section~\ref{sec-confluence}, is in some sense the core of the paper. 
See Section~\ref{sec-conf-outline} for an overview of this argument.
Theorem~\ref{thm-clsce} will be deduce from Theorem~\ref{thm-finite-geo0} in Section~\ref{sec-one-geodesic}. 

We will actually prove much more quantitative versions of Theorem~\ref{thm-finite-geo0} which give upper bounds for the number of leftmost $D_h$-geodesics across an annulus between two filled LQG metric balls which are uniform with respect to the Euclidean size of the LQG metric balls. See Theorems~\ref{thm-finite-geo} and~\ref{thm-finite-geo-quant}.
This will be important in~\cite{gm-uniqueness}. 
However, once we know that every weak LQG metric satisfies the scale invariance property~\eqref{eqn-lqg-metric-coord} (which will be established in~\cite{gm-uniqueness}), Theorem~\ref{thm-finite-geo} itself implies an estimate which is uniform w.r.t.\ the Euclidean size of the LQG metric balls. 

We now briefly discuss how the results of this paper are used in~\cite{gm-uniqueness}. Very roughly, the reason why confluence is useful in this setting is that it allows us to establish near-independence for events which depend on small segments of a $D_h$-geodesic which are separated from each other. 
To be more precise, fix distinct points $\BB z,\BB w\in\BB C$ and let $P$ be the (a.s.\ unique; see Lemma~\ref{lem-geo-unique} below) $D_h$-geodesic from $\BB z$ to $\BB w$.
Suppose we are given $0 < t < s < \infty$ and we want to study, for a small $\ep > 0$, the conditional law of $h|_{B_\ep(P(s))}$ given $P|_{[0,t]}$. 
A priori, this seems to be a very intractable conditioning since we do not know anything about the conditional law of $h$ given $P|_{[0,t]}$. 
However, we can get around this problem using Theorem~\ref{thm-finite-geo0}, as follows. 

Let $\delta \in (0,1)$ be a small parameter. If $\delta$ is chosen small enough relative to $t$ and then $\ep$ is chosen small enough relative to $\delta$, then a slightly more quantitative version of Theorem~\ref{thm-finite-geo0} tells us that the following is true. With high probability, there is an arc $I\subset\bdy\mcl B_{s-\delta}^\bullet$ such that (a) all of the leftmost $D_h$-geodesics from $\BB z$ to $I$ agree on the time interval $[0,t]$ and (b) we have $B_\ep(P(s))\subset \BB C\setminus \mcl B_s^\bullet$ and every point of $B_\ep(P(s))$ is much closer to $I$ than to $\bdy\mcl B_{s-\delta}^\bullet\setminus I$ (to keep $P(s)$ away from the endpoints of $I$, one can use Lemma~\ref{lem-geo-kill-pt} below). 
Condition (b) tells us that $P(s-\delta) \in I$ and moreover this is still the case if we change the behavior of the field in $B_\ep(P(s))$ in a ``reasonable" way.
Hence we can change what happens in $B_\ep(P(s))$ without changing $P|_{[0,t]}$.

\begin{remark}[Confluence for more general fields] \label{remark-other-points}
We have stated Theorems~\ref{thm-clsce} and~\ref{thm-finite-geo0} for a whole-plane GFF, but the statements extend immediately to other variants of the GFF via local absolute continuity. 
The proofs of Theorems~\ref{thm-clsce} and~\ref{thm-finite-geo0} also work if we replace $h$ by $h-\alpha\log|\cdot|$ for $\alpha \in (-\infty,Q)$, in which case $D_{h^\alpha}$ makes sense as a continuous metric on $\BB C$.  
In particular, by taking $\alpha=\gamma$, we get that confluence of geodesics holds a.s.\ at a ``quantum typical point". 
However, confluence of geodesics does not hold a.s.\ at all points simultaneously.
For example, if $z$ is a point on a $D_h$-geodesic $P$ which is not one of the endpoints of $P$, then there are two distinct incoming geodesic ``arms" emanating from $P$ which do not coalesce, namely the segments of $P$ before and after it hits $z$. That is, the conclusion of Theorem~\ref{thm-clsce} does not hold for such a choice of $z$. 
\end{remark}

\subsection{Basic notation}
\label{sec-notation}

\noindent
We write $\BB N = \{1,2,3,\dots\}$ and $\BB N_0 = \BB N \cup \{0\}$. 
\medskip

\noindent
For $a < b$, we define the discrete interval $[a,b]_{\BB Z}:= [a,b]\cap\BB Z$. 
\medskip

\noindent
If $f  :(0,\infty) \rta \BB R$ and $g : (0,\infty) \rta (0,\infty)$, we say that $f(\ep) = O_\ep(g(\ep))$ (resp.\ $f(\ep) = o_\ep(g(\ep))$) as $\ep\rta 0$ if $f(\ep)/g(\ep)$ remains bounded (resp.\ tends to zero) as $\ep\rta 0$. We similarly define $O(\cdot)$ and $o(\cdot)$ errors as a parameter goes to infinity. 
\medskip

\noindent
If $f,g : (0,\infty) \rta [0,\infty)$, we say that $f(\ep) \preceq g(\ep)$ if there is a constant $C>0$ (independent from $\ep$ and possibly from other parameters of interest) such that $f(\ep) \leq  C g(\ep)$. We write $f(\ep) \asymp g(\ep)$ if $f(\ep) \preceq g(\ep)$ and $g(\ep) \preceq f(\ep)$. 
\medskip

\noindent
We will often specify any requirements on the dependencies on rates of convergence in $O(\cdot)$ and $o(\cdot)$ errors, implicit constants in $\preceq$, etc., in the statements of lemmas/propositions/theorems, in which case we implicitly require that errors, implicit constants, etc., appearing in the proof satisfy the same dependencies. 
\medskip
  
\noindent
For $z\in\BB C$ and $r>0$, we write $B_r(z)$ for the Euclidean ball of radius $r$ centered at $z$. We also define the open annulus
\eqb \label{eqn-annulus-def}
\BB A_{r_1,r_2}(z) := B_{r_2}(z) \setminus \ol{B_{r_1}(z)} ,\quad\forall 0 < r_r < r_2 < \infty .
\eqe

\section{Preliminaries}
\label{sec-prelim}

In this section, we prove some preliminary results which are needed for the main argument in Section~\ref{sec-confluence}. 
In Section~\ref{sec-geodesic}, we prove some basic monotonicity properties of $D_h$-geodesics and construct the leftmost geodesics appearing in Theorem~\ref{thm-finite-geo0}.
In Section~\ref{sec-fkg}, we prove a version of the FKG inequality for $D_h$, which says that certain non-decreasing functionals of $D_h$ are positively correlated.
In Section~\ref{sec-annulus-iterate} we state a general lemma from~\cite{mq-geodesics,local-metrics} concerning the approximate independence of the restrictions of the GFF to concentric annuli. 
In Section~\ref{sec-disconnect-set}, we prove some deterministic geometric lemmas which will be used to control the geometry of filled $D_h$-metric balls. 

\subsection{Basic properties of LQG metric balls and geodesics}
\label{sec-geodesic}

Let $D$ be a weak $\gamma$-LQG metric for some $\gamma \in (0,2)$ and let $h$ be a whole-plane GFF. 
For $s >0$ and $z\in\BB C$, we define the filled metric ball $\mcl B_s^\bullet(z;D_h)$ as in the discussion just above Theorem~\ref{thm-finite-geo0}. 

We recall the definition of a local set of the GFF from~\cite[Lemma 3.9]{ss-contour}. Suppose $(h,A)$ is a coupling of $h$ with a random set $A$. 
We say that $A$ is a \emph{local set} for $h$ if for any open set $U\subset \BB C$, the event $\{A\cap U \not=\emptyset\}$ is conditionally independent from $h|_{\BB C\setminus U}$ given $h|_U$. If $A$ is determined by $h$ (which will be the case for all of the local sets we consider), this is equivalent to the statement that $A$ is determined by $h|_U$ on the event $\{A\subset U\}$. 
For a local set $A$, we can condition on the pair $(A,h|_A)$: this is by definition the same as conditioning on the $\sigma$-algebra $\bigcap_{\ep > 0} \sigma(A , h|_{B_\ep(A)})$. 
The conditional law of $h|_{\BB C\setminus A}$ given $(A,h|_A)$ is that of a zero-boundary GFF on $\BB C\setminus A$ plus a harmonic function on $\BB C\setminus A$ which is determined by $(A,h|_A)$. 

\begin{lem} \label{lem-ball-local}
Let $z\in\BB C$ be deterministic. 
If $\tau$ is a stopping time for the filtration generated by $(\ol{\mcl B_s(z;D_h)} , h|_{\ol{\mcl B_s(z;D_h)}})$, then $\ol{\mcl B_\tau(z;D_h)}$ is a local set for $h$.
The same is true with $\mcl B_s^\bullet(z;D_h)$ in place of $ \ol{\mcl B_s(z;D_h)}$.  
\end{lem}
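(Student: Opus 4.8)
The plan is to verify the defining property of a local set directly: for a deterministic open set $U \subset \BB C$, I want to show that the event $\{\ol{\mcl B_\tau(z;D_h)} \cap U \neq \emptyset\}$ is conditionally independent of $h|_{\BB C \setminus U}$ given $h|_U$. The key structural fact I would exploit is that the \emph{growing family} $\{\ol{\mcl B_s(z;D_h)}\}_{s \geq 0}$ is an increasing family of closed sets, and — crucially — by Axiom \ref{item-metric-local} (Locality), the internal metric $D_h(\cdot,\cdot;V)$ for an open $V$ is determined by $h|_V$. So $D_h$-balls grow in a way that is "locally determined" as long as they have not yet exited a given region. More precisely, for any open $V \ni z$, the set $\ol{\mcl B_s(z;D_h)}$ is determined by $h|_V$ on the event that $\ol{\mcl B_s(z;D_h)} \subset V$: distances from $z$ to points of $V$ are realized by geodesics staying in $V$ once the ball is contained in $V$, so they coincide with $D_h(z,\cdot;V)$.

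The first step is to reduce to the "determined by $h|_U$ on $\{\ol{\mcl B_\tau} \subset U\}$" criterion stated in the excerpt (the remark after the definition of local set), since $\ol{\mcl B_\tau(z;D_h)}$ is a deterministic function of $h$. So it suffices to show: on the event $\{\ol{\mcl B_\tau(z;D_h)} \subset U\}$, the pair $(\ol{\mcl B_\tau(z;D_h)}, h|_{\ol{\mcl B_\tau(z;D_h)}})$ is determined by $h|_U$. For this I would run the following argument. Work inside $U$ with the internal metric $\wt D := D_h(\cdot,\cdot;U)$, which is a measurable function of $h|_U$ by Axiom \ref{item-metric-local}. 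Let $\wt{\mcl B}_s$ denote the $\wt D$-metric ball of radius $s$ about $z$ and let $\wt\tau$ be the analogue of $\tau$ defined using $\{(\ol{\wt{\mcl B}_s}, h|_{\ol{\wt{\mcl B}_s}})\}$. On the event in question, one checks by a continuity/continuous-induction argument in $s$ that $\ol{\mcl B_s(z;D_h)} = \ol{\wt{\mcl B}_s}$ for all $s$ up to the (common) stopping time, because as long as the $D_h$-ball is strictly contained in $U$, any $D_h$-geodesic from $z$ to a point of the ball stays in $U$, hence $D_h(z,\cdot)$ agrees with $\wt D(z,\cdot)$ there; and the stopping times agree because they are defined from the same filtration data. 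Consequently $(\ol{\mcl B_\tau(z;D_h)}, h|_{\ol{\mcl B_\tau(z;D_h)}}) = (\ol{\wt{\mcl B}_{\wt\tau}}, h|_{\ol{\wt{\mcl B}_{\wt\tau}}})$, and the right-hand side is a function of $h|_U$ (the field restricted to $\ol{\wt{\mcl B}_{\wt\tau}} \subset U$ is part of $h|_U$, and $\wt D$ is a function of $h|_U$). That gives the claim for $\ol{\mcl B_\tau(z;D_h)}$.

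For the filled ball $\mcl B_\tau^\bullet(z;D_h)$, I would argue that on the event $\{\mcl B_\tau^\bullet(z;D_h) \subset U\}$ the same identification goes through: $\mcl B_\tau^\bullet$ is obtained from $\ol{\mcl B_\tau(z;D_h)}$ by filling in the complementary components that do not contain $\infty$, and since $\ol{\mcl B_\tau} = \ol{\wt{\mcl B}_{\wt\tau}}$ on this event (note $\{\mcl B^\bullet_\tau \subset U\} \subset \{\ol{\mcl B_\tau}\subset U\}$), the filled versions agree as well — filling is a deterministic topological operation on the closed set. One subtlety worth a remark: the "hole-filling" is defined relative to $\infty$, i.e. relative to the unbounded component of the complement in $\BB C$, which is not an operation internal to $U$; but this is harmless because the operation is applied to a set that is already known (equal to $\ol{\wt{\mcl B}_{\wt\tau}}$) and the result is thus still a function of $h|_U$.

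The main obstacle, and the step deserving the most care, is the continuous-induction argument showing $\ol{\mcl B_s(z;D_h)} = \ol{\wt{\mcl B}_s}$ up to the stopping time on the relevant event — one must handle the fact that $\wt D$ can be $+\infty$ between components of $U$, verify left/right continuity of $s \mapsto \ol{\mcl B_s}$ in the Hausdorff sense, and confirm that the stopping-time data generated by the two filtrations genuinely coincide on the event (so that $\tau = \wt\tau$ there). Everything else is bookkeeping with Axiom \ref{item-metric-local} and the definition of a local set.
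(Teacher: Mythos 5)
Your overall strategy (verify the ``determined by $h|_U$ on $\{A\subset U\}$'' criterion by replacing $D_h$ with the internal metric $\wt D := D_h(\cdot,\cdot;U)$) is sound, and it differs from the paper's route: for the un-filled ball the paper simply cites an external lemma on local metrics (\cite[Lemma~2.2]{local-metrics}), while you reconstruct that fact from scratch. Your core identity is actually simpler than you suggest — it needs no continuous induction. On $\{\ol{\mcl B_s(z;D_h)}\subset U\}$, any $w$ with $D_h(z,w)<s$ is joined to $z$ by a path of $D_h$-length $<s$ that stays in $\ol{\mcl B_{D_h(z,w)}(z;D_h)}\subset\mcl B_s(z;D_h)\subset U$, so $\wt D(z,w)=D_h(z,w)$ and $\wt{\mcl B}_s=\mcl B_s(z;D_h)$ directly; the concerns you flag about $\wt D=\infty$ between components of $U$ and Hausdorff continuity of $s\mapsto\ol{\mcl B_s}$ do not enter this step. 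For the filled ball, both you and the paper observe that hole-filling is a deterministic operation on the closed set, and that $\{\mcl B_\tau^\bullet\subset U\}\subset\{\ol{\mcl B_\tau}\subset U\}$; the one thing to be careful about (and you gloss over it) is that the filled-ball filtration is genuinely larger than the un-filled one, since $h|_{\mcl B_s^\bullet}$ carries more information than $h|_{\ol{\mcl B_s}}$, so the internal-metric identification must be run at the level of the filled filtration $(\wt{\mcl B}_s^\bullet , h|_{\wt{\mcl B}_s^\bullet})$ directly rather than by quoting the un-filled case for the same $\tau$.

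The real gap, which you correctly flag but do not close, is the passage from deterministic radii to general stopping times. Your introduction of a vaguely defined ``analogue $\wt\tau$'' is where this hides: a general stopping time is not given by an explicit rule that can be transferred to the $\wt D$-filtration, and showing that $\tau$ and $\ol{\mcl B_\tau}$ are $h|_U$-measurable on $\{\ol{\mcl B_\tau}\subset U\}$ requires a genuine argument. The paper closes this in the standard way: establish locality at deterministic times, upgrade ``for free'' to stopping times taking countably many values (where one can sum over the events $\{\tau=t_i\}\in\mcl F_{t_i}$), then discretize a general $\tau$ by $\tau_n=2^{-n}\lceil 2^n\tau\rceil$ and invoke the fact that local sets behave well under decreasing limits (\cite[Lemma~6.8]{qle}). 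That last ingredient is an external input your proposal is missing, and without it the measurability step for general $\tau$ does not go through as written.
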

\begin{proof}
The lemma in the case of an un-filled metric ball follows from a general fact about local metrics of the Gaussian free field, see~\cite[Lemma 2.2]{local-metrics}.
We will now treat the case of a filled metric ball. We will use the following criterion from~\cite[Lemma 3.9]{ss-contour}: a closed set $A$ coupled with the GFF $h$ is a local set if and only if for each open set $U\subset\BB C$, the event $\{A\subset U\}$ is conditionally independent from $h|_{\BB C\setminus U}$ given $h|_U$.
For a deterministic radius $s$, the event $\{\mcl B_s^\bullet(z;D_h) \subset U\}$ is the same as the event that $\ol{\mcl B_s(z;D_h)} \subset U$ and each bounded connected component of $U\setminus \ol{\mcl B_s(z;D_h)}$ is contained in $U$. 
Hence $\{\mcl B_s^\bullet(z;D_h) \subset U\}$ is determined by the event $\{\ol{\mcl B_s(z;D_h)} \subset U\}$ and the set $\ol{\mcl B_s(z;D_h)}$ on this event.
By~\cite[Lemma 3.9]{ss-contour} and the locality of $\ol{\mcl B_s(z;D_h)}$, it follows that $\{\mcl B_s^\bullet(z;D_h) \subset U\}$ is determined by $h|_U$, so $\mcl B_s^\bullet(z;D_h)$ is a local set.
The case of stopping times which take on only countably many possible values is immediate from the case of deterministic times. The case of general stopping times follows from the standard strong Markov property argument (i.e., look at the times $2^{-n}  \lceil 2^n \tau \rceil$ and send $n\rta\infty$) and the fact that local sets behave well under limits~\cite[Lemma 6.8]{qle}. 
\end{proof}

We abbreviate
\eqb \label{eqn-filled-ball}
\mcl B_s^\bullet := \mcl B_s^\bullet(0;D_h) .
\eqe
Note that each point $x \in \bdy\mcl B_s^\bullet$ lies at $D_h$-distance exactly $s$ from 0. 
Hence there is a $D_h$-geodesic $P$ from 0 to $x$ which stays in $\mcl B_s^\bullet$. 
For each $t < s$, such a geodesic satisfies $P(t) \in \bdy\mcl B_t^\bullet$: indeed, $P(t) \in \bdy\mcl B_t(0;D_h)$ by the definition of a geodesic and $P$ has to pass through $\bdy\mcl B_t^\bullet$ since $\bdy\mcl B_t^\bullet$ disconnects 0 from $x$. 
Conversely, if $t < s$ and $P'$ is a $D_h$-geodesic from $\bdy\mcl B_t^\bullet$ to $\bdy\mcl B_s^\bullet$ (i.e., a path of length $s-t$ between these sets), then the concatenation of any $D_h$-geodesic from 0 to $P'(0)$ with $P'$ is a path of length $s$ from 0 to $P'(s-t) \in \bdy\mcl B_s^\bullet$, so is a $D_h$-geodesic from 0 to $P'(s-t)$.  

The goal of this subsection is to prove various monotonicity statements for $D_h$-geodesics from 0 to $\bdy\mcl B_s^\bullet$ which, roughly speaking, tell us that such geodesics have to stay in cyclic order. There is some subtlety since there can be multiple $D_h$-geodesics from 0 to some points of $\bdy\mcl B_s^\bullet$, and moreover some geodesics might share a non-trivial segment, so one cannot expect to have exact monotonicity.

The starting point of our proofs is the following result from~\cite{mq-geodesics}. 

\begin{lem}[\!\!\cite{mq-geodesics}] \label{lem-geo-unique}
Almost surely, for each $q\in\BB Q^2$ there is only one $D_h$-geodesic from 0 to $q$. 
\end{lem}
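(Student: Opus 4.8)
The plan is to reduce the statement to a single fixed target point and then exploit the locality of $D_h$-metric balls (Lemma~\ref{lem-ball-local}) together with absolute continuity of the GFF. Since $\BB Q^2$ is countable, a union bound reduces us to showing that for each \emph{fixed} $q \in \BB C$ there is a.s.\ a unique $D_h$-geodesic from $0$ to $q$. Suppose not, so that with positive probability there are two distinct $D_h$-geodesics $P_1 \not= P_2$ from $0$ to $q$. Writing $d := D_h(0,q)$, the relatively open set $\{t \in (0,d) : P_1(t) \not= P_2(t)\}$ is then nonempty, hence contains a rational $s$, and $P_1(s), P_2(s)$ are distinct points of $\bdy\mcl B_s^\bullet$ (recall that a $D_h$-geodesic from $0$ to $q$ passes through $\bdy\mcl B_t^\bullet$ at time $t$ for each $t<d$). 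Since every such geodesic must cross $\bdy\mcl B_s^\bullet$, we have $d = s + \min_{x\in\bdy\mcl B_s^\bullet} D_h(x,q)$, and a point $x \in \bdy\mcl B_s^\bullet$ lies on some $D_h$-geodesic from $0$ to $q$ precisely when $D_h(x,q)$ equals this minimum. Hence it suffices to prove: for each fixed rational $s>0$, a.s.\ on the event $\{s < D_h(0,q)\}$ the function $x \mapsto D_h(x,q)$ on the Jordan curve $\bdy\mcl B_s^\bullet$ has a unique minimizer.

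To this end I would condition on the pair $(\mcl B_s^\bullet, h|_{\mcl B_s^\bullet})$, which by Lemma~\ref{lem-ball-local} is a local set for $h$; then $\bdy\mcl B_s^\bullet$ becomes a fixed Jordan curve $\eta$, and on $\BB C\setminus\mcl B_s^\bullet$ the field $h$ is a zero-boundary GFF plus a harmonic function, which on compact subsets of $\BB C\setminus\eta$ is mutually absolutely continuous with a whole-plane GFF. The minimum of $D_h(\cdot,q)$ over $\eta$ is attained exactly where the growing filled metric ball $\mcl B_r^\bullet(q)$ --- also a local set, by Lemma~\ref{lem-ball-local} --- first touches $\eta$. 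So the remaining task is a \emph{one-point contact} statement: the growing filled LQG metric ball emanating from $q$ a.s.\ first meets the conditionally independent Jordan curve $\eta$ at a single point, rather than at several points or along an arc.

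This one-point contact statement is the main obstacle, and I do not expect it to have a short proof. A direct perturbation argument --- adding a small negative bump to $h$ near one putative contact point to ``break the tie'' --- does not work as stated, because the location of the contact point(s) is a function of the field, so the perturbed field no longer has the law of a whole-plane GFF plus a continuous function and the axioms cannot be applied to it. Instead I would try to use absolute continuity to replace $(\eta, h)$ by a genuinely independent fixed curve together with a clean whole-plane GFF, and then prove one-point contact in that setting --- for instance via a scale-recursion argument showing that a multi-point contact forces, with positive conditional probability, the existence of more and more distinct geodesics between two points and thereby contradicts the structure of $(\BB C, D_h)$ (using the monotonicity properties of $D_h$-geodesics developed in Section~\ref{sec-geodesic} and boundedly compactness), or via a GFF symmetry argument in the spirit of Le Gall's re-rooting proof for the Brownian map. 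Making any such argument precise --- in particular showing that a multi-point contact genuinely produces a proliferation of geodesics, and extracting the contradiction --- is the technical heart of the matter, which is why this lemma is most cleanly imported as a black box from~\cite{mq-geodesics}.
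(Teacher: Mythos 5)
The paper's ``proof'' of Lemma~\ref{lem-geo-unique} is literally a one-line citation: it invokes \cite[Theorem 1.2]{mq-geodesics} and remarks that, although that theorem is stated for a strong LQG metric, its proof does not use the coordinate-change axiom and hence applies verbatim to any weak LQG metric. Your ultimate conclusion --- that this lemma is best imported as a black box from~\cite{mq-geodesics} --- therefore matches the paper exactly, and that part of your proposal is correct.

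The rest of your proposal is an exploratory sketch of what a direct proof might look like, which you explicitly and correctly flag as incomplete. A few small remarks on that sketch. (i) Your reduction to ``for each fixed rational $s<D_h(0,q)$, the function $x\mapsto D_h(x,q)$ has a unique minimizer on $\bdy\mcl B_s^\bullet$'' silently assumes $q\notin\mcl B_s^\bullet$. Since $D_h(0,q)>s$ you do have $q\notin\ol{\mcl B_s(0;D_h)}$, but $q$ could in principle lie in a bounded complementary component (a ``hole'') of $\ol{\mcl B_s(0;D_h)}$; in that case the two geodesic points $P_1(s),P_2(s)$ lie on $\bdy\mcl B_s(0;D_h)$ but not necessarily on the Jordan curve $\bdy\mcl B_s^\bullet$, so the reduction would need to be phrased with the appropriate complementary component rather than always with $\mcl B_s^\bullet$. (ii) Your dismissal of the perturbation idea is a bit too quick: the standard way around ``the contact point is random'' is to perturb at each point of a fixed fine grid and argue via absolute continuity plus a Fubini/union-bound, rather than perturbing at a field-dependent location. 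Such arguments can be made to work, though they are delicate, and in fact that style of reasoning is closer in spirit to what~\cite{mq-geodesics} actually does. None of this affects your final verdict --- citing~\cite{mq-geodesics} is the right move, and is what the paper does --- but it is worth knowing that the obstruction you raise to a perturbative proof is not, by itself, fatal.
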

\begin{proof}
This follows from the proof of~\cite[Theorem 1.2]{mq-geodesics}. We note that the theorem is stated for a strong LQG metric, but the proof does not use the coordinate change axiom. 
\end{proof}

As a consequence of Lemma~\ref{lem-geo-unique}, we get the following lemma which says that $D_h$-geodesics started from 0 cannot cross $D_h$-geodesics from 0 to rational points.
This will be the main tool in our monotonicity arguments.

\begin{lem} \label{lem-non-cross}
For $q\in\BB Q^2$, let $P_q$ be the a.s.\ unique $D_h$-geodesic from 0 to $q$. The following holds a.s. If $q\in\BB Q^2$, $P'$ is a $D_h$-geodesic started from 0, and $u \in P_q\cap P'$, then there is a time $s \geq 0$ such that $P_q(s) = P'(s) = u$ and $P_q(t) = P'(t)$ for each $t \in [0,s]$.  
\end{lem}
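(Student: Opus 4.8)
The plan is to reduce the statement to the uniqueness result of Lemma~\ref{lem-geo-unique} by exploiting the fact that geodesics from $0$ are prefix-stable: if two geodesics from $0$ pass through a common point $u$ at the same time, their restrictions to $[0,\text{that time}]$ are both geodesics from $0$ to $u$, and we want to conclude they agree. First I would set up the a.s.\ event on which Lemma~\ref{lem-geo-unique} holds simultaneously for all $q \in \BB Q^2$, and work on that event throughout. The key structural observation is that if $P'$ is any $D_h$-geodesic from $0$ and $u = P'(t_0)$ for some $t_0$, then $P'|_{[0,t_0]}$ is a $D_h$-geodesic from $0$ to $u$ (a sub-path of a geodesic is a geodesic). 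So the whole issue is whether, for a point $u \in P_q \cap P'$, the geodesic $P_q$ reaches $u$ at the same time that $P'$ does.

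The main step is therefore to show that $u \in P_q \cap P'$ forces $D_h(0,u) $ to be attained along $P_q$ at the time $P_q$ passes through $u$, \emph{and} that the two geodesic segments from $0$ to $u$ — namely $P_q$ stopped at $u$ and $P'$ stopped at $u$ — must coincide. Since $u$ lies on $P_q$, the geodesic $P_q$ does pass through $u$, say $P_q(s_0) = u$, and $P_q|_{[0,s_0]}$ is a $D_h$-geodesic from $0$ to $u$; likewise $P'|_{[0,t_0]}$ is one. If $u$ were rational this would be immediate from Lemma~\ref{lem-geo-unique}, but $u$ need not be rational. To handle a general $u$, I would argue as follows: suppose for contradiction that $P_q|_{[0,s_0]}$ and $P'|_{[0,t_0]}$ do not agree on all of $[0, s_0 \wedge t_0]$, so they first split at some time $r_0 < s_0 \wedge t_0$ and are disjoint (except for sharing the point $P_q(r_0)=P'(r_0)$) on a small interval just after $r_0$. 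Pick a rational point $q'$ on $P_q$ strictly beyond $u$ — this exists because $P_q$ is a continuous path and hits rational points on a dense set of times, and $0$ to $q$ geodesic extended or simply $q$ itself works if we instead take $q' = q$ and note $u$ is between $0$ and $q$ on $P_q$; actually the cleanest choice is $q' = q$ itself, since $u \in P_q$ means $u = P_q(s_0)$ with $s_0 \le D_h(0,q)$. Then $P_q$ is \emph{the} geodesic from $0$ to $q$, and its initial segment $P_q|_{[0,s_0]}$ is the unique geodesic from $0$ to $u$ that extends to a geodesic to $q$. The contradiction must come from producing a competing geodesic from $0$ to a rational point: concatenate $P'|_{[0,t_0]}$ with $P_q|_{[s_0, D_h(0,q)]}$; this is a path from $0$ to $q$ of length $t_0 + (D_h(0,q) - s_0)$. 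Since $P'|_{[0,t_0]}$ has length $t_0 = D_h(0,u)$ and $P_q|_{[0,s_0]}$ has length $s_0 = D_h(0,u)$ as well (both are geodesics to $u$), we get $t_0 = s_0$, and the concatenation is a path from $0$ to $q$ of length exactly $D_h(0,q)$, hence a $D_h$-geodesic from $0$ to $q$. By Lemma~\ref{lem-geo-unique} it must equal $P_q$; in particular its initial segment $P'|_{[0,t_0]}$ equals $P_q|_{[0,s_0]}$, contradicting the assumed splitting. This shows $P_q(t) = P'(t)$ for all $t \in [0, s_0]$ with $s := s_0$, which is exactly the claim. Note this argument also yields $t_0 = s_0$, so there is no timing mismatch to worry about.

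The one subtlety I would be careful about — and which I expect to be the genuine (if minor) obstacle — is the measure-zero bookkeeping: the statement "for all $q \in \BB Q^2$, $P'$, and $u$" must hold on a single a.s.\ event. Since $\BB Q^2$ is countable this is fine for the $q$-quantifier (intersect the a.s.\ events from Lemma~\ref{lem-geo-unique}), and the argument above is entirely deterministic given the uniqueness of the $P_q$'s, so it applies to \emph{every} geodesic $P'$ from $0$ and \emph{every} common point $u$ with no further exceptional set. I would also double-check the trivial degenerate cases (e.g.\ $u = 0$, in which case $s = 0$ works, or $q = 0$) and the continuity/parametrization conventions (all geodesics parametrized by $D_h$-arclength starting at time $0$), but these are routine. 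The heart of the matter is the concatenation-and-uniqueness trick, which is short once the setup is in place.
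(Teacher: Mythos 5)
Your proof is correct and follows essentially the same argument as the paper: both hinge on observing that each geodesic from $0$ reaches $u$ at time $D_h(0,u)$, then concatenating $P'|_{[0,s]}$ with $P_q|_{[s,D_h(0,q)]}$ to produce a $D_h$-geodesic from $0$ to $q$ that must equal $P_q$ by Lemma~\ref{lem-geo-unique}. The contradiction framing and the brief detour about choosing $q'$ are unnecessary (as you yourself note, taking $q'=q$ suffices), but the core mechanism and measure-zero bookkeeping are exactly as in the paper.
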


Lemma~\ref{lem-non-cross} says that if a $D_h$-geodesic started from a rational point and an arbitrary $D_h$-geodesic started from 0 meet after time 0, then they have to agree for a non-trivial interval of time. We emphasize, however, that we have not yet proven that there are \emph{any} such pairs of $D_h$-geodesics started from 0 which meet, so Lemma~\ref{lem-non-cross} does not imply any sort of confluence. 

\begin{proof}[Proof of Lemma~\ref{lem-non-cross}]
Suppose $u\in P_q \cap P'$ is as in the statement of the lemma. 
Since each of $P_q$ and $P'$ is a $D_h$-geodesic started from 0, the time when each of $P_q$ and $P'$ hits $u$ is equal to $s := D_h(0,u)$. 
Therefore $P_q(s) = P'(s) =u$. 
The concatenation of $P'|_{[0,s]}$ and $P_q|_{[s,D_h(0,q)]}$ is a $D_h$-geodesic from 0 to $q$. This $D_h$-geodesic must coincide with $P_q$ by the uniqueness of $D_h$-geodesics between rational points, whence $P_q(t) = P'(t)$ for each $t \in [0,s]$.
\end{proof}

For general points $y\in \bdy\mcl B_s^\bullet$, there might be many (even infinitely many) $D_h$-geodesics from 0 to $y$. 
However, there are two distinguished geodesics which are in some sense maximal, in the following sense. 

\begin{lem} \label{lem-leftmost-geodesic} 
Almost surely, for each $s>0$ and each $y\in \bdy \mcl B_s^\bullet$, there exists a unique \emph{leftmost (resp.\ rightmost) geodesic} $P_y^-$ (resp.\ $P_y^+$) from 0 to $y$ such that each $D_h$-geodesic from 0 to $y$ lies (weakly) to the right (resp.\ left) of $P_y^-$ (resp.\ $P_y^+$) if we stand at $y$ and look outward from $\mcl B_{s }^\bullet$. 
Moreover, there are sequences of points $q_n^- , q_n^+ \in \BB Q^2 \setminus\mcl B_s^\bullet$ such that the $D_h$-geodesics from 0 to $q_n^\pm$ satisfy $P_{q_n^\pm} \rta P_y^\pm$ uniformly. 
\end{lem}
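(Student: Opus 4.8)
The plan is to construct the leftmost geodesic $P_y^-$ as a limit of geodesics $P_{q_n^-}$ from $0$ to carefully chosen rational points approaching $y$ from the left, and then use the non-crossing property of Lemma~\ref{lem-non-cross} to verify that the limit is weakly to the left of every $D_h$-geodesic from $0$ to $y$. The rightmost geodesic is handled symmetrically, so I would only write out the leftmost case.

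\medskip

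\noindent\emph{Setting up the candidate sequence.} First I would fix $s > 0$ and a countable dense set of radii, and work on the a.s.\ event of Lemma~\ref{lem-geo-unique} (uniqueness of $D_h$-geodesics from $0$ to each $q \in \BB Q^2$) and Lemma~\ref{lem-non-cross}. Fix $y \in \bdy\mcl B_s^\bullet$. Since $\bdy\mcl B_s^\bullet$ is a Jordan-type boundary curve separating $0$ from $\infty$, there is a well-defined cyclic/linear order on geodesics from $0$ reaching $\bdy\mcl B_s^\bullet$ near $y$, read off by standing at $y$ and looking outward from $\mcl B_s^\bullet$. I would choose rational points $q_n \in \BB Q^2 \setminus \mcl B_s^\bullet$ with $q_n \to y$ and arranged so that $q_n$ approaches $y$ "from the left" — concretely, pick $q_n$ in a small Euclidean neighborhood of $y$, on the unbounded side of $\bdy\mcl B_s^\bullet$, and lying to the left of $y$ along $\bdy\mcl B_s^\bullet$ (or on a short outward-pointing curve just to the left of $y$). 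By Axiom~\ref{item-metric-coord}-type compactness, more precisely by the fact that $D_h$ is boundedly compact and geodesics from $0$ to a bounded set form an equicontinuous family (they are $D_h$-unit-speed, hence uniformly Hölder in the Euclidean metric by the continuity of $D_h$), the sequence $P_{q_n}$ has a subsequential limit $P_y^-$ in the uniform topology, and $P_y^-$ is a $D_h$-geodesic from $0$ to $y$ (lengths converge since $D_h(0, q_n) \to D_h(0,y) = s$). I would pass to such a subsequence.

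\medskip

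\noindent\emph{Verifying the leftmost property.} Let $P'$ be any $D_h$-geodesic from $0$ to $y$. I claim $P_y^-$ lies weakly to the left of $P'$. Suppose not: then $P'$ crosses strictly to the left of $P_y^-$ somewhere. For large $n$, the endpoint $q_n$ of $P_{q_n}$ is strictly to the left of $y$ (hence of $P'$'s endpoint), so by a topological/Jordan curve argument $P_{q_n}$ and $P'$ must intersect at some point $u \neq 0$ (the curve $P'$ together with the outward ray past $y$ separates $q_n$ from $0$, or one can argue directly from the assumed crossing of the limit). By Lemma~\ref{lem-non-cross} applied to $P_{q_n}$ (started from rational $q_n$) and $P'$ (started from $0$), the two geodesics must agree on $[0, D_h(0,u)]$; taking $n \to \infty$ forces $P_y^-$ and $P'$ to share an initial segment up to where they separate, contradicting the supposed strict crossing of $P'$ to the left of $P_y^-$. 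Hence $P_y^-$ is weakly to the left of every $D_h$-geodesic from $0$ to $y$. Uniqueness of $P_y^-$ is then automatic: any two leftmost geodesics are each weakly to the left of the other, and two $D_h$-geodesics from $0$ to $y$ that are mutually weakly-left of each other must coincide (again by the non-crossing argument, approximating each by rational-point geodesics).

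\medskip

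\noindent\emph{Uniformity over $y$ and $s$.} Finally, I would upgrade from a fixed $y$ to "for each $s > 0$ and each $y \in \bdy\mcl B_s^\bullet$ simultaneously." The construction already produces, for each $y$, the approximating rational sequence $q_n^- \to y$ with $P_{q_n^-} \to P_y^-$ uniformly, which is the last assertion of the lemma. To get the "a.s., for all $s, y$" quantifier, I would note that everything was carried out on the single a.s.\ event where Lemmas~\ref{lem-geo-unique} and~\ref{lem-non-cross} hold, and the compactness/limit arguments above are deterministic given that event; the only subtlety is that $s$ ranges over an uncountable set, which is handled by the continuity of $s \mapsto \mcl B_s^\bullet$ (filled balls are monotone in $s$) plus the fact that the defining property of $P_y^-$ is closed under the uniform limits involved. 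I expect the main obstacle to be the careful topological bookkeeping in the non-crossing step — making precise the "to the left of" order on geodesics relative to the Jordan curve $\bdy\mcl B_s^\bullet$, and checking that a strict crossing of the limit $P_y^-$ genuinely forces an intersection of $P_{q_n}$ with $P'$ for large $n$ — rather than anything involving the LQG structure, which enters only through bounded compactness and Lemma~\ref{lem-non-cross}.
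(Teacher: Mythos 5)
Your overall strategy is the same as the paper's: take geodesics $P_{q_n}$ to rational points approaching $y$ ``from the left,'' extract a uniform subsequential limit $P_y^-$ via Arzel\`a--Ascoli, and invoke Lemma~\ref{lem-non-cross} to show no geodesic from $0$ to $y$ can cross $P_y^-$. However, there is one genuine gap, and it is exactly the point you half-flag at the end.

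The subtlety is that choosing $q_n$ Euclidean-close to $y$ and ``on the left side'' in $\BB C\setminus\mcl B_s^\bullet$ does \emph{not} by itself guarantee that the geodesic $P_{q_n}$ crosses $\bdy\mcl B_s^\bullet$ on the clockwise side of $y$; what matters for your ordering argument is where $P_{q_n}(s)$ lands, not where $q_n$ sits, and with no quantitative control the geodesic to $q_n$ could cross $\bdy\mcl B_s^\bullet$ far from $y$ (possibly on the other arc) before reaching $q_n$. The paper closes this gap by choosing the points in two steps: first pick boundary points $y_n^-$ on the clockwise arc converging to $y$, and then pick $q_n^- \in \BB Q^2\setminus\mcl B_s^\bullet$ with $D_h(q_n^-, y_n^-)$ smaller than \emph{half} the $D_h$-distance from $q_n^-$ to any point of the opposite arc. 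Since a geodesic from $0$ to $q_n^-$ satisfies $D_h(P_{q_n^-}(s), q_n^-) = D_h(0,q_n^-)-s \le D_h(q_n^-, y_n^-)$, the half-distance condition forces $P_{q_n^-}(s)$ to lie on the arc containing $y_n^-$, i.e.\ on the left of $y$. Without this (or an equivalent) quantitative step, your assertion that ``the endpoint of $P_{q_n}$ is strictly to the left of $y$'' conflates the geodesic's terminal point $q_n$ with its crossing point $P_{q_n}(s)$ and the subsequent non-crossing/Jordan-curve argument does not follow.

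Two smaller remarks: (1) the ``for all $s,y$ simultaneously'' quantifier needs no continuity argument in $s$ — the construction is purely deterministic on the single a.s.\ event where Lemmas~\ref{lem-geo-unique} and~\ref{lem-non-cross} hold, so it is already uniform; (2) uniqueness of $P_y^-$ is immediate once you know it is weakly left of every geodesic from $0$ to $y$ (two such objects are then weakly left of each other, hence equal by a direct comparison), so the additional appeal to rational approximation there can be dropped.
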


\begin{figure}[t!]
 \begin{center}
\includegraphics[scale=.75]{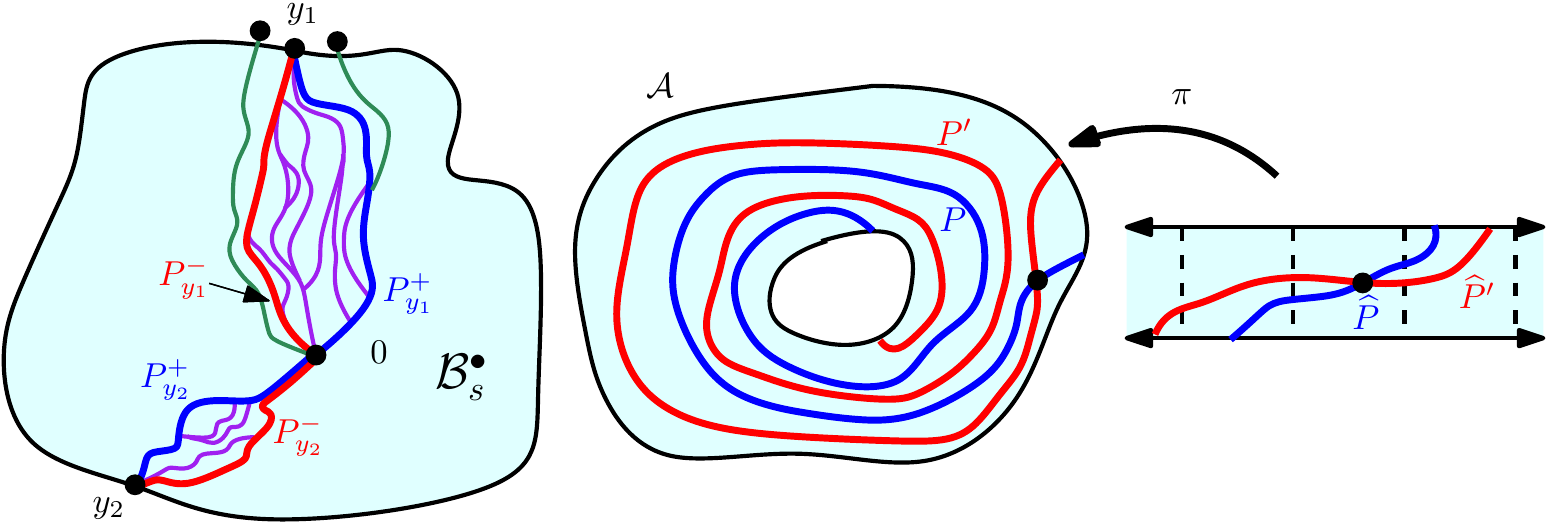}
\vspace{-0.01\textheight}
\caption{\textbf{Left:} Two points $y_1,y_2 \in \bdy \mcl B_s^\bullet$ and their associated leftmost and rightmost $D_h$-geodesics (red and blue). Other $D_h$-geodesics from 0 to $y_1$ and $y_2$ are shown in purple. We have also shown two $D_h$-geodesics from 0 to points of $\BB Q^2\setminus \mcl B_s^\bullet$ (green) which approximate $P_{y_1}^-$ and $P_{y_1}^+$, respectively. 
Note that $P_{y_1}^-$ and $P_{y_1}^+$ intersect only at their endpoints, whereas $P_{y_2}^-$ and $P_{y_2}^+$ coincide for an initial time interval. Theorem~\ref{thm-clsce} implies that a.s.\ the latter situation holds simultaneously for every $y\in \bdy\mcl B_s^\bullet$, but this has not been established yet. 
\textbf{Middle:} Illustration of the statement and proof of Lemma~\ref{lem-winding-intersect}. Two paths $P,P'$ in an annular region $\mcl A$ are shown, with $\op{wind}(P) $ slightly smaller than 2 and $\op{wind}(P')$ slightly larger than 3. 
\textbf{Right:} To show that $P$ and $P'$ intersect, we consider their lifts to the universal cover of $\mcl A$ chosen so that $\wt P(0)$ lies to the right of $\wh P'(0)$. Then $\wh P(1)$ lies to the left of $\wh P'(1)$, which forces the paths to intersect. 
}\label{fig-leftmost}
\end{center}
\vspace{-1em}
\end{figure} 

See Figure~\ref{fig-leftmost}, left, for an illustration of the statement and proof of Lemma~\ref{lem-leftmost-geodesic}. 
Note that if $0 <t < s$, then $P_x^-|_{[0,t]}$ is the leftmost $D_h$-geodesic from 0 to $P_x^-(t)$, since the concatenation of $P_{P_x^-(t)}^-$ and $P_x^-|_{[t,s]}$ is a $D_h$-geodesic which lies (weakly) to the left of $P_x^-$. 
We remark that \cite[Proposition 2.2]{tbm-characterization} gives an alternative definition and construction of leftmost geodesics, but we will give an independent proof since we need the approximation by geodesics to rational points.

\begin{proof}[Proof of Lemma~\ref{lem-leftmost-geodesic}]
We will construct $P_y^-$; the construction of $P_y^+$ is analogous. 
Fix a point $z \in \bdy\mcl B_s^\bullet\setminus \{y\}$. 
We can choose a sequence $y_n^-$ of points in the clockwise arc of $\bdy\mcl B_s^\bullet$ from $z$ to $y$ which converge to $y$ from the left (with ``left" defined as in the lemma statement). 
Since $D_h$ induces the Euclidean topology, for each $n$ we can find $q_n^- \in \BB Q^2\setminus\mcl B_s^\bullet$ such that $D_h(q_n^- , y_n^-)$ is smaller than half of the distance from $q_n^-$ to any point of $\bdy\mcl B_s^\bullet$ in the clockwise arc of $\bdy\mcl B_s^\bullet$ from $y$ to $z$. 
Then the points $P_{q_n^-}(s)$ converge to $y$ from the left.
Since each $P_{q_n^-}$ is a geodesic, the Arz\'ela-Ascoli theorem implies that after possibly passing to a subsequence, we can arrange that the paths $P_{q_n^-}$'s converge uniformly to a continuous path $P_y^-$. The path $P_y^-$ is a $D_h$-geodesic from 0 to $y$.
By Lemma~\ref{lem-non-cross}, no $D_h$-geodesic from 0 to $y$ can cross any of the $P_{q_n^-}$'s. 
It follows that each such geodesic lies to the right of $P_y^-$. 
\end{proof}

We next need to rule out the possibility that some $D_h$-geodesics started from 0 wind around the origin more than others, and thereby cross several other $D_h$-geodesics. It is possible for a $D_h$-geodesic to wind around 0 an arbitrarily large number of times, so instead of bounding the amount of winding we will show that it is approximately the same for all of the $D_h$-geodesics started from 0 (Lemma~\ref{lem-geo-winding}). 
To make this precise, we need to define the winding number of a curve in an annulus. 
Let $\mcl A$ be a topological space homeomorphic to a closed annulus and let $\bdy_{\op{in}} \mcl A$ and $\bdy_{\op{out}}\mcl A$ be its inner and outer boundaries. 
Let $\pi : \BB R\times [0,1] \rta \mcl A$ be a universal covering map normalized so that $\pi(\BB R\times\{0\}) = \bdy_{\op{in}} \mcl A$, $\pi(\BB R\times\{1\}) = \bdy_{\op{out}} \mcl A$, and $\pi(x , y) = \pi(x + n  ,  y)$ for each $n\in\BB Z$ and $(x,y) \in \BB R\times [0,1]$.
For a path $P : [a,b] \rta\mcl A$, let $\wh P : [a,b] \rta \BB R\times [0,1]$ be a lift of $P$ with respect to $\pi$. We define the \emph{winding number} of $P$ by
\eqb
\op{wind}(P) := \left( \text{First coordinate of $\wh P(b)$} \right) - \left( \text{First coordinate of $\wh P(a)$} \right) .
\eqe
Note that $\op{wind}(P)$ does not depend on the choice of lift due to the periodicity of $\pi$. 
We also note that $\op{wind}(P)$ is not required to be an integer since we consider general paths, not loops.

\begin{lem} \label{lem-winding-intersect}
Let $\mcl A$ be as above and let $P, P'$ be paths in $\mcl A$ from $\bdy_{\op{in}}\mcl A$ to $\bdy_{\op{out}}\mcl A$ which satisfy $\op{wind}(P) + 1  < \op{wind}(P') $. 
Then $P$ and $P'$ must cross each other in the sense that there is a point $w \in P\cap P'$ such that the segments of $P$ and $P'$ before hitting $w$ do not coincide. 
\end{lem}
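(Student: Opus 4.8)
The plan is to work in the universal cover $\BB R \times [0,1]$ and use a connectedness/intermediate-value argument. First I would lift $P$ to a path $\wh P : [a,b] \to \BB R\times[0,1]$ with $\wh P(a) \in \BB R\times\{0\}$ and $\wh P(b) \in \BB R\times\{1\}$ (since $P$ goes from $\bdy_{\op{in}}\mcl A$ to $\bdy_{\op{out}}\mcl A$). Similarly lift $P'$, but with the freedom to pre-compose with a deck transformation $(x,y)\mapsto(x+n,y)$: choose the lift $\wh P'$ so that the first coordinate of $\wh P'(a)$ lies in the interval $(x_0, x_0+1]$, where $x_0$ is the first coordinate of $\wh P(a)$. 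Thus at time $a$, the lift $\wh P'$ starts weakly to the right of $\wh P$, and strictly to the right unless their first coordinates differ by exactly $0$ — in which case, since $\op{wind}(P)+1 < \op{wind}(P')$ means the endpoints are far apart, I can instead argue directly. More robustly: by the normalization, $\wh P'(a)$ has first coordinate in $[x_0, x_0+1]$, hence $\ge x_0$, so $\wh P'$ starts weakly right of $\wh P$.

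Now compare the endpoints. The first coordinate of $\wh P(b)$ is $x_0 + \op{wind}(P)$, and the first coordinate of $\wh P'(b)$ is (first coordinate of $\wh P'(a)$)$\,+ \op{wind}(P')$, which is at most $x_0 + 1 + \op{wind}(P')$... wait, that is the wrong direction — let me instead bound below: it is at least $x_0 + \op{wind}(P')$. Hmm, but I want $\wh P'(b)$ to the \emph{left} of $\wh P(b)$ for the crossing argument, so I should use the \emph{other} inequality on the starting point. The correct normalization is: choose $\wh P'$ so that the first coordinate of $\wh P'(a)$ lies in $[x_0 - 1, x_0)$, i.e., $\wh P'$ starts strictly to the left of $\wh P$ by less than $1$. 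Then the first coordinate of $\wh P'(b)$ is at least $x_0 - 1 + \op{wind}(P') > x_0 + \op{wind}(P)$ (using the hypothesis $\op{wind}(P) + 1 < \op{wind}(P')$), so $\wh P'(b)$ lies strictly to the \emph{right} of $\wh P(b)$. So $\wh P'$ starts left of $\wh P$ and ends right of $\wh P$.

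Next I would make the crossing precise. Consider the function $t \mapsto (\text{first coord of } \wh P'(s(t))) - (\text{first coord of } \wh P(t))$ — but the two paths have different time domains, so instead it is cleanest to argue topologically: the lifted path $\wh P$ together with two vertical rays going to $+\infty$ from its endpoints (or a suitable completion) separates $\BB R\times[0,1]$ into pieces, with $\wh P'(a)$ and $\wh P'(b)$ on opposite sides of $\wh P([a,b])$ in the ``horizontal'' sense. More carefully: let $A^- = \{(x,y) : x < \text{(first coord of the point of } \wh P \text{ at height } y)\}$ — but $\wh P$ need not be a graph over the height coordinate. To avoid this, I would use the following standard fact: if $\wh P$ is a path from the bottom edge to the top edge of the strip $\BB R\times[0,1]$, then $\wh P([a,b])$ together with the strip boundary separates the strip into a ``left'' region and a ``right'' region; since $\wh P'(a)$ is in the left region and $\wh P'(b)$ is in the right region, the connected path $\wh P'$ must intersect $\wh P$. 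Projecting down by $\pi$, we get a point $w \in P\cap P'$. Finally, to upgrade ``intersect'' to ``cross'' in the stated sense (the segments before $w$ do not coincide), I note that if the segments before $w$ did coincide, then $P$ and $P'$ would have the same winding contribution up to $w$ and I can reduce to considering the segments after $w$ in a sub-annulus — a short argument, or alternatively: if for \emph{every} intersection point the initial segments coincide, take the intersection point $w$ maximizing $D_h$-length along $P'$ before $w$ (or just the last one in $P'$-time, using compactness of $P\cap P'$); beyond $w$ the two paths are disjoint and both run from a common point to $\bdy_{\op{out}}\mcl A$, and their winding numbers from $w$ onward still differ by more than... actually by the same amount, contradiction with the separation argument applied to the sub-paths. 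I expect the main obstacle to be this last point — making the distinction between ``touch'' and ``cross'' fully rigorous — together with the care needed in the topological separation argument since lifted geodesics are not graphs over the radial coordinate; both are routine but fiddly, and I would handle the separation step by invoking the Jordan curve theorem on the one-point compactification of the strip (or on the closed annulus $\mcl A$ itself after closing up $P$ with arcs of $\bdy\mcl A$).
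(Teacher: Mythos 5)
Your proposal matches the paper's proof: lift to the universal cover $\BB R\times[0,1]$, normalize the lift of $P'$ to start in $[\wh x - 1, \wh x)$ (you reach the right normalization after a self-corrected false start), observe that the winding hypothesis then forces $\wh P'$ to start strictly to the left of $\wh P$ but end strictly to the right, and conclude by a planar separation argument that the lifts must meet. The paper states that final step exactly as tersely as you do; for the touch-versus-cross point you worry about at the end, the cleanest resolution (and simpler than either sketch you give) is to note that for injective paths --- which suffices, since the lemma is applied to geodesic segments --- if the initial segments of $P$ and $P'$ up to $w$ coincided, then the lifted initial segments would be two lifts of the same curve from the distinct starting points $\wh P(0)\neq\wh P'(0)$, hence would remain separated by a fixed nonzero deck translation and could not both terminate at the common intersection point $\wh w$.
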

\begin{proof}
The proof is completely elementary but we give the details for the sake of completeness. See Figure~\ref{fig-leftmost}, middle and right, for an illustration. 
Let $\pi : \BB R\times [0,1] \rta \mcl A$ be the universal covering map from above. 
We parameterize our paths $P , P' : [0,1] \rta \mcl A$.  

Let $\wh P : [0,1] \rta \BB R\times [0,1]$ be a lift of $P$ with respect to $\pi$ started from a point in $[0,1) \times \{0\}$ and let $\wh x \in [0,1)$ be the first coordinate of $\wh P(0)$. 
We claim that we can choose a lift $\wh P' :[0,1] \rta \BB R\times [0,1]$ of $ P'$ such that the first coordinate $\wh x'$ of $\wh P'(0)$ lies in $[\wh x  -1 , \wh x)$. Indeed, if the starting point of the lift of $P'$ started from a point in $(0,1] \times\{0\}$ lies to the left of $\wh x$, then we can take $\wh P'$ to be this lift.
Otherwise, we can take $\wh P'$ to be the lift of $P'$ started from a point in $(-1,0]\times \{0\}$, which (by the periodicity of $\pi$) is given by translating the lift started from a point in $(0,1] \times\{0\}$ one unit to the left.

By the definition of the winding number, the first coordinates of $\wh P(1)$ and $\wh P'(1)$ are $\wh y = \wh x + \op{wind}(P) $ and $\wh y' = \wh x' + \op{wind}(  P')$, respectively.
Since $\op{wind}(P) + 1 < \op{wind}(P') $ and $\wh x' \in [\wh x - 1,  \wh x)$, it follows that $\wh y'  > \wh y$.  
Since also $\wh x' < \wh x$, it follows that $P$ and $P'$ must cross. 
\end{proof}

\begin{lem} \label{lem-geo-winding}
Almost surely, for each $0 < s < s' < \infty$, the winding numbers in $\mcl B_{s'}^\bullet\setminus\mcl B_s^\bullet$ of any two $D_h$-geodesics from $\bdy \mcl B_s^\bullet$ to $\bdy\mcl B_{s'}^\bullet$ (i.e., paths of $D_h$-length $s'-s$ between these boundaries) differ by at most 1. 
\end{lem}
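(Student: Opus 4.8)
The plan is to reduce the statement to countably many instances indexed by rational radii, and within each instance to use the non-crossing property of geodesics to rational points (Lemma~\ref{lem-non-cross}) together with the elementary winding-crossing criterion of Lemma~\ref{lem-winding-intersect}. First I would observe that it suffices to prove the statement for a fixed pair $0 < s < s' < \infty$: if the winding numbers differ by at most $1$ for every pair, we are done, but in fact monotonicity lets us restrict to rational $s,s'$. Indeed, if $s < s_1 < s_1' < s'$ with $s_1, s_1'$ rational, then any $D_h$-geodesic from $\bdy\mcl B_s^\bullet$ to $\bdy\mcl B_{s'}^\bullet$ restricts to a $D_h$-geodesic from $\bdy\mcl B_{s_1}^\bullet$ to $\bdy\mcl B_{s_1'}^\bullet$, and the winding number in $\mcl B_{s'}^\bullet\setminus\mcl B_s^\bullet$ differs from that in $\mcl B_{s_1'}^\bullet\setminus\mcl B_{s_1}^\bullet$ by a quantity that can be bounded using the winding of geodesic segments in the thin annuli $\mcl B_{s_1}^\bullet\setminus\mcl B_s^\bullet$ and $\mcl B_{s'}^\bullet\setminus\mcl B_{s_1'}^\bullet$; so one only loses a controlled additive error by passing to rational radii. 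Actually the cleanest route is: prove the bound with constant $1$ for fixed $s,s'$, then note the event in question is increasing/decreasing in an appropriate sense so that the a.s.\ statement over all pairs follows from the countable collection of rational pairs. (We may need a slightly more careful argument here: since we want the bound for all pairs simultaneously, I would phrase the fixed-pair result with a sharper constant, e.g.\ that two geodesics between the boundaries have winding numbers differing by strictly less than $1$, and then absorb the discrepancy from enlarging the annulus.)

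For a fixed pair $s < s'$, the main step is: suppose for contradiction that there are two $D_h$-geodesics $P, P'$ from $\bdy\mcl B_s^\bullet$ to $\bdy\mcl B_{s'}^\bullet$ with $\op{wind}(P) + 1 < \op{wind}(P')$. By Lemma~\ref{lem-winding-intersect}, $P$ and $P'$ must cross, i.e.\ there is $w \in P\cap P'$ such that the segments of $P$ and $P'$ up to $w$ do not coincide. Now the key point is that, as recorded in the discussion after~\eqref{eqn-filled-ball}, concatenating any $D_h$-geodesic from $0$ to $P(0)$ (resp.\ $P'(0)$) with $P$ (resp.\ $P'$) produces a genuine $D_h$-geodesic from $0$ to a point of $\bdy\mcl B_{s'}^\bullet$. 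So $P$ and $P'$ extend to $D_h$-geodesics $\wt P, \wt P'$ started from $0$, and $w$ lies on both of them. By approximating: I would take a rational point $q$ close to $\wt P(s')$ so that the geodesic $P_q$ from $0$ to $q$ passes near $w$ — more precisely, I would use Lemma~\ref{lem-leftmost-geodesic} to find rational points $q_n$ whose geodesics $P_{q_n}$ converge to a leftmost/rightmost geodesic through the relevant endpoint, and then Lemma~\ref{lem-non-cross} to say that no geodesic from $0$ can cross $P_{q_n}$. The contradiction will come from the fact that a crossing of $\wt P$ and $\wt P'$ at $w$ — neither segment before $w$ agreeing with the other — forces, after suitable rational approximation, a genuine crossing of a geodesic from $0$ with a geodesic from $0$ to a rational point, contradicting Lemma~\ref{lem-non-cross}.

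The step I expect to be the main obstacle is making the rational approximation in the crossing argument precise: Lemma~\ref{lem-non-cross} is stated for geodesics from $0$ and from $0$ to rational points, and I need to transfer the genuine crossing of the two arbitrary geodesics $\wt P$, $\wt P'$ at $w$ into a crossing involving a geodesic to a rational point without losing the ``do not coincide before $w$'' property in the limit. The delicate case is when $w$ is approached by the two geodesics from the same side, or when near $w$ the geodesics agree for a while and then separate — one must check that the crossing persists under small perturbations of the endpoint and hence survives rational approximation. I would handle this by noting that a true crossing (as opposed to mere touching) is stable: if $P$ and $P'$ cross at $w$ with distinct incoming segments, then for a rational $q$ sufficiently close to $\wt P'(s')$ chosen on the correct side, the geodesic $P_q$ — which lies close to $\wt P'$ by uniqueness (Lemma~\ref{lem-geo-unique}) and continuity — must cross $\wt P$, and since $P_q$ is a geodesic from $0$ to a rational point while $\wt P$ is a geodesic from $0$, this contradicts Lemma~\ref{lem-non-cross}. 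Finally, taking a countable intersection over rational $s < s'$ and the rational points used in the approximations gives the full a.s.\ statement; the passage back from rational radii to all radii uses the monotonicity of winding under restriction of geodesics noted above, at the cost of the additive constant $1$ in the statement.
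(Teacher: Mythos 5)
You have the right ingredients—Lemma~\ref{lem-non-cross}, Lemma~\ref{lem-winding-intersect}, and approximation via rational geodesics—and your plan is in the same spirit as the paper's proof, but there are two genuine gaps in the execution.

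First, the reduction to rational $(s,s')$ is both unnecessary and unjustified. You propose to control the discrepancy by ``the winding of geodesic segments in the thin annuli,'' but a priori bounding that winding is exactly the kind of statement the lemma is trying to prove; moreover, the winding numbers in $\mcl B_{s'}^\bullet\setminus\mcl B_s^\bullet$ and in $\mcl B_{s_1'}^\bullet\setminus\mcl B_{s_1}^\bullet$ are computed with respect to different covering maps of different annuli, so the claimed additive comparison does not come for free. In fact no reduction is needed: the almost sure events in Lemma~\ref{lem-geo-unique}, Lemma~\ref{lem-non-cross}, and Lemma~\ref{lem-leftmost-geodesic} already hold simultaneously over all $s,s'$, and the argument on that event is purely deterministic. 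Second, and more seriously, the approximation step as you state it is wrong. You argue that for $q$ close to $\wt P'(s')$ the geodesic $P_q$ ``lies close to $\wt P'$ by uniqueness (Lemma~\ref{lem-geo-unique}) and continuity,'' but Lemma~\ref{lem-geo-unique} concerns rational endpoints, while $y:=\wt P'(s')$ need not be rational, and there may be many geodesics from $0$ to $y$. By Lemma~\ref{lem-leftmost-geodesic}, $P_{q_n}\to P_y^-$ or $P_y^+$ depending on the side of approach—not to $\wt P'$—so the crossing of $\wt P$ with $\wt P'$ at $w$ need not transfer to a crossing of $\wt P$ with $P_{q_n}$.

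The paper avoids both problems by never comparing two arbitrary geodesics head-on. Instead it shows that, a.s., the winding of \emph{every} geodesic $P'$ across the annulus differs by at most $1$ from the winding of \emph{every} rational $P_q|_{[s,s']}$ (extend $P'$ to a geodesic from $0$, apply Lemma~\ref{lem-non-cross} to forbid a genuine crossing with $P_q$, then read off the winding bound from the contrapositive of Lemma~\ref{lem-winding-intersect}). Applying this with $P'$ itself rational shows all the $\op{wind}(P_q|_{[s,s']})$ lie in a fixed interval $[a,b]$ of length $1$; continuity of winding plus Lemma~\ref{lem-leftmost-geodesic} puts $\op{wind}(P_y^\pm|_{[s,s']})$ in $[a,b]$; and the sandwiching of any geodesic to $y$ between $P_y^-$ and $P_y^+$ (which forces the winding to be monotone between the extremes) finishes the argument. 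If you want to salvage your contradiction argument, the missing piece is precisely this sandwich: from $\op{wind}(\wt P)+1 < \op{wind}(\wt P')$ you should deduce that one of $\op{wind}(P_y^\pm|_{[s,s']}) \geq \op{wind}(\wt P')$, then pass to $P_{q_n}$ approximating that extremal geodesic and invoke Lemma~\ref{lem-winding-intersect} and Lemma~\ref{lem-non-cross}. As written, your ``stability of crossing'' heuristic is not the right mechanism.
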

\begin{proof} 
For $q\in\BB Q^2\setminus\mcl B_{s'}^\bullet$, let $P_q$ be the (a.s.\ unique) $D_h$-geodesic from 0 to $q$. 
The combination of Lemma~\ref{lem-non-cross} (applied to a $D_h$-geodesic from 0 to $\bdy\mcl B_{s'}^\bullet$ which coincides with the given geodesic on $[s,s']$) and Lemma~\ref{lem-winding-intersect} shows that a.s.\ the winding number of any $D_h$-geodesic from $\bdy \mcl B_s^\bullet$ to $\bdy\mcl B_{s'}^\bullet$ differs from winding number of each of the $P_q|_{[s,s']}$'s by at most 1. 
Hence there is an interval $[a,b]\subset \BB R$ of length at most one such that $\op{wind}(P_q|_{[s,s']}) \in [a,b]$ for every $q \in \BB Q^2\setminus\mcl B_{s'}^\bullet$. 
Since the winding number depends continuously on the path, the approximation statement from Lemma~\ref{lem-leftmost-geodesic} implies that a.s.\ for each $y\in\bdy\mcl B_{s'}^\bullet$, the winding numbers of the leftmost and rightmost geodesics $P_y^-|_{[s,s']}$ and $P_y^+|_{[s,s']}$ lie in $[a,b]$. 
Since every $D_h$-geodesic from $y$ to $\bdy\mcl B_s^\bullet$ lies between $P_y^-$ and $P_y^+$, the winding number of any such geodesic lies between $\op{wind}(P_y^-|_{[s,s']})$ and $\op{wind}(P_y^+|_{[s,s']})$, so must also lie in $[a,b]$. 
\end{proof}

The following lemma will be important in the iterative argument used to prove confluence of geodesics in Section~\ref{sec-finite-geo}.

\begin{lem} \label{lem-geo-arc}
Almost surely, the following is true for each $0 < s < s' < \infty$. 
Let $\mcl I$ be a finite collection of disjoint arcs of $\bdy\mcl B_s^\bullet$. 
For each $I\in\mcl I$, let $I'$ be the set of $y\in\bdy\mcl B_{s'}^\bullet$ such that the leftmost $D_h$-geodesic from 0 to $y$ passes through $I$.
Then each $I'$ is either empty or is a connected arc of $\bdy\mcl B_{s'}^\bullet$ and the arcs $I'$ for different choices of $I\in\mcl I$ are disjoint.
\end{lem}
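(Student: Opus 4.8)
The plan is to deduce everything from the monotonicity of leftmost geodesics, which in turn comes from the non-crossing property (Lemma~\ref{lem-non-cross}) together with the control on winding (Lemma~\ref{lem-geo-winding}) and the approximation of leftmost geodesics by geodesics to rational points (Lemma~\ref{lem-leftmost-geodesic}). The key structural fact I would first establish is a \emph{cyclic order preservation} statement: a.s.\ for all $0<s<s'<\infty$, if $y_1,y_2,y_3 \in \bdy\mcl B_{s'}^\bullet$ appear in this cyclic (say counterclockwise) order, then the points $P_{y_1}^-(s), P_{y_2}^-(s), P_{y_3}^-(s)$ on $\bdy\mcl B_s^\bullet$ appear in the same cyclic order (with ties allowed, i.e.\ weak cyclic monotonicity). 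Granting this, the lemma follows quickly: for an arc $I\subset\bdy\mcl B_s^\bullet$, the set $I'$ of $y\in\bdy\mcl B_{s'}^\bullet$ with $P_y^-(s)\in I$ is, by cyclic monotonicity of $y\mapsto P_y^-(s)$, the preimage of an arc under a (weakly) cyclically monotone map, hence an arc (or empty); and disjoint arcs $I$ have disjoint preimages because the map $y \mapsto P_y^-(s)$ has a well-defined value at each $y$, so $y$ cannot lie in both $I_1'$ and $I_2'$ when $I_1\cap I_2=\emptyset$.

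To prove cyclic order preservation I would argue as follows. Fix the annular region $\mcl A := \mcl B_{s'}^\bullet\setminus\rng{\mcl B_s^\bullet}$ and note every $D_h$-geodesic from $0$ to a point of $\bdy\mcl B_{s'}^\bullet$ crosses $\mcl A$ from inner to outer boundary as a path of $D_h$-length exactly $s'-s$ (using the concatenation observation recorded just before Lemma~\ref{lem-geo-unique}, and that $P_y^-(s)\in\bdy\mcl B_s^\bullet$). By Lemma~\ref{lem-geo-winding}, the winding numbers in $\mcl A$ of any two such geodesic segments differ by at most $1$. Now suppose, for contradiction, that for some counterclockwise-ordered triple $y_1,y_2,y_3$ the images $P_{y_1}^-(s),P_{y_2}^-(s),P_{y_3}^-(s)$ are \emph{not} in counterclockwise order; then two of these leftmost geodesics must cross in $\mcl A$ in the sense of Lemma~\ref{lem-winding-intersect} (their segments before a common point do not coincide). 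I would handle this by first proving the statement for geodesics to \emph{rational} points $q_1,q_2\in\BB Q^2\setminus\mcl B_{s'}^\bullet$: by Lemma~\ref{lem-non-cross}, $P_{q_1}$ and $P_{q_2}$, both started from $0$, cannot cross, so once we know (from Lemma~\ref{lem-geo-winding}) that their winding numbers in $\mcl A$ are within $1$ of each other, a lift argument as in the proof of Lemma~\ref{lem-winding-intersect} shows their entry points into $\bdy\mcl B_s^\bullet$ respect cyclic order. Then I would pass to general $y$ via the approximation $P_{q_n^\pm}\to P_y^\pm$ of Lemma~\ref{lem-leftmost-geodesic}: cyclic order is a closed condition under uniform convergence of the relevant points, so it is inherited in the limit (here one uses that $P_{q_n^\pm}(s)\to P_y^\pm(s)$, which follows from uniform convergence of the paths).

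The main obstacle I anticipate is the bookkeeping around \emph{weak} rather than strict monotonicity: because distinct leftmost geodesics may share an initial segment and then split, the map $y\mapsto P_y^-(s)$ is only weakly cyclically monotone, and one must be careful that (a) this still forces $I'$ to be a genuine arc (an interval in the cyclic order, possibly degenerate or with endpoints behaving subtly) rather than a more complicated set, and (b) the non-crossing input from Lemma~\ref{lem-non-cross}, which allows geodesics to \emph{merge}, is strong enough. Point (b) is exactly why Lemma~\ref{lem-non-cross} is phrased as ``coincide on an initial interval'' rather than ``are disjoint away from $0$'', and why Lemma~\ref{lem-winding-intersect}'s notion of crossing explicitly excludes the merge case; I would make sure the lift argument uses precisely this notion. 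A secondary technical point is that the ``leftmost'' choice is what makes the assignment $y\mapsto I'$ well-behaved — had we allowed arbitrary geodesics, a single $y$ could be reached by geodesics through several different arcs $I$ — so I would emphasize that the well-definedness of $P_y^-$ (Lemma~\ref{lem-leftmost-geodesic}) is what gives disjointness of the $I'$.
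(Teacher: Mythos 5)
Your approach is essentially the paper's, just packaged differently. You both rely on exactly the same three ingredients: the non-crossing of arbitrary geodesics with rational-target geodesics (Lemma~\ref{lem-non-cross}), the winding bound (Lemma~\ref{lem-geo-winding}), and the approximation of leftmost geodesics by rational-target geodesics (Lemma~\ref{lem-leftmost-geodesic}), together with lifting everything to the universal cover $\BB R\times[0,1]$ of the metric annulus. The difference is organizational. You first prove an intermediate ``weak cyclic order preservation'' statement for the map $y\mapsto P_y^-(s)$ (rational targets first, then a limiting argument), and then appeal to the topological fact that a weakly cyclically monotone circle map pulls arcs back to arcs. The paper instead lifts the annulus to $\BB R\times[0,1]$ normalized so that the arcs in $\mcl I$ lift to honest intervals, uses Lemma~\ref{lem-geo-winding} to confine all lift endpoints to a length-$1$ interval $[a,b]\times\{1\}$, and directly proves $\wh I'$ is connected by a sandwich argument: given $y_1<y<y_2$ with $y_1,y_2\in\wh I'$, one picks rationals $q_1,q_2$ strictly between $y_1,y$ and $y,y_2$; non-crossing forces $\wh P_{q_1}(s),\wh P_{q_2}(s)\in\wh I$ and then forces $\wh P_y(s)\in\wh I$.

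Where the paper's organization pays off is precisely at the obstacle you flagged as point (a). Once you have weak cyclic monotonicity for triples, you still have to worry about \emph{which} of the two arcs bounded by $P_{y_1}^-(s)$ and $P_{y_2}^-(s)$ the point $P_y^-(s)$ lands on, and this requires either fixing a reference point outside $I$ and running a quadruple-ordering argument, or implicitly working with the lift. The paper's formulation sidesteps this by working in $\BB R\times[0,1]$ from the start, so ``between'' is an unambiguous linear order and the winding bound immediately pins all endpoints into a length-$1$ window. If you write up your version, I would recommend doing the ``circle map pulls back arcs to arcs'' step in the lift anyway, at which point you will find you have reconstructed the paper's proof.

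One small additional point: when you pass from rational targets to general $y$ via $P_{q_n^-}\to P_y^-$, note that the paper does not actually need this limit for the connectedness step itself. The rationals $q_1,q_2$ are used directly as dividers, and the non-crossing Lemma~\ref{lem-non-cross} already applies to $P_y^-$ (which is a geodesic started from $0$, not necessarily to a rational point). The approximation from Lemma~\ref{lem-leftmost-geodesic} is needed only to know that the leftmost geodesics exist and are well-defined; the sandwich then works without any limiting argument. Your limiting step is not wrong, but it is an extra step the paper avoids.
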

\begin{proof}
It is obvious that the sets $I'$ for different choices of $I$ are disjoint since each $y\in\mcl B_{s'}^\bullet$ gives rise to a unique leftmost geodesic. 
We need to show that each $I'$ is connected.

Let $\pi : \BB R\times [0,1] \rta \ol{\mcl B_{s'}^\bullet \setminus \mcl B_s^\bullet)}$ be a universal covering map normalized so that $\pi(\BB R\times\{0\}) = \bdy \mcl B_s^\bullet$, $\pi(\BB R\times\{1\}) = \bdy \mcl B_{s'}^\bullet$, and $\pi(x , y) = \pi(x + n , y)$ for each $n\in\BB Z$ and $(x,y) \in \BB R\times [0,1]$. 
For each $D_h$-geodesic $P$ from 0 to a point of $\ol{\BB C\setminus\mcl B_{s'}^\bullet}$, let $\wh P$ be the lift of $P|_{[s,s']}$ w.r.t.\ $\pi$ which starts from a point in $[0,1)\times \{0\}$. 
Lemma~\ref{lem-geo-winding} implies that there is an interval $[a,b] \times \{1\} $ of length at most 1 such that $\wh P(s') \in [a,b]\times \{1\}$ for every such $D_h$-geodesic $P$. 

For $q\in \BB Q^2\setminus \mcl B_{s'}^\bullet$, let $P_q$ be the unique $D_h$-geodesic from 0 to $q$.  
For each such $q$, the path $\wh P_q$ divides $\BB R\times [0,1]$ into two connected components. 
Lemma~\ref{lem-non-cross} implies that each lift of a $D_h$-geodesic from $\bdy \mcl B_s^\bullet$ to $\bdy \mcl B_{s'}^\bullet$ must be in the closure of one of these connected components, i.e., it cannot cross~$\wh P_q$. 
 
For $I\in\mcl I$, let $\wh I$ be the subset of $[0,1)\times\{0\}$ such that $\pi(\wh I) = I$ and let $\wh I' \subset [a,b]$ be the subset of $[a,b] \times \{1\}$ such that $\pi(\wh I') = I'$. By possibly re-choosing $\pi$ so that $\pi(0,0)$ is a endpoint of one of the intervals in $\mcl I$, we can arrange that each $\wh I$ is an interval. 
To show that each $I'$ is connected, it is enough to show that $\wh I'$ is connected.
Suppose $(y_1, 1) , (y_2,1) \in \wh I'$ with $y_1 < y_2$ and that $y_1 < y < y_2$. 
We claim that $(y,1) \in \wh I'$. 
To see this, let $P_{y_1}^-$ and $P_{y_2}^-$ be the leftmost $D_h$-geodesics from 0 to $\pi(y_1)$ and $\pi(y_2)$, respectively, so that $P_{y_1}^-(s) , P_{y_2}^-(s) \in I$. 
If we choose $q_1,q_2 \in \BB Q\setminus \mcl B_{s'}^\bullet$ in such a way that $q_1$ (resp.\ $q_2$) is close to the interior of the counterclockwise arc of $\bdy\mcl B_{s'}^\bullet$ from $\pi(y_1)$ to $\pi(y )$ (resp.\ $\pi(y)$ to $\pi(y_2)$) then $y_1 \leq \wh P_{q_1}(s')  < \wh P_{q_2}(s') \leq y_2$.  

The lifts $\wh P_{y_1}$ and $\wh P_{y_2}$ cannot cross $\wh P_{q_1}$ and $\wh P_{q_2}$, so since $\wh P_{y_1}(s) , \wh P_{y_2}(s) \in \wh I$, it follows that $\wh P_{q_1}(s)$ and $\wh P_{q_2}(s)$ belong to $\wh I$. 
Let $P_y$ be the leftmost $D_h$-geodesic from $\mcl B_s^\bullet$ to $\pi(y)$. Then $\wh P_y$ cannot cross $\wh P_{q_1}$ or $\wh P_{q_2}$, so it must be the case that $\wh P_y(y) \in \wh I$. 
Therefore $y \in \wh I'$ so $\wh I'$ is connected.
\end{proof}

\subsection{FKG inequalities}
\label{sec-fkg}

In this subsection we will prove a variant of the FKG inequality which applies to a weak LQG metric. 
For the statement, we recall the definition of $e^{\xi f}\cdot D$ for a continuous function $f$ and a metric $D$ from~\eqref{eqn-metric-f}. We also recall from Remark~\ref{remark-other-domains} that $D$ gives rise to a metric associated with a zero-boundary GFF on any domain $U\subset\BB C$.

\begin{prop}[FKG for the LQG metric] \label{prop-fkg-metric}
Let $\gamma \in (0,2)$, let $U\subset\BB C$ be an open domain, let $ h$ be a zero-boundary GFF on $U$, and let $D$ be a weak $\gamma$-LQG metric.  
Let $\Phi$ and $\Psi$ be bounded, real-valued measurable functions on the space of continuous metrics on $U$ which are non-decreasing in the sense that for any two such metrics $D_1,D_2$ with $D_1(z,w) \leq D_2(z,w) $ for all $z,w\in U$, one has $\Phi(D_1) \leq \Phi(D_2)$ and $\Psi(D_1)\leq \Psi(D_2)$. 
Suppose further that $\Phi$ and $\Psi$ are a.s.\ continuous at $D_h$ in the sense that for every (possibly random) sequence of continuous functions $\{f^n\}_{n\in\BB N}$ which converges to zero uniformly on $U$, one has $\Phi(e^{\xi f^n} \cdot D_h ) \rta \Phi(D_h)$ and $\Psi(e^{\xi f^n} \cdot D_h) \rta \Psi(D_h)$.  
Then $\op{Cov}(\Phi(D_h), \Psi(D_h)) \geq 0$.
\end{prop}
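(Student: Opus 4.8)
The natural strategy is to reduce to the classical FKG inequality for the Gaussian free field, applied to a suitable approximation of the field. First I would expand $h$ in a (deterministic, orthonormal) basis of the Cameron--Martin space of the zero-boundary GFF on $U$ whose elements are continuous (even smooth) functions, say $h = \sum_{j=1}^\infty X_j \phi_j$ with $X_j$ i.i.d.\ standard Gaussians and $\phi_j$ continuous. Let $h^n := \sum_{j=1}^n X_j \phi_j$, a continuous function on $U$, and write $h = h^n + \bar h^n$ where $\bar h^n$ is independent of $h^n$. By Axiom~\ref{item-metric-f} (Weyl scaling), $D_h = e^{\xi h^n} \cdot D_{\bar h^n}$, so conditionally on $\bar h^n$ the metric $D_h$ is a non-decreasing, continuous function of the finite-dimensional Gaussian vector $(X_1,\dots,X_n)$ through the map $(x_1,\dots,x_n) \mapsto e^{\xi \sum_j x_j \phi_j} \cdot D_{\bar h^n}$: non-decreasing because increasing any coordinate $x_j$ multiplies $e^{\xi\sum x_j\phi_j}$ pointwise by a positive factor (monotonicity of the integral defining $e^{\xi f}\cdot D$), and $\phi_j \geq 0$ is \emph{not} needed because the classical FKG input I will use is the one for Gaussian measures with respect to a monotone coupling, not coordinatewise monotonicity---so let me restate that more carefully below.

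To make the monotonicity usable I would instead use the following classical fact: if $\mu$ is a centered Gaussian measure on $\BB R^n$ and $F, G : \BB R^n \to \BB R$ are bounded and non-decreasing with respect to the coordinatewise partial order, then $\op{Cov}_\mu(F,G) \geq 0$ provided the covariance matrix has nonnegative entries (Pitt's theorem); more robustly, one uses that a centered Gaussian vector can always be written as $A Z$ for $Z$ standard Gaussian, and $F(A Z)$ need not be monotone in $Z$. The clean way around this is to choose the basis so that $\phi_j \geq 0$ on $U$ is \emph{not} required---rather, observe directly that $\Phi(D_h)$ is a non-decreasing function of the \emph{field} $h$ in the sense that if $h_1 \leq h_2$ pointwise (as continuous functions, on the event both are continuous) then $D_{h_1}(z,w) \leq D_{h_2}(z,w)$ by Axiom~\ref{item-metric-f}, hence $\Phi(D_{h_1}) \leq \Phi(D_{h_2})$. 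Then I would invoke the FKG inequality for the GFF in the form: for the zero-boundary GFF $h$ on $U$ and bounded functionals $\mathcal F, \mathcal G$ of $h$ that are non-decreasing with respect to the pointwise order on fields and sufficiently continuous, $\op{Cov}(\mathcal F(h), \mathcal G(h)) \geq 0$. This is standard (it follows from the finite-dimensional FKG/Pitt inequality after a white-noise expansion $h = \sum X_j \psi_j$ with the $\psi_j$ having nonnegative pairwise covariances, e.g.\ an expansion in indicator-type or Haar-type functions, or simply from the fact that the GFF is a monotone limit of such; see the FKG inequality for the GFF as used in the LQG literature). Apply this with $\mathcal F(h) = \Phi(D_h)$ and $\mathcal G(h) = \Psi(D_h)$.

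Concretely, the steps are: (1) record from Axiom~\ref{item-metric-f} that $f \mapsto D_{h+f}$ is pointwise non-decreasing in $f$, hence $\Phi, \Psi$ composed with $h \mapsto D_h$ are non-decreasing functionals of the field; (2) approximate the zero-boundary GFF by $h^n$, a finite linear combination $\sum_{j \le n} X_j \psi_j$ where $\{\psi_j\}$ is an orthonormal basis of $H^1_0(U)$ chosen so that the Gaussian vector $(\langle h, \psi_i\rangle_\nabla)$ --- or better, the values of $h^n$ at a fine grid of points --- has a covariance structure to which Pitt's inequality applies; the cleanest choice is to discretize via the Markov property on a fine mesh so that $h^n$ is a piecewise-harmonic interpolation whose defining Gaussians have nonnegative covariances (Green's function of the GFF is nonnegative); (3) for fixed $n$, condition on the complementary field $\bar h^n$, apply the finite-dimensional Gaussian FKG inequality to the non-decreasing (in the grid values) bounded functionals $\Phi(e^{\xi h^n}\cdot D_{\bar h^n})$ and $\Psi(\cdots)$, getting $\op{Cov}(\Phi(D_h), \Psi(D_h) \mid \bar h^n) \geq 0$ a.s.; (4) take expectations, so $\op{Cov}(\mathbb E[\Phi(D_h)\mid \bar h^n], \mathbb E[\Psi(D_h)\mid\bar h^n]) $ plus $\mathbb E[\op{Cov}(\cdots\mid \bar h^n)]$ is $\geq$ something --- actually just use $\op{Cov}(\Phi,\Psi) = \mathbb E[\op{Cov}(\Phi,\Psi\mid\bar h^n)] + \op{Cov}(\mathbb E[\Phi\mid\bar h^n], \mathbb E[\Psi\mid\bar h^n])$ and note the first term is $\geq 0$ while the second tends to $\op{Cov}(\Phi(D_h),\Psi(D_h))$ as $n \to \infty$ by the martingale convergence theorem together with the continuity hypothesis on $\Phi,\Psi$ (which gives $\Phi(D_{h^n + \bar h^n_{\mathrm{harm}}}) \to \Phi(D_h)$, since $h - h^n \to 0$ locally uniformly along a subsequence and $\Phi$ is assumed continuous under uniform perturbations); (5) conclude $\op{Cov}(\Phi(D_h), \Psi(D_h)) \geq 0$.

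The main obstacle is step (2)--(3): arranging the finite-dimensional approximation so that the relevant Gaussian vector genuinely has nonnegative covariances (so Pitt's inequality applies), \emph{and} so that the approximating metric $D_{h^n}$ (or $e^{\xi h^n}\cdot D_{\bar h^n}$) is monotone in exactly those coordinates. Using the grid-discretization of the GFF via its Markov property is the right device --- the conditional expectation of the GFF given its values on a fine mesh is harmonic interpolation, the mesh values form a Gaussian vector with covariance given by a (nonnegative) discrete Green's function, and $h^n$ is pointwise increasing in each mesh value since harmonic interpolation preserves pointwise order --- but verifying the continuity/convergence needed to pass $n \to \infty$ requires care, and one must make sure the perturbation $h^n - h$ converges to zero in the local-uniform sense hypothesized in the statement of the proposition (which on a proper subdomain $U$ it does, along a subsequence, since the GFF minus its mesh-harmonic-interpolation tends to zero locally uniformly). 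Everything else is bookkeeping.
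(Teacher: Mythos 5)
You have the right top-level strategy: use Axiom~\ref{item-metric-f} to observe that $h\mapsto\Phi(D_h)$ is monotone in the pointwise order on fields, reduce to an FKG inequality for the zero-boundary GFF, and prove that via a finite-dimensional approximation with nonnegative covariances (Pitt), passing to the limit using the continuity hypothesis on $\Phi,\Psi$. You also correctly identify the two constraints -- nonnegative covariances and coordinatewise monotonicity of the functional in the approximating coordinates -- and you gesture at the right kind of decomposition. However, the execution in steps (2)--(4) has real gaps.

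The grid-discretization idea does not work as stated because the GFF has no pointwise values: ``its values on a fine mesh'' and the ``piecewise-harmonic interpolation'' of those values are undefined, and the claim that ``$h-h^n\to 0$ locally uniformly along a subsequence'' is vacuous since $h-h^n$ is a distribution, not a continuous function. More substantively, step (4) gets the limit backwards: in the identity
\[
\op{Cov}(\Phi,\Psi)=\BB E\left[\op{Cov}(\Phi,\Psi\mid\bar h^n)\right]+\op{Cov}\left(\BB E[\Phi\mid\bar h^n],\BB E[\Psi\mid\bar h^n]\right),
\]
you need the second term to tend to $0$, not to $\op{Cov}(\Phi,\Psi)$ as you write; with your stated limit the conclusion is circular. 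Getting the correct limit requires showing that $\bigcap_n\sigma(\bar h^n)$ is trivial (via a Kolmogorov zero-one law applied to the independent coefficients) and invoking \emph{backward} martingale convergence to conclude $\BB E[\Phi\mid\bar h^n]\to\BB E[\Phi]$. Neither of these ingredients appears in your sketch.

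The paper sidesteps both issues by splitting the argument in two. Lemma~\ref{lem-fkg-cont} proves FKG for an a.s.\ \emph{continuous} positively correlated Gaussian field by discretizing with a partition of unity subordinate to a fine cover of a compact exhaustion -- this is the correct version of your steps (2)--(3), carried out for a random object that genuinely has pointwise values. Lemma~\ref{lem-fkg-gff} then applies this to the smooth part $h_{t,\infty}$ of the heat-kernel white-noise decomposition $h=h_{0,t}+h_{t,\infty}$, conditionally on the rough part $\{h_{0,s}\}_{s\le t}$; the pointwise nonnegativity of the heat kernel gives the nonnegative covariances for free, and $h_{t,\infty}$ is a continuous Gaussian function. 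The limit $t\to 0$ is handled exactly as your step (4) should be: the zero-one law gives triviality of $\bigcap_{t>0}\sigma(\{h_{0,s}\}_{s\le t})$ and the backward martingale convergence theorem kills the second term. This decomposition -- not a mesh-value conditioning -- is the missing idea in your proposal.
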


We will typically apply Proposition~\ref{prop-fkg-metric} to functionals of the form 
\eqb
\Phi(D) = \BB 1\left\{ \sup_{u \in A , v \in B} D (u,v) \geq c\right\} \quad \text{or} \quad
\Phi(D) = \BB 1\left\{ \inf_{u \in A , v \in B} D (u,v) \geq c\right\}
\eqe
where $c> 0$ and $A,B \subset U$. Note that functionals of this form include distances between points and sets as well as diameters of sets (taking $A = B$). 
Such functionals $\Phi$ are obviously non-decreasing. Moreover, such functionals are a.s.\ continuous at $D_h$ since the probability that the supremum or infimum in question is exactly equal to $c$ is zero. This can be seen using Axiom~\ref{item-metric-f} and the fact that adding a smooth compactly supported function to $h$ affects its law in an absolutely continuous way.

We expect that Proposition~\ref{prop-fkg-metric} is true without the continuity hypothesis, but our proof does not give this.

The basic idea of the proof of Proposition~\ref{prop-fkg-metric} is to first prove a version of the FKG inequality for continuous, positively correlated Gaussian functions using the FKG inequality for finite-dimensional Gaussian vectors~\cite{pitt-positively-correlated} and an approximation argument (Lemma~\ref{lem-fkg-cont}). We then transfer this to the GFF using the white noise decomposition (Lemma~\ref{lem-fkg-gff}) and finally deduce Proposition~\ref{prop-fkg-metric} using Axiom~\ref{item-metric-f}. 
These intermediate FKG inequalities appear to be standard results, but we could not find sufficiently general statements in the literature so we will deduce them directly from the FKG inequality for finite-dimensional positive correlated Gaussian vectors~\cite{pitt-positively-correlated}. 

\begin{lem}[FKG for continuous Gaussian functions] \label{lem-fkg-cont}
Let $(X,D)$ be a locally compact metric space, let $\mcl C(X,\BB R)$ be the space of continuous, real-valued functions on $X$ equipped with the local uniform topology.
Let $\Phi$ and $\Psi$ be bounded measurable functions from $\mcl C(X,\BB R)$ to $\BB R$  
which are non-decreasing in the sense that $\Phi(f) \geq \Phi(g)$ and $\Psi(f) \geq \Psi(g)$ whenever $f(x) \geq g(x)$ for every $x\in X$. 
Let $f$ be a Gaussian random continuous function on $X$ and suppose that $\op{Cov}(f(x) , f(y)) \geq 0$ for every $x,y\in X$.  
If $\Phi$ and $\Psi$ are each a.s.\ continuous at $f$, then $\op{Cov}(\Phi(f) ,\Psi(f)) \geq 0$. 
\end{lem}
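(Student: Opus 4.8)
The plan is to deduce the statement from the finite-dimensional FKG inequality of Pitt~\cite{pitt-positively-correlated}, which asserts that a Gaussian vector with non-negative pairwise covariances is associated, i.e., any two bounded non-decreasing functions of it are non-negatively correlated. The idea is to approximate $f$ by continuous functions depending on only finitely many of its values $f(x_1),\dots,f(x_n)$ in such a way that both the monotonicity of $\Phi,\Psi$ and the positivity of the covariances of $f$ are preserved under the approximation.

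First I would set up the approximation. We may assume that $X$ is $\sigma$-compact, which holds in all of our applications (where $X\subset\BB C$); then the local uniform topology on $\mcl C(X,\BB R)$ is metrizable — so continuity of $\Phi$ or $\Psi$ at $f$ is equivalent to sequential continuity there — and $X$ is separable. Fix an exhaustion $K_1\subset K_2\subset\cdots$ of $X$ by compact sets and a countable dense set $\{x_1,x_2,\dots\}\subset X$. For each $n$, choose $r_n\downarrow 0$ and $\ep_n\downarrow 0$ so that $\{x_1,\dots,x_n\}$ contains an $(r_n/2)$-net of $K_n$ (possible after relabelling the $K_m$), set $c_j^{(n)}(x):=\max\{0,\,1-D(x,x_j)/r_n\}$, and define
\[
f^{(n)}(x) := \frac{\sum_{j=1}^n c_j^{(n)}(x)\,f(x_j)}{\ep_n+\sum_{j=1}^n c_j^{(n)}(x)},\qquad x\in X .
\]
This has three key properties: (i) $f^{(n)}\in\mcl C(X,\BB R)$, since the denominator is continuous and bounded below by $\ep_n>0$; (ii) the map $(a_1,\dots,a_n)\mapsto\sum_j c_j^{(n)}(\cdot)\,a_j/(\ep_n+\sum_k c_k^{(n)}(\cdot))$ from $\BB R^n$ to $\mcl C(X,\BB R)$ is continuous and is non-decreasing in each coordinate, because the weights are non-negative; and (iii) $f^{(n)}\to f$ a.s. in $\mcl C(X,\BB R)$, since on each $K_m$ and for $n$ large the weight $c_j^{(n)}(x)$ is supported on points $x_j$ within distance $r_n$ of $x$ while $\sum_j c_j^{(n)}(x)\geq 1/2$, so $|f^{(n)}(x)-f(x)|\leq\omega(r_n)+2\ep_n\sup_{K_m}|f|$, which tends to $0$ as $n\to\infty$ ($\omega$ being a modulus of continuity of $f$ on a compact neighbourhood of $K_m$).

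Next I would apply Pitt and pass to the limit. By (i) and (ii), $\Phi(f^{(n)})=\Phi_n(f(x_1),\dots,f(x_n))$ and $\Psi(f^{(n)})=\Psi_n(f(x_1),\dots,f(x_n))$ for bounded non-decreasing $\Phi_n,\Psi_n:\BB R^n\to\BB R$, and $(f(x_1),\dots,f(x_n))$ is Gaussian with non-negative covariances, so Pitt's theorem gives $\op{Cov}(\Phi(f^{(n)}),\Psi(f^{(n)}))\geq 0$ for all $n$. By (iii) and the a.s. continuity of $\Phi$ and $\Psi$ at $f$, we have $\Phi(f^{(n)})\to\Phi(f)$ and $\Psi(f^{(n)})\to\Psi(f)$ a.s.; since these are uniformly bounded, dominated convergence gives $\op{Cov}(\Phi(f^{(n)}),\Psi(f^{(n)}))\to\op{Cov}(\Phi(f),\Psi(f))$, and the claim follows.

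The hard part is engineering the approximation in (i)--(iii): it must be genuinely a continuous function (so that $\Phi(f^{(n)})$ makes sense), a non-decreasing function of finitely many of the $f(x_j)$ with \emph{non-negative} coefficients (so that Pitt applies and monotonicity survives), and locally uniformly convergent to $f$. In particular, the naive choice of taking $\Phi_n$ to be the conditional expectation of $\Phi(f)$ given $f(x_1),\dots,f(x_n)$ fails: conditionally, $f$ is Gaussian with mean the linear combination $x\mapsto v(x)^{T}\Sigma^{-1}(f(x_1),\dots,f(x_n))^{T}$, where $\Sigma$ is the covariance matrix of $(f(x_1),\dots,f(x_n))$ and $v(x)_k=\op{Cov}(f(x),f(x_k))$, and the coefficients of this combination need not be non-negative even though $\Sigma$ and $v$ have non-negative entries, so the conditional expectation is not monotone. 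The partition-of-unity-type formula above is tailored to avoid exactly this issue; the rest of the argument is routine.
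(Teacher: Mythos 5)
Your proposal is correct and takes essentially the same approach as the paper: approximate $f$ locally uniformly by a non-negative linear combination $f^n$ of finitely many of the $f(x_j)$'s (with coefficients independent of the Gaussian values), apply Pitt's finite-dimensional FKG to the resulting monotone functions of $(f(x_1),\dots,f(x_n))$, and pass to the limit using the a.s.\ continuity of $\Phi,\Psi$ at $f$. The only cosmetic difference is the exact approximant — the paper uses a partition of unity $\sum_j f(x_j^n)\phi_j^n$ on $K_n$ and controls it via a Borel--Cantelli modulus-of-continuity estimate, whereas you use a normalized cone-bump average with an $\ep_n$ regularizer and control it pathwise — but both serve the same purpose.
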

\begin{proof}
We will deduce the lemma from the FKG inequality for finite positively correlated Gaussian vectors~\cite{pitt-positively-correlated}. 
To do this, we will approximate $f$ by a sequence of functions which depend on only finitely many Gaussian random variables (this is how the continuity assumption on $\Phi$ and $\Psi$ is used).

Let $\{K_n\}_{n\in\BB N}$ be an increasing sequence of compact subsets of $X$ whose union is all of $X$. 
For $n\in\BB N$, choose $\delta_n > 0$ such that 
\eqb \label{eqn-fkg-modulus}
\BB P\left[ \sup_{x,y\in K_n : D(x,y) \leq \delta_n} |f(x) - f(y)| \leq 2^{-n} \right] \geq 1 -  2^{-n}  .
\eqe
Then, choose a finite collection of points $x_1^n,\dots,x_{N_n}^n$ such that the union of the metric balls $\mcl B_{\delta_n}(x_j^n)$ covers $K_n$.
Let $\phi_1^n,\dots,\phi_{N_n}^n$ be a partition of unity subordinate to this open cover of $K_n$, so that each $\phi_j^n$ is a continuous function on $X$ taking values in $[0,1]$ and supported in $\mcl B_{\delta_n}(x_j^n)$ and $\sum_{j=1}^{N_n} \phi_j^n \equiv 1$ on $K_n$.

Let $f^n := \sum_{j=1}^{N_n} f(x_j^n) \phi_j^n$. 
Since $\phi_j^n(x) = 0$ whenever $|x-x_j^n|  > \delta_n$, the Borel-Cantelli lemma and~\eqref{eqn-fkg-modulus} tell us that a.s.\ for large enough $n$, 
\eqb
|f(x) - f^n(x)| \leq \sum_{j=1}^{N_n} |f(x) - f(x_j^n)| \phi_j^n(x) \leq 2^{-n} ,\quad\forall x \in K_n .
\eqe 
Therefore $f^n \rta f$ uniformly on compact subsets of $X$. 

The function $f^n$ depends only on the values $f(x_1^n) , \dots,f(x_{N_n}^n)$ and increasing one of these values can only increase $f^n$. 
Hence $\Phi(f^n)$ and $\Psi(f^n)$ are non-decreasing functions of the finite-dimensional positively correlated Gaussian vector $(f(x_1^n),\dots,f(x_{N_n}^n))$. 
By the FKG inequality for finite-dimensional positively correlated Gaussian vectors~\cite{pitt-positively-correlated}, 
\eqbn
\BB E\left[ \Phi(f^n) \Psi(f^n) \right] \geq \BB E\left[ \Phi(f^n) \right] \BB E\left[ \Psi(f^n) \right] .
\eqen
Since $f^n \rta f$ locally uniformly and $\Phi$ and $\Psi$ are each bounded and a.s.\ continuous at $f$, sending $n \rta\infty$ now concludes the proof. 
\end{proof}

\begin{lem}[FKG for the GFF] \label{lem-fkg-gff}
Let $U\subset\BB C$ be an open domain and let $ h$ be a zero-boundary GFF on $U$. 
Let $\Phi$ and $\Psi$ be bounded, real-valued measurable functions on the space of distributions on $U$ which are non-decreasing in the sense that for any continuous, non-negative function $f$ on $U$, one has $\Phi(h + f) \geq \Phi(h)$ and $\Psi(h + f) \geq \Psi(h)$. 
Suppose also that $\Phi$ and $\Psi$ are a.s.\ continuous at $h$ in the sense that for any sequence of (possibly random) functions $\{f^n\}_{n\in\BB N}$ on $U$ which converge uniformly to zero on compact subsets of $U$, a.s.\ $\Phi(h + f^n) \rta \Phi(h )$ and $\Psi(h + f^n) \rta \Psi(h )$.  
Then $\op{Cov}(\Phi(h), \Psi(h)) \geq 0$.
\end{lem}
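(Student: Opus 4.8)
The plan is to deduce Lemma~\ref{lem-fkg-gff} from Lemma~\ref{lem-fkg-cont} by decomposing the zero-boundary GFF into independent smooth Gaussian fields with nonnegative covariance, applying the continuous FKG inequality to finitely many of these fields at a time, and then passing to the limit via martingale convergence.

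\textbf{Step 1 (heat kernel decomposition).} Let $p_s^U$ be the Dirichlet heat kernel on $U$; it is strictly positive on $U\times U$ for every $s>0$ and satisfies $G_U(x,y)=\int_0^\infty p_s^U(x,y)\,ds$. Let $W$ be a white noise on $U\times(0,\infty)$, so that (after a suitable normalization) $h = \int_0^\infty\int_U p_s^U(\cdot,y)\,W(dy,ds)$ is a zero-boundary GFF on $U$. Partition $(0,\infty)$ into countably many intervals $(I_k)_{k\in\BB N}$, each with compact closure in $(0,\infty)$, ordered so that $\bigcup_{k\leq n} I_k$ increases to all of $(0,\infty)$, and set $X_k := \int_{I_k}\int_U p_s^U(\cdot,y)\,W(dy,ds)$. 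The fields $X_k$ are independent (disjoint regions of the white noise); each $X_k$ is a centered Gaussian field whose covariance $\int_{I_k} p_{2s}^U(x,y)\,ds$ is nonnegative by strict positivity of $p_s^U$ and is locally bounded and smooth in $(x,y)$ since $\ol{I_k}$ is compact in $(0,\infty)$, so $X_k$ has a continuous modification. Writing $h^{(n)} := \sum_{k\leq n} X_k$, we have $h^{(n)}\to h$ in probability in $\mcl D'(U)$, and the tail $h-h^{(n)}=\sum_{k>n} X_k$ is a random distribution independent of $(X_1,\dots,X_n)$; hence $h$ is measurable with respect to $\mcl G_\infty := \sigma(X_1,X_2,\dots)$, and $\mcl G_n := \sigma(X_1,\dots,X_n)$ is an increasing filtration whose union generates $\mcl G_\infty\supseteq\sigma(h)$.

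\textbf{Step 2 (FKG for finitely many fields).} Fix $n$ and regard $(X_1,\dots,X_n)$ as a single Gaussian continuous function $F_n$ on the locally compact space $U\times\{1,\dots,n\}$ via $F_n(x,k)=X_k(x)$; its covariance is nonnegative because each $X_k$ has nonnegative covariance and distinct $X_k$'s are independent. Define $\Theta_n:\mcl C(U\times\{1,\dots,n\},\BB R)\to\BB R$ by $\Theta_n(\psi_1,\dots,\psi_n) := \BB E\big[\Phi\big(\sum_{k\leq n}\psi_k+\sum_{k>n}X_k\big)\big]$, and define $\Xi_n$ with $\Psi$ in place of $\Phi$. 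By Fubini these are bounded and measurable; they are non-decreasing in the pointwise order, since increasing the $\psi_k$'s adds a continuous nonnegative function to the argument and $\Phi,\Psi$ are non-decreasing; and they are a.s.\ continuous at $F_n$, because if $g^m\to 0$ locally uniformly on $U\times\{1,\dots,n\}$ then $\sum_k g^m_k\to 0$ locally uniformly on $U$, so $\Phi(h+\sum_k g^m_k)\to\Phi(h)$ a.s.\ by the continuity hypothesis, and conditional dominated convergence (boundedness of $\Phi$, independence of the tail) upgrades this to $\Theta_n(F_n+g^m)\to\Theta_n(F_n)$ a.s. Lemma~\ref{lem-fkg-cont} then gives $\op{Cov}(\Theta_n(F_n),\Xi_n(F_n))\geq 0$, and by construction (using independence of the tail) $\Theta_n(F_n)=\BB E[\Phi(h)\mid\mcl G_n]$ and $\Xi_n(F_n)=\BB E[\Psi(h)\mid\mcl G_n]$.

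\textbf{Step 3 (limit) and the main obstacle.} Since $\mcl G_n$ increases to a $\sigma$-algebra containing $\sigma(h)$ and $\Phi(h),\Psi(h)$ are bounded and $\sigma(h)$-measurable, the $L^2$ martingale convergence theorem gives $\BB E[\Phi(h)\mid\mcl G_n]\to\Phi(h)$ and $\BB E[\Psi(h)\mid\mcl G_n]\to\Psi(h)$ in $L^2$, so letting $n\to\infty$ in the covariance bound from Step 2 (covariance is continuous under $L^2$ convergence) yields $\op{Cov}(\Phi(h),\Psi(h))\geq 0$. I expect the only real subtlety to be in Step 1: arranging a decomposition of the GFF into \emph{independent} continuous Gaussian fields each of which has \emph{nonnegative} covariance — which is exactly where strict positivity of the Dirichlet heat kernel is essential — set up so that the natural filtration genuinely increases to a $\sigma$-algebra containing $\sigma(h)$. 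The measurability and continuity checks for $\Theta_n,\Xi_n$ needed to invoke Lemma~\ref{lem-fkg-cont} are routine but must be carried out carefully, and Proposition~\ref{prop-fkg-metric} will then follow by taking $\Phi\circ D$, $\Psi\circ D$ and using Axiom~\ref{item-metric-f}.
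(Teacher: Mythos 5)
Your argument is correct, but it organizes the passage from Lemma~\ref{lem-fkg-cont} to the GFF differently from the paper. Both start from the white-noise/heat-kernel decomposition, but the paper makes a two-part split $h = h_{0,t} + h_{t,\infty}$ with a continuous scale parameter $t$, \emph{conditions} on the rough small-scale piece $h_{0,t}$, applies Lemma~\ref{lem-fkg-cont} directly to the continuous Gaussian field $h_{t,\infty}$ on $U$ via the shifted functionals $\Phi_{h_{0,t}}(f):=\Phi(f+h_{0,t})$ to get the conditional inequality $\BB E[\Phi(h)\Psi(h)\mid\{h_{0,s}\}_{s\le t}]\ge\BB E[\Phi(h)\mid\cdots]\,\BB E[\Psi(h)\mid\cdots]$, and then sends $t\to 0$, invoking the Kolmogorov zero--one law and \emph{backward} martingale convergence along the decreasing filtration $\sigma(\{h_{0,s}\}_{s\le t})$. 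You instead cut $h=\sum_k X_k$ into countably many independent continuous positively-correlated pieces, \emph{average out} the tail to produce $\Theta_n(F_n)=\BB E[\Phi(h)\mid\mcl G_n]$, apply Lemma~\ref{lem-fkg-cont} to $F_n=(X_1,\dots,X_n)$ regarded as a continuous Gaussian function on the auxiliary space $U\times\{1,\dots,n\}$, and pass to the limit with \emph{forward} $L^2$ martingale convergence along the increasing filtration $\mcl G_n\uparrow\mcl G_\infty\supset\sigma(h)$. These are dual presentations of the same idea, and both hinge on the strict positivity of the Dirichlet heat kernel so that every piece of the decomposition is positively correlated. The paper's version is marginally lighter: it applies Lemma~\ref{lem-fkg-cont} to a single field on $U$ rather than on a product space, and the monotonicity and a.s.\ continuity of $\Phi_{h_{0,t}}$ at $h_{t,\infty}$ are immediate from the hypotheses on $\Phi$. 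Your verification that $\Theta_n,\Xi_n$ satisfy the hypotheses of Lemma~\ref{lem-fkg-cont} --- via conditional dominated convergence over the averaged tail, using that the tail is independent of $\mcl G_n$ --- is a bit more delicate but sound.
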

\begin{proof}
We will use the white noise decomposition of $h$ to reduce to the statement of Lemma~\ref{lem-fkg-cont}.  
Let $W$ be a space-time white noise on $U\times [0,\infty)$ so that for any square integrable functions $\phi , \psi : U\times [0,\infty) \rta \BB R$, the random variables $\int_0^\infty \int_U \phi(w,t)  \, W( d^2w,dt)$ and $\int_0^\infty \int_U \psi(w,t)  \, W( d^2w,dt)$ are centered Gaussian with covariance $\int_0^\infty \int_U \phi(w,t) \psi(w,t) \,  d^2w\,dt$. 
Let $p_t(z,w)$ for $z,w\in U$ be the transition density for Brownian motion run up to time $t$ and stopped upon exiting $U$, i.e., if $B$ is such a Brownian motion started from $z$, then $p_t(z,w) = \BB P[B_t \in  d^2w , B([0,t]) \in U]$. 
For $0  \leq s < t \leq \infty$ and $z\in U$, let  
\eqbn
h_{s,t}(z) := \sqrt \pi \int_{s^2}^{t^2} p_{r/2}( z,w) \, W( d^2w,dr)  .
\eqen
It is easily checked using the Kolmogorov continuity criterion that $h_{s,t}$ for $0 < s \leq t \leq \infty$ a.s.\ admits a continuous modification. 
For $s = 0$, $h_{0,t}$ does not admit a continuous modification and is instead interpreted as a random distribution.  
By~\cite[Lemma 5.4]{rhodes-vargas-review}, $h := h_{0,\infty}$ is the zero-boundary GFF on $U$.   
Since $p_t(z,w)$ is non-negative, each $h_{s,t}$ has non-negative covariances. 

For $\Phi$ as in the statement of the lemma and $t >0$, we define a functional on the space of continuous functions $f : U\rta\BB R$ by $\Phi_{h_{0,t}}(f) := \Phi(f + h_{0,t} )$. 
Then for a fixed realization of $h_{0,t}$, the functional $\Phi_{h_{0,t}}$ is non-decreasing in the sense of Lemma~\ref{lem-fkg-cont}.
Moreover, for a continuous function $f$ on $U$ one has $\Phi_{h_{0,t}}(h_{t,\infty}  +f ) =\Phi(h  +  f)$, so $\Phi_{h_{0,t}}$ is a.s.\ continuous at $h_{t,\infty}$ in the sense of Lemma~\ref{lem-fkg-cont}.   
The continuous Gaussian function $h_{t,\infty}$ is positively correlated,  Gaussian, and independent from $\{h_{0,s}\}_{s\leq t}$.
 Therefore, if we define $\Psi_{h_{0,t}}$ analogously to $\Phi_{h_{0,t}}$ and apply Lemma~\ref{lem-fkg-cont} to $\Phi_{h_{0,t}}(h_{t,\infty}) = \Phi(h)$ and $\Psi_{h_{0,t}}(h_{t,\infty}) = \Psi(h)$, we get
\eqb
\label{eqn:pos_cor_cond}
\BB E\left[ \Phi(h) \Psi(h) \,|\, \{h_{0,s}\}_{s\leq t} \right] \geq \BB E\left[ \Phi(h)   \,|\,\{h_{0,s}\}_{s\leq t} \right] \BB E\left[  \Psi(h) \,|\, \{h_{0,s}\}_{s\leq t} \right].
\eqe
By the Kolmogorov zero-one law applied to the independent random variables $\{h_{s,t} : 2^{-k} \leq s \leq t \leq 2^{-k+1}\}$ for $k\in\BB N$, $\bigcap_{t > 0} \sigma(\{h_{0,s}\}_{s\leq t})$ is the trivial $\sigma$-algebra. 
We can therefore take a limit as $t \to 0$ in~\eqref{eqn:pos_cor_cond} and apply the backward martingale convergence theorem to conclude the proof.
\end{proof}

\begin{proof}[Proof of Proposition~\ref{prop-fkg-metric}] 
By Axiom~\ref{item-metric-f}, it is a.s.\ the case that for each continuous function $f$ on $U$ one has $D_{h+f} = e^{\xi f}\cdot D_h$. In particular, if $f$ is non-negative then $D_{h+f} \geq D_h$ 
Therefore $g\mapsto \Phi(D_g)$ and $g\mapsto \Phi(D_g)$ are non-decreasing and a.s.\ continuous at $h$ in the sense of Lemma~\ref{lem-fkg-gff}.  
Hence the proposition statement follows from Lemma~\ref{lem-fkg-gff}. 
\end{proof}

\subsection{Iterating events for the GFF in an annulus}
\label{sec-annulus-iterate}

Throughout this subsection, $h$ denotes a whole-plane GFF normalized so that $h_1(0) = 0$. 
A key tool in our proofs is the following local independence property for events which depend on the GFF in disjoint concentric annuli, which is essentially proven in~\cite[Section 4]{mq-geodesics}; see~\cite[Lemma 3.1]{local-metrics} for the statement we use here. (The statement in~\cite{local-metrics} also allows the events to depend on a collection of metrics, instead of just the GFF, but we will not need this here since our metrics are determined by the GFF). 
We recall the definition of the annulus $\BB A_{r_1,r_2}(z) = B_{r_2}(z)\setminus \ol{B_{r_1}(z)}$ from~\eqref{eqn-annulus-def}.

\begin{lem}[\!\!\cite{local-metrics}] \label{lem-annulus-iterate}
Fix $0 < s_1<s_2 < 1$. Let $\{r_k\}_{k\in\BB N}$ be a decreasing sequence of positive real numbers such that $r_{k+1} / r_k \leq s_1$ for each $k\in\BB N$ and let $\{E_{r_k} \}_{k\in\BB N}$ be events such that $E_{r_k} \in \sigma\left( (h-h_{r_k}(0)) |_{\BB A_{s_1 r_k , s_2 r_k}(0)  } \right)$ for each $k\in\BB N$. 
For $K\in\BB N$, let $N(K)$ be the number of $k\in [1,K]_{\BB Z}$ for which $E_{r_k}$ occurs. 
\begin{enumerate} 
\item For each $a > 0$ and each $b\in (0,1)$, there exists $p = p(a,b,s_1,s_2) \in (0,1)$ and $c = c(a,b,s_1,s_2) > 0$ such that if \label{item-annulus-iterate-high}
\eqb \label{eqn-annulus-iterate-prob}
\BB P\left[ E_{r_k}  \right] \geq p , \quad \forall k\in\BB N  ,
\eqe 
then 
\eqb \label{eqn-annulus-iterate}
\BB P\left[ N(K)  < b K\right] \leq c e^{-a K} ,\quad\forall K \in \BB N. 
\eqe
\item For each $p\in (0,1)$, there exists $a > 0$, $b\in (0,1)$, and $c > 0$, depending only on $p,s_1,s_2$ such that if~\eqref{eqn-annulus-iterate-prob} holds, then~\eqref{eqn-annulus-iterate} holds. \label{item-annulus-iterate-pos}
\end{enumerate}
\end{lem}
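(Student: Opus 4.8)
\textbf{Proof proposal for Lemma~\ref{lem-annulus-iterate}.}

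The plan is to deduce both parts from a single comparison: the events $E_{r_k}$ are ``almost independent'' because each one is measurable with respect to the restriction of $h-h_{r_k}(0)$ to the annulus $\BB A_{s_1 r_k, s_2 r_k}(0)$, and these annuli are pairwise disjoint (by the assumption $r_{k+1}/r_k \le s_1 < 1$, the outer radius $s_2 r_{k+1}$ of the $(k+1)$-st annulus is at most $s_2 s_1 r_k < s_1 r_k$, the inner radius of the $k$-th). The standard way to make ``almost independent'' precise for the GFF is the following Markov-type decomposition, which is exactly what is recorded in~\cite[Section 4]{mq-geodesics}: one can write, for each $k$,
\eqb
h - h_{r_k}(0) = \mathring h^k + \mathfrak h^k
\eqe
on a neighborhood of $\ol{B_{s_2 r_k}(0)}$, where $\mathring h^k$ is a zero-boundary GFF on $B_{s_2 r_k}(0)$ (hence the $\mathring h^k$ for distinct $k$, being supported in the nested balls but only ``seen'' through the disjoint annuli, can be coupled to be mutually independent) and $\mathfrak h^k$ is the harmonic extension part, which — crucially — is a.s.\ bounded on the smaller annulus $\BB A_{s_1 r_k, s_2 r_k}(0)$ with a tail bound uniform in $k$ by scale invariance of the whole-plane GFF modulo additive constant. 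First I would invoke this decomposition as a black box from~\cite{mq-geodesics} (or just cite~\cite[Lemma 3.1]{local-metrics} directly, which is stated in precisely the form needed here).

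Granting the decomposition, the argument runs as follows. Condition on the ``coarse field'' generated by the harmonic pieces $\{\mathfrak h^k\}_{k}$ and on the low-frequency part of $h$; relative to this conditioning the events $E_{r_k}$ become conditionally independent, and one shows that on an event of probability close to $1$ (controlled by the uniform tail bound on $\|\mathfrak h^k\|_\infty$ over the relevant annulus) the conditional probability of each $E_{r_k}$ is still bounded below by a constant $p'$ depending only on the original lower bound $p$ (or $p(a,b)$) and on $s_1,s_2$. Here one uses that adding a bounded deterministic function to the GFF changes probabilities of events in a way controlled by the $L^\infty$ norm of that function via Girsanov/the Cameron–Martin theorem — more precisely, one should truncate: on the event that all the $\|\mathfrak h^k\|$ for $k \le K$ are at most some large constant $M$, the conditional law of $E_{r_k}$ is comparable to its unconditional law with a constant depending on $M$. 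Then $N(K)$ stochastically dominates a sum of i.i.d.\ Bernoulli$(p')$ random variables (restricted to the good event), and a standard Chernoff/large-deviations bound for the binomial gives $\BB P[N(K) < bK] \le c e^{-aK}$ once $b < p'$; choosing the truncation level $M = M(K)$ to grow slowly (or a union bound over the $K$ annuli for the bad event $\{\max_{k\le K}\|\mathfrak h^k\| > M\}$, whose probability decays in $M$) keeps the error term exponentially small and yields both statements — part~\ref{item-annulus-iterate-high} by taking $p$ close enough to $1$ that $p'$ exceeds any prescribed $b$ with exponential rate at least $a$, and part~\ref{item-annulus-iterate-pos} by simply recording that for \emph{any} fixed $p \in (0,1)$ one gets \emph{some} $p' > 0$ and hence some $a,b,c$.

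The main obstacle — and the only genuinely delicate point — is making the conditional independence rigorous while keeping the dependence of constants only on $p, s_1, s_2$ (and not on $K$). The subtlety is that the harmonic extension $\mathfrak h^k$ on the inner ball $B_{s_1 r_k}(0)$ depends on the field \emph{outside} $B_{s_2 r_k}(0)$, which in turn is entangled with the data defining $E_{r_j}$ for $j < k$; one must order the annuli from outside in, reveal $h$ successively, and at each stage argue that what remains to be revealed inside $B_{s_2 r_k}(0)$ is a zero-boundary GFF plus a harmonic function whose restriction to $\BB A_{s_1 r_k, s_2 r_k}(0)$ has a uniform Gaussian tail. This uniformity is exactly the content of the cited lemma, so in the write-up I would not reprove it; I would instead cite~\cite[Lemma 3.1]{local-metrics} for the statement in the precise form used here and devote the proof only to the Chernoff bound for the binomial and the choice of the truncation level $M$, which are routine. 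In fact, since~\cite[Lemma 3.1]{local-metrics} is stated in essentially this exact generality, the honest ``proof'' here is little more than a pointer to that reference together with the observation that the binomial large-deviation estimate gives the two stated regimes of parameters.
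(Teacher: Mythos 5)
The paper does not prove this lemma: it is stated as a direct citation to \cite[Lemma 3.1]{local-metrics} (with the underlying argument in \cite[Section 4]{mq-geodesics}), and you correctly identify that the honest proof here is nothing more than that pointer, so your proposal matches the paper. One small caveat about your accompanying sketch of the underlying argument: a union bound over the $K$ annuli on the bad event $\{\max_{k\le K}\|\mathfrak h^k\|>M\}$ gives an error of order $K$ times a Gaussian tail, which is only polynomially small in $K$ rather than exponentially small, and letting $M=M(K)$ grow degrades the Bernoulli parameter $p'$; the actual argument in the cited references instead absorbs the harmonic-part fluctuation into the sequential outside-in conditioning (so the $p'$ one obtains is a single constant, not truncation-dependent), but since you explicitly defer to the citation for the rigorous version this does not affect the correctness of your answer.
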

  
We have the following variant of Lemma~\ref{lem-annulus-iterate} where we explore annuli outward instead of inward.

\begin{lem} \label{lem-annulus-iterate-inverse}
Fix $1   < S_1  < S_2 < \infty$. Let $\{r_k\}_{k\in\BB N}$ be an increasing sequence of positive real numbers such that $r_{k+1} / r_k \geq S_2$ for each $k\in\BB N$ and let $\{E_{r_k} \}_{k\in\BB N}$ be events such that $E_{r_k} \in \sigma\left( (h-h_{r_k}(0)) |_{\BB A_{S_1 r_k , S_2 r_k}(0)  } \right)$ for each $k\in\BB N$. 
For $K\in\BB N$, let $N(K)$ be the number of $k\in [1,K]_{\BB Z}$ for which $E_{r_k}$ occurs. 
\begin{enumerate} 
\item For each $a > 0$ and each $b\in (0,1)$, there exists $p = p(a,b,S_1,S_2) \in (0,1)$ and $c = c(a,b,S_1,S_2) > 0$ such that if 
\eqb \label{eqn-annulus-iterate-high-inverse}
\BB P\left[ E_{r_k}  \right] \geq p , \quad \forall k\in\BB N  ,
\eqe 
then 
\eqb
\BB P\left[ N(K)  < b K\right] \leq c e^{-a K} ,\quad\forall K \in \BB N. 
\eqe
\item For each $p\in (0,1)$, there exists $a > 0$, $b\in (0,1)$, and $c > 0$, depending only on $p,S_1,S_2$ such that if~\eqref{eqn-annulus-iterate-prob} holds, then~\eqref{eqn-annulus-iterate} holds. \label{item-annulus-iterate-pos-inverse}
\end{enumerate}
\end{lem}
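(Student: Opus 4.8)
The plan is to deduce Lemma~\ref{lem-annulus-iterate-inverse} directly from Lemma~\ref{lem-annulus-iterate} by a change of variables which turns an outward exploration into an inward one. The key observation is that the inversion map $z\mapsto 1/z$ (or, more conveniently, $z\mapsto R^2/z$ for a suitable scale $R$) conjugates the whole-plane GFF to itself up to an additive constant, and exchanges small scales near $0$ with large scales near $\infty$. More precisely, if $h$ is a whole-plane GFF and we set $\wt h(z) := h(1/\bar z)$ (with the appropriate conformal convention for the GFF as a distribution modulo additive constant), then $\wt h$ is again a whole-plane GFF, the circle average $\wt h_r(0)$ corresponds to $h_{1/r}(0)$ up to a deterministic additive correction, and the annulus $\BB A_{S_1 r_k, S_2 r_k}(0)$ is mapped to the annulus $\BB A_{1/(S_2 r_k), 1/(S_1 r_k)}(0)$. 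Thus an increasing sequence $r_k$ with $r_{k+1}/r_k \geq S_2$ becomes a decreasing sequence $\rho_k := 1/r_k$ with $\rho_{k+1}/\rho_k \leq 1/S_2 =: s_1 < 1$, and setting $s_2 := 1/S_1$ we have $0 < s_1 < s_2 < 1$, exactly the setup of Lemma~\ref{lem-annulus-iterate}.

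The steps I would carry out are as follows. First, record the exact transformation rule: under $z\mapsto 1/\bar z$, the whole-plane GFF viewed modulo a global additive constant is invariant in law, and one can fix the additive constant on each side by the circle average at a fixed radius; the discrepancy between $\wt h_r(0)$ and $h_{1/r}(0)$ is an explicit deterministic function of $r$ (coming from the conformal factor / the fact that the Green's function of the GFF transforms by a harmonic correction), which in particular is $\sigma\big((h - h_{1/r}(0))|_{\text{anything}}\big)$-measurable once we subtract it — so the condition $E_{r_k}\in\sigma\big((h - h_{r_k}(0))|_{\BB A_{S_1 r_k, S_2 r_k}(0)}\big)$ is transported exactly to a condition of the form required in Lemma~\ref{lem-annulus-iterate} for the events $\wt E_{\rho_k}$ obtained by pulling back $E_{r_k}$ under the inversion. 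Second, apply Lemma~\ref{lem-annulus-iterate} to $\wt h$, the radii $\rho_k$, the parameters $s_1 = 1/S_2$, $s_2 = 1/S_1$, and the events $\wt E_{\rho_k}$: since $\BB P[\wt E_{\rho_k}] = \BB P[E_{r_k}]$ and $N(K)$ is literally the same count, the conclusion $\BB P[N(K) < bK] \leq c e^{-aK}$ transfers back verbatim, with $p, c$ (resp.\ $a,b,c$ in part~(2)) now depending on $S_1, S_2$ through $s_1 = 1/S_2$, $s_2 = 1/S_1$. This proves both parts simultaneously.

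The main obstacle — really the only point requiring care — is handling the additive-constant normalization of the whole-plane GFF under inversion. The GFF is only defined modulo a global additive constant, and Lemma~\ref{lem-annulus-iterate} is stated for the canonical normalization $h_1(0) = 0$ (and uses $h - h_{r_k}(0)$ inside the annuli, which is normalization-independent). One must check that (i) $z\mapsto 1/\bar z$ sends a whole-plane GFF to a whole-plane GFF with some — possibly different, but deterministic — choice of additive constant; (ii) this choice does not interfere with the measurability hypothesis, because the events in question only depend on the field through its restriction to an annulus after subtracting the relevant circle average, and subtracting a circle average kills any deterministic additive shift; and (iii) the radii transform correctly, i.e.\ $r\mapsto 1/r$ is an order-reversing bijection of $(0,\infty)$ sending $[S_1 r, S_2 r]$ to $[1/(S_2 r), 1/(S_1 r)]$. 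All of this is routine given the conformal covariance of the whole-plane GFF, so after setting up the dictionary the proof is a one-line invocation of Lemma~\ref{lem-annulus-iterate}. An alternative, if one prefers to avoid inversion entirely, is to reprove Lemma~\ref{lem-annulus-iterate} in the outward direction by repeating the argument of~\cite[Section 4]{mq-geodesics} / \cite[Lemma 3.1]{local-metrics} with annuli explored from inside out rather than outside in — the near-independence argument is completely symmetric under reversing the order of exploration — but the inversion argument is shorter and I would present that.
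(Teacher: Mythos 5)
Your proposal matches the paper's own argument: the paper deduces Lemma~\ref{lem-annulus-iterate-inverse} as an immediate consequence of Lemma~\ref{lem-annulus-iterate} together with the inversion invariance of the whole-plane GFF (Lemma~\ref{lem-gff-invert}, which records that $h(1/\cdot)$ has the same law as $h$ when normalized so that $h_1(0)=0$), which is exactly the reduction you describe with the dictionary $\rho_k = 1/r_k$, $s_1 = 1/S_2$, $s_2 = 1/S_1$. Your treatment of the additive constant is slightly more cautious than necessary --- with the circle-average normalization there is in fact no deterministic correction between $\wt h_r(0)$ and $h_{1/r}(0)$ --- but as you correctly observe, subtracting $h_{r_k}(0)$ makes the events normalization-independent anyway, so the point is moot.
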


Lemma~\ref{lem-annulus-iterate-inverse} can be proven using the exact same argument used to prove Lemma~\ref{lem-annulus-iterate}.
Alternatively, Lemma~\ref{lem-annulus-iterate-inverse} is an immediate consequence of Lemma~\ref{lem-annulus-iterate} and the following lemma.

\begin{lem} \label{lem-gff-invert}
Let $h$ be a whole-plane GFF normalized so that $h_1(0) = 0$.
Then the composition of $h$ with the inversion map $z\mapsto 1/z$ has the same law as $h$.
\end{lem}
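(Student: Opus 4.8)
The plan is to show that $h$ and $h \circ \iota$, where $\iota(z) := 1/z$, are both centered Gaussian random distributions on $\BB C$ with the \emph{same} covariance kernel; since a centered Gaussian field is determined in law by its covariance, this gives the claim. (An alternative route is to quote the conformal invariance of the whole-plane GFF modulo additive constant under M\"obius transformations of the sphere and then argue that the normalization is preserved, but the direct covariance computation is short and self-contained, so I would carry that out.)

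First I would recall that, because $h$ is normalized so that $h_1(0) = 0$, it is a centered Gaussian distribution whose covariance kernel is
\eqbn
G(z,w) = -\log|z-w| + \log(|z| \vee 1) + \log(|w| \vee 1) ,
\eqen
i.e.\ the kernel obtained from the whole-plane ``Green's function'' $-\log|\cdot - \cdot|$ (a priori defined only modulo an additive constant) by adding the $z$- and $w$-harmonic corrections that make the average of $G(z,\cdot)$ over the unit circle vanish; the overall multiplicative normalization is irrelevant for what follows. Concretely, $\op{Cov}((h,\rho_1),(h,\rho_2)) = \iint G(z,w)\, \rho_1(z)\rho_2(w)\, d^2z\, d^2w$ for test functions $\rho_1,\rho_2$.

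Next I would record the effect of precomposing with $\iota$. The area Jacobian of $z \mapsto 1/z$ is $|z|^{-4}$, so for a test function $\rho$ one has $(h \circ \iota, \rho) = (h, \tilde\rho)$ with $\tilde\rho(u) := |u|^{-4}\rho(1/u)$, and moreover $\tilde\rho(u)\, d^2u = \rho(z)\, d^2z$ under the substitution $u = 1/z$. One checks that $\tilde\rho$ is smooth with $O(|u|^{-4})$ decay at infinity, so that $(h,\tilde\rho)$ is a well-defined Gaussian random variable (its variance, computed from $G$, is finite) and hence $h \circ \iota$ extends to a centered Gaussian random distribution on all of $\BB C$. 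Applying the change of variables twice, $\op{Cov}((h\circ\iota,\rho_1),(h\circ\iota,\rho_2)) = \iint G(1/z,1/w)\,\rho_1(z)\rho_2(w)\, d^2z\, d^2w$, i.e.\ the covariance kernel of $h\circ\iota$ is $(z,w) \mapsto G(1/z,1/w)$.

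It then remains to verify $G(1/z,1/w) = G(z,w)$, which is the only computation in the proof. Using $|1/z - 1/w| = |z-w|/(|z||w|)$ and $|1/z| \vee 1 = 1/(|z| \wedge 1)$, and then the elementary identity $\log t - \log(t \wedge 1) = \log(t \vee 1)$ for $t > 0$, one gets $G(1/z,1/w) = -\log|z-w| + \big(\log|z| - \log(|z|\wedge 1)\big) + \big(\log|w| - \log(|w|\wedge 1)\big) = G(z,w)$. Hence $h \circ \iota$ and $h$ have the same covariance kernel, so $h \circ \iota \eqD h$; as a consistency check this forces $(h\circ\iota)_1(0) = 0$, which is also geometrically clear since $\iota$ maps the unit circle bijectively onto itself. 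The only points requiring genuine care are the bookkeeping around the ``modulo additive constant'' ambiguity and the fact that $\iota$ is singular at $0$ and $\infty$ — handled, respectively, by fixing the normalization via the ($\iota$-invariant) unit-circle average and by noting the decay of $\tilde\rho$ so that all covariance integrals converge.
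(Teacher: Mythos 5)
Your proof is correct, but it takes a genuinely different route from the paper's. The paper never writes down the explicit covariance kernel; instead it tests $h$ against the Dirichlet inner product, uses the conformal invariance $(h(1/\cdot),f)_\nabla = (h,f(1/\cdot))_\nabla$ for smooth $f$ compactly supported in $\BB C\setminus\{0\}$ to match the covariance structure of $h$ and $h(1/\cdot)$ modulo additive constant, then observes that the normalization $h_1(0)=0$ is inversion-invariant and finally appeals to triviality of the tail $\sigma$-algebras $\bigcap_{r>0}\sigma(h|_{B_r(0)})$ and $\bigcap_{R>0}\sigma(h|_{\BB C\setminus B_R(0)})$ to upgrade from ``restrictions to compacts away from $0$ and $\infty$'' to the full field. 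You instead compute the pointwise covariance kernel $G(z,w) = -\log|z-w| + \log(|z|\vee 1) + \log(|w|\vee 1)$ of the normalized field and verify directly that $G(1/z,1/w)=G(z,w)$ via $|1/z-1/w| = |z-w|/(|z||w|)$ and $\log t - \log(t\wedge 1) = \log(t\vee 1)$. Your approach is more concrete and absorbs the additive-constant bookkeeping into the normalized kernel from the start, at the cost of having to justify the specific formula for $G$ and to check integrability after the change of variables; the paper's approach avoids the explicit kernel and works modulo constants throughout, which makes the conformal invariance step essentially immediate, but then requires the separate normalization and tail-triviality arguments to pin down the law of $h$ itself. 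Both are complete and correct.
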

\begin{proof}
Let $(\cdot,\cdot)_\nabla$ denote the Dirichlet inner product. 
By the conformal invariance of $(\cdot,\cdot)_\nabla$, if $f$ is any smooth function supported on a compact subset of $\BB C\setminus \{0\}$ one has $(h(1/\cdot) ,f)_\nabla = (h , f(1/\cdot) )_\nabla$. Therefore, if $g$ is another such function, then
\alb
\op{Cov}\left( (h(1/\cdot) ,f)_\nabla , (h(1/\cdot) , g)_\nabla \right) 
=  (f(1/\cdot) , g(1/\cdot))_\nabla 
&= (f,g)_\nabla \notag\\
&= \op{Cov}\left( (h ,f)_\nabla , (h , g)_\nabla \right) .
\ale
Since a Gaussian process is determined by its covariance structure, it follows that the restrictions of $h$ and $h(1/\cdot)$ to any compact subset of $\BB C\setminus \{0\}$ agree in law modulo additive constant. We know that the additive constants for $h$ and $h(1/\cdot)$ are the same since the condition $h_1(0) =0$ is preserved under inversion. 
The restrictions of $h$ to compact subsets of $\BB C\setminus \{0\}$ a.s.\ determine $h$ since the $\sigma$-algebras $\bigcap_{r > 0} \sigma(h|_{B_r(0)})$ and $\bigcap_{R >0} \sigma(h|_{\BB C\setminus B_R(0)})$ are trivial. 
\end{proof}

\subsection{Harmonic exposure of boundary intervals}
\label{sec-disconnect-set}

In this section we will prove some deterministic geometric lemmas for general classes of domains in $\BB C$. These lemmas will eventually be applied to the complement of a filled $D_h$-metric ball in Sections~\ref{sec-confluence} and~\ref{sec-one-geodesic}. Since we have a rather poor understanding of the geometry of the boundary of such a filled metric ball, it is important that the bounds are uniform over all possible domains satisfying certain mild constraints. 
The reader may wish to skip this subsection on a first read and refer back to the estimates as they are used. 
We first prove a universal bound to the effect that most boundary arcs of a simply connected planar domain $U$ containing zero can be disconnected from $\infty$ by a small set. 
 
\begin{lem} \label{lem-disconnect-set}
There is a universal constant $A>0$ such that the following is true. 
Let $U \subset\BB C$ be a bounded simply connected domain containing 0 and view $\bdy U$ as a collection of prime ends. 
Also let $n\in\BB N$ and let $\mcl I$ be a collection of $\# \mcl I = n$ arcs of $\bdy U$ which intersect only at their endpoints. 
Then for $C> 0$, the number of arcs $I\in\mcl I$ which can be disconnected from 0 in $U$ by a path in $U$ of Euclidean diameter at most $C n^{-1/2}$ is at least $\left(1 - A C^{-2} \op{area}(U) \right) n$. 
\end{lem}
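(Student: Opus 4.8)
The plan is to reduce the statement to a single geometric inequality and then finish with Markov's inequality. Concretely, I will construct, for each $I\in\mcl I$, a crosscut $\beta_I$ of $U$ which separates $I$ from $0$ and which satisfies
\eqbn
\sum_{I\in\mcl I}\op{diam}(\beta_I)^2 \leq A\,\op{area}(U)
\eqen
for a universal constant $A>0$. Granting this, for any $C>0$ the number of $I\in\mcl I$ with $\op{diam}(\beta_I)>Cn^{-1/2}$ is at most $(C^2/n)^{-1}\sum_I\op{diam}(\beta_I)^2\leq AC^{-2}\op{area}(U)\,n$, and each of the remaining (at least $(1-AC^{-2}\op{area}(U))n$) arcs $I$ is disconnected from $0$ in $U$ by the path $\beta_I$, whose Euclidean diameter is at most $Cn^{-1/2}$. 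This is exactly the assertion of the lemma, so everything comes down to constructing the $\beta_I$ and proving the displayed bound; note in particular that the exponents $n^{-1/2}$ and $C^{-2}$ in the statement are precisely what this pigeonholing produces.

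To build the $\beta_I$, fix (Riemann mapping theorem) a conformal map $\phi:\BB D\to U$ with $\phi(0)=0$; by Carath\'eodory's prime end theorem it induces a bijection between $\bdy U$ (viewed as prime ends) and $\bdy\BB D$, so each $I\in\mcl I$ pulls back to an arc $J_I\subset\bdy\BB D$. Since the $I$'s meet only at their endpoints, so do the $J_I$'s, and in particular $\sum_I|J_I|\leq 2\pi$; consequently all but a bounded (universal) number of the arcs satisfy $|J_I|\leq\pi/2$. For such an arc, let $\gamma_I$ be the hyperbolic geodesic of $\BB D$ joining the two endpoints of $J_I$: since $|J_I|<\pi$, the point $0$ lies strictly on the side of $\gamma_I$ opposite $J_I$, so $\gamma_I$ separates $J_I$ from $0$ in $\BB D$ and $\beta_I:=\phi(\gamma_I)$ is a crosscut of $U$ separating $I$ from $0$. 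Moreover $\gamma_I$ is contained in the ``Carleson wedge'' $Q_I:=\{re^{i\theta}:e^{i\theta}\in J_I,\ r>1-c_0|J_I|\}$ for a universal $c_0>0$, and because the $J_I$ occupy pairwise disjoint angular sectors of $\bdy\BB D$, the sets $Q_I$ are pairwise disjoint subsets of $\BB D$.

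The heart of the argument is the uniform estimate $\op{diam}(\beta_I)^2\lesssim\op{area}(\phi(Q_I))$: once this is known, the displayed bound is immediate, since $\sum_I\op{area}(\phi(Q_I))\leq\op{area}(\phi(\BB D))=\op{area}(U)$ by disjointness of the $Q_I$. To prove the estimate I would use standard distortion theory for conformal maps. By the Koebe distortion and one-quarter theorems, $|\phi'(z)|\,\op{dist}(z,\bdy\BB D)\asymp\op{dist}(\phi(z),\bdy U)$ for all $z\in\BB D$, and the image under $\phi$ of a bounded hyperbolic neighbourhood of the ``tip'' $z_I$ of $Q_I$ contains a Euclidean ball of radius $\asymp\op{dist}(\phi(z_I),\bdy U)$ contained in $\phi(Q_I)$; hence $\op{dist}(\phi(z_I),\bdy U)^2\lesssim\op{area}(\phi(Q_I))$. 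On the other hand, by the Gehring--Hayman inequality (the $\phi$-image of a hyperbolic geodesic has length comparable, up to a universal constant, to that of the shortest curve in $U$ joining the corresponding prime ends) together with the distortion bound, $\op{diam}(\beta_I)\leq\op{len}(\beta_I)\lesssim\op{dist}(\phi(z_I),\bdy U)$. Chaining the two inequalities gives $\op{diam}(\beta_I)^2\lesssim\op{area}(\phi(Q_I))$. Finally, the boundedly many arcs with $|J_I|>\pi/2$ are disposed of by a separate elementary argument; their total contribution to the count is absorbed by enlarging $A$, using that the conclusion is vacuous unless $C^2>A\,\op{area}(U)$.

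The step I expect to be the main obstacle is the estimate $\op{diam}(\beta_I)\lesssim\sqrt{\op{area}(\phi(Q_I))}$. The delicate point is the behaviour of $\phi$ near $\bdy\BB D$, exactly where $\gamma_I$ has its endpoints and where the naive distortion bounds degenerate; this is why Gehring--Hayman-type control on the length of the image of a hyperbolic geodesic (rather than, say, the image of a radial segment) is essential. One also has to check, uniformly in $I$, that the Koebe ball around $\phi(z_I)$ genuinely sits inside $\phi(Q_I)$, which constrains the choice of the constant $c_0$. The bookkeeping for the few exceptional large arcs is a minor but genuine nuisance.
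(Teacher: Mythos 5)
Your argument is correct in outline and its bookkeeping (Carleson wedges, Koebe distortion, the Markov/Chebyshev step with thresholds $Cn^{-1/2}$ and $C^{-2}$) matches the paper's closely; what differs is how the separating crosscut is produced. You use the $\phi$-image of the hyperbolic geodesic of $\BB D$ spanning $\phi^{-1}(I)$ and bound its size with the Gehring--Hayman theorem. The paper instead takes a half-disk $H_I\subset\BB D$ whose boundary arc contains $\phi^{-1}(I)$ flanked by two arcs $J_I^\pm$ of length $r_I$, runs a Brownian motion from $\phi(w_I)$ (with $w_I=(1-r_I)u_I$, $u_I$ the midpoint of $\phi^{-1}(I)$), and uses the Beurling estimate to show that with positive probability the trajectory reaches $\phi(J_I^-)$ and then $\phi(J_I^+)$ within diameter $O(\op{dist}(\phi(w_I),\bdy U))$; the concatenated trajectory is the crosscut. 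Both approaches then close with the same Koebe comparison $\op{dist}(\phi(w_I),\bdy U)^2\lesssim\int_{B_I}|\phi'|^2$ and disjointness of the $B_I$'s (your $Q_I$'s). The paper's route is the more self-contained given what it already uses elsewhere (Beurling appears repeatedly in the paper; Gehring--Hayman does not), while yours avoids the probabilistic detour at the cost of invoking a less elementary theorem.

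Two points worth tightening, the first of which you already flag as the main risk. The bound $\op{diam}(\phi(\gamma_I))\lesssim\op{dist}(\phi(z_I),\bdy U)$ does not follow from a bare citation of Gehring--Hayman: that theorem compares the length of $\phi(\gamma_I)$ to the infimum of lengths of $\phi$-images of curves in $\BB D$ with the same endpoints, so you still need to exhibit a comparison curve $\sigma$ from $a_I$ to $b_I$ with $\op{len}(\phi(\sigma))\lesssim\op{dist}(\phi(z_I),\bdy U)$. The natural choice is the concatenation of the radial segments $[(1-|J_I|)a_I,a_I]$ and $[(1-|J_I|)b_I,b_I]$ with the short circular arc at radius $1-|J_I|$ joining their inner endpoints; the arc's image is controlled by Koebe distortion at $z_I$, and the two radial pieces by the radial form of Gehring--Hayman. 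This is standard, but it is exactly the place where the paper's Beurling argument substitutes a single estimate for a chain of distortion/geodesic lemmas. Second, the disposal of the $O(1)$ arcs with $|J_I|>\pi/2$ needs a slightly different justification than ``the conclusion is vacuous unless $C^2>A\op{area}(U)$'': that inequality gives $C^{-2}\op{area}(U)n<n/A$, which does not absorb a constant. The clean observation is that if $Cn^{-1/2}\geq \op{inrad}(U)$ then every arc is disconnected from $0$ by a small circle about the origin and there is nothing to prove; in the complementary regime $C^{-2}n\op{inrad}(U)^2\geq 1$, and since $\op{area}(U)\geq\pi\op{inrad}(U)^2$ the constant number of exceptional arcs is $O(C^{-2}\op{area}(U)n)$ as needed.
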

\begin{proof}
See Figure~\ref{fig-disconnect-set} for an illustration of the proof. 
Let $\phi : \BB D\rta U$ be a conformal map with $\phi(0) = 0$. For $I\in\mcl I$, let $r_I$ be the length of the arc $\phi^{-1}(I)$. Since the arcs $\phi^{-1}(I)$ intersect only at their endpoints, there can be at most 8 such arcs with $r_I \geq \pi/4$. By removing these arcs from our collection, we can assume without loss of generality that $r_I \leq \pi/4$ for each $I\in\mcl I$.

\begin{figure}[t!]
 \begin{center}
\includegraphics[scale=.85]{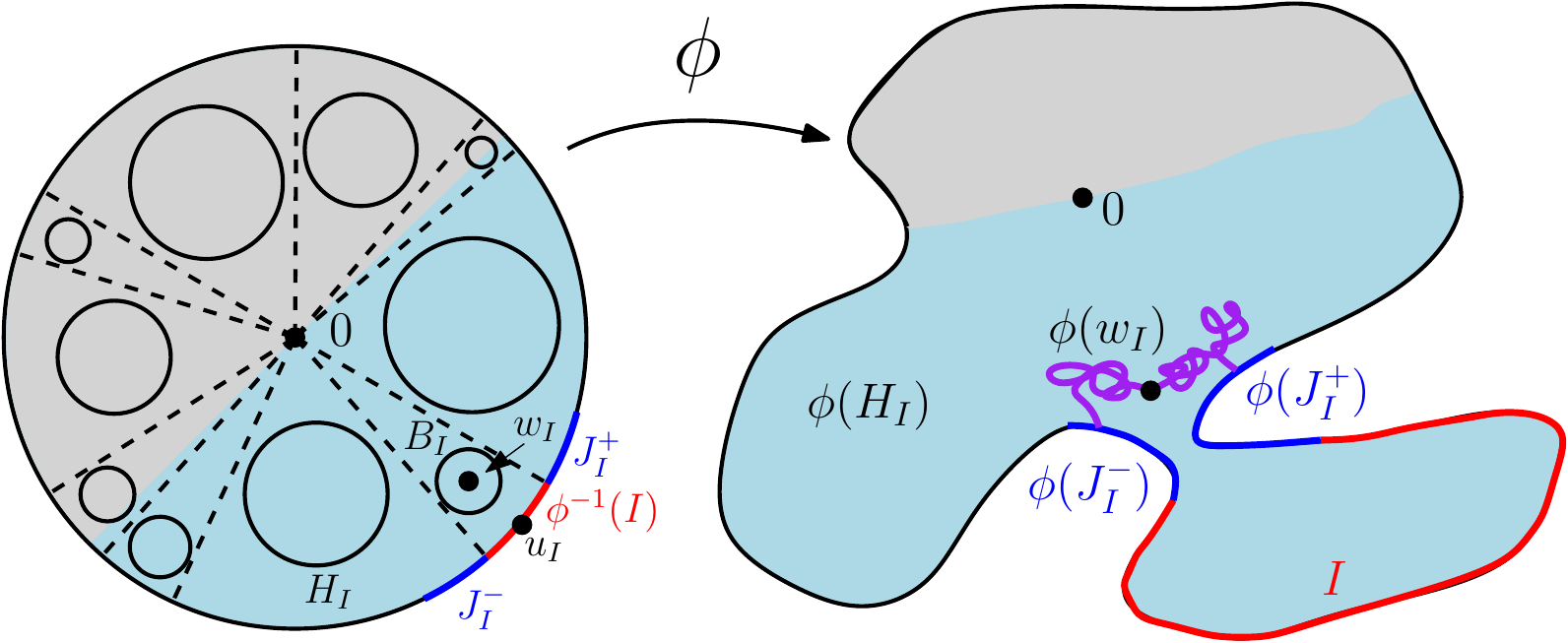}
\vspace{-0.01\textheight}
\caption{Illustration of the proof of Lemma~\ref{lem-disconnect-set}. For each arc $I\in\mcl I$, we consider a point $w_I \in\BB D$ whose distance to $\phi^{-1}(I)$ is proportional to the length of $\phi^{-1}(I)$. A Brownian motion started from $w_I$ has uniformly positive probability to first exit the half-disk $H_I$ in each of the two arcs $J_I^\pm$ on either side of $\phi^{-1}(I)$ before getting too far away from $w_I$. Using the Beurling estimate, this shows that $I$ can be disconnected from 0 in $\phi(H_I)$ by the union of two paths whose Euclidean diameters are comparable to the Euclidean distance from $\phi(w_I)$ to $\bdy U$. By distortion estimates, this distance is comparable to $|\phi'(w_I)| (1-|w_I|)$. There cannot be too many intervals for which this last quantity is large since $\int_{\BB D} |\phi'(z)|^2 \,d^2 z = \op{area}(U)$. 
}\label{fig-disconnect-set}
\end{center}
\vspace{-1em}
\end{figure} 

For $I\in\mcl I$, let $J_I^-$ (resp.\ $J_I^+$) be the arc of $\bdy\BB D$ of length $r_I$ which comes immediately before (resp.\ after) $\phi^{-1}(I)$ in the counterclockwise direction. 
Also let $u_I \in \bdy\BB D$ be the center of $\phi^{-1}(I)$ and let $H_I$ be the half-disk of $\BB D$ with the property that $u_I$ is the center of the arc $\bdy\BB D\cap\bdy H_I$. 
Since $r_I \leq  \pi/4$, we have that $J_I^-$, $\phi^{-1}(I)$, and $J_I^+$ intersect only at their endpoints and are contained in $\bdy H_I$. 

Define $w_I := (1- r_I) u_I$. 
The probability that a Brownian motion started from $w_I$ exits $H_I$ at a point in $ J_I^- $ is at least some universal constant $a >0$. By the conformal invariance of Brownian motion, a Brownian motion started from $\phi(w_I)$ has probability at least $a$ to exit $\phi(H_I)$ at a point in $\phi(J_I^-)$. On the other hand, the Beurling estimate shows that for $R > 1$, the probability that such a Brownian motion travels distance $R \op{dist}(\phi(w_I) , \bdy U ) $ without hitting $\bdy U$ is at most a universal constant times $ R^{-1/2}$. 
If we take $R$ to be a sufficiently large universal constant, then this last quantity is smaller than $a$, so with positive probability, a Brownian motion started from $\phi(w_I)$ exits $\phi(H_I)$ at a point of $\phi(J_I^-)$, and does so before it leaves the ball of radius $R \op{dist}(\phi(w_I) , \bdy U ) $ centered at $\phi(w_I)$. 
Consequently, there is a path in $\phi(H_I)$ from $\phi(w_I)$ to $\phi(J_I^-)$ with Euclidean diameter at most $2 R \op{dist}(\phi(w_I) , \bdy U )$ which is disjoint from $\phi(H_I)$. 
Symmetrically, the same statement holds with $J_I^+$ in place of $J_I^-$.

Concatenating the paths for $J_I^-$ and $J_I^+$ gives a path in $  \phi(H_I)$ with Euclidean diameter at most $4 R \op{dist}(\phi(w_I) , \bdy U )$ from a point of $\phi(J_I^-)$ to a point of $\phi(J_I^+)$. 
This path necessarily divides $U$ into at least two connected components. 
Since the path is disjoint from $\phi(H_I)$ and $0\in\bdy \phi(H_I)$, it must disconnect 0 from $I$ in $U$. Consequently, 
\eqb  \label{eqn-disconnect-path}
\text{$I$ is disconnected from 0 in $U$ by a path of diameter $\leq 4 R \op{dist}(\phi(w_I) , \bdy U)$}.
\eqe

We will now show that $\op{dist}(\phi(w_I) , \bdy U)$ is small for most $I\in\mcl I$. 
Set $B_I := B_{r_I/100}(w_I)$.  
We observe that $B_I$ is contained in the slice of $\BB D$ bounded by $\phi^{-1}(I)$ and the line segments from the two endpoints of $\phi^{-1}(I)$ to $0$, so since the arcs $\phi(I)$ for $I\in\mcl I$ are disjoint, the balls $B_I$ are disjoint. 
For $I\in\mcl I$, one has $\op{dist}(w_I , \bdy \BB D) = r_I$. 
By the Koebe distortion theorem and the Koebe quarter theorem, there is a universal constant $A_0  > 0$ such that 
\eqb \label{eqn-use-koebe}
\inf_{z \in B_I} |\phi'(z)| \geq A_0^{-1}  |\phi'(w_I)| \quad \text{and} \quad  \op{dist}\left( \phi(w_I) , \bdy U \right) \leq A_0 |\phi'(w_I)| r_I  .
\eqe
Applying the first inequality, then the second inequality, in~\eqref{eqn-use-koebe} shows that
\eqb
\op{area}(U)  =  
\int_{\BB D} |\phi'(z)|^2 \,d^2 z
\geq \frac{1}{A_0^2} \sum_{I\in\mcl I} |\phi'(w_I)|^2  \frac{\pi r_I^2}{100^2} 
\geq \frac{\pi}{100^2 A_0^4} \sum_{I\in\mcl I}   \op{dist}\left( \phi(w_I) , \bdy U \right)^2    .
\eqe
Since $r_I \leq 1$ for all $I$, we can apply the Chebyshev inequality to get that for $C>0$, 
\eqb \label{eqn-big-dist-count}
\#\left\{ I \in \mcl I :  \op{dist}\left( \phi(w_I) , \bdy U \right) \geq \frac{C}{4 R} n^{-1/2} \right\} \leq A C^{-2} \op{area}(U) n 
\eqe
for  $A =   100^2 \pi^{-1} A_0^4 \times 16 R^2$. 
Combining~\eqref{eqn-disconnect-path} and~\eqref{eqn-big-dist-count} concludes the proof. 
\end{proof}

In the next several lemmas, we will use the following notation. 
For a domain $U\subset\BB C$ containing~0, we define
\eqb 
\op{inrad}(U) := \sup\{r  >0: B_r(0) \subset U\} 
\quad\text{and} \quad \op{outrad}(U) := \inf\left\{R  > 0 : U \subset B_R(0) \right\} .
\eqe
Similarly, for a compact connected set $K\subset\BB C$ containing 0, we define
\eqb \label{eqn-outrad-inrad}
\op{inrad}(K) := \sup\left\{ r > 0 : B_r(0) \subset K  \right\} 
\quad \text{and} \quad
\op{outrad}(K) := \inf\left\{ R > 0 :  K  \subset B_R(0) \right\} .
\eqe

By inverting, we obtain an analog of Lemma~\ref{lem-disconnect-set-infty} when we want to disconnect boundary arcs from $\infty$ instead of from 0.

\begin{lem} \label{lem-disconnect-set-infty}
There is a universal constant $A>0$ such that the following is true. Let $K\subset\BB C$ be a compact connected set which contains a neighborhood of 0 and whose complement is connected.  
Also let $n\in\BB N$ and let $\mcl I$ be a collection of $\# \mcl I = n$ arcs of $\bdy K$ which intersect only at their endpoints. 
Then for $C> 0$, the number of arcs $I\in\mcl I$ which can be disconnected from $\infty$ in $\BB C\setminus K$ by a path in $\BB C\setminus K$ of Euclidean diameter at most $C n^{-1/2}$ is at least 
\eqb
\left(1 - \frac{A}{C^2} \frac{\op{outrad}(K)^4}{\op{inrad}(K)^2} \right) n
\eqe  
\end{lem}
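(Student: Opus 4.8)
The plan is to reduce Lemma~\ref{lem-disconnect-set-infty} to Lemma~\ref{lem-disconnect-set} by applying the inversion map $\iota(z) = 1/z$, which sends $\BB C\setminus K$ to a bounded simply connected domain (together with the point $0 = \iota(\infty)$) and turns the problem of disconnecting boundary arcs from $\infty$ into the problem of disconnecting boundary arcs from $0$. The two issues to control are: (a) inversion does not preserve Euclidean diameters, so I need quantitative distortion bounds for $\iota$ on the relevant region, and (b) I need to bound the area of the image domain in terms of $\op{outrad}(K)$ and $\op{inrad}(K)$.

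First I would set up the inversion carefully. Since $K$ contains a neighborhood of $0$, say $B_\rho(0)\subset K$ with $\rho = \op{inrad}(K)$, and $K\subset B_R(0)$ with $R = \op{outrad}(K)$, the set $U := \iota(\BB C\setminus K) \cup \{0\}$ is a bounded simply connected domain containing $0$, and it is contained in the annulus $\BB A_{0, 1/\rho}(0)$, hence $\op{area}(U) \leq \pi/\rho^2 = \pi \op{inrad}(K)^{-2}$. Moreover, every point of $\BB C\setminus K$ lies outside $B_\rho(0)$, wait — more precisely, I should note that $\bdy K \subset \ol{B_R(0)}\setminus B_\rho(0)$, so for points $z$ on or near $\bdy K$ we have $\rho \leq |z| \leq R$ (approximately), and there the derivative of $\iota$ satisfies $|\iota'(z)| = |z|^{-2} \in [R^{-2}, \rho^{-2}]$. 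Thus on the region where the disconnecting paths will live, $\iota$ distorts Euclidean distances by a factor between $R^{-2}$ and $\rho^{-2}$, i.e.\ by a bounded amount relative to the ratio $R^2/\rho^2$.

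Next I would transport the collection $\mcl I$ of arcs: the images $\iota(I)$ for $I\in\mcl I$ form a collection of $n$ arcs of $\bdy U$ (viewed as prime ends) intersecting only at their endpoints. Apply Lemma~\ref{lem-disconnect-set} to $U$ and $\{\iota(I)\}$ with a parameter $C' > 0$ to be chosen: at least $\left(1 - A' C'^{-2}\op{area}(U)\right) n \geq \left(1 - A' C'^{-2} \pi \op{inrad}(K)^{-2}\right) n$ of the arcs $\iota(I)$ can be disconnected from $0$ in $U$ by a path in $U$ of Euclidean diameter at most $C' n^{-1/2}$, where $A'$ is the universal constant from Lemma~\ref{lem-disconnect-set}. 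For each such arc, pull the disconnecting path back by $\iota^{-1} = \iota$. The key point is that the pulled-back path lies in $\BB C\setminus K$, disconnects $I$ from $\infty$ there (because $\iota$ is a homeomorphism sending $0\mapsto\infty$), and has Euclidean diameter at most $\sup|\iota'| $ over the relevant region times $C' n^{-1/2}$. Here I must be slightly careful: a path of small diameter disconnecting $\iota(I)$ from $0$ in $U$ need not stay away from $0$ a priori, but since it disconnects $\iota(I)$ from $0$ it cannot contain $0$, and being a connected set of Euclidean diameter at most $C'n^{-1/2}$ that disconnects the arc from $0$, it lies in $\ol{B_{1/\rho}(0)}$; on a neighborhood of this path $|\iota'|$ is controlled — more precisely one uses that the path lies within the closed disk of radius $1/\rho$ but is disjoint from a neighborhood of $0$ determined by how close $\iota(I)$ is to $0$. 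Rather than track this delicately, the cleanest route: the disconnecting path, together with the arc $\iota(I)\subset\bdy U$, bounds a region of $U$ not containing $0$; this region is contained in $\iota(\BB A_{\rho',R}(0))$ for the appropriate inner radius, and on the corresponding annulus $|\iota'|\leq \rho'^{-2}$. Choosing $C' = C R^{-2}$ — i.e.\ so that the pulled-back diameter is at most $C n^{-1/2}$ after accounting for the worst-case factor $\rho^{-2}/R^{-2}\cdot$ hmm — let me just say: take $C' = C \cdot (\text{worst distortion factor})^{-1}$ so that the image paths have diameter $\leq C n^{-1/2}$, which forces $A' C'^{-2}\pi \op{inrad}(K)^{-2} = A' C^{-2} (\text{factor})^2 \pi \op{inrad}(K)^{-2}$, and the distortion factor squared contributes the $\op{outrad}(K)^4/\op{inrad}(K)^2\cdot$(stuff); bookkeeping the powers of $R$ and $\rho$ gives exactly the bound $\left(1 - A C^{-2}\op{outrad}(K)^4\op{inrad}(K)^{-2}\right) n$ for a universal $A$, after also subtracting the at most $8$ (or a bounded number of) arcs discarded in Lemma~\ref{lem-disconnect-set}'s proof, absorbed into $A$.

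The main obstacle is the bookkeeping of the distortion: I need to make sure that when I pull back a small-diameter path in $U$ disconnecting an arc from $0$, the relevant region over which I estimate $|\iota'|$ is genuinely bounded away from $0$ by a radius comparable to $1/\op{outrad}(K)$ (not worse), so that the distortion factor is $\op{outrad}(K)^2$ rather than something unbounded, and then squaring and multiplying by $\op{area}(U)^{} = O(\op{inrad}(K)^{-2})$ produces precisely the stated exponent pattern $\op{outrad}(K)^4/\op{inrad}(K)^2$. This is where I'd want to argue: since $\bdy K\subset \ol{B_R(0)}$, every boundary arc $I$ has $\iota(I)$ at distance $\geq 1/R$ from $0$, so any path disconnecting $\iota(I)$ from $0$ meets $\bdy B_{1/R}(0)$ — wait, it need not, it could be on the far side — more simply, the disconnecting path together with $\iota(I)$ and possibly part of $\bdy U$ encloses $0$, and the enclosed region, being disjoint from the path but having the path on its boundary, pushes the path to within distance comparable to its own diameter of... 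Actually the cleanest statement is just: the path has diameter $\leq C' n^{-1/2}$ and lies in $U\subset B_{1/\rho}(0)$, so trivially $|\iota'| \leq \rho^{-2} \cdot$ — no. Let me settle it this way: $|\iota'(z)| = |z|^{-2}$, and on the path $z$ ranges over $U\subset B_{1/\rho}(0)$, but also the path disconnects $\iota(I)$ from $0$ where $\iota(I)$ is within distance $1/\rho$ of ... I will handle this in the writeup by noting that the only problematic case (path very close to $0$) can be excluded because such a path cannot disconnect an arc $\iota(I)$ lying in $\ol{B_{1/\rho}(0)}\setminus B_{1/R}(0)$ from $0$ without having diameter at least $1/R$, and for those finitely many arcs with $\iota(I)$ closer to $0$ we again absorb them into the universal constant. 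Modulo this routine-but-fiddly distortion bookkeeping, the proof is immediate from Lemma~\ref{lem-disconnect-set}.
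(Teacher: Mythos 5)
Your route is the same as the paper's: invert by $\phi(z)=1/z$, apply Lemma~\ref{lem-disconnect-set} to $U:=\phi(\BB C\setminus K)\cup\{0\}$ with $\op{area}(U)\le\pi\op{inrad}(K)^{-2}$, and pull back. You also correctly flag the one real subtlety, namely that $|\phi'(w)|=|w|^{-2}$ is unbounded near $0=\phi(\infty)$, so the distortion of a disconnecting path in $U$ under $\phi^{-1}=\phi$ is not controlled unless the path stays away from $0$. The paper's own proof is terse on exactly this point (the stated inequality $\inf_{z\in\BB C\setminus K}|\phi'(z)|\ge\op{outrad}(K)^{-2}$ is false as written, since $|\phi'(z)|\to 0$ as $|z|\to\infty$; what is actually used is that the relevant subsets of $\BB C\setminus K$ lie in $\ol{B_{\op{outrad}(K)}(0)}$).

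However, the resolution you sketch does not work: you propose to argue that a path very close to $0$ ``cannot disconnect an arc $\phi(I)$ lying in $\ol{B_{1/\rho}(0)}\setminus B_{1/R}(0)$ from $0$ without having diameter at least $1/R$.'' This is not true. A tiny closed loop around $0$ contained in $B_{1/R}(0)\subset U$ disconnects $0$ from \emph{every} prime end of $U$, including $\phi(I)$, and such a loop has small diameter but pulls back under $\phi$ to a curve near $\infty$ of enormous diameter. So it is not the case that every small disconnecting path avoids a neighborhood of $0$; you must argue that \emph{some} disconnecting path does. The fix that matches the paper's intent: the paths produced in the proof of Lemma~\ref{lem-disconnect-set} are constructed with both endpoints on $\bdy U$ (on the arcs $\phi(J_I^\pm)$ adjacent to $\phi^{-1}(I)$), and $\bdy U=\phi(\bdy K)\subset \ol{B_{1/\rho}(0)}\setminus B_{1/R}(0)$ where $R=\op{outrad}(K)$, $\rho=\op{inrad}(K)$. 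Hence a disconnecting path of diameter $d'$ from Lemma~\ref{lem-disconnect-set} lies outside $B_{1/R-d'}(0)$. Taking $C'=C/(4R^2)$ in Lemma~\ref{lem-disconnect-set}, if $Cn^{-1/2}\le 2R$ then $d'\le C'n^{-1/2}\le 1/(2R)$, so the path avoids $B_{1/(2R)}(0)$; then $|1/z-1/w|=|z-w|/(|z||w|)\le (2R)^2|z-w|$ on the path, so the pulled-back path has diameter at most $4R^2 d'\le Cn^{-1/2}$, and Lemma~\ref{lem-disconnect-set} gives at least $\bigl(1-16\pi A_0 C^{-2}R^4\rho^{-2}\bigr)n$ such arcs, which is the claim with $A=16\pi A_0$. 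In the remaining case $Cn^{-1/2}>2R$, the statement is trivial: a circle $\bdy B_r(0)$ with $R<r<Cn^{-1/2}/2$ lies in $\BB C\setminus K$ and disconnects \emph{every} arc of $\bdy K$ from $\infty$ with diameter $2r<Cn^{-1/2}$. This short case split is the ``bookkeeping'' you deferred, and it is not optional, since your stated shortcut is false.
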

\begin{proof}
Let $  U$ be the image of $\BB C\setminus K$ under the map $\phi : z\mapsto 1/z$.
Then $U$ is contained in the Euclidean ball of radius $1/\op{inrad}(K)$ centered at zero, so $\op{area}( U) \leq \pi / \op{inrad}(K)^2$. Since $\inf_{z\in \BB C\setminus K} |\phi'(z)| \geq 1/\op{outrad}(K)^2$, the diameter of each subset of $ \BB C \setminus K$ is at most $\op{outrad}(K)^2$ times the diameter of the corresponding subset of $U$. 
The statement of the lemma therefore follows from Lemma~\ref{lem-disconnect-set} applied to $ U$, with $C/\op{outrad}(K)^2$ in place of $C$.  
\end{proof}

We next prove two lemmas to the effect that a collection of arcs which cover $\bdy U$ (or $\bdy K$) must include at least one arc which is sufficiently ``exposed", in a certain quantitative sense. These lemmas will be used in the proof of Lemma~\ref{lem-one-point}. As in the previous lemmas, it is crucial that the bounds be uniform over all choices of $U$ (or $K$). We will need a particular way of measuring distances in $U$ which is slightly different from the ordinary Euclidean distance (see Figure~\ref{fig-exposed-arc} for an illustration of why this is needed). 

Let $U\subset\BB C$ be a simply connected domain and view $\bdy U$ as a set of prime ends. 
If $X\subset U$, we define the \emph{prime end closure} $\op{Cl}'(X)$ to be the set of points in $z\in U\cup\bdy  U$ with the following property: if $\phi : U\rta \BB D$ is a conformal map, then $\phi(z)$ lies in $\ol{\phi(X)}$. 
 For $z,w\in U\cup \bdy U $ we define
\eqb \label{eqn-d^U-def} 
d^U(z,w) = \inf\left\{\op{diam}(X) : \text{$X$ is a connected subset of $U$ with $z,w\in \op{Cl}'(X) $}\right\} ,
\eqe
where here $\op{diam}$ denotes the Euclidean diameter. Then $d^U$ is a metric on $U \cup \bdy U$ which is bounded below by the Euclidean metric on $\BB C$ restricted to $U \cup \bdy U$ and bounded above by the internal Euclidean metric on $U \cup \bdy U$. Note that $d^U$ is not a length metric. 
We similarly define $\op{Cl}'(\cdot)$ and $d^U$ in the case when $U$ is an unbounded open subset of $\BB C$ such that $\BB C\setminus U$ is compact, in which case we use a conformal map to $\BB C\setminus \BB D$ instead of a conformal map to $\BB D$.

\begin{figure}[t!]
 \begin{center}
\includegraphics[scale=.75]{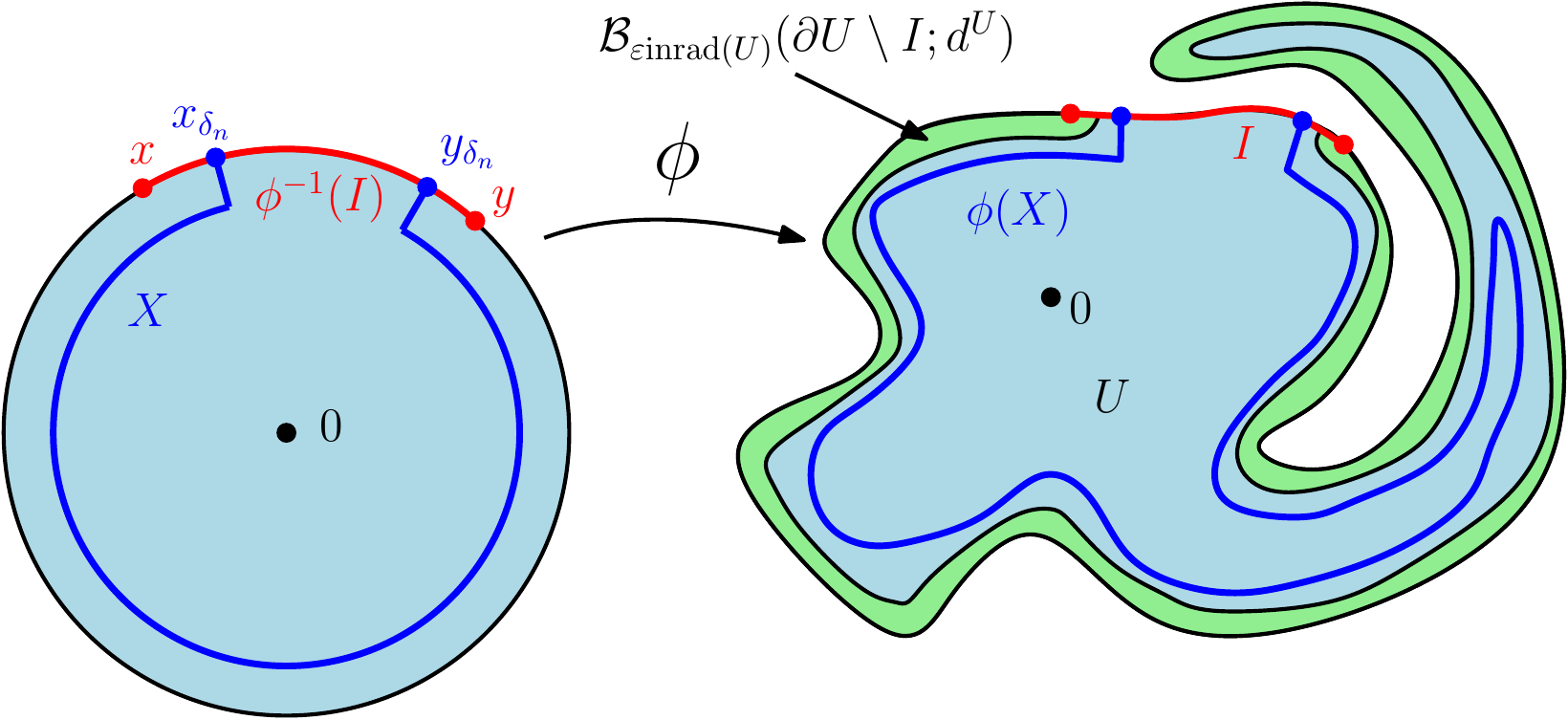}
\vspace{-0.01\textheight}
\caption{Illustration of the proof of Lemma~\ref{lem-exposed-arc}. We start by choosing $I\in\mcl I$ such that the length of $\phi^{-1}(I)$ is bounded below by $2\pi/n$ (i.e., the harmonic measure of $I$ from 0 in $U$ is at least $1/n$). We then build a blue ``shield" $X$ in $\BB D$ which separates 0 and part of $\phi^{-1}(I)$ from the rest of $\bdy\BB D$. We show using distortion estimates that the image of this shield under $\phi$ lies at uniformly positive $d^U$-distance from $\bdy U\setminus I$. Hence for a small enough $\ep  >0$, $\phi(X)$ separates $\mcl B_{\ep \op{inrad}(U)}(\bdy U\setminus I ; d^U)$ from 0 and from at least one point of $I$. 
The right side of the figure shows why we use $d^U$ instead of Euclidean distance: the illustrated domain $U$ has a long ``arm" which is very close to $I$ with respect to the ambient Euclidean distance, but it not close to $I$ with respect to $d^U$. 
}\label{fig-exposed-arc}
\end{center}
\vspace{-1em}
\end{figure}

\begin{lem} \label{lem-exposed-arc}  
Let $U \subset \BB C$ be a bounded simply connected domain containing 0 and view $\bdy U$ as a collection of prime ends. 
Let $n\in\BB N$ and let $\mcl I$ be a collection of $\# \mcl I = n$ arcs of $\bdy U$ whose union is all of $\bdy U$ (not necessarily disjoint).  
There exists $\ep > 0$ depending only on $n$ and the ratio $\op{outrad}(U) / \op{inrad}(U)$ such that for any choice of $U$ and $\mcl I$, there exists $I\in\mcl I$ such that the following is true. The arc $I$ is \emph{not} disconnected from 0 in $U$ by the $d^U$-neighborhood $\mcl B_{\ep \op{inrad}(U)}(\bdy U\setminus I ; d^U)$. 
\end{lem}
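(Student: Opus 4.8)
### Plan for the proof of Lemma~\ref{lem-exposed-arc}

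The plan is to transfer everything to the unit disk via a conformal map $\phi : \BB D \to U$ with $\phi(0) = 0$, and to work with harmonic measure. Since the $n$ arcs of $\mcl I$ cover $\bdy U$, the arcs $\phi^{-1}(I)$ cover $\bdy\BB D$, so at least one of them, call it $\phi^{-1}(I_0)$, has length at least $2\pi/n$; equivalently, $I_0$ has harmonic measure at least $1/n$ seen from $0$ in $U$. First I would fix such an $I_0$ and reduce to showing that $\phi^{-1}(I_0)$ contains a sub-arc of definite length (say $\geq \pi/(2n)$) whose "cap" region in $\BB D$ can be shielded: concretely, I build a connected set $X \subset \BB D$ — a short circular arc near $\bdy\BB D$ together with two radial segments down to it — which, inside $\BB D$, separates $0$ together with (the prime end corresponding to) the midpoint of that sub-arc from all of $\bdy\BB D \setminus \phi^{-1}(I_0)$. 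This $X$ lies at Euclidean distance from $\bdy\BB D \setminus \phi^{-1}(I_0)$ bounded below in terms of $n$ only.

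Next I would push $X$ forward by $\phi$. The image $\phi(X)$ is a connected subset of $U$ that separates $0$ and one prime end of $I_0$ from $\bdy U \setminus I_0$. The key quantitative point is to lower-bound the $d^U$-distance from $\phi(X)$ to $\bdy U \setminus I_0$. Here I would use the Koebe distortion theorem: on the portion of $\BB D$ where $X$ lives (which stays at distance $\gtrsim_n 1$ from the "bad" boundary arc), $|\phi'|$ is comparable — with constants depending only on $n$ — to $|\phi'(0)|$, and by the Koebe quarter theorem $|\phi'(0)| = \op{dist}(0,\bdy U)/\!\,$(bounded factor), so $|\phi'(0)| \asymp \op{inrad}(U)$ up to a factor depending on $\op{outrad}(U)/\op{inrad}(U)$. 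Wait — more carefully: $|\phi'(0)|$ controls $\op{inrad}(U)$ from above and below only up to the ratio $\op{outrad}(U)/\op{inrad}(U)$, which is exactly why $\ep$ is allowed to depend on that ratio. Because $d^U$ is bounded below by the ordinary Euclidean distance restricted to $U\cup\bdy U$, it suffices to lower-bound the Euclidean distance between $\phi(X)$ and $\phi(\bdy\BB D\setminus\phi^{-1}(I_0))$, which by the distortion estimate is at least a constant (depending on $n$) times $|\phi'(0)| \gtrsim \ep_0 \op{inrad}(U)$ for an appropriate $\ep_0 = \ep_0(n, \op{outrad}(U)/\op{inrad}(U))$.

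Finally I would set $\ep := \ep_0/2$ (say) and conclude: the $d^U$-neighborhood $\mcl B_{\ep \op{inrad}(U)}(\bdy U\setminus I_0; d^U)$ is disjoint from $\phi(X)$. Since $\phi(X)$ is connected, separates $0$ from $\bdy U\setminus I_0$, and contains a prime end of $I_0$ in its prime-end closure, the neighborhood $\mcl B_{\ep\op{inrad}(U)}(\bdy U\setminus I_0;d^U)$ cannot disconnect $I_0$ from $0$ in $U$ — any path realizing such a disconnection would have to cross $\phi(X)$, contradicting disjointness. This gives the desired arc $I = I_0\in\mcl I$. The unbounded case ($\BB C\setminus U$ compact) follows by the same argument using a conformal map to $\BB C\setminus\ol{\BB D}$ in place of $\BB D$.

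I expect the main obstacle to be the careful bookkeeping around $d^U$ versus Euclidean distance: one must make sure the shield $\phi(X)$ genuinely separates the $d^U$-neighborhood (not just a Euclidean neighborhood) of $\bdy U\setminus I_0$, which is why the construction places $X$ so that it separates $0$ and the chosen prime end of $I_0$ from the \emph{entire} complementary boundary arc, and why one invokes only the inequality $d^U \geq |\cdot - \cdot|$ in the direction needed. The distortion estimates themselves are routine Koebe-theorem applications once the geometric configuration of $X$ is pinned down.
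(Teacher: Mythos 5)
Your construction of the shield $X$ — a circular arc at radius $1-\delta_n$ plus two short radial segments up to $\bdy\BB D$ — and the reduction to finding an arc of $\phi$-preimage length at least $2\pi/n$ both match what the paper does. The gap is in the distance estimate, and it is exactly the point the paper flagged with the right-hand picture of Figure~\ref{fig-exposed-arc}.

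Your argument lower-bounds the Euclidean distance from $\phi(X)$ to $\bdy U\setminus I_0$ using Koebe distortion, via the claims that (i) $|\phi'|$ is comparable to $|\phi'(0)|$ on $X$ with constants depending only on $n$, and (ii) this plus the $\bdy\BB D$-separation implies the image separation. Claim (i) fails on the two radial segments: their endpoints lie \emph{on} $\bdy\BB D$, where the Koebe bounds $\frac{1-|z|}{(1+|z|)^3}\leq|\phi'(z)/\phi'(0)|\leq\frac{1+|z|}{(1-|z|)^3}$ degenerate, so $|\phi'|$ is not comparable to $|\phi'(0)|$ there. More seriously, the conclusion itself — a uniform lower bound on the Euclidean distance from $\phi(X)$ to $\bdy U\setminus I_0$ — is false: a prime end in $\bdy U\setminus I_0$ can lie at the end of a long thin ``arm'' (a fjord or slit wrapping around inside $U$) that comes Euclidean-close to $\phi(x_{\delta_n})$ even though it is far away conformally. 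For instance, take $U$ to be a disk minus a thin spiral slit; the two sides of the slit are Euclidean distance $\approx 0$ apart but can lie in different arcs of $\mcl I$. This is precisely why the lemma is stated with $d^U$ and why one cannot hope to prove it by pushing a Euclidean bound through the inequality $d^U\geq|\cdot-\cdot|$: that inequality only transfers a Euclidean lower bound to a $d^U$ lower bound, and the Euclidean lower bound is what fails.

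What is missing is a separate argument for the two radial segments. The paper handles the circular-arc part of $X$ exactly as you propose (Koebe distortion plus the quarter theorem give a Euclidean distance bound $\gtrsim_n |\phi'(0)|$, since that arc sits at fixed radius $1-\delta_n$). For the radial segments it switches to a harmonic-measure argument: the harmonic measures from $0$ of $\phi(J_{\delta_n})$ and of $I_0\setminus\phi(J_{\delta_n})$ in $U\setminus\phi([(1-\delta_n)x_{\delta_n},x_{\delta_n}])$ are each at least $c\,\delta_n$, while any connected set $Y\subset B_R(0)\setminus B_{1/2}(0)$ of small enough diameter $\zeta(n,R)$ is hit by a Brownian motion from $0$ with probability less than $(c/2)\delta_n$ (a Beurling-type bound). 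If a small $Y$ witnessed $d^U(\phi(\text{segment}),\bdy U\setminus I_0)<\zeta$, it would have to disconnect $0$ from one of those two boundary arcs, contradicting the harmonic-measure bounds. This gives the $d^U$ lower bound directly, with no Euclidean intermediary. Without some such argument for the segments, your proof does not close; and since the radial segments are needed for $X$ to have the right separating topology (the circular arc alone cuts $0$ off from \emph{all} of $\bdy\BB D$, including $\phi^{-1}(I_0)$), you cannot simply drop them.
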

\begin{proof}
See Figure~\ref{fig-exposed-arc} for an illustration and outline of the proof. 
By scaling, we can assume without loss of generality that $\op{inrad}(U) = 1$. Let $R := \op{outrad}(U)$. 

Let $\phi : \BB D\rta U$ be a conformal map which fixes zero. 
Since $\#\mcl I = n$, there must exist $I\in\mcl I$ such that the $\bdy\BB D$-length of $\phi^{-1}(I)$ is at least $2\pi/n$.  
Let $x$ and $y$ be the endpoints of $I$, in counterclockwise order. 
Let $\delta_n := 1/(4n)$ and let $x_{\delta_n}$ (resp.\ $y_{\delta_n}$) be the point of $I$ which lies at $\bdy\BB D$-distance $\delta_n$ from $x$ (resp.\ $y$) in the counterclockwise (resp.\ clockwise) direction. 
Let $X$ be the union of the linear segments $[(1-\delta_n) x_{\delta_n} , x_{\delta_n}]$ and $[(1-\delta_n )y_{\delta_n}, y_{\delta_n}]$ and the counterclockwise arc $A_{\delta_n}$ of $\bdy B_{1-\delta_n}(0)$ from $y_{\delta_n}$ to $x_{\delta_n}$ ($X$ is shown in blue on the left side of Figure~\ref{fig-exposed-arc}). 

We claim that there is a constant $\ep  = \ep(n , R)   > 0$ such that 
\eqb \label{eqn-exposed-dist}
d^U\left( \phi(X  ) , \bdy U  \setminus I   \right) \geq \ep  .
\eqe
Assuming~\eqref{eqn-exposed-dist}, we conclude the proof as follows. 
The relation~\eqref{eqn-exposed-dist} implies that $\phi(X )$ is disjoint from $\mcl B_\ep(\bdy U\setminus I ; d^U)$. 
Since $\mcl B_\ep(\bdy U\setminus I ; d^U)$ is connected, by conformally mapping back to $\BB D$ it follows that there is a connected component of $U\setminus \phi(X )$ which contains 0 and which has some point of $I$ in its prime end closure. 
Therefore, $I$ is not disconnected from 0 by $\mcl B_{\ep }(\bdy U\setminus I ; d^U)$ (recall that we have assumed that $\op{inrad}(U) =1$). 
 
Let us now prove~\eqref{eqn-exposed-dist}. Since we have assumed that $\op{inrad}(U) =1$, we have $\BB D\subset U$. By the Koebe quarter theorem, $\phi^{-1}(U) = \BB D$ contains a Euclidean ball of radius at least $|(\phi^{-1})'(0)|/4$ centered at 0. Therefore we must have $|(\phi^{-1})'(0)|/4 \leq 1$ so $|\phi'(0)| \geq 1/4$.
For $u \in A_{\delta_n}$ we have $|u| = 1-\delta_n$ so by the Koebe distortion theorem $|\phi'(u)| \geq  |\phi'(0)| \delta_n /(2 - \delta_n)^3 \geq \delta_n / (4(2-\delta_n)^3)$.
By the Koebe quarter theorem applied to the restriction of $\phi$ to $B_{\delta_n}(u)$, we then obtain
\eqb \label{eqn-exposed-koebe}
\op{dist}(\phi(A_{\delta_n}) , \bdy U) \geq  \frac{\delta_n^2 }{ 16 (2 - \delta_n)^3}   .
\eqe 

We now need to deal with the small linear segments of $X$. 
Let $J_{\delta_n} \subset \phi^{-1}(I)$ be the counterclockwise arc of $\bdy\BB D$ from $x$ to $x_{\delta_n}$. 
The probability that a Brownian motion started from 0 exits $\BB D$ in $  J_{\delta_n}$ before hitting the segment $[(1-\delta_n) x_{\delta_n} , x_{\delta_n}]$ is at least a universal constant $c>0$ times $\delta_n$.  
By the conformal invariance of Brownian motion, the probability that a Brownian motion started from 0 exits $U$ in $\phi(J_{\delta_n})$ before hitting $\phi([(1-{\delta_n}) x_{\delta_n} , x_{\delta_n}])$ is at least $c  \delta_n $. 
Similarly, the probability that a Brownian motion started from 0 exits $U$ in $I\setminus \phi(J_{\delta_n})$ before hitting $\phi([(1-\delta_n) x_{\delta_n} , x_{\delta_n}])$ is at least~$c \delta_n $. 

There is a constant $\zeta = \zeta(n,R) > 0$ such that for any set $Y\subset B_R(0) \setminus B_{1/2}(0)$ with Euclidean diameter at most $\zeta$, the probability that a Brownian motion started from 0 hits $Y$ before exiting $B_R(0)$ is at most $(c/2)\delta_n$. 
The preceding paragraph implies that neither $\phi(J_{\delta_n})$ nor $I\setminus \phi(J_{\delta_n})$ can be disconnected from 0 in $U\setminus \phi([(1-\delta_n ) x_{\delta_n} , x_{\delta_n}])$ by a set of Euclidean diameter smaller than $\zeta$. 
This implies that $d^U( \phi([(1-\delta_n ) x_{\delta_n} , x_{\delta_n}]) , \bdy U\setminus I  ) \geq \zeta$. 
A symmetric argument shows that the same is true for $ \phi([(1-\delta_n ) y_{\delta_n} , y_{\delta_n}]) , \bdy U\setminus I  )$. 
Combining this with~\eqref{eqn-exposed-koebe} gives~\eqref{eqn-exposed-dist} with $\ep = \max\{\zeta , \frac{\delta_n^2  }{ 16 (2 - \delta_n)^3}  \}$.  
\end{proof}

We now deduce an analog of Lemma~\ref{lem-exposed-arc} for domains which are conformally equivalent to $\BB C\setminus \ol{\BB D}$ instead of to $\BB D$.

\begin{lem} \label{lem-exposed-arc-invert}  
Let $K \subset \BB C$ be a compact connected set whose complement is connected and view $\bdy K$ as a collection of prime ends.  Let $n\in\BB N$ and let $\mcl I$ be a collection of $\# \mcl I = n$ (not necessarily disjoint) arcs of $\bdy K$ whose union is all of $\bdy K$.  There exists $\ep > 0$ depending only on $n$ and the ratio $\op{outrad}(K) / \op{inrad}(K)$ such that for any choice of $U$ and $\mcl I$, there exists $I\in\mcl I$ such that the following is true. The arc $I$ is \emph{not} disconnected from $\infty$ in $\BB C\setminus K$ by the $d^{\BB C\setminus K}$-neighborhood $\mcl B_{\ep \op{outrad}(U)}(K\setminus I ; d^{\BB C\setminus K} )$.  
\end{lem}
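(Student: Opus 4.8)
The plan is to deduce Lemma~\ref{lem-exposed-arc-invert} from Lemma~\ref{lem-exposed-arc} by the same inversion trick used to obtain Lemma~\ref{lem-disconnect-set-infty} from Lemma~\ref{lem-disconnect-set}. Since $K$ contains a neighborhood of $0$, the inversion map $\iota : z\mapsto 1/z$ sends $\BB C\setminus K$ to a bounded simply connected domain $U := \iota(\BB C\setminus K)$ containing $0$ (note $\infty\mapsto 0$), and it identifies the prime ends of $\bdy K$ (viewed from $\BB C\setminus K$) with the prime ends of $\bdy U$. Under $\iota$, the collection $\mcl I$ of arcs of $\bdy K$ covering $\bdy K$ corresponds to a collection $\mcl I'$ of $n$ arcs of $\bdy U$ covering $\bdy U$. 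Disconnection from $\infty$ in $\BB C\setminus K$ corresponds to disconnection from $0$ in $U$, so applying Lemma~\ref{lem-exposed-arc} to $(U,\mcl I')$ will produce an arc $I'\in\mcl I'$ which is not disconnected from $0$ in $U$ by $\mcl B_{\ep'\op{inrad}(U)}(\bdy U\setminus I' ; d^U)$ for a suitable $\ep'$, and then transporting back via $\iota$ gives the arc $I\in\mcl I$ in the statement.

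The main work is in checking that the metric $d^{\BB C\setminus K}$ and the radii transform correctly. First I would record that for a compact connected set $K$ with connected complement containing a neighborhood of $0$, one has $\op{inrad}(U)\asymp 1/\op{outrad}(K)$ and $\op{outrad}(U)\asymp 1/\op{inrad}(K)$ up to universal constants (here using that $\BB D$-type inradius/outradius of $U$ around $0$ are controlled by the reciprocals of the outradius/inradius of $K$ around $0$, since $\iota$ maps $B_R(0)^c$ to $B_{1/R}(0)$ and $B_r(0)$ to $B_{1/r}(0)^c$). In particular the ratio $\op{outrad}(U)/\op{inrad}(U)$ is bounded by a universal constant times $\op{outrad}(K)/\op{inrad}(K)$, so the $\ep'$ coming out of Lemma~\ref{lem-exposed-arc} depends only on $n$ and $\op{outrad}(K)/\op{inrad}(K)$, as required. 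Second, I would compare the two metrics: since $\iota$ is conformal on $\BB C\setminus K$ with $|\iota'(z)| = |z|^{-2}$ and every point of $\BB C\setminus K$ satisfies $|z|\geq\op{inrad}(K)$, we get $|\iota'|\leq \op{inrad}(K)^{-2}$ there, so the $\iota$-image of any connected set $X\subset\BB C\setminus K$ has Euclidean diameter at most $\op{inrad}(K)^{-2}\op{diam}(X)$; combined with the definition~\eqref{eqn-d^U-def} of $d^U$ and $d^{\BB C\setminus K}$ via the respective conformal maps (which are compatible under $\iota$), this yields $d^U(\iota(z),\iota(w)) \leq \op{inrad}(K)^{-2} \, d^{\BB C\setminus K}(z,w)$ for $z,w$ in the prime-end closure. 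Hence if a path/set of $d^{\BB C\setminus K}$-diameter at most $\rho$ disconnects $I$ from $\infty$ in $\BB C\setminus K$, its image disconnects $I'$ from $0$ in $U$ and has $d^U$-diameter at most $\op{inrad}(K)^{-2}\rho$.

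Putting this together: given $n$ and $\op{outrad}(K)/\op{inrad}(K)$, let $\ep'>0$ be the constant from Lemma~\ref{lem-exposed-arc} applied with this value of $n$ and with the outrad/inrad ratio of $U$, and let $I'\in\mcl I'$ be the resulting arc, so $I'$ is not disconnected from $0$ in $U$ by $\mcl B_{\ep'\op{inrad}(U)}(\bdy U\setminus I' ; d^U)$. Set $I := \iota^{-1}(I')\in\mcl I$. If $\mcl B_{\ep\op{outrad}(K)}(K\setminus I ; d^{\BB C\setminus K})$ disconnected $I$ from $\infty$ in $\BB C\setminus K$, then its $\iota$-image would be contained in $\mcl B_{\op{inrad}(K)^{-2}\ep\op{outrad}(K)}(\bdy U\setminus I' ; d^U)$ and would disconnect $I'$ from $0$ in $U$; choosing $\ep$ small enough that $\op{inrad}(K)^{-2}\ep\op{outrad}(K) \leq \ep'\op{inrad}(U)$ — which is possible with $\ep$ depending only on $n$ and $\op{outrad}(K)/\op{inrad}(K)$, using $\op{inrad}(U)\asymp\op{outrad}(K)^{-1}$ so that the requirement reads $\ep \lesssim \ep' \op{inrad}(K)^2/\op{outrad}(K)^2$ times a universal constant — contradicts the choice of $I'$. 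This proves the lemma. The only mildly delicate point, and the one I would write out most carefully, is the verification that the prime-end closures and the conformal maps used to define $d^U$ and $d^{\BB C\setminus K}$ are genuinely intertwined by $\iota$ (so that the diameter comparison passes to the metrics), but this is immediate from the fact that $\iota$ is a conformal isomorphism $\BB C\setminus K\to U$ fixing the relevant base points up to the $\infty\leftrightarrow 0$ swap, exactly as in the proof of Lemma~\ref{lem-disconnect-set-infty}.
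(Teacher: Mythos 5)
Your proof is correct and follows essentially the same route as the paper's: invert via $z\mapsto 1/z$ to map $\BB C\setminus K$ to a bounded simply connected domain, apply Lemma~\ref{lem-exposed-arc} there, and use the derivative bound $|\iota'|\leq\op{inrad}(K)^{-2}$ together with the exact relations $\op{inrad}(U)=1/\op{outrad}(K)$, $\op{outrad}(U)=1/\op{inrad}(K)$ (you state these as $\asymp$, but they are in fact equalities, which tightens nothing essential) to transport the $d^U$-neighborhood bound back to $d^{\BB C\setminus K}$.
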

\begin{proof}
Let $\phi(z) := 1/z$ and let 
$U := \phi( \BB C\setminus K)$. 
Then $U$ is simply connected and we have $\op{outrad}(U) = 1/\op{inrad}(K)$ and $\op{inrad}(U) = 1 / \op{outrad}(K)$.  
Lemma~\ref{lem-exposed-arc} shows that there exists $\ep > 0$ as in the statement of the lemma such that for any choice of $U$ and $\mcl I$, there exists $I\in\mcl I$ such that $I$ is \emph{not} disconnected from $\infty$ in $\BB C\setminus K$ by the $d^U$-neighborhood $\phi( \mcl B_{\ep / \op{outrad}(K)}(\bdy U\setminus I ; d^U) )$. 
We have $|\phi'(z)| \leq 1/\op{inrad}(K)^2$ on $\BB C\setminus B_{\op{inrad}(K)}(0) \supset \BB C\setminus K$, so for any set $A\subset\BB C\setminus K$, the Euclidean diameter of $\phi(A)$ is at most $\op{diam}(A) / \op{inrad}(K)^2$. 
It follows that 
\eqb
 \phi\left( \mcl B_{\ep' \op{inrad}(K)}\left( K \setminus I ; d^{\BB C\setminus K} \right) \right)  \subset \mcl B_{\ep / \op{outrad}(K)}(\bdy U\setminus I ; d^U) \quad\text{for} \quad \ep' = \frac{\op{inrad}(K)}{\op{outrad}(K)}\ep .
\eqe
Therefore, the statement of the lemma is true with $\ep'$ in place of $\ep$.
\end{proof}

\section{Finitely many leftmost geodesics across an LQG annulus}
\label{sec-confluence}

In this section we will prove a more quantitative version of Theorem~\ref{thm-finite-geo0} (Theorem~\ref{thm-finite-geo}) which, as we explain just below, immediately implies Theorem~\ref{thm-finite-geo}. 
The main difference between Theorem~\ref{thm-finite-geo0} and Theorem~\ref{thm-finite-geo} is that the latter gives a bound for the number of leftmost $D_h$-geodesics across an LQG annulus which is \emph{uniform in the Euclidean size of the LQG annulus}. 
This is important since we are only assuming tightness across scales (Axiom~\ref{item-metric-coord}) instead of exact scale invariance. 
To quantify what Euclidean ``scale" we will be working in,\footnote{We write $\BB r$ for the fixed Euclidean scale we are working with. The symbol $r$ is used for other, possibly random, radii which arise in the proof.} 
we will consider the following stopping times for $\{(\mcl B_s^\bullet, h|_{\mcl B_s^\bullet})\}_{s\geq 0}$: 
\eqb \label{eqn-tau_r-def}
\tau_{\BB r} := \inf\left\{ s  > 0 : \mcl B_s^\bullet \not\subset B_{\BB r}(0) \right\} ,\quad\forall \BB r > 0. 
\eqe
We also recall the definition of leftmost geodesics from Lemma~\ref{lem-leftmost-geodesic} and the scaling constants $\frk c_{\BB r}$ for $\BB r >0$ from Axiom~\ref{item-metric-coord}.

\begin{thm} \label{thm-finite-geo}
For each $t > 0$ and $p \in (0,1)$, there exists $N = N(t , p) \in\BB N$ such that the following is true for each $\BB r > 0$ and each stopping time $\tau$ for $\{(\mcl B_s^\bullet, h|_{\mcl B_s^\bullet})\}_{s\geq 0}$ such that $\tau \in [\tau_{\BB r} , \tau_{2 \BB r}]$ a.s. With probability at least $p$, that there are at most $N$ points of $\bdy\mcl B_{\tau  }^\bullet$ which are hit by leftmost $D_h$-geodesics from 0 to $\bdy\mcl B_{\tau + t \frk c_{\BB r} e^{\xi h_{\BB r}(0)} }^\bullet$. 
\end{thm}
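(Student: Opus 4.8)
The plan is to prove Theorem~\ref{thm-finite-geo} via an iterative (multi-scale) argument that combines the monotonicity results of Section~\ref{sec-geodesic} with the annulus-iteration lemma (Lemma~\ref{lem-annulus-iterate}) and the FKG inequality (Proposition~\ref{prop-fkg-metric}). The key geometric observation is Lemma~\ref{lem-geo-arc}: if at scale $s$ the set $\bdy\mcl B_s^\bullet$ is covered by finitely many arcs $\mcl I$, then at any later scale $s'$ the preimages $I'$ (the points whose leftmost geodesic passes through $I$) are again disjoint arcs. So the number of points of $\bdy\mcl B_t^\bullet$ hit by leftmost geodesics to $\bdy\mcl B_{t+\Delta}^\bullet$ is controlled by how much we can ``prune'' the family of arcs as we pass from the inner to the outer radius. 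Thus the strategy is: start at the inner filled ball $\mcl B_{\tau}^\bullet$, partition $\bdy\mcl B_\tau^\bullet$ into a bounded number of arcs of comparable harmonic measure from $0$, and then show that as we grow the metric ball through a sequence of concentric Euclidean annuli between radius $\BB r$ and (roughly) radius $2\BB r$, at a definite fraction of the annular scales some of the arcs get ``killed'' — meaning no leftmost geodesic to the final boundary passes through them — because a geodesic passing through such an arc would have to take an unreasonably long route. After enough such killing events, only a bounded number of arcs survive, giving a bounded number of hit points.

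The main steps I would carry out are as follows. First, a normalization/setup step: using Axiom~\ref{item-metric-translate}, Axiom~\ref{item-metric-coord}, and the stopping time $\tau\in[\tau_{\BB r},\tau_{2\BB r}]$, reduce to working at Euclidean scale $\BB r$ with the scaled metric $\frk c_{\BB r}^{-1} e^{-\xi h_{\BB r}(0)} D_h(\BB r\cdot,\BB r\cdot)$, so the interval of $D_h$-radii of length $t\frk c_{\BB r} e^{\xi h_{\BB r}(0)}$ becomes an interval of length $\asymp t$ in rescaled units; tightness across scales gives uniform control. Also invoke Lemma~\ref{lem-ball-local} to know $\mcl B_\tau^\bullet$ is a local set, so we may condition on $(\mcl B_\tau^\bullet, h|_{\mcl B_\tau^\bullet})$ and resample $h$ outside. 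Second, the ``one good annulus kills an arc'' step: show that for a fixed concentric Euclidean annulus $\BB A_{s_1 r, s_2 r}(0)$ deep inside $B_{\BB r}(0)$, conditionally on the field outside the annulus, there is probability bounded below (uniformly) that the metric restricted to the annulus is so large (a ``bottleneck'') that any leftmost geodesic from $0$ to $\bdy\mcl B_{\tau+t\frk c_{\BB r}e^{\xi h_{\BB r}(0)}}^\bullet$ crossing a prescribed portion of the annulus would be forced to be too long; here one uses the locality axiom, FKG, and the estimates of Section~\ref{sec-disconnect-set} (especially Lemma~\ref{lem-disconnect-set-infty} and Lemma~\ref{lem-exposed-arc-invert}) to produce, at positive probability, a short path that ``shields'' a boundary arc of the filled ball from $0$, forcing geodesics to avoid it. Third, the iteration step: apply Lemma~\ref{lem-annulus-iterate}\eqref{item-annulus-iterate-pos} to a decreasing sequence of radii $r_k$ inside $B_{\BB r}(0)$ to conclude that with probability $\geq p$, a positive fraction of the $\sim K$ annular scales are ``good'', so that after $\asymp K$ killing events only a bounded number $N=N(t,p)$ of arcs survive; combined with Lemma~\ref{lem-geo-arc}, which guarantees the surviving arcs' images remain disjoint arcs, this bounds the number of hit points of $\bdy\mcl B_{\tau}^\bullet$ — and then pushing forward along the leftmost-geodesic correspondence bounds the number of points of $\bdy\mcl B_{\tau+t\frk c_{\BB r}e^{\xi h_{\BB r}(0)}}^\bullet$ that are hit. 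Finally, one chooses $K$ large enough (depending on $t$ and $p$) that the Lemma~\ref{lem-annulus-iterate} bound $c e^{-aK}$ is less than $1-p$, and sets $N$ accordingly.

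The hard part will be making the ``killing an arc'' event precise and showing it occurs with probability bounded below \emph{uniformly} over all possible shapes of the filled metric ball $\mcl B_\tau^\bullet$ — this is exactly why Section~\ref{sec-disconnect-set} proves deterministic geometric lemmas that are uniform over domains satisfying mild constraints (bounded ratio of outradius to inradius). One must first establish that $\mcl B_\tau^\bullet$ has controlled shape (its inradius and outradius are comparable to $\BB r$ with high probability, using Axiom~\ref{item-metric-coord} tightness and the fact that $\tau\in[\tau_{\BB r},\tau_{2\BB r}]$), then feed this into the disconnection/exposure estimates to get that a definite fraction of any bounded arc-partition of $\bdy\mcl B_\tau^\bullet$ can be disconnected from $\infty$ (equivalently from the outer target sphere) by short paths living in a concentric annulus, whereupon an independent-across-scales positive-probability event in that annulus actually realizes the bottleneck. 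The interplay between the $D_h$-metric radii (which is what indexes the filled balls and geodesics) and the Euclidean radii (which is what the annulus-iteration lemma and circle-average normalization use) is the bookkeeping that requires care; Axiom~\ref{item-metric-coord} together with the scaling constants $\frk c_{\BB r}$ is the tool that bridges the two, and one should expect to lose constants there that get absorbed into $N(t,p)$.
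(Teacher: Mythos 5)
There is a genuine gap in your plan: the mechanism you describe for ``killing an arc'' does not do what you need. A bottleneck annulus $\BB A_{s_1 r,s_2 r}(0)$ \emph{concentric about the origin} obstructs all $D_h$-geodesics from $0$ equally; it cannot selectively eliminate the geodesics to one arc $I\subset\bdy\mcl B_\tau^\bullet$ while leaving those to its neighbors. The shields in the paper's proof are instead pairs of small annuli $\BB A_{2r,3r}(z)$, $\BB A_{3r,4r}(z)$ centered at a point $z$ \emph{near the boundary arc itself}, at a scale $r$ that is a power of $\ep$ times $\BB r$ rather than of order $\BB r$, chosen so that (i) $\BB A_{2r,3r}(z)$ disconnects $I$ from $\infty$ in $\BB C\setminus\mcl B_\tau^\bullet$, (ii) the $D_h$-distance across $\BB A_{2r,3r}(z)$ is bounded below, and (iii) every point of $\BB A_{3r,4r}(z)$ is $D_h$-\emph{closer} to $\mcl B_\tau^\bullet$ than the lower bound in (ii). The essential idea, which your sketch omits, is not that the annulus is expensive but that a path headed into $I$ from far away must first pass a point of $\BB A_{3r,4r}(z)$ from which there is a \emph{shortcut into $\mcl B_\tau^\bullet$}, so the path cannot be a geodesic to $0$. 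Without this shortcut step, the disconnection estimates of Lemma~\ref{lem-disconnect-set-infty} and the FKG inequality buy you nothing; and accordingly the correct iteration lemma is Lemma~\ref{lem-annulus-iterate-inverse} applied \emph{outward} from scale $\ep\BB r$ to roughly $\ep^{1/2}\BB r$ about $z$, not Lemma~\ref{lem-annulus-iterate} applied over concentric annuli between radius $\BB r$ and $2\BB r$ (that range is a single dyadic scale and gives no room to iterate, and the center is wrong). Also, Lemma~\ref{lem-exposed-arc-invert} is not an input to this theorem; it is used later for Theorem~\ref{thm-clsce}.

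A second issue: starting from a \emph{bounded} arc partition of $\bdy\mcl B_\tau^\bullet$ and concluding that ``a bounded number of arcs survive'' does not bound the number of hit \emph{points}, since a surviving arc can still contain infinitely many of them. The paper starts from an arbitrary finite collection $\mcl I_0$ and proves that at most $N$ arcs are hit, with $N$ independent of $\#\mcl I_0$ and of $\BB r$, and only then sends $\#\mcl I_0\to\infty$ with arc diameters $\to 0$. Getting $N$ uniform in $\#\mcl I_0$ requires the specific inductive structure: one defines stopping times $s_0=\tau<s_1<\cdots$ whose increments shrink like a negative power of $\#\mcl I_k$, together with arc collections $\mcl I_k$ on $\bdy\mcl B_{s_k}^\bullet$ satisfying $\#\mcl I_{k+1}\le\tfrac12\#\mcl I_k$ with high probability; the shrinking increments are what keep the total $D_h$-radius traversed below the budget $t\,\frk c_{\BB r}e^{\xi h_{\BB r}(0)}$. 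Your plan iterates over Euclidean annular scales rather than over these inductively defined $D_h$-radii, and has no mechanism for controlling the budget.
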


By Axiom~\ref{item-metric-coord}, typically $\tau_{\BB r}$ and $\tau_{2\BB r}$ are each comparable to $\frk c_{\BB r} e^{\xi h_{\BB r}(0)}$, so typically $\tau  + t\frk c_{\BB r} e^{\xi h_{\BB r}(0)}$ is of the same order of magnitude as $\tau$. 
Theorem~\ref{thm-finite-geo} is the first result of this paper which gives any sort of confluence of geodesics.  Prior to this point, we have not ruled out the possibility that any two distinct $D_h$-geodesics started from 0 intersect only at their common starting point. 

\begin{proof}[Proof of Theorem~\ref{thm-finite-geo0} assuming Theorem~\ref{thm-finite-geo}]
By Axiom~\ref{item-metric-translate} and the translation invariance of the law of $h$, viewed modulo additive constant, we can assume without loss of generality that $z = 0$.
For $0 < t < s < \infty$, let $X_{t,s}$ be the set of points $x\in\bdy\mcl B_t^\bullet$ which are hit by leftmost $D_h$-geodesics from 0 to $\bdy\mcl B_s^\bullet$.  
Theorem~\ref{thm-finite-geo} says that for each stopping time $\tau$ as in that theorem and each $t > 0$, a.s.\ $X_{\tau , \tau+t \frk c_{\BB r} e^{\xi h_{\BB r}(0)}}$ is finite.

We first observe that in the setting of Theorem~\ref{thm-finite-geo}, there is a.s.\ a \emph{unique} $D_h$-geodesic from 0 to each point $x\in X_{\tau , \tau+t \frk c_{\BB r} e^{\xi h_{\BB r}(0)}}$. 
Indeed, consider such a point $x$ and let $y\in \bdy\mcl B_{\tau + t \frk c_{\BB r} e^{\xi h_{\BB r}(0)} }^\bullet$ such that the leftmost $D_h$-geodesic $P_y$ from 0 to $y$ passes through $x$.
By Lemma~\ref{lem-leftmost-geodesic}, we can find $q\in \BB Q^2 \setminus \mcl B_{\tau + t \frk c_{\BB r} e^{\xi h_{\BB r}(0)} }^\bullet$ such that the distance between $P_y$ and the (a.s.\ unique) $D_h$-geodesic $P_q$ from 0 to $q$ w.r.t.\ the Euclidean uniform metric is smaller than the minimum of $|x - x'|$ over all $ x' \in  X_{\tau , \tau+t \frk c_{\BB r} e^{\xi h_{\BB r}(0)}} \setminus \{x\}$. 
Since $P_q|_{[0,\tau+t\frk c_{\BB r} e^{\xi h_{\BB r}(0)}]}$ is a leftmost $D_h$-geodesic from 0 to $\bdy\mcl B_{\tau + t \frk c_{\BB r} e^{\xi h_{\BB r}(0)} }^\bullet$, it follows that $P_q$ must pass through $x$. By the uniqueness of $D_h$-geodesics to rational points (Lemma~\ref{lem-geo-unique}), it follows that there is only on $D_h$-geodesic from 0 to $x$. 

Therefore, it is a.s.\ the case that for each rational $\BB r> 0$ and each rational $b >0$, the set $X_{\tau_{\BB r} , \tau_{\BB r}   + b \frk c_{\BB r} e^{\xi h_{\BB r}(0)}}$ is finite and the $D_h$-geodesic from 0 to each point of this set is unique. 
We now argue that the same holds for $X_{t,s}$ for all $0<t<s<\infty$ simultaneously.
Since $D_h$ induces the Euclidean topology on $\BB C$, it is easily seen that $\BB r \mapsto \tau_{\BB r}$ is continuous and surjective: indeed, if this function had an upward jump then there would be some non-trivial interval of times $s$ during which the Euclidean radius of $\mcl B_s^\bullet$ does not increase. 
Therefore, for any given times $0  <t < s < \infty$, we can find a rational $\BB r>  0$ and a small rational $b > 0$ such that $t < \tau_{\BB r} <  \tau_{\BB r} + b \frk c_{\BB r} e^{\xi h_{\BB r}(0)} <s $.
For every leftmost $D_h$-geodesic $P$ from 0 to $\bdy\mcl B_s^\bullet$, the restriction $P|_{[0,\tau_{\BB r} +b\frk c_{\BB r} e^{\xi h_{\BB r}(0)}]}$ is a leftmost $D_h$-geodesic from 0 to $\bdy\mcl B_{\tau +b\frk c_{\BB r} e^{\xi h_{\BB r}(0)} }^\bullet$, so $P(\tau_{\BB r} +b\frk c_{\BB r} e^{\xi h_{\BB r}(0)})$ is one of the points $x \in X_{\tau_{\BB r} , \tau_{\BB r} +b \frk c_{\BB r} e^{\xi h_{\BB r}(0)}}$.
Consequently, $P$ coincides with the unique $D_h$-geodesic $P_x$ from 0 to $x$ until time $\tau_{\BB r}$. Therefore, $P|_{[0,t]}$ is equal to one of the finitely many paths $P_x|_{[0,t]}$ for $x\in  X_{\tau_{\BB r} , \tau_{\BB r} +b \frk c_{\BB r} e^{\xi h_{\BB r}(0)}}$. 
\end{proof}

\subsection{Outline of the proof}
\label{sec-conf-outline}

The rest of this section is devoted to the proof of Theorem~\ref{thm-finite-geo}. In fact, we will prove a more quantitative version of the theorem (Theorem~\ref{thm-finite-geo-quant} below) which gives bounds for $N$ in terms of $t$ and $p$ provided we truncate on a certain global regularity event. 

We now outline the proof of Theorem~\ref{thm-finite-geo}.
Fix a stopping time $\tau$ for $\{(\mcl B_s^\bullet , h|_{\mcl B_s^\bullet})\}_{s\geq 0}$ such that $\op{diam}(\mcl B_\tau^\bullet)$ is typically of order $\BB r > 0$, as in Theorem~\ref{thm-finite-geo}. 
We start by considering an arbitrary finite collection $\mcl I_0$ of disjoint arcs of $\bdy\mcl B_{\tau }^\bullet$, chosen in a manner depending only on $(\mcl B_{\tau }^\bullet, h|_{\mcl B_{\tau }^\bullet})$. 
We will show that for each $p  \in (0,1)$, there exists $N\in\BB N$, which does \emph{not} depend on $\mcl I_0$ or on $\BB r$, such that with probability at least $p$, there are at most $N$ arcs in $\mcl I_0$ which are hit by a leftmost $D_h$-geodesic from 0 to $\bdy\mcl B_{\tau + t \frk c_{\BB r} e^{\xi h_{\BB r}(0)}}^\bullet$ (we consider leftmost geodesics because of their monotonicity properties, in particular Lemma~\ref{lem-geo-arc}). 
By taking $\mcl I_0$ to be a large collection of arbitrarily small arcs, this will show that the set of points of $\bdy\mcl B_{\tau}^\bullet$ which are hit by leftmost $D_h$-geodesics from 0 to $\bdy\mcl B_{\tau + t \frk c_{\BB r} e^{\xi h_{\BB r}(0)}}^\bullet$ can be covered by at most $N$ arbitrarily small arcs with probability at least $p$, hence this set is a.s.\ finite.

To bound the number of arcs in $\mcl I_0$ which are hit by a leftmost $D_h$-geodesic from 0 to $\bdy\mcl B_{\tau + t \frk c_{\BB r} e^{\xi h_{\BB r}(0)}}^\bullet$, we proceed as follows.
Let $n := \#\mcl I_0$. We first show that there exist exponents $\alpha  , \theta > 0$ (depending only on the choice of metric) such that if we condition on $(\mcl B_s^\bullet , h|_{\mcl B_s^\bullet})$ (and truncate on an appropriate global regularity event), then the following is true.
\begin{enumerate}[A.]
\item At least $3n/4$ of the arcs $I\in\mcl I_0$ can be disconnected from $\infty$ in $\BB C\setminus\mcl B_{\tau}^\bullet$ by a set of Euclidean diameter at most $n^{-1/4}$. \label{item-outline-disc}
\item For each arc $I\in\mcl I_0$ which can be disconnected from $\infty$ in $\BB C\setminus\mcl B_{\tau}^\bullet$ by a set of Euclidean diameter at most $n^{-1/4}$, it holds with conditional probability at least $1- O_n( n^{-\alpha})$ that no leftmost $D_h$-geodesic from 0 to a point outside of $\mcl B_{\tau +n^{-\theta}}^\bullet$ can pass through $I$. \label{item-outline-geo}
\end{enumerate}
Property~\ref{item-outline-disc} follows from the deterministic estimate for general planar domains given in Lemma~\ref{lem-disconnect-set-infty}, and does not require any information at all about the geometry of $\bdy\mcl B_{\tau}^\bullet$.

Property~\ref{item-outline-geo} is established in Sections~\ref{sec-clsce-event} and~\ref{sec-geo-kill} by using Weyl scaling (Axiom~\ref{item-metric-f}) and the Markov property of the GFF to show that with positive conditional probability given $(\mcl B_{\tau}^\bullet , h|_{\mcl B_{\tau}^\bullet})$, one can build a ``shield" around $I$ which no $D_h$-geodesic from a point far from $I$ to 0 can pass through. This shield will consist of two concentric annuli of the form $\BB A_{2r,3r}(z)$ and $\BB A_{3r,4r}(z)$ for appropriate choices of $r >0$ and $z\in\BB C$ with the following properties.
The annulus $\BB A_{2r,3r}(z)$ disconnects $I$ from $\infty$ in $\BB C\setminus \mcl B_\tau^\bullet$ and the $D_h$-distance from any point of $\BB A_{3r,4r}(z)$ to $\bdy\mcl B_\tau^\bullet$ is smaller than the $D_h$-distance between the inner and outer boundaries of $\BB A_{2r,3r}(z)$. 
Such annuli are illustrated in Figure~\ref{fig-geo-kill}. 
It is easy to see from the definition of a geodesic (and is explained carefully in the proof of Lemma~\ref{lem-geo-kill-pt}) that if such annuli exist, then no $D_h$-geodesic from 0 to a point outside of $\mcl B_{\tau}^\bullet \cup B_{4r}(z)$ can hit $I$. 

We will set things up so that one has a logarithmic number of essentially independent chances to build a shield of the above form, so the probability that no such shield exists decays like a negative power of $n$. The events needed to build the shield depend on the metric in a reasonably continuous way, so Axiom~\ref{item-metric-coord} allows us to get a lower bound for the probability that the shield exists which is uniform in $\BB r$ (see Lemma~\ref{lem-clsce-event-pos}). 
There is some subtlety here since the event that the shield exists depends on both the zero-boundary part of the field outside of $\mcl B_{\tau}^\bullet$ and the pair $(\mcl B_{\tau}^\bullet , h|_{\mcl B_{\tau}^\bullet})$.
To deal with this, we will use Lemma~\ref{lem-annulus-iterate-inverse} to find many annuli surrounding $I$ where the harmonic part of $h|_{\BB C\setminus \mcl B_\tau^\bullet}$ is under control and only try to build a shield in these ``good" annuli (see Lemma~\ref{lem-clsce-event-pos}). 
We will also need to use the FKG inequality (Proposition~\ref{prop-fkg-metric}) to deal with the behavior of $h$ very close to $\bdy\mcl B_\tau^\bullet$. 
See Figure~\ref{fig-geo-kill} for an illustration of this part of the argument. 
 
We will then apply properties~\ref{item-outline-disc} and~\ref{item-outline-geo} iteratively to ``kill off" all of the geodesics from all but a constant order number of the arcs in $\mcl I_0$. This is done in Section~\ref{sec-finite-geo} and is illustrated in Figure~\ref{fig-metric-ball-iterate}. To this end, we define radii $s_k$ and collections of boundary arcs $\mcl I_k$ for $k\in\BB N$ inductively as follows. We set $s_0 = \tau$ and we define $s_k$ to be a stopping time (to be specified precisely below) which is between $s_{k-1}$ and $ s_{k-1} + (\#\mcl I_{k-1})^{-\theta} \frk c_{\BB r} e^{\xi h_{\BB r}(0)}$. For $I \in \mcl I_{k-1}$, we define $I'$ to be the set of points in $\bdy\mcl B_{s_k}^\bullet $ which are hit by a leftmost $D_h$-geodesic from 0 which passes through $I$ and we set $\mcl I_k := \{I' : I\in \mcl I_{k-1} , I'\not=\emptyset\}$. Basic properties of geodesics (see Lemma~\ref{lem-geo-arc}) show that $\mcl I_k$ is a collection of disjoint arcs of $\bdy \mcl B_{s_k}^\bullet$.
By applying properties~\ref{item-outline-disc} and~\ref{item-outline-geo} above with $s_k$ in place of $\tau$, we get that typically $\#\mcl I_{k+1} \leq \frac12 \#\mcl I_k$ (Lemma~\ref{lem-half-count}). 
Using this, we infer that if $N$ is chosen sufficiently large, in a manner which does not depend on $\mcl I_0$, and $K$ is the smallest $k\in\BB N$ for which $\#\mcl I_k \leq N$, then which high probability $s_K \leq \tau + t \frk c_{\BB r} e^{\xi h_{\BB r}(0)}$ (Lemma~\ref{lem-count-radius}). We do not prove any bound for $K$, just for $s_K$.
This shows that with high probability, there are at most $N$ arcs in $\mcl I_0$ which are hit by leftmost $D_h$-geodesics from 0 to $\bdy\mcl B_{\tau + t \frk c_{\BB r} e^{\xi h_{\BB r}(0)}}^\bullet$. Since the arcs in $\mcl I_0$ can be made arbitrarily small and $N$ does not depend on $\mcl I_0$, this implies Theorem~\ref{thm-finite-geo}.

\subsection{Good annuli}
\label{sec-clsce-event}

We now define an event for a Euclidean annulus which will eventually be used to build ``shields" surrounding boundary arcs of a filled $D_h$-metric ball through which $D_h$-geodesics to 0 cannot pass. See Figure~\ref{fig-geo-event} for an illustration.

\begin{figure}[t!]
 \begin{center}
\includegraphics[scale=1]{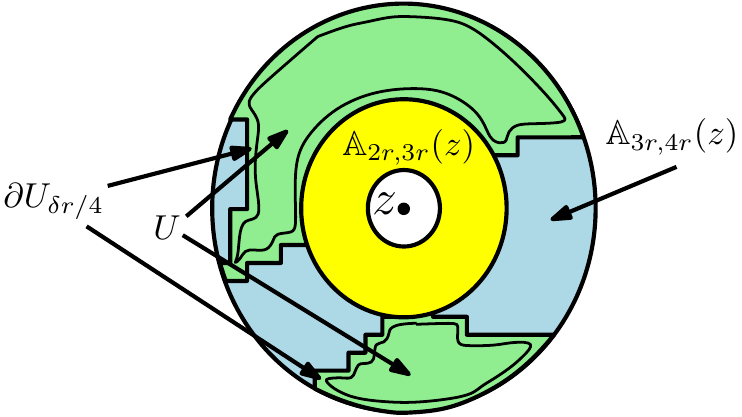}
\vspace{-0.01\textheight}
\caption{Illustration of the definitions in Section~\ref{sec-clsce-event}. The set $\mcl U_r(z) = \mcl U_r(z;\delta)$ consists of open subsets $U$ of $\BB A_{3r,4r}(z)$ such that  $\BB A_{3r,4r}(z) \setminus  U$ is a finite union of sets of the form $S \cap \BB A_{3r,4r}(z)$ for $\delta r\times\delta r$ squares $S\in   \mcl S_{\delta r}(\BB A_{3r,4r}(z))$ (i.e., with corners in $\delta \BB Z^2$). One such set is shown in light green. For each $U\in\mcl U_r(z)$, $E_r^U(z)$ is the event that (1) the $D_h$-distance across the yellow annulus $\BB A_{2r,3r}(z)$ is bounded below, (2) the $D_h$-diameter of each of the squares $S\in\mcl S_{ \delta r}(\BB A_{3r,4r}(z))+z$ is much smaller than the $D_h$-distance across $\BB A_{2r,3r}(z)$ and (3) the harmonic part of $h|_U$ is bounded above on the set $U_{\delta r/4} \subset U$ (outlined in black). 
}\label{fig-geo-event}
\end{center}
\vspace{-1em}
\end{figure}

For $\ep > 0$, $z\in \BB C$, and a set $V\subset\BB C$, we define the collection of Euclidean squares
\eqb \label{eqn-square-def}
\mcl S_\ep^z(V) := \left\{ [x, x+\ep] \times [y,y+\ep] : (x,y)\in\ep\BB Z^2 +z , \, ([x, x+\ep] \times [y,y+\ep])\cap V \not=\emptyset\right\}. 
\eqe
Note that $\mcl S_\ep^z(V)$ depends only on the value of $z$ modulo $\ep\BB Z^2$ and that $\mcl S_\ep^z(V) - z  = \mcl S_\ep^0(V-z)$. 
 
For $z\in\BB C$, $r> 0$, and $\delta\in (0,1)$, we define $\mcl U_r(z) = \mcl U_r(z;\delta)$ to be the (finite) set of open subsets $U$ of the annulus $\BB A_{3r,4r}(z)$ such that $\BB A_{3r,4r}(z) \setminus  U$ is a finite union of sets of the form $S \cap \BB A_{3r,4r}(z)$ for squares $S \in \mcl S_{\delta r}^z(\BB A_{3r,4r}(z)) $.
For $U\in \mcl U_r(z;\delta)$ and $\ep > 0$, we define
\eqb \label{eqn-U-nbd-def}
U_\ep := \left\{ u \in U : \op{dist}(z , \bdy U) > \ep \right\}
\eqe 
where $\op{dist}$ denotes Euclidean distance.
 
For $z\in\BB C$, $r  > 0$, parameters $  \delta  , c    \in (0,1)$ and $A > 0$, and $U\in \mcl U_r(z;\delta)$, we let $E_r^U(z) = E_r^U(z;  c , \delta,A) $ be the event that the following is true. 
\begin{enumerate}  
\item $D_h \left(\bdy B_{2r}(z) , \bdy B_{3r}(z) \right) \geq c \frk c_r e^{\xi h_r(z)}$. \label{item-clsce-across}
\item One has \label{item-clsce-ball}
\eqb 
\max_{S\in\mcl S_{ \delta r}^z(\BB A_{3r,4r}(z)) } \sup_{u,v\in S} D_h\left(u,v ; \BB A_{ 2r, 5r}(z) \right) \leq \frac{c}{100} \frk c_r e^{\xi h_r(z)}  . 
\eqe
\item Let $\frk h^U$ be the harmonic part of $h|_U$. Then, in the notation~\eqref{eqn-U-nbd-def}, \label{item-clsce-harmonic} 
\eqb 
\sup_{u   \in U_{\delta r / 4}} |\frk h^U(u) - h_r(z) |  \leq A  .
\eqe  
\end{enumerate}
We also define
\eqb \label{eqn-clsce-all-event}
E_r(z) = E_r(z;c,\delta,A) := \bigcap_{U\in \mcl U_r(z;\delta)} E_r^U(z) .
\eqe
The first two conditions in the definition of $E_r^U(z)$ do not depend on $U$, so the only difference between $E_r(z)$ and $E_r^U(z)$ is that for the former event, condition~\ref{item-clsce-harmonic} is required to hold for all choices of $U$ simultaneously.

We think of annuli $\BB A_{2r,5r}(z)$ for which $E_r(z)$ occurs as ``good". 
We will show in Lemma~\ref{lem-clsce-event-pos} just below that $\BB P[E_r(z)]$ can be made close to 1 by choosing the parameters $\delta, c , A$ appropriately, in a manner which is uniform over the choices of $r$ and $z$, 
The reason for separating $E_r(z)$ and $E_r^U(z)$ is that conditioning on $E_r^U(z)$ is easier than conditioning on $E_r(z)$ (see Lemma~\ref{lem-cond-diam-small} just below). 

The occurrence of $E_r^U(z)$ or $E_r(z)$ is unaffected by adding a constant to the field. By this and the locality of $D_h$ (Axiom~\ref{item-metric-local}), these events are determined by $h|_{\BB A_{2r,5r}(z)}$, viewed modulo additive constant. This will be important when we apply Lemma~\ref{lem-annulus-iterate-inverse} below.  

We will eventually apply condition~\ref{item-clsce-harmonic} with $U$ equal to $\BB A_{3r,4r}(z)$ minus the union of the set of squares in $\mcl S_\ep^z(\BB A_{3r,4r}(z))$ which intersect a filled $D_h$-metric ball centered at 0. Condition~\ref{item-clsce-harmonic} together with the Markov property of $h$ will allow us to show that with uniformly positive conditional probability given $h|_{\BB C\setminus U}$ and the event $E_r^U(z)$, the $D_h$-diameter of $U_{\delta r/ 4}$ is small (see Lemma~\ref{lem-cond-diam-small}). 
This combined with condition~\ref{item-clsce-ball} will show that with uniformly positive conditional probability given $h|_{\BB C\setminus U}$ and $E_r^U(z)$, \emph{every} point of $\BB A_{3r,4r}(z)$ lies at $D_h$-distance strictly smaller than $c r^{\xi Q} e^{\xi h_r(z)}$ from the filled $D_h$-metric ball. 
Due to condition~\ref{item-clsce-across}, this will prevent a $D_h$-geodesic from crossing $\BB A_{2r,3r}(z)$ before entering this filled metric ball. 
See Figure~\ref{fig-geo-kill} for an illustration of how the events $E_r^U(z)$ will eventually be used.

\begin{lem} \label{lem-clsce-event-pos}
For each $p \in (0,1)$, we can find parameters $ c , \delta  \in (0,1)$ and $A>0$ such that, in the notation~\eqref{eqn-clsce-all-event}, we have $\BB P[E_r(z)] \geq p$ for each $z\in\BB C$ and $r > 0$.
\end{lem}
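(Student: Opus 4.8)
\textbf{Proof proposal for Lemma~\ref{lem-clsce-event-pos}.}

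The plan is to prove that each of the three conditions in the definition of $E_r^U(z)$ fails with small probability, uniformly in $r$, $z$, and $U$, and then take a union bound over the (finitely many) choices of $U\in\mcl U_r(z;\delta)$. By translation invariance (Axiom~\ref{item-metric-translate}) and the fact that the events are unaffected by adding a constant to the field, we may reduce to the case $z=0$ and normalize $h$ so that $h_r(0)=0$; by the exact scale invariance of the law of the whole-plane GFF modulo additive constant together with Axiom~\ref{item-metric-coord} (which controls how $\frk c_r$ behaves across scales), it suffices to treat a single scale, say $r=1$, with the caveat that all estimates must be tracked to ensure they do not degrade. Actually, since the three conditions each involve only $h|_{\BB A_{2r,5r}(z)}$ viewed modulo additive constant and the constants $\frk c_r$, scale invariance of the GFF reduces everything cleanly to $r=1$, $z=0$.

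First I would handle condition~\ref{item-clsce-across}: the lower bound $D_h(\bdy B_2(0),\bdy B_3(0))\geq c\,\frk c_1$. This is exactly the type of statement controlled by the tightness across scales axiom (Axiom~\ref{item-metric-coord}): the laws of $\frk c_r^{-1} e^{-\xi h_r(0)} D_h(r\cdot,r\cdot)$ are tight and every subsequential limit is supported on continuous metrics inducing the Euclidean topology, so the distance across a fixed Euclidean annulus, rescaled, is bounded below in probability away from $0$; choosing $c$ small makes this hold with probability close to $1$. Next, condition~\ref{item-clsce-ball}, the upper bound on the $D_h$-diameters of the small squares $S\in\mcl S_{\delta}^0(\BB A_{3,4}(0))$ measured in the internal metric $D_h(\cdot,\cdot;\BB A_{2,5}(0))$: here tightness across scales again gives that the rescaled internal-metric diameter of $\BB A_{2,5}(0)$ is tight, hence the sup over finitely many $\delta$-squares of their internal-metric diameters tends to $0$ in probability as $\delta\to 0$ (using that the limiting metrics are continuous, hence uniformly continuous on compacts). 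So for $\delta$ small enough relative to $c$, condition~\ref{item-clsce-ball} holds with probability close to $1$, uniformly in $r,z$. Finally, condition~\ref{item-clsce-harmonic} concerns the harmonic part $\frk h^U$ of $h|_U$ on the inner region $U_{\delta/4}$: since $U_{\delta/4}$ stays at Euclidean distance $\gtrsim\delta$ from $\bdy U$, standard Gaussian estimates for the harmonic extension of a GFF (the harmonic part $\frk h^U$ is a Gaussian field whose variance at a point $u$ is comparable to $\log(1/\op{dist}(u,\bdy U))$ up to $O(1)$, and one has a modulus-of-continuity/maximum bound via e.g. Borell--TIS or a direct union bound over a $\delta$-net) show that $\sup_{u\in U_{\delta/4}}|\frk h^U(u)-h_r(z)|$ has Gaussian tails with a constant depending only on $\delta$; hence choosing $A=A(\delta)$ large makes condition~\ref{item-clsce-harmonic} hold with probability close to $1$. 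Crucially this estimate must be uniform over $U\in\mcl U_r(z;\delta)$, which it is, since the bound only uses the lower bound $\op{dist}(U_{\delta/4},\bdy U)\gtrsim\delta$ and the annulus $\BB A_{3,4}(0)$ is fixed.

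To finish, I would choose $\delta$ small enough that condition~\ref{item-clsce-ball} holds with probability at least $(1+2p)/3$ and condition~\ref{item-clsce-across} holds with probability at least $(1+2p)/3$ after fixing $c$ small accordingly, then choose $A=A(\delta)$ large enough that, since $\#\mcl U_r(z;\delta)$ is bounded by a constant depending only on $\delta$ (the number of subsets of the fixed finite collection $\mcl S_\delta^0(\BB A_{3,4}(0))$), the union bound $\sum_{U}\BB P[\text{condition~\ref{item-clsce-harmonic} fails for }U]$ is at most $(1-p)/3$. Intersecting, $\BB P[E_r(z)]=\BB P[\bigcap_U E_r^U(z)]\geq p$, uniformly in $r$ and $z$. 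The main obstacle I anticipate is making the three probability estimates genuinely uniform across all Euclidean scales $r$ using only Axiom~\ref{item-metric-coord} rather than exact scale invariance of the metric: one must combine the exact scale invariance of the GFF (which lets us rescale $\BB A_{2r,5r}(z)$ to $\BB A_{2,5}(0)$) with the tightness and the polynomial control~\eqref{eqn-scaling-constant} on $\frk c_{\delta r}/\frk c_r$, being careful that the normalizing constant $\frk c_r e^{\xi h_r(z)}$ appearing in conditions~\ref{item-clsce-across} and~\ref{item-clsce-ball} is exactly the one produced by the rescaling, so that the rescaled events are the fixed-scale events to which tightness applies. The harmonic-part estimate in condition~\ref{item-clsce-harmonic} is scale invariant on the nose (the law of $h|_{\BB A_{2r,5r}(z)}-h_r(z)$ does not depend on $r$ or $z$), so that piece carries over without difficulty.
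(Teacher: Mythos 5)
Your proposal is correct and follows essentially the same route as the paper: handle each of the three conditions separately, using Axioms~\ref{item-metric-translate} and~\ref{item-metric-coord} to get uniform-in-$(r,z)$ tightness for conditions~\ref{item-clsce-across} and~\ref{item-clsce-ball}, then exploit the finiteness of $\mcl U_r(z;\delta) = r\,\mcl U_1(0;\delta)+z$ and scale/translation invariance of the field for condition~\ref{item-clsce-harmonic}. The only minor divergence is that you invoke quantitative Gaussian concentration (Borell--TIS plus a variance bound) to control $\sup_{u\in U_{\delta/4}}|\frk h^U(u)-h_r(z)|$, whereas the paper uses the softer observation that $\frk h^U$ is continuous away from $\bdy U$ so each sup is a.s.\ finite, and there are only finitely many $U\in\mcl U_1(0;\delta)$ to consider, so one may simply take $A$ large; both are valid.
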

\begin{proof}
By translation invariance and tightness across scales (Axioms~\ref{item-metric-translate} and~\ref{item-metric-coord}), the laws of the reciprocals of the scaled distances $\frk c_r^{-1} e^{-\xi h_r(z)} D_h(\bdy B_{2r}(z) , \bdy B_{3r}(z))$ for $z\in\BB C$ and $r>0$ are tight. Therefore, we can find $c = c(p) > 0$ such that for each $z\in\BB C$ and $r > 0$, condition~\ref{item-clsce-across} in the definition of $E_r^U(z)$ occurs with probability at least $1 - (1-p)/3$.
Similarly, Axioms~\ref{item-metric-translate} and~\ref{item-metric-coord} show that we can find $\delta = \delta(p , c)  \in (0,1)$ such that condition~\ref{item-clsce-ball} in the definition of $E_r(z)$ occurs with probability at least $1-(1-p)/3$. 
For a given choice of $\delta$, the collection of open sets $\mcl U_r(z;\delta)$ is finite, and is equal to $r \mcl U_1(0;\delta) + z$ (here we use the translation by $z$ in~\eqref{eqn-square-def}). 
Since $\frk h^U$ is continuous away from $\bdy U$, for any fixed choice of $U \in \mcl U_1(0;\delta)$, a.s.\ $\sup_{u  \in U_{\delta/4 }} |\frk h^U(u)  |  < \infty$. 
By combining this with the translation and scale invariance of the law of $h$, modulo additive constant, we find that there exists $A> 0$ (depending on $\delta$) such that with probability at least $1-(1-p)/3 $, condition~\ref{item-clsce-harmonic} in the definition of $E_r^U(z)$ holds simultaneously for every $U\in \mcl U_r(z;\delta)$.
\end{proof}

\begin{lem} \label{lem-cond-diam-small}
For any choice of parameters $c,\delta,A$, there is a constant $\frk p = \frk p( c, \delta , A) > 0$ such that the following is true. 
Let $r  > 0$, let $z\in\BB C$, and let $U \in \mcl U_r(z) =  \mcl U_r(z;\delta)$. 
Also let $\mcl V(U) \subset\mcl U_r(z)$ be the set of connected components of $U$. 
Almost surely,
\eqb  \label{eqn-cond-diam-small} 
\BB P\left[ \max_{V\in \mcl V(U)} \sup_{u,v \in V} D_h\left( u , v ; \BB A_{ 2r ,5r}(z) \right) \leq \frac{c}{2} \frk c_r e^{\xi h_r(z)}    \,\big|\,h|_{\BB C\setminus U} , E_r^U(z) \right] \geq \frk p .
\eqe  
\end{lem}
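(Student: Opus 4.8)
I would work conditionally on $h|_{\BB C\setminus U}$. Writing $h|_U=\rng h^U+\frk h^U$ as in Remark~\ref{remark-other-domains}, so that given $h|_{\BB C\setminus U}$ the field $\rng h^U$ is a zero-boundary GFF on $U$ independent of the conditioning, Axioms~\ref{item-metric-local} and~\ref{item-metric-f} give $D_h(\cdot,\cdot;U)=e^{\xi\frk h^U}\cdot D_{\rng h^U}$, with $D_{\rng h^U}$ a measurable function of $\rng h^U$ alone. Since $D_h(u,v;\BB A_{2r,5r}(z))\leq D_h(u,v;V)$ for $u,v$ in a common component $V$ of $U$, it suffices to bound the $D_h(\cdot,\cdot;V)$-diameters of the components. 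I would first observe that conditions~\ref{item-clsce-across} and~\ref{item-clsce-harmonic} defining $E_r^U(z)$ are $\sigma(h|_{\BB C\setminus U})$-measurable: a $D_h$-geodesic realizing $D_h(\bdy B_{2r}(z),\bdy B_{3r}(z))$ stays in $\ol{\BB A_{2r,3r}(z)}\subset\BB C\setminus U$, and $\frk h^U$ and $h_r(z)$ (note $B_r(z)\cap U=\emptyset$) are determined by $h|_{\BB C\setminus U}$. Hence on the event $\{$cond.~\ref{item-clsce-across}$\}\cap\{$cond.~\ref{item-clsce-harmonic}$\}$, which contains $E_r^U(z)$, conditioning additionally on $E_r^U(z)$ amounts to conditioning $\rng h^U$ on condition~\ref{item-clsce-ball}; and with the data outside $U$ fixed, condition~\ref{item-clsce-ball} is a non-increasing, a.s.\ continuous functional of the metric $D_{\rng h^U}$ on $U$.

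The core is an FKG argument together with a Cameron--Martin shift. Let $U^\circ:=U_{\delta r/4}$ and, for a component $V$, $V^\circ:=V\cap U^\circ$, and let $\mcl G$ be the event that $\op{diam}(V^\circ,D_{\rng h^U}(\cdot,\cdot;V^\circ))\leq\tfrac{c}{1000}e^{-\xi A}\frk c_r$ for every component $V$; this too is a non-increasing, a.s.\ continuous functional of $D_{\rng h^U}$. Applying Proposition~\ref{prop-fkg-metric} to the zero-boundary GFF $\rng h^U$ on $U$ (which gives positive correlation of the indicator functionals of $\mcl G$ and of condition~\ref{item-clsce-ball}, equivalently of their non-decreasing complements), and using that $\mcl G$ depends only on $\rng h^U$ and is therefore independent of $h|_{\BB C\setminus U}$, I would get on $\{$cond.~\ref{item-clsce-across}$\}\cap\{$cond.~\ref{item-clsce-harmonic}$\}$ that
\[
\BB P\big[\mcl G\,\big|\,h|_{\BB C\setminus U},E_r^U(z)\big]\;\geq\;\BB P\big[\mcl G\,\big|\,h|_{\BB C\setminus U}\big]\;=\;\BB P[\mcl G].
\]
To bound $\BB P[\mcl G]$ below uniformly in $r,z$: by translation and scale invariance, the harmlessness of the harmonic part of the GFF on the bulk, and tightness across scales (Axioms~\ref{item-metric-translate},~\ref{item-metric-coord}), the diameters $\op{diam}(V^\circ,D_{\rng h^U}(\cdot,\cdot;V^\circ))$ are at most $M_0\frk c_r$ on an event of probability $\geq 3/4$, with $M_0$ depending only on $\delta$; and since $\mcl U_1(0;\delta)$ is finite, a Cameron--Martin shift of $\rng h^U$ by a smooth $\psi\in H^1_0(U)$ equal to $-M$ on $U^\circ$ — whose Dirichlet energy depends only on $\delta$ and $M$ — multiplies every such diameter by $e^{-\xi M}$. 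Taking $M=M(c,\delta,A)$ large and controlling the Radon--Nikodym derivative of the shift on a positive-probability event then gives $\BB P[\mcl G]\geq\frk p(c,\delta,A)>0$.

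It then remains to check that $\mcl G\cap E_r^U(z)$ forces the event in~\eqref{eqn-cond-diam-small}. Given a component $V$ and $u,v\in V$: using condition~\ref{item-clsce-ball} (with the triangle inequality for $D_h(\cdot,\cdot;\BB A_{2r,5r}(z))$, and the fact that the squares of $\mcl S_{\delta r}^z(\BB A_{3r,4r}(z))$ involved are each controlled by it) together with an elementary analysis of the grid-square geometry of $U$, one produces points $u^\circ,v^\circ$ in the same component of $V^\circ$ with $D_h(u,u^\circ;\BB A_{2r,5r}(z))$ and $D_h(v,v^\circ;\BB A_{2r,5r}(z))$ each bounded by a universal multiple of $\tfrac{c}{100}\frk c_r e^{\xi h_r(z)}$; and by condition~\ref{item-clsce-harmonic} ($\frk h^U\leq h_r(z)+A$ on $U^\circ$) together with $\mcl G$, $D_h(u^\circ,v^\circ;V^\circ)\leq e^{\xi(h_r(z)+A)}\cdot\tfrac{c}{1000}e^{-\xi A}\frk c_r=\tfrac{c}{1000}\frk c_r e^{\xi h_r(z)}$. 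Adding these bounds and choosing the constants so the total is at most $\tfrac c2\frk c_r e^{\xi h_r(z)}$ gives $\op{diam}(V,D_h(\cdot,\cdot;\BB A_{2r,5r}(z)))\leq\tfrac c2\frk c_r e^{\xi h_r(z)}$, and combining with the displayed FKG bound yields~\eqref{eqn-cond-diam-small} with $\frk p=\frk p(c,\delta,A)$.

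I expect the main obstacle to be twofold. First, one must turn the heuristic ``$\rng h^U$ is very negative on the bulk of $U$'' into a genuine lower bound on $\BB P[\mcl G]$ that is uniform in the Euclidean scale $r$; this is where the Cameron--Martin absolute continuity of GFF shifts and the tightness-across-scales axiom (Axiom~\ref{item-metric-coord}) both enter, and care is needed so that the Dirichlet energy of the shift and the tightness constant $M_0$ depend only on $\delta$ (and not on $r$ or $z$). Second, and more delicately, the purely geometric step that passes from the bulk estimate plus condition~\ref{item-clsce-ball} to a diameter bound for the \emph{whole} component $V$ must handle components of $U$ whose geometry is complicated — in particular thin components that wind around $\BB A_{3r,4r}(z)$ — for which one has to verify that both the passage from an arbitrary point of $V$ to $V^\circ$ and the traversal of $V^\circ$ can be carried out at $D_h$-cost that is a bounded multiple of $\frk c_r e^{\xi h_r(z)}$.
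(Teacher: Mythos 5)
Your proposal is correct and follows essentially the same route as the paper's proof: decompose $h|_U=\rng h^U+\frk h^U$ via the Markov property, observe that conditions~\ref{item-clsce-across} and~\ref{item-clsce-harmonic} are $\sigma(h|_{\BB C\setminus U})$-measurable so that conditioning on $E_r^U(z)$ reduces to conditioning $\rng h^U$ on condition~\ref{item-clsce-ball}, define a ``bulk diameter'' event for $D_{\rng h^U}$, lower-bound its unconditional probability by a Cameron--Martin shift (using the finiteness of $\mcl U_1(0;\delta)$ and Axioms~\ref{item-metric-translate}--\ref{item-metric-coord} for uniformity in $r,z$), transfer that bound past the conditioning via the FKG inequality (Proposition~\ref{prop-fkg-metric}), and finish with the elementary grid-square geometry that converts the bulk estimate plus conditions~\ref{item-clsce-ball} and~\ref{item-clsce-harmonic} into a full diameter bound. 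The only cosmetic differences are the order of the three steps and the exact choice of bulk set: the paper's event $G^U$ restricts endpoints to $V_{\delta r/2}$ while allowing paths in $V_{\delta r/4}$, which makes the endpoint-to-bulk step marginally cleaner than using $V^\circ=V_{\delta r/4}$ for both, and the geometric step you correctly flag as needing care is handled in the paper by the observation that two points of $V$ can be joined by a path inside $V_{\delta r/2}$ together with at most two grid squares.
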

\begin{proof}  
By Axiom~\ref{item-metric-f}, the statement of the lemma does not depend on the choice of additive constant for $h$, so we can assume without loss of generality that $h$ is normalized so that $h_{r_0}(z_0) = 0$ for some $z_0 \in \BB C$ and $r_0 > 0$ such that $B_{r_0}(z_0) \cap U =\emptyset$ (we could take $z_0 = z$ and $r_0 = r$, but we find that the proof is clearer if we do not normalize so that $h_r(z) =0$). 
For $U\in\mcl U_r(z)$, let $\rng h^U$ be the zero-boundary part of $h|_U$. By the Markov property of the field (see, e.g., \cite[Lemma 2.2]{local-metrics}), we can write $h|_U = \rng h^U + \frk h^U$, where $\rng h^U$ is a zero-boundary GFF in $U$ which is independent from $h|_{\BB C\setminus U}$ (and hence also from the harmonic part $\frk h^U$). 
We define the metric $D_{\rng h^U}$ as in Remark~\ref{remark-other-domains}. 

Since conditions~\ref{item-clsce-across} and~\ref{item-clsce-harmonic} in the definition of $E_r^U(z)$ are determined by $h|_{\BB C\setminus U}$, the conditional law of $\rng h^U$ given $ h|_{\BB C\setminus U}$ and the event $E_r^U(z)$ is the same as the conditional law of $\rng h^U$ given $\frk h^U$ and the event $F_r(z)$ that condition~\ref{item-clsce-ball} in the definition of $E_r^U(z)$ occurs (note that this condition does not depend on $U$).

The main idea of the proof is as follows. By Axiom~\ref{item-metric-f}, we have $D_{h|_U} = e^{\xi \frk h^U} \cdot D_{\rng h^U}$. Condition~\ref{item-clsce-harmonic} in the definition of $E_r^U(z)$ gives an upper bound for $\frk h^U$, so we just need to prove that certain $ D_{\rng h^U}$-distances are very small with positive conditional probability when we condition on $h|_{\BB C\setminus U}$ and $E_r^U(z)$ (see~\eqref{eqn-cond-diam-event} for the precise event we need). Since $\rng h^U$ is independent from $h|_{\BB C\setminus U}$ and by the preceding paragraph, we only need to prove a bound for $D_{\rng h^U}$-distances when we condition on $F_r(z)$. 

This can be done as follows. By Axiom~\ref{item-metric-f}, if we let $f$ be a large bump function supported on a compact subset of $U$ which is close to all of $U$, then $D_{\rng h^U - f}$ distances are much smaller than $D_{\rng h^U}$ distances. 
On the other hand, the laws of $\rng h^U$ and $\rng h^U-f$ are mutually absolutely continuous. 
This shows that $D_{\rng h^U}$-distances are small with positive probability under the unconditional law of $\rng h^U$.
This last estimate can be made uniform over the choice of $U\in\mcl U_r(z)$ since $\mcl U_r(z) = r \mcl U_1(0) + z$ and $\mcl U_1(0)$ is a finite set (this is why we restrict to domains $U$ which are made up of small squares in a fixed grid). To add in the conditioning on $F_r(z)$, we use the FKG inequality (Proposition~\ref{prop-fkg-metric}). 
\medskip

\noindent\textit{Step 1: reducing to an event for the zero-boundary part.}
For $U\in\mcl U_r(z)$, define $V_{\delta r/2 }$ and $V_{\delta r/4}$ for $V \in \mcl V(U)$ as in~\eqref{eqn-U-nbd-def} but with $V$ in place of $U$. Note that $U_{\delta r/2} = \bigcup_{V\in\mcl V(U)} V_{\delta r/2}$. Let
\eqb \label{eqn-cond-diam-event}
G^U := \left\{ \max_{V\in\mcl V(U)} \sup_{u,v \in V_{\delta r/2}} D_{\rng h^U  }\left( u ,v ; V_{\delta r/4  } \right) \leq \frac{c}{100}    e^{-\xi A} \frk c_r   \right\}.
\eqe
By condition~\ref{item-clsce-harmonic} in the definition of $E_r^U(z)$ and Axiom~\ref{item-metric-f}, if $E_r^U(z) \cap G^U$ occurs then 
\eqb \label{eqn-cond-diam-add}
\max_{V\in\mcl V(U)} \sup_{u,v \in V_{\delta r/2 }} D_{h}\left( u ,v ; V_{\delta r/4  } \right) \leq \frac{c}{100} \frk c_r e^{\xi h_r(z)} . 
\eqe 
Since any two points of any $V \in \mcl V(U)$ can be joined by a path in $V$ which is contained in the union of $V_{\delta r/2}$ and at most two squares $S\in \mcl S_{\delta r}^z(\BB A_{2 r,4 r}(z))$, combining~\eqref{eqn-cond-diam-add} with condition~\ref{item-clsce-ball} in the definition of $E_r^U(z)$ shows that if $E_r^U(z)\cap G^U$ occurs, then the event in~\eqref{eqn-cond-diam-small} occurs. 

To prove the lemma, it therefore suffices to find $\frk p = \frk p(c,\delta,A) >0$ such that a.s.\ $\BB P[G^U \,|\, h|_{\BB C\setminus U} , E_r^U(z) ] \geq \frk p$ for each $U\in \mcl U_r(z)$.
By the above discussion about $F_r(z)$ and since $G^U$ is determined by $\rng h^U$, so is independent from $h|_{\BB C\setminus U}$, we only need to show that a.s.\ 
\eqb \label{eqn-cond-diam-show}
\BB P[G^U \,|\,  F_r(z)  ]  \geq \frk p  , \quad  \forall U\in\mcl U_r(z)  .
\eqe 
\medskip

\noindent\textit{Step 2: lower bound without conditioning on $F_r(z)$.}
We first argue that there is a constant $\frk p = \frk p(c , \delta,A) >0$ such that a.s.\ 
\eqb \label{eqn-cond-diam-good}
\BB P\left[ G^U  \right] \geq \frk p , \quad  \forall U\in\mcl U_r(z) .
\eqe 
By Axioms~\ref{item-metric-translate} and~\ref{item-metric-coord} and since $\mcl U_r(z) = r \mcl U_1(0) + z$ and $\#\mcl U_1(0)$ depends only on $\delta$, we can find a constant $C>0$, depending only on $\delta$, such that for each $z\in\BB C$, $r>0$, and $U\in\mcl U_r(z)$, 
\eqb \label{eqn-cond-diam-pos}
\BB P\left[ \max_{V\in\mcl V(U)} \sup_{u,v \in V_{\delta r/2 }} D_{\rng h^U  }\left( u ,v ; V_{\delta r/4 } \right) \leq  C \frk c_r    \right] \geq \frac12 .
\eqe

Let $U^0 := r^{-1}(U-z) \in \mcl U_1(0)$ and let $g^0 : U^0 \rta [0,1]$ be a smooth compactly supported bump function on $U^0$ which is identically equal to 1 on $U^0_{\delta/4 }$.
Let $g : U \rta [0,1]$ be defined by $g(\cdot) = g^0(r^{-1}(\cdot-z))$. 
Then the Dirichlet energy of $g$ equals the Dirichlet energy of $g^0$, which depends only on $U^0$. 
Let $f := \xi^{-1} (\log C - \log(c e^{-\xi A}/100)) g$. 
By Axiom~\ref{item-metric-f}, if the event in~\eqref{eqn-cond-diam-pos} occurs, then $G^U$ occurs with $\rng h^U -  f$ in place of $\rng h^U$. 

By a standard calculation for the GFF, the laws of $\rng h^U$ and $\rng h^U -f$ are mutually absolutely continuous and the law of the Radon-Nikodym derivative depends only on the Dirichlet energy of $f$, which in turn depends only on $U^0, \delta, c,A$. 
It follows that $\BB P[G^U]$ is bounded below by a constant depending only on $U^0, \delta , c, A$. 
Since the number of possibilities for $U^0$ depends only on $\delta$, by taking the minimum over all such possibilities we get~\eqref{eqn-cond-diam-good} for an appropriate choice of $\frk p$. 
\medskip

\noindent\textit{Step 3: adding the conditioning on $F_r(z)$.}
We now use the FKG inequality to add in the conditioning on $F_r(z)$. Indeed, under the conditional law given $h|_{\BB C\setminus U}$, both $\BB 1_{G^U}$ and $\BB 1_{F_r(z)}$ are non-increasing functions of the metric $D_{\rng h^U}$ (note that $\BB 1_{F_r(z)}$ depends also on $h|_{\BB C\setminus U}$, but we can still view it as a function of $\rng h^U$ when we condition on a fixed realization of $h|_{\BB C\setminus U}$). Moreover, it is easily seen that these functions are a.s.\ continuous at $D_{\rng h^U}$ in the sense of Proposition~\ref{prop-fkg-metric}: in the case of $G^U$, this follows since the probability that the supremum in~\eqref{eqn-cond-diam-event} is exactly equal to $\frac{c}{100}    e^{-\xi A} \frk c_r$ is zero. A similar justification holds for $F_r(z)$.
 By Proposition~\ref{prop-fkg-metric}, the events $F_r(z)$ and $G^U$ are positively correlated under the conditional law given $h|_{\BB C\setminus U}$.  
Therefore,~\eqref{eqn-cond-diam-good} implies that~\eqref{eqn-cond-diam-show} holds.
\end{proof}

\subsection{Cutting off geodesics from a boundary arc}
\label{sec-geo-kill}

We will now use the events of the preceding subsection to build ``shields" which prevent $D_h$-geodesics from hitting a given arc of a filled metric ball. 
Fix parameters $c , \delta \in (0,1)$ and $A>0$ and define $E_r(z) = E_r(z; c,\delta,A)$ as in~\eqref{eqn-clsce-all-event}
For $z\in\BB C$ and $\BB r >0$, let $\rho_{\BB r}^0(z) := \BB r$ and for $n\in\BB N$, inductively define
\eqb \label{eqn-good-radius-def}
\rho_{\BB r}^n(z) := \inf\left\{ r \geq 6\rho_{\BB r}^{n-1}(z) : \text{$r = 2^k \BB r$ for some $k\in\BB Z$}, \: E_r(z) \: \text{occurs} \right\} .
\eqe
Since $E_r(z)$ is determined by $h|_{\BB A_{2r,5r}(z)}$, it follows that $\rho_{\BB r}^n(z)$ is a stopping time for the filtration generated by $h|_{B_{5r}(z)}$ for $r \geq \BB r$. 

\begin{lem} \label{lem-clsce-iterate}
For each $q > 0$, we can find parameters $c,\delta\in(0,1)$ and $A >0$ and another parameter $\eta > 0$, all depending on $q$, such that the following is true.
Uniformly over all $\BB r > 0$ and $z\in\BB C$, 
\eqb
\BB P\left[ \rho_{\BB r}^{\lfloor \eta\log C \rfloor}(z)  > C \BB r \right] = O_C(C^{-q} ) ,\quad \text{as $C\rta\infty$}.
\eqe
\end{lem}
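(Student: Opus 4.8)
The plan is to iterate Lemma~\ref{lem-annulus-iterate-inverse}, using the fact that the events $E_r(z)$ have probability uniformly close to $1$ and are measurable with respect to the GFF restricted to the annulus $\BB A_{2r,5r}(z)$, viewed modulo additive constant.

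\textit{Setup.} First I would fix the parameters. Given $q > 0$, use Lemma~\ref{lem-clsce-event-pos} to choose $c,\delta \in (0,1)$ and $A > 0$ so that $\BB P[E_r(z)] \geq p$ for all $r,z$, where $p \in (0,1)$ is a threshold to be chosen momentarily. Recall from the discussion after~\eqref{eqn-clsce-all-event} that $E_r(z)$ is determined by $h|_{\BB A_{2r,5r}(z)}$ modulo additive constant, i.e.\ $E_r(z) \in \sigma\big( (h - h_r(z))|_{\BB A_{2r,5r}(z)} \big)$. This is exactly the form required by Lemma~\ref{lem-annulus-iterate-inverse} with $S_1 = 2$, $S_2 = 5$, applied with $z$ as the center (translation invariance of the whole-plane GFF modulo additive constant lets us reduce to $z = 0$, or one simply notes that Lemma~\ref{lem-annulus-iterate-inverse} as stated works for the circle average process centered anywhere after a translation).

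\textit{Reduction to a geometric-combinatorial count.} The point is that $\rho_{\BB r}^n(z)$ only fails to grow slowly if very few of the dyadic scales $r = 2^k \BB r$ have $E_r(z)$ occur. More precisely, consider the increasing sequence of dyadic radii $r_j := 2^{3j} \BB r$ for $j \in \BB N_0$, so that $r_{j+1}/r_j = 8 \geq S_2 = 5$ and, crucially, the scales $6 r_j < r_{j+1}$ so that once we find a scale in $[r_j, r_{j+1})$ (actually in $\{2^k\BB r\}$) where $E$ occurs we can jump to the next block. Actually the cleanest approach: for $j \in \BB N$ let $F_j$ be the event that $E_r(0)$ occurs for some dyadic $r \in [r_{j-1}, r_j)$. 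Then each $F_j$ is determined by $(h - h_{r}(0))$ restricted to a union of annuli all contained in $\BB A_{2 r_{j-1}, 5 r_j}(0) \subset \BB A_{2 r_{j-1}, 40 r_{j-1}}(0)$; choosing the block ratio slightly larger (say $r_j = 2^{6j}\BB r$) makes these annuli disjoint across $j$ and of the form required by Lemma~\ref{lem-annulus-iterate-inverse}, with $\BB P[F_j]$ at least $\BB P[E_{r_{j-1}}(0)] \geq p$. Now if $N(K)$ denotes the number of $j \in [1,K]_{\BB Z}$ with $F_j$ occurring, then whenever $N(K) \geq \lfloor \eta \log C\rfloor$ for a suitable $\eta$ and $K = K(C) \asymp \log C$, we have $\rho_{\BB r}^{\lfloor \eta \log C\rfloor}(0) \leq r_K \leq C\BB r$ (since each occurrence of $F_j$ advances the inductive construction~\eqref{eqn-good-radius-def} past one more block, and a block has bounded multiplicative size). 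Quantitatively: choose $\eta$ and the block size so that $r_K \le C \BB r$ when $K = \lceil \eta' \log C\rceil$ for appropriate $\eta'$, and note $\rho^n_{\BB r}(0) > C\BB r$ forces $N(K) < n = \lfloor \eta \log C\rfloor$.

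\textit{Applying the iteration lemma.} Apply Lemma~\ref{lem-annulus-iterate-inverse}, part~\ref{item-annulus-iterate-pos-inverse}, with $p$ as above: there exist $a > 0$, $b \in (0,1)$, $c_0 > 0$ (depending only on $p$, $S_1$, $S_2$) so that $\BB P[N(K) < bK] \leq c_0 e^{-aK}$ for all $K$. Now choose $\eta > 0$ small enough, in terms of $a$, $b$ and the block ratio, so that $\lfloor \eta \log C\rfloor < b K(C)$ for all large $C$; then
\[
\BB P\big[ \rho_{\BB r}^{\lfloor \eta\log C\rfloor}(0) > C\BB r \big] \leq \BB P[N(K(C)) < bK(C)] \leq c_0 e^{-a K(C)} \leq c_0 C^{-a\eta''}
\]
for a constant $\eta'' > 0$; finally choose $\eta$ (equivalently $p$, hence $K(C)/\log C$) small enough that $a\eta'' \geq q$. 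This gives the $O_C(C^{-q})$ bound, uniformly in $\BB r$ and $z$ by translation invariance. The main obstacle is purely bookkeeping: one must choose the block structure (the exponent in $r_j = 2^{Mj}\BB r$, and the relation between the index $n = \lfloor \eta\log C\rfloor$ in~\eqref{eqn-good-radius-def}, the number of blocks $K(C)$, and the radius $C\BB r$) so that (i) the annuli $\BB A_{2r_{j-1}, 5 r_j}(0)$ feeding $F_j$ are pairwise disjoint and satisfy the ratio hypothesis of Lemma~\ref{lem-annulus-iterate-inverse}, (ii) each occurrence of $F_j$ genuinely advances the recursion $\rho^{\,\cdot}_{\BB r}$, and (iii) the final power of $C$ is at least $q$. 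None of this requires any new probabilistic input beyond Lemmas~\ref{lem-annulus-iterate-inverse} and~\ref{lem-clsce-event-pos}.
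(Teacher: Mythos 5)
Your high-level strategy matches the paper's: combine Lemma~\ref{lem-clsce-event-pos} with Lemma~\ref{lem-annulus-iterate-inverse}. However, there is a genuine gap in the order in which you fix the parameters, and you should also be aware that the paper's implementation is substantially simpler than yours.

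\textbf{The quantifier-order gap.} You invoke part~\ref{item-annulus-iterate-pos-inverse} of Lemma~\ref{lem-annulus-iterate-inverse} (``for each $p$, there exist $a,b,c$''), which hands you \emph{some} decay rate $a>0$ depending on $p$, but gives you no control over its size. Since the target bound $O_C(C^{-q})$ must hold for an arbitrary $q>0$, you need the exponent $a$ to be at least of order $q$ (once $K(C)\asymp\log C$ is fixed). Part~\ref{item-annulus-iterate-pos-inverse} does not promise that $a$ can be made large, and nothing in its statement rules out $a$ being tiny for every $p$. Your final remedy --- ``choose $\eta$ (equivalently $p$, hence $K(C)/\log C$) small enough that $a\eta''\geq q$'' --- conflates three quantities: $\eta$ only controls the number of recursion steps and does not touch $a$; $\eta''=K(C)/\log C$ is fixed once the block ratio is fixed and is bounded above; and increasing $p$ requires re-running Lemma~\ref{lem-clsce-event-pos} to re-choose $c,\delta,A$, with no guarantee the resulting $a$ increases. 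The correct route, and the one the paper takes, is to use part~\hyperref[item-annulus-iterate-high]{(1)} of Lemma~\ref{lem-annulus-iterate-inverse} (stated before the label \ref{item-annulus-iterate-pos-inverse}): first fix $a$ large enough that $e^{-aK(C)}\le C^{-q}$ (roughly $a\gtrsim q\log 8$ if $K(C)\asymp\log_8 C$) and $b=1/2$ say, \emph{then} read off the threshold $p=p(a,b,S_1,S_2)$, and only then invoke Lemma~\ref{lem-clsce-event-pos} to pick $c,\delta,A$ realizing $\BB P[E_r(z)]\geq p$, and finally choose $\eta$ small enough that $\lfloor\eta\log C\rfloor< bK(C)$.

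\textbf{Unnecessary block construction.} Your intermediate events $F_j$ (that $E_r(0)$ occurs at \emph{some} dyadic $r$ in a block $[r_{j-1},r_j)$) are an extra layer of complexity that also introduces a bookkeeping problem you only gesture at under ``(ii)'': if $\rho^{m-1}$ lies near the top of block $j-1$, then $F_j$ can occur at a scale $r\in[r_{j-1}, 6\rho^{m-1})$, which does \emph{not} advance the recursion in~\eqref{eqn-good-radius-def}. The paper avoids this entirely by applying Lemma~\ref{lem-annulus-iterate-inverse} directly to the events $E_{r_k}(z)$ with $r_k=8^k\BB r$. Since $8>6$ and $8^k\BB r$ is a dyadic multiple of $\BB r$, one verifies by induction that if $k_1<\cdots<k_n$ are the indices at which $E_{8^k\BB r}(z)$ occurs, then $\rho^i_{\BB r}(z)\le 8^{k_i}\BB r$ for each $i$ (using $6\cdot 8^{k_{i-1}}<8^{k_{i-1}+1}\le 8^{k_i}$), so $\{\rho^n_{\BB r}(z)>C\BB r\}\subset\{N(K)<n\}$ once $8^K\le C$. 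This disposes of the geometry cleanly with no block structure.

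So: same idea as the paper, but you must swap part~\ref{item-annulus-iterate-pos-inverse} for part~(1) of Lemma~\ref{lem-annulus-iterate-inverse}, fix $a$ in terms of $q$ \emph{before} choosing $p$ and the metric parameters, and you can drop the $F_j$-block layer entirely.
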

\begin{proof}
Since each $E_r(z)$ is determined by $(h-h_r(z))|_{\BB A_{2r,5r}(z)}$, this follows by combining Lemma~\ref{lem-annulus-iterate-inverse} (applied with $r_k = 8^k \BB r$, say, $S_1 = 2$, $S_2= 5$, $a = \log q$, $b = 1/2$, say, and $K =\lfloor\log_6 C\rfloor$) and Lemma~\ref{lem-clsce-event-pos}.  
\end{proof}

\begin{lem} \label{lem-clsce-all}
There exists a choice of parameters $c,\delta\in(0,1)$ and $A >0$ and another parameter $\eta > 0$, depending only on the choice of metric $D$, such that the following is true. 
For each compact set $K\subset\BB C$, it holds with probability $1-O_\ep(\ep^2)$ (at a rate depending on $K$) that
\eqb
\rho_{\ep \BB r}^{\lfloor \eta\log\ep^{-1} \rfloor}(z) \leq   \ep^{1/2} \BB r , \quad\forall z\in \left(\frac{\ep \BB r}{4} \BB Z^2 \right) \cap B_{\ep \BB r}(\BB r K) .
\eqe
\end{lem}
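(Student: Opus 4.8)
The plan is to deduce Lemma~\ref{lem-clsce-all} from the quantitative estimate of Lemma~\ref{lem-clsce-iterate} by a union bound over the relevant lattice points. First I would fix the exponent $q = 8$ and let $c,\delta\in(0,1)$, $A > 0$, and $\eta_0 > 0$ be the parameters furnished by Lemma~\ref{lem-clsce-iterate} for this value of $q$; since $q$ is now a fixed universal number, these parameters depend only on the metric $D$. Set $\eta := \eta_0/2$. The point of this choice is the exact identity $\lfloor \eta_0 \log \ep^{-1/2}\rfloor = \lfloor \eta \log\ep^{-1}\rfloor$, which follows from $\log\ep^{-1/2} = \tfrac12\log\ep^{-1}$ and makes the iteration counts match up after rescaling.

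Next I would apply Lemma~\ref{lem-clsce-iterate} with $\ep\BB r$ in place of $\BB r$ and with $C = \ep^{-1/2}$. Since $C\cdot(\ep\BB r) = \ep^{1/2}\BB r$, this gives, uniformly over $z\in\BB C$ and $\BB r > 0$ (with implicit constant independent of $z$, $\BB r$, $\ep$),
\[
\BB P\left[ \rho_{\ep\BB r}^{\lfloor \eta\log\ep^{-1}\rfloor}(z) > \ep^{1/2}\BB r \right] = O_\ep(\ep^4) \quad\text{as } \ep\to 0 .
\]
It then remains to count the points: the set $(\tfrac{\ep\BB r}{4}\BB Z^2)\cap B_{\ep\BB r}(\BB r K)$ is contained in a Euclidean ball of radius $O_K(\BB r)$, so it has at most a $K$-dependent constant times $\ep^{-2}$ elements. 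Note that the factor $\BB r^2$ coming from the area cancels the factor $\BB r^2$ coming from the squared lattice spacing, so this count does not depend on $\BB r$. A union bound over these $O_{\ep}(\ep^{-2})$ points now gives that, with probability $1 - O_{\ep}(\ep^{-2}\cdot\ep^4) = 1 - O_\ep(\ep^2)$, we have $\rho_{\ep\BB r}^{\lfloor\eta\log\ep^{-1}\rfloor}(z)\leq \ep^{1/2}\BB r$ for all $z$ in this set, as desired.

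There is no serious obstacle here: the lemma is essentially a packaging of Lemma~\ref{lem-clsce-iterate}. The two things needing a little care are (a) choosing $q$ large enough to absorb the $\ep^{-2}$ loss from the union bound over lattice points --- any $q\geq 8$ suffices, which is why we can keep the parameters $c,\delta,A,\eta$ depending only on $D$ --- and (b) the rescaling bookkeeping: substituting $C = \ep^{-1/2}$ into Lemma~\ref{lem-clsce-iterate} turns its iteration count $\lfloor\eta_0\log C\rfloor$ into exactly $\lfloor\eta\log\ep^{-1}\rfloor$ once we set $\eta = \eta_0/2$, and turns its radius threshold $C\BB r$ (with base scale $\ep\BB r$) into exactly $\ep^{1/2}\BB r$.
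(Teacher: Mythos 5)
Your proof is correct and follows the same route as the paper: apply Lemma~\ref{lem-clsce-iterate} after rescaling the base radius to $\ep\BB r$, then take a union bound over the $O_\ep(\ep^{-2})$ lattice points in $\left(\frac{\ep\BB r}{4}\BB Z^2\right)\cap B_{\ep\BB r}(\BB r K)$. Your bookkeeping is careful and correct: $C = \ep^{-1/2}$ makes the threshold $C\cdot\ep\BB r = \ep^{1/2}\BB r$ exactly, $\eta := \eta_0/2$ makes the iteration counts match, and $q\geq 8$ gives $C^{-q} = \ep^{q/2}$ small enough to absorb the $\ep^{-2}$ loss from the union bound. For what it is worth, the paper's parenthetical suggests $C = \ep^{-3/2}$ and $q = 6$; with base radius $\ep\BB r$ that would yield the threshold $C\cdot\ep\BB r = \ep^{-1/2}\BB r$ rather than $\ep^{1/2}\BB r$, so your choice of parameters is the one that actually closes the argument as stated.
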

\begin{proof}
This follows from Lemma~\ref{lem-clsce-iterate} (applied $\ep \BB r$ in place of $\BB r$, with $C =\ep^{-3/2}$, and with $q = 6$, say) and a union bound over $O_\ep(\ep^{-2})$ points in $\left(\frac{\ep \BB r}{4} \BB Z^2 \right) \cap B_{\ep \BB r}(\BB r K)$. 
\end{proof}

Henceforth assume that $c , \delta , A$, and $\eta$ are as in Lemma~\ref{lem-clsce-all}.
For $\ep > 0$, $\BB r > 0$, and a compact set $K\subset\BB C$, let 
\eqb \label{eqn-extra-radius-eucl}
R_{\BB r}^\ep(K) :=  6 \sup\left\{ \rho_{\ep \BB r}^{\lfloor \eta \log \ep^{-1} \rfloor}(z) : z\in \left( \frac{\ep \BB r}{4} \BB Z^2 \right) \cap B_{\ep \BB r}\left( K \right) \right\} +\ep\BB r ,
\eqe
so that each of the radii $\rho_{\ep \BB r}^n(z)$ for $z\in \left( \frac{\ep \BB r}{4} \BB Z^2 \right) \cap B_{\ep \BB r}\left( K \right)$ and $n\in[1,\eta\log\ep^{-1}]_{\BB Z}$ is determined by $R_{\BB r}^\ep(K)$ and $h|_{B_{R_{\BB r}^\ep(K)}(K)}$. 
Lemma~\ref{lem-clsce-all} shows that for each fixed choice of $K$, $\BB P[ R_{\BB r}^\ep(r K) \leq (6\ep^{1/2} + \ep) \BB r ]$ tends to 1 as $\ep\rta 0$, at a rate which is uniform in $\BB r$.  

Recall from Section~\ref{sec-geodesic} that $\mcl B_s^\bullet$ for $s > 0$ denotes the filled $D_h$-ball of radius $s$ centered at zero. 
For $s >0$, define
\eqb \label{eqn-extra-radius}
\sigma_{s,\BB r}^\ep := \inf\left\{ s' > s :   B_{R_{\BB r}^\ep(\mcl B_s^\bullet)}(\mcl B_s^\bullet) \subset \mcl B_{s'}^\bullet  \right\}   ,
\eqe  
so that $\mcl B_{\sigma_{s,\BB r}^\ep}^\bullet$ contains $B_{6\rho_{\ep \BB r}^{\lfloor\eta\log\ep^{-1}\rfloor}(z)}(z)$ for each $z\in B_{\ep \BB r}(\mcl B_s^\bullet)$. 
Since each $\rho_{\ep \BB r}^{\lfloor \eta \log \ep^{-1} \rfloor}(z)$ is a stopping time for the filtration generated by $h|_{B_{5r}(z)}$ for $r\geq \ep\BB r$, it follows that if $\tau$ is a stopping time for $\left\{ \left( \mcl B_t^\bullet , h|_{\mcl B_t^\bullet} \right) \right\}_{t\geq 0}$, then so is $\sigma_{\tau , \BB r}^\ep$. 
The following lemma will be used to ``kill off" the $D_h$-geodesics from 0 which hit a given boundary arc of a filled $D_h$-metric ball.

\begin{lem}
\label{lem-geo-kill-pt}
There exists $\alpha  >0$, depending only on the choice of metric, such that the following is true. 
Let $\BB r > 0$, let $\tau$ be a stopping time for the filtration generated by $\left\{ \left( \mcl B_s^\bullet , h|_{\mcl B_s^\bullet} \right) \right\}_{s \geq 0}$,
and let $x\in\bdy\mcl B_\tau^\bullet$ and $\ep\in (0,1)$ be chosen in a manner depending only on $( \mcl B_\tau^\bullet  , h|_{\mcl B_\tau^\bullet} )$. 
There is an event $ G_x^\ep \in \sigma\left(\mcl B_{  \sigma_{\tau,\BB r}^\ep}^\bullet  , h|_{B_{  \sigma_{\tau,\BB r}^\ep}^\bullet} \right)$ with the following properties.
\begin{enumerate}[A.]
\item If, in the notation~\eqref{eqn-extra-radius-eucl}, we have $ R_{\BB r}^\ep(\mcl B_\tau^\bullet) \leq   \op{diam} \mcl B_\tau^\bullet$ and $G_x^\ep $ occurs, then no $D_h$-geodesic from 0 to a point in $\BB C\setminus B_{R_{\BB r}^\ep(\mcl B_\tau^\bullet)}(\mcl B_\tau^\bullet)$ can enter $B_{\ep \BB r}(x) \setminus \mcl B_\tau^\bullet$. \label{item-geo-event-kill-pt}
\item There is a deterministic constant $C_0 >1$ depending only on the choice of metric such that a.s. $\BB P\left[     G_x^\ep \,\big|\, \mcl B_\tau^\bullet  , h|_{\mcl B_\tau^\bullet} \right] \geq 1 -  C_0 \ep^\alpha$. \label{item-geo-event-prob-pt}
\end{enumerate}
\end{lem}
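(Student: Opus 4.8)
\textbf{Proof plan for Lemma~\ref{lem-geo-kill-pt}.}

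The plan is to construct the event $G_x^\ep$ explicitly using the good annuli of Section~\ref{sec-clsce-event} and then verify the two claimed properties. First I would set up the geometry: by Lemma~\ref{lem-disconnect-set-infty} applied to $K = \mcl B_\tau^\bullet$ (or rather by a direct argument using that $x$ is a prime end of $\bdy\mcl B_\tau^\bullet$), one can find a point $z = z(x)$ in the lattice $\left(\frac{\ep\BB r}{4}\BB Z^2\right)\cap B_{\ep\BB r}(\mcl B_\tau^\bullet)$ near $x$ such that the annulus $\BB A_{2r,3r}(z)$, for $r$ one of the dyadic radii $\rho_{\ep\BB r}^n(z)$ with $n \le \eta\log\ep^{-1}$, disconnects $B_{\ep\BB r}(x)\cap(\BB C\setminus\mcl B_\tau^\bullet)$ from $\infty$ in $\BB C\setminus\mcl B_\tau^\bullet$ while staying close enough to $x$ that $B_{4r}(z)$ is contained in a ball of radius comparable to $\ep^{1/2}\BB r$; this is why we iterate through $\lfloor\eta\log\ep^{-1}\rfloor$ annuli and rely on Lemma~\ref{lem-clsce-all}. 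Then I would take for $U$ the set in $\mcl U_r(z;\delta)$ obtained by removing from $\BB A_{3r,4r}(z)$ exactly the $\delta r\times\delta r$ squares that intersect $\mcl B_\tau^\bullet$ (note $\mcl B_\tau^\bullet$, hence $U$, is determined by $(\mcl B_\tau^\bullet,h|_{\mcl B_\tau^\bullet})$ as required), and define $G_x^\ep$ to be the intersection, over the $O(\log\ep^{-1})$ candidate radii $r$, of the events $E_r^U(z)$ together with the conditional "small-diameter" event of Lemma~\ref{lem-cond-diam-small}, i.e. $\max_{V\in\mcl V(U)}\sup_{u,v\in V} D_h(u,v;\BB A_{2r,5r}(z)) \le \frac{c}{2}\frk c_r e^{\xi h_r(z)}$ — but only requiring this for one of the good radii (the ``first success'').

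For property~\ref{item-geo-event-kill-pt}, the geometric mechanism is the one sketched just before Lemma~\ref{lem-clsce-event-pos}: on $G_x^\ep$, conditions~\ref{item-clsce-ball} and the small-diameter bound together show that every point of $\BB A_{3r,4r}(z)$ lies at $D_h$-distance strictly less than $c\,\frk c_r e^{\xi h_r(z)}$ from $\mcl B_\tau^\bullet$ (connect through at most two of the squares $S$ plus a path inside a component $V$), whereas condition~\ref{item-clsce-across} says $D_h(\bdy B_{2r}(z),\bdy B_{3r}(z)) \ge c\,\frk c_r e^{\xi h_r(z)}$. Hence a $D_h$-geodesic from $0$ to a point $w\in\BB C\setminus B_{R_{\BB r}^\ep(\mcl B_\tau^\bullet)}(\mcl B_\tau^\bullet)$ that entered $B_{\ep\BB r}(x)\setminus\mcl B_\tau^\bullet$ would have to cross $\BB A_{2r,3r}(z)$ twice (in and out), since $\BB A_{2r,3r}(z)$ separates $B_{\ep\BB r}(x)\setminus\mcl B_\tau^\bullet$ from $w$ in $\BB C\setminus\mcl B_\tau^\bullet$ and the geodesic cannot pass through $\mcl B_\tau^\bullet$ before reaching $x$ (any point of $\mcl B_\tau^\bullet$ is within $D_h$-distance $\tau$ of $0$ while $x$ is at distance exactly $\tau$). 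But then one could shortcut: go from $0$ to the first hitting point of $\bdy B_{3r}(z)$, then jump to $\mcl B_\tau^\bullet$ and into $\BB A_{3r,4r}(z)$ at $D_h$-cost $< c\,\frk c_r e^{\xi h_r(z)}$, contradicting that the geodesic spent at least $c\,\frk c_r e^{\xi h_r(z)}$ crossing the annulus — this is the standard ``geodesics avoid shields'' argument and I would write it out carefully using that $R_{\BB r}^\ep(\mcl B_\tau^\bullet)\le\op{diam}\mcl B_\tau^\bullet$ to guarantee $w$ really is outside $B_{4r}(z)$.

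For property~\ref{item-geo-event-prob-pt}, I would condition on $(\mcl B_\tau^\bullet,h|_{\mcl B_\tau^\bullet})$ and then on the successive annuli going outward. Lemma~\ref{lem-clsce-all} gives that with probability $1-O_\ep(\ep^2)$ all the relevant $\rho_{\ep\BB r}^{\lfloor\eta\log\ep^{-1}\rfloor}(z)\le\ep^{1/2}\BB r$, so there genuinely are of order $\log\ep^{-1}$ good dyadic radii $r$ for the chosen $z$. For each such $r$, given $h|_{\BB C\setminus U}$ and $E_r^U(z)$, Lemma~\ref{lem-cond-diam-small} gives the small-diameter event with conditional probability at least $\frk p>0$, and these trials are nearly independent across the well-separated annuli (they live in disjoint regions $\BB A_{2r,5r}(z)$, and the $U$'s are determined by $h$ restricted to the complement), so the probability that none succeeds is at most $(1-\frk p)^{c\log\ep^{-1}} = \ep^\alpha$ for a suitable $\alpha>0$. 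Combining with the $O_\ep(\ep^2)$ error from Lemma~\ref{lem-clsce-all} and absorbing constants gives the bound $1 - C_0\ep^\alpha$. \textbf{The main obstacle} I anticipate is the bookkeeping in property~\ref{item-geo-event-prob-pt}: making the "nearly independent trials" rigorous requires carefully specifying the filtration (exploring $\mcl B_\tau^\bullet$, then annuli outward via the stopping times $\rho_{\ep\BB r}^n(z)$) and checking that conditioning on the outer field and on $E_r^U(z)$ leaves enough randomness in $\rng h^U$ for Lemma~\ref{lem-cond-diam-small} to apply at each scale, uniformly — together with ensuring the single point $z$ works for disconnecting $B_{\ep\BB r}(x)\setminus\mcl B_\tau^\bullet$, which may force a small additional union bound over a bounded number of candidate lattice points near $x$.
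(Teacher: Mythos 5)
Your overall approach matches the paper's: choose a lattice point $z$ near $x$, build the ``shield'' from the annuli $\BB A_{2r,3r}(z)$ and $\BB A_{3r,4r}(z)$ at a sequence of $\log\ep^{-1}$ scales, take $U$ to be the squares in $\BB A_{3r,4r}(z)$ disjoint from $\mcl B_\tau^\bullet$, and iterate Lemma~\ref{lem-cond-diam-small}. The geometric mechanism in Property~A is also correct (though a single crossing of $\BB A_{2r,3r}(z)$ already suffices for the contradiction; ``crossing twice'' is not needed).

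The genuine gap is in Property~B, and you correctly flagged it as ``the main obstacle'' but did not see how to resolve it. You propose to iterate over the radii $\rho_{\ep\BB r}^n(z)$ from~\eqref{eqn-good-radius-def}, which are defined by requiring the full event $E_r(z) = \bigcap_{U'\in\mcl U_r(z;\delta)} E_r^{U'}(z)$. But Lemma~\ref{lem-cond-diam-small} is stated only with conditioning on $E_r^U(z)$ for a \emph{single} $U$, and this is not a cosmetic choice: to apply it one needs the event $\{z=\frk z,\,\rho^n=\frk r,\,U=\frk U\}$ to be determined by $h|_{\BB C\setminus\frk U}$ together with $\BB 1_{E_{\frk r}^{\frk U}(\frk z)}$. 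With your radii, $\{\rho^n=\frk r\}$ involves $E_{\frk r}^{U'}(z)$ for every $U'\in\mcl U_{\frk r}(z;\delta)$, and the harmonic condition (condition~\ref{item-clsce-harmonic}) for a $U'$ overlapping $\frk U$ genuinely depends on the zero-boundary part $\rng h^{\frk U}$ inside $\frk U$, since $\frk h^{U'}$ is reconstructed from boundary data that may lie in $\frk U$. So the conditional law of $\rng h^{\frk U}$ given your filtration is no longer the one Lemma~\ref{lem-cond-diam-small} controls. The paper's fix is to introduce the modified radii $\wt\rho^n$ defined by the occurrence of only the single-set event $E_r^{\wt U^r}(z)$ (where $\wt U^r$ is your $U$, determined by $(\mcl B_\tau^\bullet,h|_{\mcl B_\tau^\bullet})$); then $\{\wt\rho^n=\frk r,\wt U^{\wt\rho^n}=\frk U\}$ \emph{is} measurable with respect to $\sigma(h|_{\BB C\setminus\frk U},\BB 1_{E_{\frk r}^{\frk U}(\frk z)})$, and the iteration closes. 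The link back to the hypotheses of Property~A is that $\wt\rho^n\le\rho_{\ep\BB r}^n(z)$ (since $E_r(z)\subset E_r^{\wt U^r}(z)$), so the control built into $R_{\BB r}^\ep(\mcl B_\tau^\bullet)$ still bounds the scales. Also note that Lemma~\ref{lem-clsce-all} is not actually invoked inside the proof of Lemma~\ref{lem-geo-kill-pt}: the smallness of the $\rho$'s is absorbed into the hypothesis $R_{\BB r}^\ep(\mcl B_\tau^\bullet)\le\op{diam}\mcl B_\tau^\bullet$ of Property~A (and later into the global regularity event $\mcl E_{\BB r}(a)$), while the probability bound of Property~B follows unconditionally by iterating Lemma~\ref{lem-cond-diam-small} over the $\wt\rho^n$'s, regardless of how large they turn out to be.
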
 
\begin{proof}
See Figure~\ref{fig-geo-kill} for an illustration of the proof.   
We will outline the argument just below, after introducing some notation.
\medskip

\begin{figure}[t!]
 \begin{center}
\includegraphics[scale=.85]{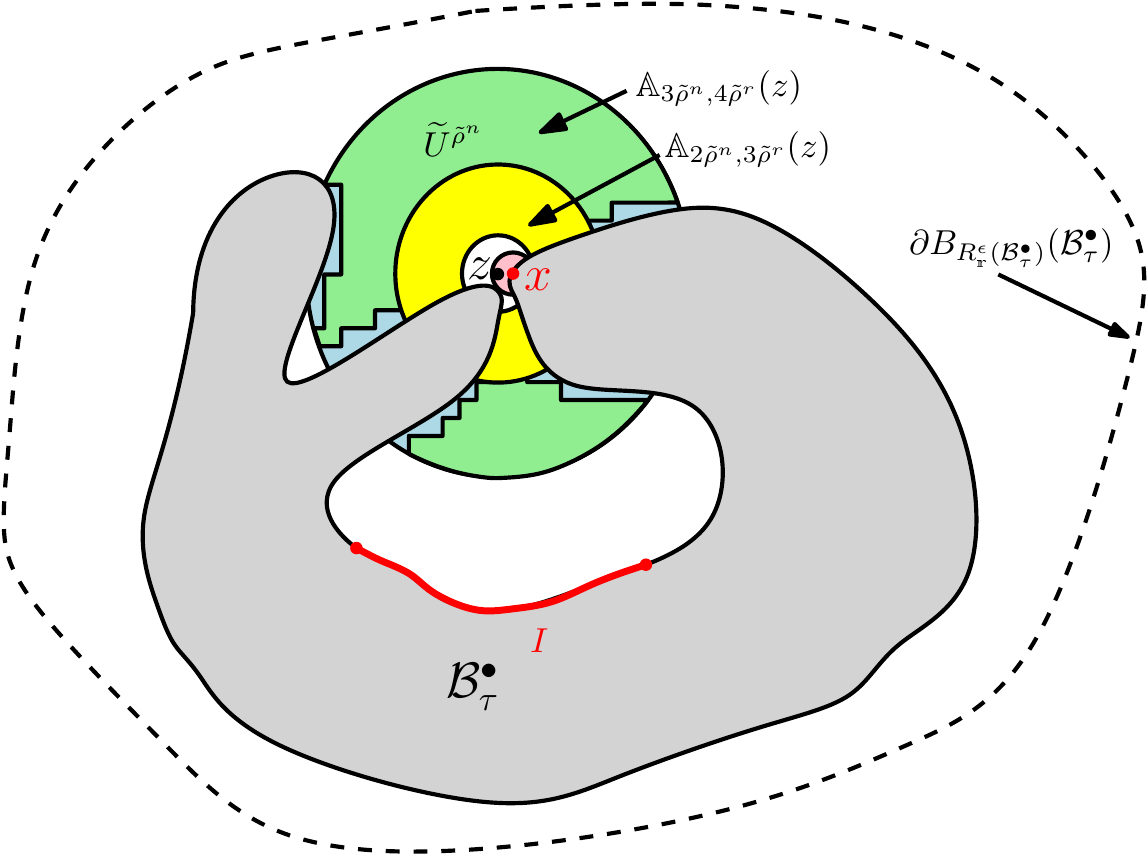}
\vspace{-0.01\textheight}
\caption{Illustration of the proof of Lemma~\ref{lem-geo-kill-pt}. The point $z \in \frac{\ep \BB r}{4} \BB Z^2$ is chosen so that $B_{\ep \BB r}(x) \subset B_{2\ep \BB r}(z)$. On the event $G_x^\ep$ defined in~\eqref{eqn-geo-event-def}, there is some $n \in [1,\eta\log\ep^{-1}]_{\BB Z}$ for which the following is true. With $\wt\rho^n$ as in~\eqref{eqn-good-radius-def'}, the sum of the $D_h$-diameter of the light green region $\wt U^{\wt\rho^n}$ and the maximal $D_h$-diameters of the light blue squares $S\in\mcl S_{\delta\wt\rho^n}^z(\BB A_{2\wt\rho^n,4\wt\rho^n}(z))$ is strictly smaller than the $D_h$-distance across the yellow annulus $\BB A_{2\wt\rho^n,3\wt\rho^n}(z)$. 
This makes it so that each point in $\BB A_{3\wt\rho^n,4\wt\rho^n}(z)$ is $D_h$-closer to a point of $\mcl B_\tau^\bullet\cap \BB A_{3\wt\rho^n,4\wt\rho^n}(z)$ than it is to $B_{\ep \BB r}(x)$. Hence no $D_h$-geodesic from a point outside of $B_{R_{\BB r}^\ep(\mcl B_s^\bullet)}(\mcl B_s^\bullet)$ to 0 can enter $B_{\ep \BB r}(x)\setminus \mcl B_\tau^\bullet$.
The condition that $ R_{\BB r}^\ep(\mcl B_\tau^\bullet) \leq  \op{diam} \mcl B_\tau^\bullet$ ensures that $\BB A_{3\wt\rho^n,4\wt\rho^n}(z)$ intersects $\mcl B_\tau^\bullet$. 
We can also prevent $D_h$-geodesics from hitting an arc $I$ of $\bdy\mcl B_\tau^\bullet$ by choosing $x$ so that $B_{\ep \BB r}(x)$ disconnects $I$ from $\infty$ in $\BB C\setminus\mcl B_\tau^\bullet$; see Lemma~\ref{lem-geo-kill}. 
}\label{fig-geo-kill}
\end{center}
\vspace{-1em}
\end{figure}

\noindent\textit{Step 1: setup.}
We can choose $z\in \left( \frac{\ep \BB r}{4} \BB Z^2 \right) \cap B_{\ep \BB r}\left( \mcl B_\tau^\bullet  \right)$ such that $B_{\ep \BB r}(x) \subset B_{2\ep \BB r}(z)$, in a manner depending only on $( \mcl B_\tau^\bullet  , h|_{\mcl B_\tau^\bullet} )$.  
Recalling the set of squares $\mcl S_{\delta r}^z(\cdot)$ from~\eqref{eqn-square-def}, for $r > 0$ we define
\eqb \label{eqn-ball-comp-set}
\wt U^r := \wt U^r(z) := \BB A_{3r,4r}(z) \setminus  \bigcup \left\{ S \in \mcl S_{\delta r}^z(\BB A_{3r,4r}(z)) : S \cap \mcl B_\tau^\bullet \not=\emptyset \right\} .
\eqe
Note that $\wt U^r$ belongs to the set $\mcl U_r(z)$ of Section~\ref{sec-clsce-event} and $\wt U^r$ is determined by $(\mcl B_\tau^\bullet , h|_{\mcl B_\tau^\bullet})$. 

Let $\wt\rho^0 := \ep \BB r$ and for $n\in\BB N$, inductively define
\eqb \label{eqn-good-radius-def'}
\wt\rho^n = \wt\rho_{\ep \BB r}^n(z) := \inf\left\{ r \geq 6\wt\rho^{n-1} : \text{$r = 2^k \BB r$ for some $k\in\BB Z$}, \: E_r^{\wt U^r}(z) \: \text{occurs} \right\} .
\eqe
In other words, $\wt\rho^n$ is defined in the same manner as $\rho_{\ep \BB r}^n(z)$ from~\eqref{eqn-good-radius-def} (with $\ep \BB r$ in place of $\BB r$) but with $E_r^{\wt U^r}(z)$ instead of $E_r(z)$.
This means that $E_{\wt\rho^n}^U(z)$ is only required to occur for $U = \wt U^{\wt\rho^n}$ instead of for every $U\in\mcl U_{\wt\rho^n}(z)$. 
By this and the definition~\eqref{eqn-extra-radius-eucl} of $R_{\BB r}^\ep(\mcl B_\tau^\bullet)$, 
\eqb \label{eqn-good-radii-compare}
\wt\rho^n \leq \rho_{\ep \BB r}^n(z) ,\: \forall n \in \BB N_0 \quad \text{and hence} \quad \wt\rho^{\lfloor \eta\log\ep^{-1}\rfloor} \leq \frac16 R_{\BB r}^\ep(\mcl B_\tau^\bullet).
\eqe
The reason for considering $\wt\rho^n$ instead of $\rho_{\ep \BB r}^n(z)$ is because we can only condition on $E_r^U(z)$, not on $E_r(z)$, in Lemma~\ref{lem-cond-diam-small}.
 
Recalling that $\mcl V(\wt U^{\wt\rho^n})$ denotes the set of connected components of $\wt U^{\wt\rho^n}$, we define 
\eqb \label{eqn-geo-event-def}
G_x^\ep := \left\{ \exists   n\in [1,\eta\log\ep^{-1}]_{\BB Z}  \: \text{s.t.}\: \max_{V\in\mcl V(\wt U^{\wt\rho^n} )} \sup_{u,v \in V} D_h\left( u , v ; \BB A_{ 2\wt\rho^n , 5\wt\rho^n }(z) \right) \leq \frac{c}{2} \frk c_{\wt\rho^n} e^{\xi h_{\wt\rho^n}(z)} \right\}  .
\eqe
In other words, $G_x^\ep$ is the event that the event of Lemma~\ref{lem-cond-diam-small} occurs for at least one of the sets $\wt U^{\wt\rho^n}$ for $n \in [1,\eta\log\ep^{-1}]_{\BB Z}$. 
\begin{itemize}
\item To check property~\ref{item-geo-event-kill-pt}, we will show that if $G_x^\ep$ occurs and $n$ is as in the definition of $G_x^\ep$, then no $D_h$-geodesic from a point outside of $\BB C\setminus B_{R_{\BB r}^\ep(\mcl B_\tau^\bullet)}(\mcl B_\tau^\bullet)$ to 0 can cross between the inner and outer boundaries of the annulus $\BB A_{2\wt\rho^n,4\wt\rho^n}(z)$. As we will explain in Step 2 below, the reason for this is that the $D_h$-distance from any point of $\BB A_{3\wt\rho^n,4\wt\rho^n}(z)$ to $\mcl B_\tau^\bullet$ is shorter than the $D_h$-distance across $\BB A_{2\wt\rho^n,3\wt\rho^n}(z)$, so it is more efficient to enter $\mcl B_\tau^\bullet$ before crossing $\BB A_{2\wt\rho^n,3\wt\rho^n}(z)$. 
\item To check property~\ref{item-geo-event-prob-pt}, we will first apply Lemma~\ref{lem-cond-diam-small} for a possible realization of $\wt U^{\wt\rho^n}$ to get that the conditional probability of $G_x^\ep$ given $z,\ep,\wt\rho^n, \wt U^{\wt\rho^n}$, and $h|_{\BB C\setminus \wt U^{\wt\rho^n}}$ is at least $\frk p$. We will then multiply this estimate over all $n\in [1,\eta\log\ep^{-1}]$ to get $\BB P[G_x^\ep] \geq 1- C_0 \ep^\alpha$ for $\alpha $ slightly less than $ \eta\log(1/(1-\frk p))$ and an appropriate choice of $C_0 > 1$.
\end{itemize}

Since $z$ and $\wt U^r$ for $r > 0$ are each determined by $(\mcl B_\tau^\bullet , h|_{\mcl B_\tau^\bullet})$, it follows that each $E^{\wt U^r}(z)$ is determined by $(\mcl B_\tau^\bullet , h|_{\mcl B_\tau^\bullet})$ and $h|_{\BB A_{2r, 5r}(z)}$. 
Hence $\wt\rho^n$ is a stopping time for the filtration generated by $h|_{B_{5r}(z)}$ for $r\geq \ep\BB r$ and $(\mcl B_\tau^\bullet , h|_{\mcl B_\tau^\bullet})$.
By~\eqref{eqn-good-radii-compare} and the definition~\eqref{eqn-extra-radius} of $\sigma_{\tau,\BB r}^\ep$, we have $B_{5\wt\rho^n}(z)  \subset \mcl B_{ \sigma_{\tau,\BB r}^\ep}^\bullet$.
By combining these statements with~\eqref{eqn-geo-event-def} and the locality of the metric (Axiom~\ref{item-metric-local}), we get that $G_x^\ep \in \sigma\left(\mcl B_{ \sigma_{\tau,\BB r}^\ep}^\bullet  , h|_{B_{  \sigma_{\tau,\BB r}^\ep}^\bullet} \right)$. 
\medskip

\noindent\textit{Step 2: proof that $G_x^\ep$ satisfies property~\ref{item-geo-event-kill-pt}.} 
Assume that $ R_{\BB r}^\ep(\mcl B_\tau^\bullet) \leq   \op{diam} \mcl B_\tau^\bullet$ and $G_x^\ep$ occurs. 
Choose $n\in [1,\eta\log\ep^{-1}]_{\BB Z}$ as in the definition~\eqref{eqn-geo-event-def} of $G_x^\ep$. 
Then 
\eqbn
\ep \BB r \leq \wt\rho^n \leq  \frac16  R_{\BB r}^\ep(\mcl B_\tau^\bullet)  \leq \frac16  \op{diam}\mcl B_\tau^\bullet  .
\eqen
By our choice of $z$, this means that $\BB A_{3\wt\rho^n , 4\wt\rho^n}(z)$ intersects $\mcl B_\tau^\bullet$ and $\BB A_{2\wt\rho^n , 3\wt\rho^n}(z)$ disconnects $B_{\ep \BB r}(x)$ from $\infty$. 

By~\eqref{eqn-ball-comp-set}, each point of $\BB A_{3\wt\rho^n,4\wt\rho^n}(z)\setminus \wt U^{\wt\rho^n}$ is contained in one of the squares $S \in \mcl S_{\delta \wt\rho^n}^z(\BB A _{3\wt\rho^n ,4\wt\rho^n}(z))$ such that $ S \cap \mcl B_\tau^\bullet \not=\emptyset$. 
By this and condition~\ref{item-clsce-ball} in the definition of $E_{\wt\rho^n}^{\wt U^{\wt\rho^n}}(z)$, each point of $\BB A_{3\wt\rho^n , 4\wt\rho^n}(z) \setminus \wt U^{\wt\rho^n}$ lies at $D_h$-distance at most $(c/100) \frk c_{\wt\rho^n} e^{\xi h_{\wt\rho^n}(z)} $ from $ \mcl B_\tau^\bullet$.
This together with the definition~\eqref{eqn-geo-event-def} of $G_x^\ep$ shows that 
\eqb \label{eqn-ball-complement-diam}
\sup_{u \in \BB A_{3\wt\rho^n , 4\wt\rho^n}(z)} D_h\left(u , \mcl B_\tau^\bullet \right)  < c \frk c_{\wt\rho^n} e^{\xi h_{\wt\rho^n}(z)}  .
\eqe

As a $D_h$-geodesic from a point outside of $\mcl B_\tau^\bullet$ to $0$ hits $\bdy\mcl B_\tau^\bullet$ exactly once, if such a geodesic hits $B_{\ep \BB r}(x) \setminus \mcl B_\tau^\bullet$, then it hits $B_{\ep \BB r}(x)$ \emph{before} entering $\mcl B_\tau^\bullet$. 
Therefore, to prove property~\ref{item-geo-event-kill-pt}, it suffices to consider a path $P$ from a point outside of $\BB C\setminus B_{R_{\BB r}^\ep(\mcl B_\tau^\bullet)}(\mcl B_\tau^\bullet)$ to 0 which enters $B_{\ep \BB r}(x)$ before entering $\mcl B_\tau^\bullet$ and show that $P$ cannot be a $D_h$-geodesic.

 Since $\BB A_{2\wt\rho^n  , 3\wt\rho^n }(z)$ disconnects $B_{\ep \BB r}(x)$ from $\infty$, the path $P$ must cross from the outer boundary of $\BB A_{2\wt\rho^n,3\wt\rho^n}(z)$ to the inner boundary of $\BB A_{2\wt\rho^n,3\wt\rho^n}(z)$ before hitting $B_{\ep \BB r}(x)$, and hence also before hitting $\mcl B_\tau^\bullet$.  
By condition~\ref{item-clsce-across} in the definition of $E_{\wt\rho^n}(z)$, each path between the inner and outer boundaries of $\BB A_{2\wt\rho^n,3\wt\rho^n}(z)$ has $D_h$-length at least $c\frk c_{\wt\rho^n} e^{\xi h_{\wt\rho^n}(z)}$. Hence, the $D_h$-length of the segment of $P$ after the first time it enters $\BB A_{2\wt\rho^n , 3\wt\rho^n}(z)$ must be at least $c \frk c_{\wt\rho^n} e^{\xi h_{\wt\rho^n}(z)}   + \tau$.  

But, $P$ must enter $\BB A_{3\wt\rho^n , 4\wt\rho^n}(z)$ before entering $\BB A_{2\wt\rho^n , 3\wt\rho^n}(z)$, so by~\eqref{eqn-ball-complement-diam} $P$ must hit a point at $D_h$-distance strictly smaller than $c \frk c_{\wt\rho^n} e^{\xi h_{\wt\rho^n}(z)}   $ from $\bdy\mcl B_\tau^\bullet$ before entering $\BB A_{2\wt\rho^n , 3\wt\rho^n}(z)$.
Such a point lies at $D_h$-distance strictly smaller than $c \frk c_{\wt\rho^n} e^{\xi h_{\wt\rho^n}(z)} + \tau $ from $0$. 
Combining this with the preceding paragraph shows that $P$ cannot be a $D_h$-geodesic to 0.  
\medskip

\noindent\textit{Step 3: proof that $G_x^\ep$ satisfies property~\ref{item-geo-event-prob-pt}.} 
For $n\in\BB N$, let 
\eqb \label{eqn-geo-event-once}
\wt G^n:= \left\{ \max_{V\in\mcl V(\wt U^{\wt\rho^n} )} \sup_{u,v \in V} D_h\left( u , v ; \BB A_{ 2\wt\rho^n , 5\wt\rho^n }(z) \right) \leq \frac{c}{2} \frk c_{\wt\rho^n} e^{\xi h_{\wt\rho^n}(z)} \right\}
\eqe
be the event appearing in the definition~\eqref{eqn-geo-event-def} of $G_x^\ep$, so that $G_x^\ep = \bigcup_{n=1}^{\lfloor\eta\log\ep^{-1}\rfloor} \wt G^n$. 

Let $\frk p$ be as in Lemma~\ref{lem-cond-diam-small} with our above choice of $c, \delta   , A$. Since we chose these parameters in a manner depending only on the choice of metric, $\frk p$ depends only on the choice of metric.
Just below, we will show using Lemma~\ref{lem-cond-diam-small} that a.s.\ 
\eqb  \label{eqn-use-cond-diam-small'} 
\BB P\left[ \wt G^n     \,\big|\,     \wt U^{\wt\rho^n} ,  h|_{\BB C\setminus \wt U^{\wt\rho^n}} \right] \geq \frk p , \quad\forall n \in \BB N.
\eqe 
Before proving~\eqref{eqn-use-cond-diam-small'}, we explain why~\eqref{eqn-use-cond-diam-small'} implies property~\ref{item-geo-event-prob-pt}. 
Recall that each $\wt\rho^n$ is a stopping time for the filtration generated by $h|_{B_{5r}(z)}$ for $r\geq 0$ and $(\mcl B_\tau^\bullet , h|_{\mcl B_\tau^\bullet})$ and each of the sets $\wt U^r$ for $r  > 0$ from~\eqref{eqn-ball-comp-set} is determined by $(\mcl B_\tau^\bullet , h|_{\mcl B_\tau^\bullet})$. 
By the locality of $D_h$ (Axiom~\ref{item-metric-local}), for each $n\in\BB N$ the event $\wt G^n$ of~\eqref{eqn-geo-event-once} is determined by $h|_{B_{6\wt\rho^n}(z)}$ and $(\mcl B_\tau^\bullet , h|_{\mcl B_\tau^\bullet})$. 

Since $\wt\rho^n \geq 6\wt\rho^{n-1}$ and $\wt U^{\wt\rho^n}$ is disjoint from $\mcl B_\tau^\bullet$, it follows that $h|_{B_{6\wt\rho^m}(z)}$ and $(\mcl B_\tau^\bullet , h|_{\mcl B_\tau^\bullet})$ and hence also $\wt G^m$ for $m\leq n-1$ is determined by $(\wt U^{\wt\rho^n} ,  h|_{\BB C\setminus \wt U^{\wt\rho^n}})$.  
Hence we can iterate~\eqref{eqn-use-cond-diam-small'} $\lfloor\eta\log\ep^{-1}\rfloor$ times to get that the conditional probability given $(\mcl B_\tau^\bullet, h|_{\mcl B_\tau^\bullet})$ that $\wt G^n$ does not occur for every $n\in [1,\eta\log\ep^{-1}]_{\BB Z}$ is at most $(1-\frk p)^{\lfloor \eta\log\ep^{-1} \rfloor}$.
That is, a.s.\ $\BB P\left[     G_x^\ep \,\big|\, \mcl B_\tau^\bullet  , h|_{\mcl B_\tau^\bullet} \right] \geq 1 -  C_0 \ep^\alpha$ for $\alpha $ slightly smaller than $ \eta \log ( 1 / (1-\frk p))$ and an appropriate choice of $C_0 >1$ depending only on the choice of metric. 

It remains to justify~\eqref{eqn-use-cond-diam-small'}. 
To this end, let $\frk e  >0$, let $\frk z \in \frac{\frk e \BB r}{4} \BB Z^2$, let $\frk r \geq 6^n \frk e \BB r$ be a dyadic multiple of $\frk e \BB r$, and let $\frk U \in \mcl U_{\frk r}(\frk z)$.
We will study the conditional law given $\{\ep = \frk e,  z =\frk z  , \wt\rho^n  =\frk r , \wt U^{\wt\rho^n} = \frk U \}$.
By Lemma~\ref{lem-cond-diam-small},
\eqb  \label{eqn-use-cond-diam-small} 
\BB P\left[ \max_{V\in \mcl V(\frk U)} \sup_{u,v \in V} D_h\left( u , v ; \BB A_{ \frk r ,5 \frk r}(\frk z) \right) \leq \frac{c}{2} \frk c_{\frk r}  e^{\xi h_{\frk r}(\frk z)}    \,\big|\,h|_{\BB C\setminus \frk U} , E_{\frk r}^{\frk U}(\frk z) \right] \geq \frk p .
\eqe 
We will now argue that 
\eqb \label{eqn-determined-from-outside}
\{ \ep = \frk e, z =\frk z  , \wt\rho^n  =\frk r ,  \wt U^{\wt\rho^n} = \frk U \} \in \sigma\left( h|_{\BB C\setminus \frk U} , \BB 1_{ E_{\frk r}^{\frk U}(\frk z)} \right) .
\eqe 

Recall that $\mcl B_\tau^\bullet$ is a local set for $h$ (Lemma~\ref{lem-ball-local}).
Since $\mcl B_\tau^\bullet$ is a.s.\ determined by $h$, the event $\{\mcl B_\tau^\bullet \cap\frk U = \emptyset\}$ is a.s.\ determined by $h|_{\BB C\setminus \frk U}$ and moreover $(\mcl B_\tau^\bullet , h|_{\mcl B_\tau^\bullet} )$ is a.s.\ determined by $h|_{\BB C\setminus \frk U}$ on $\{\mcl B_\tau^\bullet \cap\frk U = \emptyset\}$.  

The points $z$ and $\ep$ and the sets $\{\wt U^r\}_{r > 0}$ from~\eqref{eqn-ball-comp-set} are all determined by $(\mcl B_\tau^\bullet , h|_{\mcl B_\tau^\bullet} )$.  
Hence each of these objects is determined by $h|_{\BB C\setminus \frk U}$ on the event $\{\mcl B_\tau^\bullet \cap\frk U = \emptyset\}$. 
Each of the events $E_r^{\wt U^r}(z)$ is determined by $\mcl B_\tau^\bullet$ and $h|_{\BB A_{2r,5r}(z)}$. 
Since $\wt\rho^n$ is the \emph{smallest} radius $r \geq 6\rho^{n-1}$ which is a dyadic multiple of $\ep \BB r$ for which $E_r^{\wt U^r}(z)$ occurs, it follows that the event $\{\ep = \frk e , z=\frk z , \wt\rho^n =\frk r\}$ is determined by $(\mcl B_\tau^\bullet , h|_{\mcl B_\tau^\bullet})$, $E_{\frk r}^{\frk U}(z)$, and $h|_{\BB C\setminus \BB A_{2\frk r , 5\frk r}(z)}$. 
We have $\frk U\subset  \BB A_{2\frk r , 5\frk r}(z)$, so $\{\ep = \frk e , z=\frk z , \wt\rho^n =\frk r\}$ is a.s.\ determined by $\BB 1_{E_{\frk r}(z)}$ and $h|_{\BB C\setminus \frk U}$ on the event $\{\mcl B_\tau^\bullet \cap\frk U = \emptyset\}$. 

By~\eqref{eqn-ball-comp-set}, we have $\mcl B_\tau^\bullet \cap\frk U = \emptyset$ on the event $\{\wt U^{\wt\rho^n} = \frk U\}$. 
Combining this with the preceding two paragraphs gives~\eqref{eqn-determined-from-outside}. 
Combining~\eqref{eqn-use-cond-diam-small} and~\eqref{eqn-determined-from-outside} and using that $\{\frk z = z , \wt\rho^n = \frk r , \wt U^{\wt\rho^n} = \frk U\} \subset E_{\frk r}^{\frk U}(\frk z)$ by definition shows that~\eqref{eqn-use-cond-diam-small'} holds.
\end{proof}

We will most often use the following variant of Lemma~\ref{lem-geo-kill-pt} where we prevent $D_h$-geodesics from hitting a boundary arc rather than a neighborhood of a point.
 
\begin{lem} \label{lem-geo-kill}
Let $\alpha$ be as in Lemma~\ref{lem-geo-kill-pt}. 
Let $\BB r > 0$, let $\tau$ be a stopping time for the filtration generated by $\left\{ \left( \mcl B_s^\bullet , h|_{\mcl B_s^\bullet} \right) \right\}_{s \geq 0}$.
Also let $\ep \in (0,1)$ and $I\subset \bdy \mcl B_\tau^\bullet$ be an arc, each chosen in a manner depending only on $( \mcl B_\tau^\bullet  , h|_{\mcl B_\tau^\bullet} )$, such that $I$ can be disconnected from $\infty$ in $\BB C\setminus \mcl B_\tau^\bullet$ by a set of Euclidean diameter at most $\ep \BB r$.   
There is an event $G_I \in \sigma\left(\mcl B_{  \sigma_{\tau,\BB r}^\ep}^\bullet  , h|_{B_{  \sigma_{\tau,\BB r}^\ep}^\bullet} \right)$ with the following properties.
\begin{enumerate}[A.]
\item If $ R_{\BB r}^\ep(\mcl B_\tau^\bullet) \leq  \op{diam} \mcl B_\tau^\bullet$ and $G_I  $ occurs, then no $D_h$-geodesic from 0 to a point in $\BB C\setminus  \mcl B_{ \sigma_{\tau,\BB r}^\ep}^\bullet $ can pass through $I$. \label{item-geo-event-kill}
\item There is a deterministic constant $C_0 >1$ depending only on the choice of metric such that a.s. $\BB P\left[     G_I \,\big|\, \mcl B_\tau^\bullet  , h|_{\mcl B_\tau^\bullet} \right] \geq 1 -  C_0 \ep^\alpha$. \label{item-geo-event-prob}
\end{enumerate}
\end{lem}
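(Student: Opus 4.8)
The plan is to deduce Lemma~\ref{lem-geo-kill} from Lemma~\ref{lem-geo-kill-pt} by covering the ``disconnecting set'' for $I$ with a bounded number of Euclidean balls of radius $\ep\BB r$ and applying Lemma~\ref{lem-geo-kill-pt} to a suitable point in each of them. First I would fix an arc $I\subset\bdy\mcl B_\tau^\bullet$ as in the statement, and let $\gamma$ be a connected set of Euclidean diameter at most $\ep\BB r$ contained in $\BB C\setminus\mcl B_\tau^\bullet$ which disconnects $I$ from $\infty$ in $\BB C\setminus\mcl B_\tau^\bullet$; such a $\gamma$ can be chosen in a manner depending only on $(\mcl B_\tau^\bullet,h|_{\mcl B_\tau^\bullet})$ (e.g.\ by minimizing diameter over a countable dense family of candidate disconnecting sets, or simply by taking a fixed measurable selection). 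Since $\op{diam}\gamma\le\ep\BB r$, the set $\gamma$ is contained in the union of at most a universal constant number $M$ (say $M=9$) of Euclidean balls of radius $\ep\BB r$ centered at points of $\left(\frac{\ep\BB r}{4}\BB Z^2\right)$; indeed, any set of diameter $\le\ep\BB r$ is covered by the $\ep\BB r$-balls about the at most $9$ grid points of $\frac{\ep\BB r}{4}\BB Z^2$ within Euclidean distance $\frac{3}{2}\ep\BB r$ of it. Call these centers $x_1,\dots,x_M$ (allowing repeats so the count is exactly $M$), chosen in a manner depending only on $(\mcl B_\tau^\bullet,h|_{\mcl B_\tau^\bullet})$. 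A subtlety is that Lemma~\ref{lem-geo-kill-pt} takes as input a point $x\in\bdy\mcl B_\tau^\bullet$, whereas the $x_i$ need not lie on $\bdy\mcl B_\tau^\bullet$; but the only place the hypothesis ``$x\in\bdy\mcl B_\tau^\bullet$'' is used in the proof of Lemma~\ref{lem-geo-kill-pt} is to guarantee $x\in B_{\ep\BB r}(\mcl B_\tau^\bullet)$ so that an appropriate grid point $z$ with $B_{\ep\BB r}(x)\subset B_{2\ep\BB r}(z)$ can be chosen in $\left(\frac{\ep\BB r}{4}\BB Z^2\right)\cap B_{\ep\BB r}(\mcl B_\tau^\bullet)$; since each $x_i$ is within Euclidean distance $\frac{3}{2}\ep\BB r$ of $\gamma\subset\ol{B_{\ep\BB r}(\mcl B_\tau^\bullet)}$, the same argument goes through after harmlessly enlarging the relevant constants (replace $B_{2\ep\BB r}(z)$ by $B_{4\ep\BB r}(z)$, etc.), so Lemma~\ref{lem-geo-kill-pt} continues to hold verbatim with $x_i$ in place of $x$. (If one prefers not to re-inspect that proof, one can instead replace $\gamma$ by $\gamma\cup\{$a point of $I$ nearest $\gamma\}$, whose diameter is still $O(\ep\BB r)$, and choose the $x_i$ to be grid points near $I$ itself — either way the count stays bounded.)

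Next I would define $G_I:=\bigcap_{i=1}^M G_{x_i}^\ep$, where $G_{x_i}^\ep\in\sigma\left(\mcl B_{\sigma_{\tau,\BB r}^\ep}^\bullet,h|_{B_{\sigma_{\tau,\BB r}^\ep}^\bullet}\right)$ is the event from Lemma~\ref{lem-geo-kill-pt} applied to $x_i$ (note the definition~\eqref{eqn-extra-radius-eucl} of $R_{\BB r}^\ep$ and~\eqref{eqn-extra-radius} of $\sigma_{\tau,\BB r}^\ep$ already range over all $z\in\left(\frac{\ep\BB r}{4}\BB Z^2\right)\cap B_{\ep\BB r}(\mcl B_\tau^\bullet)$, so the same $R_{\BB r}^\ep(\mcl B_\tau^\bullet)$ and $\sigma_{\tau,\BB r}^\ep$ work for every $x_i$ simultaneously and $G_I$ indeed lies in the asserted $\sigma$-algebra). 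For property~\ref{item-geo-event-kill}: suppose $R_{\BB r}^\ep(\mcl B_\tau^\bullet)\le\op{diam}\mcl B_\tau^\bullet$ and $G_I$ occurs. By Lemma~\ref{lem-geo-kill-pt}\ref{item-geo-event-kill-pt}, for each $i$ no $D_h$-geodesic from $0$ to a point of $\BB C\setminus B_{R_{\BB r}^\ep(\mcl B_\tau^\bullet)}(\mcl B_\tau^\bullet)$ enters $B_{\ep\BB r}(x_i)\setminus\mcl B_\tau^\bullet$; since $\gamma\subset\bigcup_i B_{\ep\BB r}(x_i)$ and $\gamma\subset\BB C\setminus\mcl B_\tau^\bullet$, no such geodesic enters $\gamma$. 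Now let $P$ be a $D_h$-geodesic from $0$ to a point $y\in\BB C\setminus\mcl B_{\sigma_{\tau,\BB r}^\ep}^\bullet$. Reversing time, $P$ runs from $y$ to $0$; since $B_{R_{\BB r}^\ep(\mcl B_\tau^\bullet)}(\mcl B_\tau^\bullet)\subset\mcl B_{\sigma_{\tau,\BB r}^\ep}^\bullet$ by~\eqref{eqn-extra-radius}, $y$ lies outside $B_{R_{\BB r}^\ep(\mcl B_\tau^\bullet)}(\mcl B_\tau^\bullet)$, and $P$ hits $\bdy\mcl B_\tau^\bullet$ exactly once (at time $D_h(0,y)-\tau$ from the $y$ end), so before that hitting time $P$ stays in $\BB C\setminus\mcl B_\tau^\bullet$ and hence avoids $\gamma$ by the above; after it enters $\mcl B_\tau^\bullet$ it of course cannot be on $I\subset\bdy\mcl B_\tau^\bullet$ except at the entry point. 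But $\gamma$ disconnects $I$ from $\infty$ in $\BB C\setminus\mcl B_\tau^\bullet$, so the portion of $P$ in $\BB C\setminus\mcl B_\tau^\bullet$, which connects $y$ (in the unbounded component of $\BB C\setminus\mcl B_\tau^\bullet\setminus\gamma$) to a point of $\bdy\mcl B_\tau^\bullet$, can reach a point of $I$ only by crossing $\gamma$ — a contradiction. If instead $P$ enters $\mcl B_\tau^\bullet$ exactly at a point of $I$, that entry point is the unique point of $P\cap\bdy\mcl B_\tau^\bullet$ and is approached by $P$ from within the unbounded component of $(\BB C\setminus\mcl B_\tau^\bullet)\setminus\gamma$, which again is impossible since $\gamma$ separates $I$ from that component (here one uses that $\gamma$, having diameter $\le\ep\BB r$ and disconnecting $I$ from $\infty$, lies in the bounded components of $(\BB C\setminus\mcl B_\tau^\bullet)\setminus\gamma$ together with $I$). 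Hence $P$ does not pass through $I$.

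Finally, for property~\ref{item-geo-event-prob}: by Lemma~\ref{lem-geo-kill-pt}\ref{item-geo-event-prob-pt}, for each $i$ we have a.s.\ $\BB P\left[(G_{x_i}^\ep)^c\,\big|\,\mcl B_\tau^\bullet,h|_{\mcl B_\tau^\bullet}\right]\le C_0'\ep^{\alpha}$ with $C_0'$ depending only on the choice of metric (using the enlarged-constant version noted above, which at worst multiplies $C_0$ by a metric-dependent factor). A union bound over the $M$ indices gives a.s.\ $\BB P\left[G_I^c\,\big|\,\mcl B_\tau^\bullet,h|_{\mcl B_\tau^\bullet}\right]\le M C_0'\ep^{\alpha}$, so the claim holds with $C_0:=M C_0'$, which depends only on the choice of metric. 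This completes the proof.

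The main obstacle I anticipate is the measurability/covering bookkeeping around the disconnecting set $\gamma$ and the points $x_i$: one must make sure that $\gamma$ and the $x_i$ can genuinely be chosen measurably as functions of $(\mcl B_\tau^\bullet,h|_{\mcl B_\tau^\bullet})$ (so that Lemma~\ref{lem-geo-kill-pt} applies), that the bound $M$ on the number of balls is truly universal and independent of everything, and that the slight mismatch between ``$x\in\bdy\mcl B_\tau^\bullet$'' in Lemma~\ref{lem-geo-kill-pt} and ``$x_i$ near $\gamma$'' here is handled cleanly — either by re-examining the (short) proof of Lemma~\ref{lem-geo-kill-pt} to see that only $x\in\ol{B_{\ep\BB r}(\mcl B_\tau^\bullet)}$ is used, or by the trick of appending a point of $I$ to $\gamma$. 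The topological argument that ``avoids $\gamma$'' plus ``$\gamma$ disconnects $I$ from $\infty$'' forces ``does not hit $I$'' is intuitively clear but deserves a careful sentence or two, which I have sketched above.
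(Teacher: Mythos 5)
Your proposal is in the right spirit but more complicated than the paper's argument, and the complication is exactly where your flagged gap sits. The paper's proof takes a single point $x\in\bdy\mcl B_\tau^\bullet$ chosen so that $B_{\ep\BB r}(x)$ itself disconnects $I$ from $\infty$ in $\BB C\setminus\mcl B_\tau^\bullet$, sets $G_I:=G_x^\ep$, and is done. This is possible because the disconnecting set $\gamma$, having Euclidean diameter at most $\ep\BB r$, must have closure meeting $\bdy\mcl B_\tau^\bullet$ (if $\ol\gamma$ were disjoint from $\bdy\mcl B_\tau^\bullet$ it could only disconnect $I$ from $\infty$ in the annulus $\BB C\setminus\mcl B_\tau^\bullet$ by winding around the whole filled ball, forcing $\op{diam}\gamma\geq\op{diam}\mcl B_\tau^\bullet$; in the degenerate case $\ep\BB r\geq\op{diam}\mcl B_\tau^\bullet$, $B_{\ep\BB r}(x)$ already covers all of $\bdy\mcl B_\tau^\bullet$ for any $x\in\bdy\mcl B_\tau^\bullet$). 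Taking $x\in\ol\gamma\cap\bdy\mcl B_\tau^\bullet$ gives $\gamma\subset\ol{B_{\ep\BB r}(x)}$, so $B_{\ep\BB r}(x)$ disconnects $I$. This observation is the whole content; the rest is exactly the final two sentences of your property~\ref{item-geo-event-kill} argument.

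Your version covers $\gamma$ by $M$ balls $B_{\ep\BB r}(x_i)$ with $x_i\in\frac{\ep\BB r}{4}\BB Z^2$ and takes $G_I=\bigcap_i G_{x_i}^\ep$. The topological part of your argument (if no geodesic enters $\bigcup_i B_{\ep\BB r}(x_i)\supset\gamma$, no geodesic can reach $I$) and the union-bound for property~\ref{item-geo-event-prob} are fine. But the step you flag as a ``slight mismatch'' is a real gap: Lemma~\ref{lem-geo-kill-pt} as stated needs $x\in\bdy\mcl B_\tau^\bullet$ because the auxiliary grid point $z$ must lie in $\left(\frac{\ep\BB r}{4}\BB Z^2\right)\cap B_{\ep\BB r}(\mcl B_\tau^\bullet)$, and the quantities $R_{\BB r}^\ep(\mcl B_\tau^\bullet)$ and $\sigma_{\tau,\BB r}^\ep$ in~\eqref{eqn-extra-radius-eucl}--\eqref{eqn-extra-radius}, which govern both the measurability statement $G_x^\ep\in\sigma(\mcl B_{\sigma_{\tau,\BB r}^\ep}^\bullet,h|_{B_{\sigma_{\tau,\BB r}^\ep}^\bullet})$ and the ``kill'' radius, only control $\rho_{\ep\BB r}^n(z)$ for such $z$. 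Your $x_i$ are grid points ``within $\frac32\ep\BB r$ of $\gamma$,'' so the associated $z$'s can lie up to roughly $3\ep\BB r$ from $\mcl B_\tau^\bullet$, outside $B_{\ep\BB r}(\mcl B_\tau^\bullet)$. The ``enlarged-constant version'' you gesture at would require redefining $R_{\BB r}^\ep$ and $\sigma_{\tau,\BB r}^\ep$ to sup over a wider annular neighborhood and then re-verifying the downstream uses (Lemma~\ref{lem-clsce-all}, condition~\ref{item-good-radii-small} of $\mcl E_{\BB r}(a)$); doable, but not a free lunch. Your alternative fix --- appending a point of $I$ to $\gamma$ --- is the right instinct and, pushed to its conclusion, becomes the paper's argument: once you note $\gamma$ touches $\bdy\mcl B_\tau^\bullet$, a single ball $B_{\ep\BB r}(x)$ around a touching point already disconnects $I$, so there is no need for a covering by $M$ balls at all. (Minor: the claim that $M=9$ grid points of $\frac{\ep\BB r}{4}\BB Z^2$ lie within $\frac32\ep\BB r$ of a diameter-$\ep\BB r$ set is off by an order of magnitude, though the existence of a universal $M$ is of course fine --- and in fact moot once you use one ball.)
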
 
\begin{proof}
Since $I$ can be disconnected from $\infty$ in $\BB C\setminus \mcl B_\tau^\bullet$ by a set of Euclidean diameter at most $\ep \BB r$, we can choose a point $x\in\bdy\mcl B_\tau^\bullet$ in a manner depending only on $( \mcl B_\tau^\bullet  , h|_{\mcl B_\tau^\bullet} )$ such that $B_{\ep \BB r}(x)$ disconnects $I$ from $\infty$ in $\BB C\setminus \mcl B_\tau^\bullet$. 
Let $G_I := G_x^{ \ep}$ be the event of Lemma~\ref{lem-geo-kill-pt} for this choice of $x$. 
Then $G_I$ satisfies condition~\ref{item-geo-event-prob} in the lemma statement.
Moreover, $B_{\ep \BB r}(x) \subset B_{R_{\BB r}^\ep(\mcl B_\tau^\bullet)}(\mcl B_\tau^\bullet)$ by~\eqref{eqn-extra-radius-eucl}, so each path from a point in the unbounded connected component of $\BB C\setminus B_{R_{\BB r}^\ep(\mcl B_\tau^\bullet)}(\mcl B_\tau^\bullet)$ which first hits $\bdy\mcl B_\tau^\bullet$ at a point of $I$ must pass through $B_{\ep \BB r}(x)$. 
By~\eqref{eqn-extra-radius-eucl}, $\BB C\setminus \mcl B_{ \sigma_{\tau,\BB r}^\ep}^\bullet$ is contained in the unbounded connected component of $\BB C\setminus B_{R_{\BB r}^\ep(\mcl B_\tau^\bullet)}(\mcl B_\tau^\bullet)$. 
By this and the corresponding condition from Lemma~\ref{lem-geo-kill-pt}, we get that $G_I$ satisfies condition~\ref{item-geo-event-kill} in the lemma statement.
\end{proof}

\subsection{Proof of Theorem~\ref{thm-finite-geo}}
\label{sec-finite-geo}

Continue to fix parameters $c,\delta , A ,\eta$ for which the conclusion of Lemma~\ref{lem-clsce-all} holds. For the rest of the paper we will no longer need to recall the precise definitions of the events $E_r(z)$ and $E_r^U(z)$. Rather, we only need the conclusions of Lemmas~\ref{lem-clsce-all} and~\ref{lem-geo-kill}. 
 
We will actually prove a much more quantitative version of Theorem~\ref{thm-finite-geo} (see Theorem~\ref{thm-finite-geo-quant} below) which gives a quantitative bound on how large $N$ needs to be in terms of $t$ and $p$ provided we truncate on a global regularity event, which we now define. 

It is shown in~\cite[Theorem 1.7]{lqg-metric-estimates} that if $\xi =\gamma/d_\gamma$ and $Q = 2/\gamma+\gamma/2$, then $D_h$ is a.s.\ locally $\chi$-H\"older continuous w.r.t.\ the Euclidean metric for any $\chi \in (0,\xi(Q-2))$. 
Henceforth fix such a $\chi$, chosen in a manner depending only on $\xi$ and $Q$. 
We also recall the stopping time $\tau_{\BB r}$ from~\eqref{eqn-tau_r-def}. 
For $a \in (0,1)$, we define $\mcl E_{\BB r}(a)$ to be the event that the following is true.
\begin{enumerate}
\item $B_{a \BB r}(0) \subset \mcl B_{\tau_{\BB r}}^\bullet$.  \label{item-quantum-ball-contained}
\item $ \tau_{3\BB r} - \tau_{2\BB r} \geq a \frk c_{\BB r} e^{-\xi h_{\BB r}(0)}$. \label{item-quantum-ball-compare}
\item $\frk c_{\BB r}^{-1} e^{-\xi h_{\BB r}(0)} D_h(u,v) \leq    \left( \frac{ |u - v| }{\BB r} \right)^\chi$ for each $u,v \in B_{4 \BB r}(0)$ with $|u-v|/\BB r \leq a$. \label{item-holder-cont}
\item In the notation~\eqref{eqn-good-radius-def}, we have $\rho_{\ep \BB r}^{\lfloor \eta \log \ep^{-1} \rfloor}(z) \leq \ep^{1/2} \BB r $ for each $ z\in \left( \frac{\ep \BB r}{4} \BB Z^2 \right) \cap B_{4 \BB r}(0) $ and each dyadic $\ep \in (0,a]$.    \label{item-good-radii-small}
\end{enumerate}

\begin{lem} \label{lem-finite-geo-reg}
For each $p\in (0,1)$, there exists $a = a(p) > 0$ such that $\BB P[\mcl E_{\BB r}(a)] \geq p$ for every $\BB r >0$.
\end{lem}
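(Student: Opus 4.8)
The plan is to show that each of the four conditions defining $\mcl E_{\BB r}(a)$ holds with probability close to 1 (uniformly in $\BB r$) when $a$ is small, and then take a union bound. The key observation making this uniform in $\BB r$ is scale and translation invariance of the law of $h$ modulo additive constant together with Axioms~\ref{item-metric-translate} and~\ref{item-metric-coord}; the scaling constants $\frk c_{\BB r}$ and the factor $e^{-\xi h_{\BB r}(0)}$ are precisely what is needed to reduce the $\BB r$-dependent statements to a single $\BB r = 1$ statement. So throughout we may normalize and work at scale $\BB r = 1$.

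\textit{Condition~\ref{item-quantum-ball-contained}.} The point is that $\mcl B_{\tau_{\BB r}}^\bullet$ is not too ``thin'': by definition $\tau_{\BB r}$ is the first time $\mcl B_s^\bullet$ exits $B_{\BB r}(0)$, so $\mcl B_{\tau_{\BB r}}^\bullet$ reaches $\bdy B_{\BB r}(0)$ and in particular its outer radius is $\BB r$. Since $D_h$ induces the Euclidean topology and $\mcl B_{\tau_{\BB r}}^\bullet$ is connected and contains $0$, the event $\{B_{a\BB r}(0)\subset \mcl B_{\tau_{\BB r}}^\bullet\}$ fails only if there is a crossing of the annulus $\BB A_{a\BB r, \BB r}(0)$ of small $D_h$-diameter that disconnects $0$ from $\infty$; by Axiom~\ref{item-metric-coord} (tightness of $\frk c_{\BB r}^{-1} e^{-\xi h_{\BB r}(0)} D_h(\cdot,\cdot)$ across scales) the $D_h$-distance from $0$ to $\bdy B_{a\BB r}(0)$ is, with probability $\to 1$ as $a\to 0$, much smaller than the $D_h$-distance from $\bdy B_{a\BB r}(0)$ to $\bdy B_{\BB r}(0)$, which forces $\tau_{\BB r}$ to occur only after $\mcl B_s^\bullet$ has swallowed $B_{a\BB r}(0)$. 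This can be made quantitative and uniform in $\BB r$ using Axiom~\ref{item-metric-coord}.

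\textit{Conditions~\ref{item-quantum-ball-compare} and~\ref{item-holder-cont}.} For~\ref{item-quantum-ball-compare}, note $\tau_{2\BB r}$ and $\tau_{3\BB r}$ are the exit times of $B_{2\BB r}(0)$ and $B_{3\BB r}(0)$; the difference $\tau_{3\BB r}-\tau_{2\BB r}$ is at least the $D_h$-distance from $\bdy B_{2\BB r}(0)$ to $\bdy B_{3\BB r}(0)$, which is bounded below by $a\frk c_{\BB r} e^{\xi h_{\BB r}(0)}$ with probability $\to 1$ as $a\to 0$ by tightness across scales (Axiom~\ref{item-metric-coord}) --- here I should double-check the sign of the exponent, as the statement has $e^{-\xi h_{\BB r}(0)}$; in any case the bound is of this form and the argument is the same. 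For~\ref{item-holder-cont}, this is exactly the local Hölder continuity of $D_h$ from \cite[Theorem 1.7]{lqg-metric-estimates} with exponent $\chi \in (0,\xi(Q-2))$; one rescales that statement to scale $\BB r$ (the factor $\frk c_{\BB r}^{-1} e^{-\xi h_{\BB r}(0)}$ absorbs the scaling), and the Hölder constant being finite on $B_{4\BB r}(0)$ with probability $\to 1$ as the threshold $a\to 0$ is immediate, uniformly in $\BB r$ by scale invariance.

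\textit{Condition~\ref{item-good-radii-small}.} This is precisely the content of Lemma~\ref{lem-clsce-all} applied with $K$ a fixed ball (say $B_4(0)$), which gives that with probability $1-O_\ep(\ep^2)$, $\rho_{\ep\BB r}^{\lfloor\eta\log\ep^{-1}\rfloor}(z)\le\ep^{1/2}\BB r$ for all relevant lattice points $z$; summing the failure probabilities over dyadic $\ep\in(0,a]$ gives a total failure probability $O(a^2)\to 0$ as $a\to 0$, uniformly in $\BB r$. Combining the four estimates and choosing $a$ small enough that each holds with probability at least $(1+3p)/4$, say, and then union-bounding, yields $\BB P[\mcl E_{\BB r}(a)]\ge p$ for all $\BB r>0$. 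The main obstacle is bookkeeping rather than conceptual: one must make sure each of the four estimates is genuinely uniform in $\BB r$, which hinges entirely on Axiom~\ref{item-metric-coord} providing tightness of the rescaled metrics $\frk c_{\BB r}^{-1}e^{-\xi h_{\BB r}(0)} D_h(\BB r\cdot,\BB r\cdot)$ over all scales simultaneously, together with the fact that $\rho_{\ep\BB r}^n(z)$ depends on $h$ only through $(h-h_r(z))|_{\BB A_{2r,5r}(z)}$ so that Lemma~\ref{lem-clsce-all} already delivers the needed uniformity for condition~\ref{item-good-radii-small}.
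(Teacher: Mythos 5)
Your proposal is correct and follows essentially the same approach as the paper: a union bound over the four conditions, with conditions~\ref{item-quantum-ball-contained} and~\ref{item-quantum-ball-compare} controlled by tightness across scales (Axiom~\ref{item-metric-coord}), condition~\ref{item-holder-cont} by the H\"older continuity estimate from~\cite{lqg-metric-estimates} (the paper cites Proposition~3.18 there, which gives the uniform-in-$\BB r$ quantitative bound directly), and condition~\ref{item-good-radii-small} by Lemma~\ref{lem-clsce-all} plus a union bound over dyadic $\ep\le a$. You also correctly flag the sign typo $e^{-\xi h_{\BB r}(0)}$ in condition~\ref{item-quantum-ball-compare}, which should be $e^{\xi h_{\BB r}(0)}$ as in the way it is used later in the proof of Theorem~\ref{thm-finite-geo-quant}.
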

\begin{proof}
By Axiom~\ref{item-metric-coord}, if $a$ is chosen sufficiently small then the probability of each of conditions~\ref{item-quantum-ball-contained} and~\ref{item-quantum-ball-compare} is at least $1-(1-p)/4$.   
By~\cite[Proposition 3.18]{lqg-metric-estimates}, if $a$ is chosen sufficiently small then the probability of condition~\ref{item-holder-cont} is at least $1-(1-p)/4$.
By Lemma~\ref{lem-clsce-all} and a union bound over dyadic values of $\ep$ with $\ep \in (0, a]$, the probability of condition~\ref{item-good-radii-small} is at least $1-(1-p)/4$.
\end{proof}

Theorem~\ref{thm-finite-geo} will be an immediate consequence of Lemma~\ref{lem-finite-geo-reg} together with the following quantitative estimate.
For the statement, we recall the times $\tau_{\BB r}$ from~\eqref{eqn-tau_r-def}.

\begin{thm} \label{thm-finite-geo-quant}
For each $a \in (0,1)$, there is a constant $b_0 > 0$ depending only on $a$ and constants $b_1 , \beta > 0$ depending only on the choice of metric $D$ such that the following is true. 
For each $\BB r>0$, each $N\in\BB N$, and each stopping time $\tau$ for $\{(\mcl B_s^\bullet , h|_{\mcl B_s^\bullet})\}_{s\geq 0}$ with $\tau \in [\tau_{\BB r} ,\tau_{2\BB r}]$ a.s., the probability that $\mcl E_{\BB r}(a)$ occurs and there are more than $N$ points of $\bdy\mcl B_{\tau}^\bullet$ which are hit by leftmost $D_h$-geodesics from 0 to $\bdy\mcl B_{\tau +  N^{-\beta} \frk c_{\BB r} e^{\xi h_{\BB r}(0)} }^\bullet$ is at most $b_0 e^{-b_1 N^\beta}$. 
\end{thm}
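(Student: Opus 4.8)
The plan is to prove Theorem~\ref{thm-finite-geo-quant} by the iterative scheme sketched in Section~\ref{sec-conf-outline}: repeatedly grow the filled ball $\mcl B_\tau^\bullet$ by a small $D_h$-amount and track the boundary arcs through which leftmost geodesics can still pass. First I would reduce to the following statement, uniform in everything: for each partition $\mcl I_0$ of $\bdy\mcl B_\tau^\bullet$ into $n$ arcs (with $n$ arbitrary), the probability that $\mcl E_{\BB r}(a)$ occurs and more than $N$ arcs of $\mcl I_0$ are \emph{alive} — i.e.\ hit by a leftmost $D_h$-geodesic from $0$ to $\bdy\mcl B_{\tau+N^{-\beta}\frk c_{\BB r}e^{\xi h_{\BB r}(0)}}^\bullet$ — is at most $b_0 e^{-b_1 N^\beta}$. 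Passing to successively finer partitions recovers the statement about hit points, since the number of alive arcs is monotone under refinement and any finite set of hit points eventually lies in distinct arcs.

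Given $\mcl I_0$, I would define stopping times $\tau=s_0\le s_1\le\cdots$ and arc families $\mcl I_k\subset\bdy\mcl B_{s_k}^\bullet$ inductively. With $n_{k-1}:=\#\mcl I_{k-1}$, choose a dyadic $\ep_{k-1}\asymp_a n_{k-1}^{-1/2}$ so that, by Lemma~\ref{lem-disconnect-set-infty} together with $\op{inrad}(\mcl B_{s_{k-1}}^\bullet)\ge a\BB r$ and $\op{outrad}(\mcl B_{s_{k-1}}^\bullet)\le 4\BB r$ on $\mcl E_{\BB r}(a)$ (conditions~\ref{item-quantum-ball-contained} and~\ref{item-quantum-ball-compare}), at least $\tfrac34 n_{k-1}$ of the arcs of $\mcl I_{k-1}$ can be disconnected from $\infty$ in $\BB C\setminus\mcl B_{s_{k-1}}^\bullet$ by sets of Euclidean diameter $\le\ep_{k-1}\BB r$; set $s_k:=\sigma_{s_{k-1},\BB r}^{\ep_{k-1}}$ from~\eqref{eqn-extra-radius}, and $\mcl I_k:=\{I':I\in\mcl I_{k-1},\,I'\neq\emptyset\}$, where $I'$ is the set of $y\in\bdy\mcl B_{s_k}^\bullet$ hit by a leftmost $D_h$-geodesic from $0$ through $I$. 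Lemma~\ref{lem-geo-arc} shows the $\mcl I_k$ are collections of disjoint arcs and that $n_k\le n_{k-1}$, and composing the injective maps $I\mapsto I'$ shows that the number of alive arcs of $\mcl I_0$ is at most $\#\mcl I_K$ whenever $s_K\le\tau+N^{-\beta}\frk c_{\BB r}e^{\xi h_{\BB r}(0)}$. Two ingredients then drive the scheme on $\mcl E_{\BB r}(a)$. The \textbf{time estimate}: by condition~\ref{item-holder-cont} (Hölder continuity) and condition~\ref{item-good-radii-small} of $\mcl E_{\BB r}(a)$ one has $R_{\BB r}^{\ep_{k-1}}(\mcl B_{s_{k-1}}^\bullet)\le 7\ep_{k-1}^{1/2}\BB r$, hence $s_k-s_{k-1}\le (R_{\BB r}^{\ep_{k-1}}(\mcl B_{s_{k-1}}^\bullet)/\BB r)^\chi\frk c_{\BB r}e^{\xi h_{\BB r}(0)}\preceq_a n_{k-1}^{-\chi/4}\frk c_{\BB r}e^{\xi h_{\BB r}(0)}$, so that if the count halves at every step until it first drops below $N$ (say after $K^*\le\log_2(n/N)$ steps), the increments form a geometric series dominated by the last term and $s_{K^*}-\tau\preceq_a N^{-\chi/4}\frk c_{\BB r}e^{\xi h_{\BB r}(0)}\le N^{-\beta}\frk c_{\BB r}e^{\xi h_{\BB r}(0)}$ once $\beta<\chi/4$ and $N\ge N_0(a)$ — and in this regime the filled balls involved stay inside $B_{4\BB r}(0)$, so the conditions of $\mcl E_{\BB r}(a)$ apply at each step. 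The \textbf{count-contraction estimate}: conditionally on $\mcl F_{k-1}:=\sigma(\mcl B_{s_{k-1}}^\bullet,h|_{\mcl B_{s_{k-1}}^\bullet})$ and on $\mcl E_{\BB r}(a)$ with $n_{k-1}\ge M_a$, one has $n_k\le n_{k-1}/2$ with conditional probability $\ge 1-q_{n_{k-1}}$. Here Lemma~\ref{lem-geo-kill} supplies, for each disconnectable arc $I$, an event $G_I\in\sigma(\mcl B_{s_k}^\bullet,h|_{\mcl B_{s_k}^\bullet})$ with $\BB P[G_I^c\mid\mcl F_{k-1}]\le C_0\ep_{k-1}^\alpha\asymp_a n_{k-1}^{-\alpha/2}$ which (since $R_{\BB r}^{\ep_{k-1}}(\mcl B_{s_{k-1}}^\bullet)\le\op{diam}\mcl B_{s_{k-1}}^\bullet$ on $\mcl E_{\BB r}(a)$) forces $I'=\emptyset$; so $n_k\le n_{k-1}/4+S_k$ with $S_k$ the number of disconnectable arcs on which $G_I$ fails, and it remains to control $\BB P[S_k>n_{k-1}/4\mid\mcl F_{k-1}]$.

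The final assembly is short: on $\mcl E_{\BB r}(a)$ the bad event forces $S_k>n_{k-1}/4$ at some step $k$ before the count first drops below $N$; decomposing by $m:=\lfloor\log_2 n_{k-1}\rfloor$ and using that the count halves at each successful step (so at most one step attains each value of $m$, and $n_{k-1}>N$ forces $m\ge\log_2 N-1$), we get $\BB P[\text{bad}\cap\mcl E_{\BB r}(a)]\le\sum_{m\ge\log_2 N-1}q_{2^m}$; if $q_m\le\exp(-cm^{\beta_0})$ this sums to $\preceq\exp(-c'N^{\beta_0})$, and taking $\beta:=\min(\beta_0,\chi/8)$, then $b_1$ small depending only on the metric, then $b_0:=e^{N_0(a)}$ (which dominates $1$ for $N<N_0(a)$ and absorbs the implicit constant for $N\ge N_0(a)$, $N_0(a)$ enlarged as needed) yields the theorem. \textbf{The main obstacle} is obtaining the exponential-in-a-power estimate $q_m\le\exp(-cm^{\beta_0})$ rather than the polynomial bound $q_m\preceq_a m^{-\alpha/2}$ given by Markov's inequality (which already suffices for Theorem~\ref{thm-finite-geo} and its qualitative consequences, but not here). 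The events $\{G_I\}$ at a given step are not independent: each $G_I$ is governed by a ``shield'' built from Euclidean annuli at scale $\asymp\ep_{k-1}^{1/2}\BB r$ near $I$, and the GFF has long-range correlations. The way around this is to select, among the $\asymp n_{k-1}$ disconnectable arcs, a subfamily whose shield-annuli are pairwise disjoint — only $O(n_{k-1}^{1/2})$ such arcs can be chosen, which is the source of the loss in the exponent — so that the corresponding $G_I$ become conditionally independent given $\mcl F_{k-1}$ up to the approximate-independence estimates behind Lemma~\ref{lem-annulus-iterate-inverse}, apply a Chernoff/Bernstein bound within such a subfamily, and iterate over $O(n_{k-1}^{1/2})$ subfamilies while adjusting the time bookkeeping. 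Making this precise — reconciling the geometric packing constraint with the correlation structure of $h$ near $\bdy\mcl B_{s_{k-1}}^\bullet$, and keeping every constant uniform in $\BB r$, $n$, and $\tau$ — is the technical heart of the proof.
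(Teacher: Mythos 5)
Your iterative framework — stopping times $s_k$, arc families $\mcl I_k$, the split of bad arcs into "not disconnectable" (handled by Lemma~\ref{lem-disconnect-set-infty}) versus "disconnectable but shield fails" (handled by Lemma~\ref{lem-geo-kill}), the H\"older time estimate — is the paper's. You have also correctly located the crux: Markov's inequality on $\BB E[\#\mcl I_k^{**}\mid\mcl F_{k-1}]$ only yields a \emph{polynomial} bound $\preceq n_{k-1}^{-c\alpha}$ on the conditional probability that the count fails to halve at a given step, and a naive union bound over $O(\log_2(n/N))$ halvings does not produce an exponential-in-$N^\beta$ estimate. Where you go wrong is in the proposed remedy and in the final assembly.

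First, the remedy. You suggest obtaining an exponential bound $q_m\le e^{-cm^{\beta_0}}$ \emph{at a single time step} by extracting a subfamily of $O(n^{1/2})$ arcs with disjoint shields, arguing conditional independence of the corresponding $G_I$ given $\mcl F_{k-1}$ (up to the corrections in Lemma~\ref{lem-annulus-iterate-inverse}), and applying Chernoff. This is not what the paper does, it is not completed in your proposal, and it faces real difficulties that you acknowledge but do not resolve: the shields are built at scales as large as $\ep_{k}^{1/2}\BB r$ so only $O(\ep_k^{-1/2})$ (far fewer than $n_k^{1/2}$) can have disjoint supports; Lemma~\ref{lem-annulus-iterate-inverse} handles concentric annuli around a single center, not disjoint annuli at scattered locations; and the events $G_I$ compare $D_h$-distances \emph{across} the shield to $D_h$-distances back to $\bdy\mcl B_{s_{k-1}}^\bullet$, which share field dependencies across different $I$. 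The paper sidesteps all of this: it keeps the polynomial single-step bound and instead \emph{iterates across time steps}. In Lemma~\ref{lem-half-count} it shows that, given $\mcl F_{k_j}$, the probability of needing more than $M$ steps to halve the count from $n_{k_j}$ is $\le(n_{k_j}/C)^{-\wt\beta M}$ — because the single-step failure events at $k_j, k_j+1,\dots$ are each measurable w.r.t.\ the next step's $\sigma$-algebra, so the polynomial Markov bounds multiply. Then (Lemma~\ref{lem-count-radius}) it spends a polynomial time budget $M=N^{\chi/16}$ per halving: the probability of ever exceeding this budget before reaching count $N$ is $\preceq N^{-\wt\beta N^{\chi/16}}$, exponentially small in $N^{\chi/16}$, while the total $D_h$-time cost is $\preceq \sum_j M\,n_{k_j}^{-\chi/8}\preceq N^{\chi/16-\chi/8}$, still a negative power of $N$. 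That trading of time budget for failure probability is the missing idea.

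Second, the assembly. Your final estimate $\BB P[\text{bad}\cap\mcl E_{\BB r}(a)]\le\sum_{m\ge\log_2 N-1}q_{2^m}$ relies on "the count halves at each successful step, so at most one step attains each value of $m=\lfloor\log_2 n_{k-1}\rfloor$." This presumes that at all but one step the count halves, which is not what the scheme guarantees: the count may stagnate for many consecutive steps, and each such stagnation only carries a polynomial cost. The bad event "$s_{K_N}$ exceeds the time budget" can occur via many small failures rather than a single dramatic one, and your decomposition by $m$ does not account for this. The paper's bookkeeping via the stopping times $k_j$ (first time the count has halved $j$ times) is designed precisely to handle stretches of consecutive non-halving steps, and there is no analogue of it in your argument.

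In short: framework and obstacle correctly identified, but the mechanism that actually closes the gap — iterating the polynomial bound across a polynomially long run of time steps rather than seeking concentration within a single step — is absent, and the final union bound as stated is not valid.
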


In the rest of this section we prove Theorem~\ref{thm-finite-geo-quant}. 
Fix $\BB r > 0$ and a stopping time $\tau$ as in Theorem~\ref{thm-finite-geo-quant}.
Let $\mcl I_0$ be a collection of disjoint boundary arcs of $\mcl B_{\tau}^\bullet$, chosen in a manner depending only on $(\mcl B_{\tau}^\bullet , h|_{\mcl B_{\tau}^\bullet} )$. We allow arcs to be open, half-open, or closed. In particular, the union of all of the arcs of $\mcl I_0$ is allowed to be all of $\bdy\mcl B_{\tau}^\bullet$ (this is in fact the typical case we will be interested in).  The idea of the proof is to show that for \emph{any} choice of $\mcl I_0$, the probability that $\mcl E_{\BB r}(a)$ occurs and there are more than $N$ arcs in $\mcl I_0$ which are hit by leftmost $D_h$-geodesics from 0 to $\bdy\mcl B_{\tau_{\BB r}  +  N^{-\beta} \frk c_{\BB r} e^{\xi h_{\BB r}(0)} }^\bullet$ is at most $b_0 e^{-b_1 N^\beta}$.  Taking $\mcl I_0$ to be a huge collection of tiny arcs will then prove Theorem~\ref{thm-finite-geo-quant}.  See Figure~\ref{fig-metric-ball-iterate} for an illustration of the setup.

\begin{figure}[t!]
 \begin{center}
\includegraphics[scale=.85]{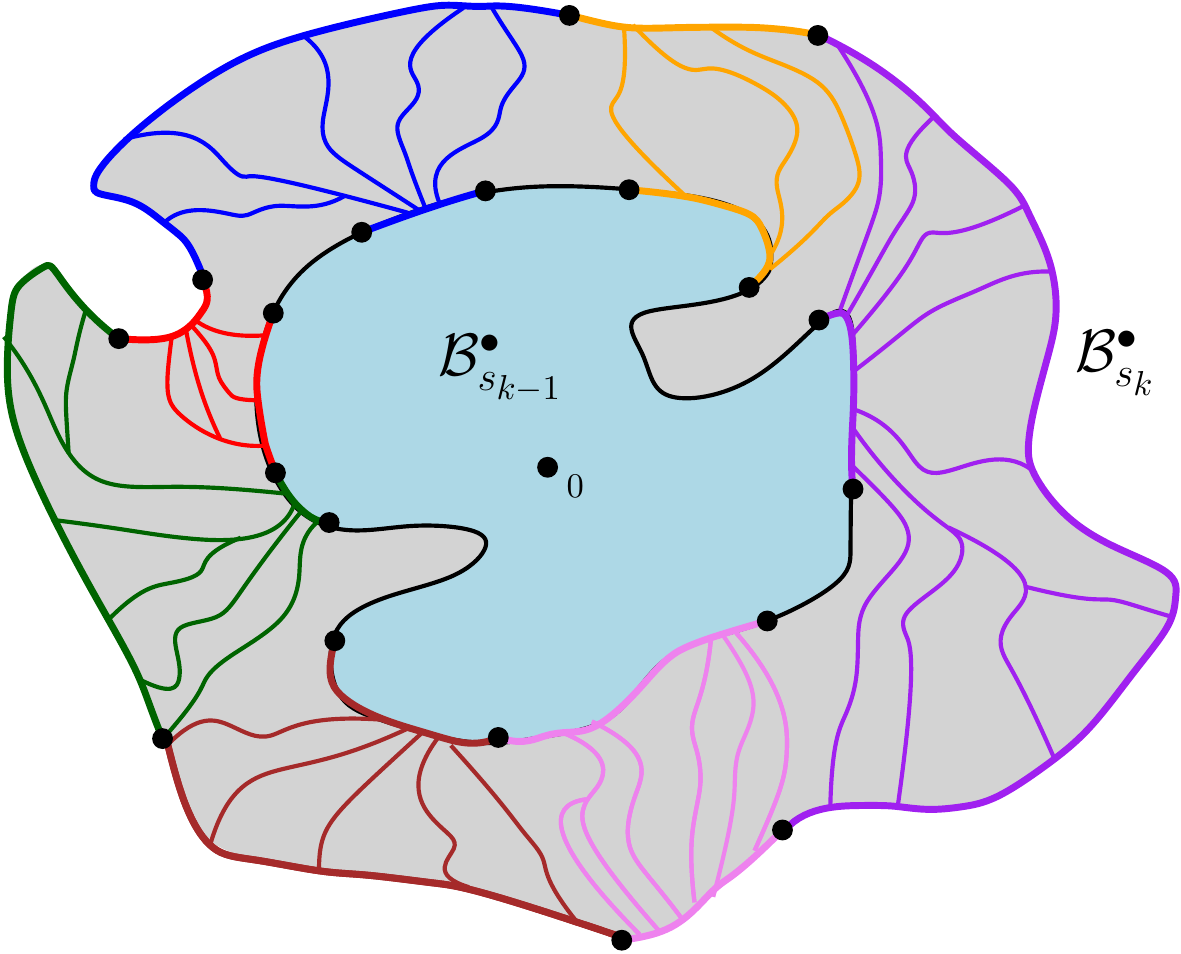}
\vspace{-0.01\textheight}
\caption{Illustration of one stage of the iterative procedure used in the proof of Theorem~\ref{thm-finite-geo}. The endpoints of the arcs in $\mcl I_{k-1}$ are shown with black dots. Each of the intervals $I\in\mcl I_{k-1}$ with $I' \not=\emptyset$ (i.e., those which are hit by leftmost geodesics from 0 to $\bdy\mcl B_{s_k}^\bullet$) is assigned a unique color. The corresponding arc $I' \subset\mcl B_{s_k}^\bullet$ and some representative leftmost geodesics from $I'$ to $I$ are shown in the same color. Black arcs $I\in\mcl I_{k-1}$ are ones with $I'=\emptyset$. Note that here we have shown some geodesics merging into each other, but we have not yet established whether or not this happens. We show in Lemma~\ref{lem-half-count} that typically the number of arcs in $\mcl I_k$ decays geometrically in $k$. 
}\label{fig-metric-ball-iterate}
\end{center}
\vspace{-1em}
\end{figure}

\subsubsection{Inductive definition of radii and boundary arcs}

We start by inductively defining for each $k\in\BB N$ a radius $s_k \geq s_{k-1}$ and a finite collection $\mcl I_k$ of disjoint boundary arcs of $\mcl B_{s_k}^\bullet$, chosen in a manner depending only on $(\mcl B_{s_k}^\bullet , h|_{\mcl B_{s_k}^\bullet} )$ and satisfying $\#\mcl I_k \leq \#\mcl I_{k-1}$. 

Set $s_0 = \tau$ and let $\mcl I_0$ be as above.  
Inductively, suppose $s_{k-1}$ and $\mcl I_{k-1}$ have been defined. 
Let $\ep_{k-1} $ be the smallest dyadic number with $\ep_{k-1} \geq (\#\mcl I_{k-1})^{-1/4}$ and in the notation~\eqref{eqn-extra-radius} define 
\eqb \label{eqn-iterate-time-def}
s_k :=   \sigma_{s_{k-1},\BB r}^{\ep_{k-1} } .
\eqe
For $I\in\mcl I_{k-1}$, let $I'$ be the set of points $x  \in \bdy \mcl B_{s_k}^\bullet$ for which the leftmost $D_h$-geodesic from 0 to $x$ passes through $I$.
Note that we may have $I'=\emptyset$. Define
\eqb \label{eqn-arc-set-def}
\mcl I_k := \{I' : I \in \mcl I_{k-1} , I'\not=\emptyset\} .
\eqe 

We make the following observations about $\mcl I_k$. 
\begin{itemize}
\item By Lemma~\ref{lem-geo-arc}, $\mcl I_k$ is a collection of disjoint arcs of $\bdy\mcl B_{s_k}^\bullet$. The arcs in $\mcl I_k$ can be closed, open, or half-open. 
\item $\mcl I_k$ is determined by $(\mcl B_{s_k}^\bullet , h|_{\mcl B_{s_k}^\bullet} )$: indeed, this is because of Axiom~\ref{item-metric-local} and the fact that a $D_h$-geodesic from 0 to $\bdy\mcl B_{s_k}^\bullet$ cannot exit $\mcl B_{s_k}^\bullet$, so must also be a geodesic for the internal metric of $D_h$ on $\mcl B_{s_k}^\bullet$. 
\item If we let $I^{(k)}$ for $I\in\mcl I_0$ be the set of $x\in\bdy\mcl B_{s_k}^\bullet$ for which the leftmost $D_h$-geodesic from 0 to $x$ pass through $I$, then we can equivalently define $\mcl I_k = \{I^{(k)} : I\in \mcl I_0 , I^{(k)} \not=\emptyset\}$. Indeed, this is because $I^{(k)}$ is obtained from $I$ by applying the operation $I\mapsto I'$ $k$ times.  
\end{itemize}
\medskip

\subsubsection{The cardinalities of the $\mcl I_k$'s decrease geometrically}

To lighten notation, in what follows we abbreviate 
\eqb \label{eqn-interval-count-def}
n_k := \#\mcl I_k \quad \text{so that} \quad \ep_k \in [n_k^{-1/4} , 2n_k^{-1/4}].
\eqe   
For $N\in\BB N$, we also define  
\eqb \label{eqn-finite-geo-time}
K_N :=  \min \left\{ k\in\BB N_0 :  s_k > \tau_{3 \BB r } \: \text{or} \: n_k < N  \right\} .
\eqe
We observe that $K_N$ is a stopping time for the filtration generated by $\{(\mcl B_{s_k}^\bullet , h|_{\mcl B_{s_k}^\bullet})\}_{k\in \BB N_0}$ and that $K_N$ is a non-increasing function of $N$.
We will also have occasion to consider a parameter $N_0 \in \BB N$, which we will eventually choose to be sufficiently large in a manner depending only on $a$ (at several places we will assume that $N_0$ is sufficiently large that some estimate is true). 
Most of our estimates will require that $N\geq N_0$. 

The key ingredient in the proof of Theorem~\ref{thm-finite-geo-quant} is the following lemma, which tells us that the arc counts $n_k$ typically decrease almost geometrically in $k$. 

\begin{lem} \label{lem-half-count}
Let $k_0 = 0$ and for $j\in\BB N$, inductively let $k_j$ be the smallest $k\geq k_{j-1}$ for which $n_k \leq \frac12 n_{k_{j-1}}$.  There are constants $C > 1$ and $\wt\beta > 0$ depending only on the choice of metric such that if $N_0$ is chosen to be sufficiently large, in a manner depending only on~$a$, then for $j \in \BB N_0$ and $M > 1$,  
\eqb \label{eqn-half-count}
\BB P\left[ k_{j+1}  - k_j  > M ,\, k_{j+1} \leq K_{N_0} ,\, \mcl E_{\BB r}(a) \,\big|\, \mcl B_{s_{k_j}}^\bullet , h|_{\mcl B_{s_{k_j}}^\bullet} \right] 
\leq (   n_{k_j} / C )^{ - \wt\beta M } .
\eqe
\end{lem}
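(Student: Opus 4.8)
We want to show that, conditionally on $(\mcl B_{s_{k_j}}^\bullet , h|_{\mcl B_{s_{k_j}}^\bullet})$ and on the event that we have not yet stopped and the regularity event $\mcl E_{\BB r}(a)$ holds, the number of additional steps $k_{j+1}-k_j$ needed to halve the arc count has an exponential tail. The natural strategy is to show that each single step $k\mapsto k+1$ (while $k < K_{N_0}$ and $\mcl E_{\BB r}(a)$ holds) has at least a constant probability, conditionally on the past, of decreasing the arc count by a definite fraction — and then iterate. More precisely, I would establish a one-step estimate of the form: for $k$ with $k_j \le k < k_{j+1}\wedge K_{N_0}$, conditionally on $(\mcl B_{s_k}^\bullet, h|_{\mcl B_{s_k}^\bullet})$, on the event $\{s_k \le \tau_{3\BB r}\}\cap\mcl E_{\BB r}(a)$ one has
\eqb
\BB P\left[ n_{k+1} \le (1-\zeta) n_k \,\big|\, \mcl B_{s_k}^\bullet , h|_{\mcl B_{s_k}^\bullet} \right] \ge \zeta
\eqe
for some constant $\zeta>0$ depending only on the metric, provided $n_k$ is at least some threshold depending only on $a$ (this is where $N_0$ large enters). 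Given such a one-step bound, a standard argument — the number of $k$'s between $k_j$ and $k_{j+1}$ for which $n_{k+1}\le(1-\zeta)n_k$ fails is stochastically dominated by a geometric-type random variable, and we need only $O_\zeta(1)$ successes to drop from $n_{k_j}$ to $\tfrac12 n_{k_j}$ — gives $\BB P[k_{j+1}-k_j > M, \dots \mid \cdots] \le C' e^{-c M}$, which can be absorbed into the form $(n_{k_j}/C)^{-\wt\beta M}$ after adjusting constants (using that $n_{k_j}\ge N_0$ is large, so $(n_{k_j}/C)^{-\wt\beta M}$ is a weaker bound than a pure exponential in $M$ once $\wt\beta\log(n_{k_j}/C)\le c$, or alternatively one keeps the stronger exponential and notes it implies the stated one). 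I would carry out the martingale-difference / conditional-Borel-Cantelli iteration carefully using that $K_{N_0}$, $k_j$, and the $s_k$'s are stopping times for the filtration generated by $\{(\mcl B_{s_k}^\bullet, h|_{\mcl B_{s_k}^\bullet})\}$, so we may apply the one-step bound repeatedly along the random times $k$.

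**Proving the one-step bound.** This is the heart of the matter and uses the machinery of Section~\ref{sec-geo-kill}. Fix $k$ with $s_k\le\tau_{3\BB r}$ and work conditionally on $(\mcl B_{s_k}^\bullet, h|_{\mcl B_{s_k}^\bullet})$, on $\mcl E_{\BB r}(a)$. Let $n := n_k = \#\mcl I_k$ and $\ep := \ep_k \asymp n^{-1/4}$. First, by Lemma~\ref{lem-disconnect-set-infty} applied to the filled ball $\mcl B_{s_k}^\bullet$ (whose inradius and outradius are controlled on $\mcl E_{\BB r}(a)$ — by condition~\ref{item-quantum-ball-contained}, $\op{inrad}(\mcl B_{\tau_{\BB r}}^\bullet)\ge a\BB r$, and $\op{outrad}(\mcl B_{s_k}^\bullet)\le 3\BB r$ since $s_k\le\tau_{3\BB r}$, and these bounds persist since $\tau\ge\tau_{\BB r}$), at least $\tfrac34 n$ of the arcs $I\in\mcl I_k$ can be disconnected from $\infty$ in $\BB C\setminus\mcl B_{s_k}^\bullet$ by a set of Euclidean diameter at most $C_1 n^{-1/2}\le \ep\BB r$ (for $N_0$, hence $n$, large enough depending on $a$). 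Call these the \emph{good} arcs. For each good arc $I$, Lemma~\ref{lem-geo-kill} (with $\tau$ there equal to $s_k$, which is a stopping time for $\{(\mcl B_s^\bullet,h|_{\mcl B_s^\bullet})\}$) provides an event $G_I\in\sigma(\mcl B_{\sigma_{s_k,\BB r}^\ep}^\bullet, h|_{\mcl B_{\sigma_{s_k,\BB r}^\ep}^\bullet}) = \sigma(\mcl B_{s_{k+1}}^\bullet, h|_{\mcl B_{s_{k+1}}^\bullet})$ with $\BB P[G_I \mid \mcl B_{s_k}^\bullet, h|_{\mcl B_{s_k}^\bullet}]\ge 1 - C_0\ep^\alpha \ge 1 - C_0' n^{-\alpha/4}$, such that on $G_I\cap\{R_{\BB r}^\ep(\mcl B_{s_k}^\bullet)\le\op{diam}\mcl B_{s_k}^\bullet\}$ no $D_h$-geodesic from $0$ to a point outside $\mcl B_{s_{k+1}}^\bullet$ passes through $I$; in particular $I'=\emptyset$ for such $I$, so $I\notin$ the arcs generating $\mcl I_{k+1}$. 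The condition $R_{\BB r}^\ep(\mcl B_{s_k}^\bullet)\le\op{diam}\mcl B_{s_k}^\bullet$ holds on $\mcl E_{\BB r}(a)$: condition~\ref{item-good-radii-small} of $\mcl E_{\BB r}(a)$ forces $\rho_{\ep\BB r}^{\lfloor\eta\log\ep^{-1}\rfloor}(z)\le\ep^{1/2}\BB r$ for the relevant $z$, whence by~\eqref{eqn-extra-radius-eucl} $R_{\BB r}^\ep(\mcl B_{s_k}^\bullet)\le 6\ep^{1/2}\BB r+\ep\BB r$, which is $\le a\BB r\le\op{diam}\mcl B_{s_k}^\bullet$ once $\ep$ is small, i.e. $n$ large depending on $a$. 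Now let $Z$ be the number of good arcs $I$ with $G_I$ failing; then $\BB E[Z\mid\mcl B_{s_k}^\bullet,h|_{\mcl B_{s_k}^\bullet}]\le C_0' n\cdot n^{-\alpha/4} = C_0' n^{1-\alpha/4}$, so by Markov's inequality, with conditional probability at least $\tfrac12$ we have $Z\le 2C_0' n^{1-\alpha/4}\le \tfrac18 n$ for $n$ large. On that event, the number of good arcs surviving into $\mcl I_{k+1}$'s generating family is at most $\tfrac18 n$, and adding the at most $\tfrac14 n$ bad arcs we get $n_{k+1}\le\tfrac38 n$, so we may take $\zeta = \min\{\tfrac58, \tfrac12\}$ — i.e. with conditional probability $\ge\tfrac12$, $n_{k+1}\le\tfrac38 n_k\le(1-\tfrac12)n_k$. (Here I am using $n_{k+1}\le\#\{I\in\mcl I_k : I'\ne\emptyset\}$, since distinct surviving arcs produce distinct, disjoint, nonempty $I'$ by Lemma~\ref{lem-geo-arc}.)

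**Assembling and the main obstacle.** With the one-step bound in hand, define $J_k := \BB 1\{n_{k+1} > (1-\tfrac12)n_k\}$; conditionally on the past and on the relevant good events, $\BB E[J_k\mid\cdots]\le\tfrac12$. Starting from $n_{k_j}$, we reach $\tfrac12 n_{k_j}$ after at most $1$ successful step (since $\tfrac38 < \tfrac12$), so in fact $k_{j+1}-k_j$ is dominated by the first success time of a sequence of conditionally-Bernoulli$(\ge\tfrac12)$ trials, giving $\BB P[k_{j+1}-k_j > M, k_{j+1}\le K_{N_0}, \mcl E_{\BB r}(a)\mid\mcl B_{s_{k_j}}^\bullet,h|_{\mcl B_{s_{k_j}}^\bullet}]\le 2^{-M}$. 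Since $n_{k_j}\ge N_0$ and $N_0$ is large, $2^{-M}\le(n_{k_j}/C)^{-\wt\beta M}$ for suitable $C,\wt\beta$ depending only on the metric (e.g. if $\wt\beta\log(n_{k_j}/C)\le\log 2$; or, more robustly, replace the single success requirement by a fixed number $L$ of successes and track constants — the point is only to match the stated RHS). The main obstacle is the bookkeeping for the conditional independence: one must verify that applying Lemma~\ref{lem-geo-kill} at step $k$, then again at step $k+1$, etc., is legitimate — i.e. that conditioning on $(\mcl B_{s_{k+1}}^\bullet, h|_{\mcl B_{s_{k+1}}^\bullet})$ and reapplying the lemma with $\tau=s_{k+1}$ works, using the strong Markov / tower property along the stopping times $s_k$ — and that the choice of $N_0$ making "$n$ large enough" work in Lemma~\ref{lem-disconnect-set-infty}, in $R_{\BB r}^\ep\le\op{diam}$, and in the Markov bound $2C_0'n^{1-\alpha/4}\le\tfrac18 n$, depends only on $a$ (via the inradius/outradius bounds from $\mcl E_{\BB r}(a)$) and the metric, not on $\BB r$, $\tau$, or $\mcl I_0$. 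This uniformity is exactly what the careful scale-invariant formulation of Lemmas~\ref{lem-clsce-all} and~\ref{lem-geo-kill} was designed to give, so the argument goes through once these dependencies are tracked.
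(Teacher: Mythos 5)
Your overall scheme — disconnect most arcs via Lemma~\ref{lem-disconnect-set-infty}, kill off the disconnected arcs via Lemma~\ref{lem-geo-kill}, Markov-bound the number of survivors, and iterate across steps $k$ using the locality of the $G_I$'s — is exactly the paper's approach. However, there is a quantitative gap at the Markov step that breaks the final bound.

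You apply Markov's inequality in the form $\BB P[Z > 2C_0' n^{1-\alpha/4}] \le \tfrac12$, which gives a one-step success probability bounded below by a \emph{constant}. From this you can only deduce $\BB P[k_{j+1}-k_j > M, \dots] \le 2^{-M}$, which is independent of $n_{k_j}$. You then try to write $2^{-M} \le (n_{k_j}/C)^{-\wt\beta M}$, but this inequality goes the \emph{wrong} way: since $\wt\beta>0$ and $n_{k_j} \ge N_0$ can be arbitrarily large (it is only bounded below), $(n_{k_j}/C)^{-\wt\beta M}$ can be made arbitrarily smaller than $2^{-M}$ for any fixed $C, \wt\beta$. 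Your parenthetical correctly identifies the needed constraint $\wt\beta\log(n_{k_j}/C)\le\log 2$, but this fails for large $n_{k_j}$, and switching from "one success" to "$L$ successes" just changes the base of the $n_{k_j}$-independent exponential, so it does not help. The fix is to apply Markov with the threshold $\tfrac18 n$ (or $\tfrac14\#(\mcl I_k\setminus\mcl I_k^*)$ as the paper does) rather than twice the mean: $\BB P[Z > \tfrac{n}{8}] \le 8\BB E[Z]/n \preceq n^{-\alpha/4} \asymp \ep_k^\alpha$. This gives a per-step \emph{failure} probability of order $n_k^{-\alpha/4}$, and since $n_k \ge \tfrac12 n_{k_j}$ for all $k \in [k_j, k_{j+1})$, iterating $M$ times yields $(n_{k_j}/C)^{-\alpha M/4}$. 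The $n_{k_j}$-dependence in the right-hand side is not a constant to be absorbed; it is the essential content of the estimate and is what lets the later union bound over $j$ in Lemma~\ref{lem-count-radius} close.

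Aside from this, your handling of the deterministic inputs (using condition~\ref{item-quantum-ball-contained} for the inradius/outradius ratio, condition~\ref{item-good-radii-small} for $R_{\BB r}^\ep \le \op{diam}$, Lemma~\ref{lem-geo-arc} for disjointness of the $I'$), and your identification of the measurability/iteration structure via $G_I\in\sigma(\mcl B_{s_{k+1}}^\bullet, h|_{\mcl B_{s_{k+1}}^\bullet})$ and the stopping-time property of the $s_k$'s, match the paper and are correct as stated.
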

 
To prove Lemma~\ref{lem-half-count}, we will show that, roughly speaking, $n_k \leq \frac12 n_{k-1}$ with high conditional probability given $(\mcl B_{s_k}^\bullet , h|_{\mcl B_{s_k}^\bullet})$, then multiply the resulting estimate over $M$ values of $k$ to get~\eqref{eqn-half-count}. 
Recall that $\ep_k$ is a negative power of 2 chosen so that $\ep_k \in [n_k^{-1/4} , 2 n_k^{-1/4})$. 
The basic idea is that Lemma~\ref{lem-disconnect-set-infty} tells us that most of the arcs of $\bdy\mcl B_{s_k}^\bullet$ can be disconnected from $\infty$ by sets of diameter smaller than $\ep_k$, and Lemma~\ref{lem-geo-kill} tells us that each of these arcs is unlikely to survive to the next step (i.e., one has $I' = \emptyset$ with high probability). 

Let us first record what we get from Lemmas~\ref{lem-disconnect-set-infty} and~\ref{lem-geo-kill}. 
For $k\in\BB N_0$, let $\mcl I_k^*$ be the set of ``bad" arcs $I\in\mcl I_k$ which \emph{cannot} be disconnected from $\infty$ in $\BB C\setminus \mcl B_{s_k}^\bullet$ by a set of Euclidean diameter at most $\ep_k  \BB r $.  
By Lemma~\ref{lem-disconnect-set-infty} applied with $n = n_k \asymp \ep_k^{-4} $ and $C \asymp   \BB r  / \ep_k$ and condition~\ref{item-quantum-ball-contained} in the definition of $\mcl E_{\BB r}(a)$, there is a universal constant $A  > 0$ such that on $\mcl E_{\BB r}(a)$, 
\eqb \label{eqn-good-arc-count}
\# \mcl I_k^* \leq A \frac{2^4}{a^2} n_k^{1/2}  ,\quad \forall k\in [0,K_1 -1]_{\BB Z} .
\eqe 
We henceforth assume that $N_0$ is chosen sufficiently large that $A \frac{2^4}{a^2} N_0^{-1/2}  \leq 1/4$, so that the right side of~\eqref{eqn-good-arc-count} is smaller than $n_k/4$ whenever $n_k \geq N_0$ (which in particular is the case if $k \leq K_{N_0}-1$).  
Then on $\mcl E_{\BB r}(a)$, 
\eqb \label{eqn-good-arc-count'}
\# \mcl I_k^* \leq \frac14 n_k ,\quad\forall k \in [0,K_{N_0} -1]_{\BB Z}. 
\eqe

We will now explain how Lemma~\ref{lem-geo-kill} allows us to ``kill off" most of the arcs not in $\mcl I_k^*$. 
For $I\in  \mcl I_k \setminus \mcl I^*_k $, let $G_I$ be the event of Lemma~\ref{lem-geo-kill} with $\tau = s_k$ and $\ep = \ep_k$ and define a second set of bad arcs
\eqb \label{eqn-bad-arc-set}
\mcl I_k^{**} := \left\{ I \in \mcl I_k \setminus \mcl I_k^* : \text{$G_I$ does not occur} \right\} .
\eqe 
By assertion~\ref{item-geo-event-kill} of Lemma~\ref{lem-geo-kill}, if (in the notation~\eqref{eqn-extra-radius-eucl}) we have $R_{\BB r}^{\ep_k }(\mcl B_{s_k}^\bullet) \leq   \op{diam}\left( \mcl B_{s_k}^\bullet \right) $ and $G_I$ occurs, then the interval $I'$ from the definition~\eqref{eqn-arc-set-def} of $\mcl I_{k+1}$ is empty.

We now want to say that this condition is satisfied for all $k \leq K_{N_0}-1$. 
Assume that $N_0$ is chosen sufficiently large that $N_0^{-1/4} \leq a/2$. 
Then $\ep_k \leq a$ for each $k \in [0,K_{N_0}-1]_{\BB Z}$ so if $\mcl E_{\BB r}(a)$ occurs, then condition~\ref{item-good-radii-small} in the definition of $\mcl E_{\BB r}(a)$ shows that
\eqb \label{eqn-good-radii-k}
\rho_{\ep_k \BB r}^{\lfloor \eta \log \ep^{-1} \rfloor}(z) \leq \ep_k^{1/2} \BB r ,\quad \forall  z\in \left( \frac{\ep_k r}{4} \BB Z^2 \right) \cap B_{4\BB r}(0) ,
\quad\forall k \in [0,K_{N_0}-1]_{\BB Z} .
\eqe
By condition~\ref{item-quantum-ball-contained} in the definition of $\mcl E_{\BB r}(a)$ and the definition~\eqref{eqn-finite-geo-time} of $K_{N_0}$, one has $B_{a \BB r}(0) \subset \mcl B_{s_k}^\bullet \subset B_{3 \BB r}(0)$ for each $k \in [0,K_{N_0}-1]_{\BB Z}$.
In particular, $\op{diam}(\mcl B_{s_k}^\bullet) \geq a\BB r$.  
Therefore,~\eqref{eqn-good-radii-k} together with the definition~\eqref{eqn-extra-radius-eucl} of $R_{\BB r}^{\ep_k}(\mcl B_{s_k}^\bullet)$ shows that if $\mcl E_{\BB r}(a)$ occurs, then
\eqb \label{eqn-good-radii-max-k}
R_{\BB r}^{\ep_k}(\mcl B_{s_k}^\bullet) 
\leq (6\ep_k^{ 1/2} + \ep_k) \BB r 
\leq  7 n_k^{-1/8} \BB r 
\leq \frac{7}{ a N_0^{1/8} } \op{diam}\left( \mcl B_{s_k}^\bullet \right)  ,\quad\forall k \in [0,K_{N_0}-1]_{\BB Z}. 
\eqe 
Hence, if we choose $N_0$ sufficiently large that $a  N_0^{1/8} \geq 7$, then on $\mcl E_{\BB r}(a)$, we have $R_{\BB r}^{\ep_k}(\mcl B_{s_k}^\bullet)  \leq \op{diam}\left( \mcl B_{s_k}^\bullet \right)$ for each $k \in [0,K_{N_0}-1]_{\BB Z}$. 
By combining this with the preceding paragraph, if $\mcl E_{\BB r}(a)$ occurs and $k \in [0,K_{N_0}-1]_{\BB Z}$, then $I' = \emptyset$ for every $I\in \mcl I_k \setminus (\mcl I_k^* \cup \mcl I_k^{**})$. Therefore,
\eqb \label{eqn-arc-count}
n_{k+1} = \#\mcl I_{k+1} \leq  \#\mcl I_k^* + \# \mcl I_k^{**} ,\quad\forall k \in [0,K_{N_0} -1]_{\BB Z} .
\eqe 

We will also need assertion~\ref{item-geo-event-prob} of Lemma~\ref{lem-geo-kill}, which tells us that there is an exponent $\alpha > 0$ and a constant $C_0 > 0$ depending only on the choice of metric such that for $k \in \BB N_0$, 
\eqb \label{eqn-arc-kill}
\BB P\left[ G_I \,\big|\,  \mcl B_{s_k}^\bullet , h|_{\mcl B_{s_k}^\bullet}   \right] \geq 1 - C_0 \ep_k^{\alpha }  ,\quad\forall I \in \mcl I_k \setminus \mcl I_k^* .
\eqe 

\begin{proof}[Proof of Lemma~\ref{lem-half-count}]
By~\eqref{eqn-bad-arc-set} and \eqref{eqn-arc-kill}, $\BB E\left[ \# \mcl I_k^{**}  \,\big|\,  \mcl B_{s_k}^\bullet , h|_{\mcl B_{s_k}^\bullet}   \right] \leq C_0 \ep_k^\alpha  \# \left( \mcl I_k \setminus \mcl I_k^* \right)$. 
By Markov's inequality, for $k\in\BB N_0$, 
\eqb \label{eqn-arc-kill'}
\BB P\left[ \# \mcl I_k^{**} > \frac14 \# \left( \mcl I_k \setminus \mcl I_k^* \right)  \,\big|\,  \mcl B_{s_k}^\bullet , h|_{\mcl B_{s_k}^\bullet}   \right]
\leq 4 C_0 \ep_k^{\alpha }  .
\eqe
For $k \in [k_j ,k_{j+1}-1]_{\BB Z}$, one has $n_k \geq \frac12 n_{k_j}$ and hence $\ep_k \leq   (  n_{k_j} /4)^{-1/4}$. 

By Lemma~\ref{lem-geo-kill}, $G_I \in \sigma (\mcl B_{s_{k+1}}^\bullet , h|_{\mcl B_{s_{k+1}}^\bullet} )$ for each $I\in \mcl I_k$. 
Since $k_j$ is a stopping time for the filtration generated by $(\mcl B_{s_k}^\bullet , h|_{\mcl B_{s_k}^\bullet}  )$ and $\mcl I_k , \mcl I_k^* , \mcl I_k^{**} \in \sigma (\mcl B_{s_{k+1}}^\bullet , h|_{\mcl B_{s_{k+1}}^\bullet} )$, 
we can set $k =  k_j + m$ in~\eqref{eqn-arc-kill'} and iterate $M$ times to get that 
\allb \label{eqn-arc-kill-iterate}
&\BB P\bigg[ k_{j+1} -k_j > M \: \text{and} \: 
\# \mcl I_{k_j + m}^{**} >  \frac14 \# \left( \mcl I_{k_j+m} \setminus \mcl I_{k_j+m}^* \right) ,\notag\\
&\qquad\qquad\qquad\qquad\qquad \forall m \in [0,M]_{\BB Z}
\,\big|\,  \mcl B_{s_{k_j}}^\bullet , h|_{\mcl B_{s_{k_j}}^\bullet}   \bigg]
\leq ( n_{k_j} / C)^{-\alpha M /4}  
\alle
for an appropriate constant $C>1$ depending only on the choice of metric. 
 
By~\eqref{eqn-good-arc-count'} and~\eqref{eqn-arc-count}, if $\# \mcl I_k^{**} \leq \frac14 \# \left( \mcl I_k \setminus \mcl I_k^* \right)$, $k\leq  K_{N_0}-1$, and $\mcl E_{\BB r}(a)$ occurs, then
\eqb \label{eqn-arc-count'}
n_{k+1} 
\leq  \#\mcl I_k^* + \# \mcl I_k^{**}  
\leq \frac12 n_k .
\eqe 
By the definition of $k_{j+1}$, if $m\in\BB N$, $\# \mcl I_{k_j+m}^{**} \leq \frac14 \# \left( \mcl I_{k_j+m} \setminus \mcl I_{k_j+m}^* \right)$, $k_j + m \leq K_{N_0} -1$, and $\mcl E_{\BB r}(a)$ occurs, then $k_{j+1} \leq k_j + m$. 
In other words, the event inside the conditional probability in~\eqref{eqn-arc-kill-iterate} contains the event $\{k_{j+1} - k_j > M\} \cap \{k_j \leq K_{N_0}-1\} \cap \mcl E_{\BB r}(a)$. 
We therefore get~\eqref{eqn-half-count} with $\wt\beta = \alpha / 4$. 
\end{proof}
  
\subsubsection{Conclusion of the proof}
We will now apply Lemma~\ref{lem-half-count} iteratively to bound how much we need to increase the radius of our metric balls to get down to $N$ remaining boundary arcs. 

\begin{lem} \label{lem-count-radius}
For each $a  \in (0,1)$, there are constants $b_0,b_1,\beta > 0$ as in the statement of Theorem~\ref{thm-finite-geo-quant} such that for each $\BB r > 0$ and $N\in\BB N$, 
\eqb
\BB P\left[ s_{K_N} >  \tau + N^{-\beta} \frk c_{\BB r} e^{\xi h_{\BB r}(0)}   ,\, \mcl E_{\BB r}(a) \right] \leq  b_0 e^{-b_1 N^\beta} .
\eqe
\end{lem}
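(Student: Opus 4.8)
The plan is to combine the geometric decay of the arc counts $n_k$ from Lemma~\ref{lem-half-count} with the control on the radius increments $s_{k}-s_{k-1}$ coming from condition~\ref{item-quantum-ball-compare} in $\mcl E_{\BB r}(a)$. First I would record the bound on how far the radius moves in a single step. By~\eqref{eqn-iterate-time-def}, $s_k = \sigma_{s_{k-1},\BB r}^{\ep_{k-1}}$, and by the definition~\eqref{eqn-extra-radius} of $\sigma$ together with~\eqref{eqn-good-radii-max-k} (valid on $\mcl E_{\BB r}(a)$ for $k\le K_{N_0}-1$), the Euclidean ball $B_{R_{\BB r}^{\ep_{k-1}}(\mcl B_{s_{k-1}}^\bullet)}(\mcl B_{s_{k-1}}^\bullet)$ is contained in $B_{7 n_{k-1}^{-1/8}\BB r + 3\BB r}(0)\subset B_{4\BB r}(0)$ provided $N_0$ is large, so $s_k \le \tau_{4\BB r}$, hence $\mcl B_{s_{k-1}}^\bullet$ and $\mcl B_{s_k}^\bullet$ both lie in $B_{4\BB r}(0)$. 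Then condition~\ref{item-holder-cont} of $\mcl E_{\BB r}(a)$ bounds $D_h$-distances inside $B_{4\BB r}(0)$ by $\frk c_{\BB r}e^{\xi h_{\BB r}(0)}(|u-v|/\BB r)^\chi$ for Euclidean-close points, and since every point of $B_{R_{\BB r}^{\ep_{k-1}}(\mcl B_{s_{k-1}}^\bullet)}(\mcl B_{s_{k-1}}^\bullet)$ is within Euclidean distance $R_{\BB r}^{\ep_{k-1}}(\mcl B_{s_{k-1}}^\bullet)\le 7n_{k-1}^{-1/8}\BB r$ of $\mcl B_{s_{k-1}}^\bullet$, we get
\eqb \label{eqn-radius-increment-bound}
s_k - s_{k-1} \leq \frk c_{\BB r} e^{\xi h_{\BB r}(0)} \left( 7 n_{k-1}^{-1/8} \right)^\chi = C_1 \frk c_{\BB r} e^{\xi h_{\BB r}(0)} n_{k-1}^{-\chi/8} , \quad \forall k \in [1, K_{N_0}]_{\BB Z},
\eqe
on the event $\mcl E_{\BB r}(a)$, where $C_1$ depends only on the choice of metric. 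Summing the telescoping sum, $s_{K_N} - \tau \le C_1 \frk c_{\BB r} e^{\xi h_{\BB r}(0)} \sum_{k=0}^{K_N - 1} n_k^{-\chi/8}$ (assuming $N\ge N_0$, which we may since $N$ is free and we can absorb $N<N_0$ into the constant $b_0$).

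Next I would control $\sum_{k < K_N} n_k^{-\chi/8}$ using the near-geometric decay. Let $k_0=0<k_1<k_2<\cdots$ be the stopping times from Lemma~\ref{lem-half-count}, so $n_{k_j} \le 2^{-j} n_0$ by construction, and let $J$ be the largest $j$ with $k_j \le K_{N_0}$ (equivalently $n_{k_{J}} \ge N_0$). On each block $[k_j, k_{j+1})$ we have $n_k \ge \tfrac12 n_{k_j} \ge 2^{-j-1} n_0$, so
\eqb \label{eqn-block-sum}
\sum_{k=k_j}^{k_{j+1}-1} n_k^{-\chi/8} \leq (k_{j+1} - k_j) \left( 2^{-j-1} n_0 \right)^{-\chi/8} .
\eqe
Lemma~\ref{lem-half-count} gives, on $\mcl E_{\BB r}(a)$ and for $k_{j+1}\le K_{N_0}$, that $k_{j+1}-k_j$ has an exponential tail with rate $\wt\beta \log(n_{k_j}/C) \ge \wt\beta\log(N_0/C)$, so by a union bound over $j$ (there are at most $\log_2 n_0$ relevant blocks, and $n_0 \le \#\mcl I_0$ but we only ever use $N\le n_0$, and if $n_0 < N$ the statement is vacuous since $K_N = 0$), with probability at least $1 - b_0' e^{-b_1' N^{\chi/16}}$ on $\mcl E_{\BB r}(a)$ one has $k_{j+1}-k_j \le 2^{j\chi/16}$ for every $j$ with $k_{j+1}\le K_{N_0}$, once $N_0$ is chosen large enough that the rate dominates. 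Plugging this into~\eqref{eqn-block-sum} and summing the resulting geometric-type series in $j$ (the exponent $2^{j\chi/16}\cdot 2^{j\chi/8} = 2^{-j\chi/16}$ decays) gives $\sum_{k<K_{N_0}} n_k^{-\chi/8} \le C_2 n_0^{-\chi/8} \le C_2 N_0^{-\chi/8}$, a bounded constant; more usefully, truncating the sum at $K_N$ and noting $n_{k} \ge N$ for $k < K_N$, the tail of the sum past the block where $n_{k_j}$ first drops below $N$ contributes $\le C_2 N^{-\chi/16}$. Combining with~\eqref{eqn-radius-increment-bound} yields $s_{K_N} - \tau \le C_3 \frk c_{\BB r} e^{\xi h_{\BB r}(0)} N^{-\chi/16}$ on $\mcl E_{\BB r}(a)$ with probability $\ge 1 - b_0 e^{-b_1 N^{\chi/16}}$, which is the claim with $\beta = \chi/16$ (after shrinking $\beta$ if needed so that $N^{-\beta} < C_3 N^{-\chi/16}$ fails, i.e. choosing $\beta$ slightly smaller, or equivalently absorbing $C_3$).

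The main obstacle I anticipate is the careful bookkeeping in the union bound over the blocks $j$: one must ensure the exponential rate in Lemma~\ref{lem-half-count} (which degrades as $n_{k_j}$ shrinks toward $N_0$) still beats the polynomial factor $2^{j\chi/16}$ uniformly, and one must handle the conditioning correctly — $k_j$ is a stopping time for the filtration generated by $\{(\mcl B_{s_k}^\bullet, h|_{\mcl B_{s_k}^\bullet})\}_k$, and the events $\{k_{j+1}-k_j > M, k_{j+1}\le K_{N_0}, \mcl E_{\BB r}(a)\}$ must be chained via the tower property using that $\mcl E_{\BB r}(a) \in \sigma(\mcl B_{s_{k_j}}^\bullet, h|_{\mcl B_{s_{k_j}}^\bullet})$ fails in general, so one instead bounds $\BB P[\cdot \cap \mcl E_{\BB r}(a)]$ by summing the conditional bounds of Lemma~\ref{lem-half-count} over the (countably many) possible values of $k_j$ and using that $\{k_{j+1}\le K_{N_0}\} \subset \{n_{k_j}\ge N_0\}$ to control the rate. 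A secondary point requiring care is that~\eqref{eqn-radius-increment-bound} and all the estimates above are only valid for steps $k \le K_{N_0}$, so when $N < N_0$ one should instead run the argument with $N_0$ in place of $N$ in the stopping rule and note $K_N \le K_{N_0}$ only when $N \ge N_0$; since the conclusion for $N < N_0$ is weaker and follows by monotonicity (or by enlarging $b_0$), this is a minor issue. Finally one should double-check that $\tau \in [\tau_{\BB r}, \tau_{2\BB r}]$ and condition~\ref{item-quantum-ball-compare} of $\mcl E_{\BB r}(a)$ together guarantee $\tau + N^{-\beta}\frk c_{\BB r}e^{\xi h_{\BB r}(0)} < \tau_{3\BB r}$ for $N$ large, so that the event $\{s_{K_N} > \tau + N^{-\beta}\frk c_{\BB r}e^{\xi h_{\BB r}(0)}\}$ is not automatically triggered by $s_{K_N}$ jumping past $\tau_{3\BB r}$; indeed $K_N$ is defined to stop also when $s_k > \tau_{3\BB r}$, and~\ref{item-quantum-ball-compare} ensures $\tau_{3\BB r} - \tau \ge \tau_{3\BB r} - \tau_{2\BB r} \ge a\frk c_{\BB r} e^{-\xi h_{\BB r}(0)}$, which dominates $N^{-\beta}\frk c_{\BB r}e^{\xi h_{\BB r}(0)}$ once we also invoke the a.s. finiteness of $h_{\BB r}(0)$ — but since everything is stated with the deterministic normalization absorbed into $\frk c_{\BB r}$ and $e^{\xi h_{\BB r}(0)}$ appears on both sides, this reduces to $a \ge N^{-\beta} e^{2\xi h_{\BB r}(0)}$, handled by a further truncation event of probability $\ge 1 - e^{-b_1 N^\beta}$ using Gaussian tails of $h_{\BB r}(0)$.
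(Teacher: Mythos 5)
Your overall plan (the telescoping bound $s_{K_N}-\tau \preceq \frk c_{\BB r} e^{\xi h_{\BB r}(0)}\sum_{k<K_N} n_k^{-\chi/8}$, then controlling block lengths $k_{j+1}-k_j$ via Lemma~\ref{lem-half-count}) is the right one and matches the paper's, but the combinatorics in the second and third steps have real errors. First, the lower bound $n_{k_j}\ge 2^{-j}n_0$ is backwards: by construction $n_{k_j}\le \tfrac12 n_{k_{j-1}}$, so iterating gives only the \emph{upper} bound $n_{k_j}\le 2^{-j}n_0$ (the quantity $n_{k_j}$ can drop by much more than a factor of $2$ in the step from $k_j-1$ to $k_j$). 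The correct lower bound comes from the other end: $n_{k_j}\ge 2 n_{k_{j+1}}$, hence $n_{k_j}\ge 2^{J-1-j}n_{k_{J-1}}\ge 2^{J-1-j}N$, which decays as $j$ increases toward $J$ rather than decaying from $n_0$. Second, even if one grants your anchor, the arithmetic $2^{j\chi/16}\cdot 2^{j\chi/8}=2^{-j\chi/16}$ is wrong: $(2^{-j-1}n_0)^{-\chi/8}=2^{(j+1)\chi/8}n_0^{-\chi/8}$ (the outer $-\chi/8$ flips the sign), so the block contributions you write down grow like $2^{3j\chi/16}$ and the series does not converge. Third, the $j$-dependent choice $M_j=2^{j\chi/16}$ cannot produce a stretched-exponential failure probability: for $j=0$ you are asking for $k_1-k_0\le 1$, and Lemma~\ref{lem-half-count} only bounds that failure by $(n_{k_0}/C)^{-\wt\beta}\lesssim N^{-\wt\beta}$, which is polynomial in $N$, not of the form $e^{-b_1N^\beta}$.

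The paper's route avoids all three issues by using the bottom-anchored bound $n_k\ge 2^{J-j-2}N$ on $[k_j,k_{j+1})$ together with a single uniform cutoff $M=N^{\chi/16}$. With that anchor, the block estimate becomes $(k_{j+1}-k_j)\,2^{-(J-j-2)\chi/8}N^{-\chi/8}$, so the geometric factor $2^{-(J-j)\chi/8}$ actually decays as $j$ ranges away from $J$ and absorbs the uniform $M=N^{\chi/16}$, giving the desired $\sum_k n_k^{-\chi/8}\preceq N^{\chi/16}N^{-\chi/8}=N^{-\chi/16}$. Meanwhile the failure probability for a fixed $j$ is at most $(n_{k_j}/C)^{-\wt\beta N^{\chi/16}}\le 2^{-(J-j)\wt\beta N^{\chi/16}}(4C/N)^{\wt\beta N^{\chi/16}}$, which sums geometrically in $j$ and carries an overall $(4C/N)^{\wt\beta N^{\chi/16}}$ prefactor that is $\le b_0 e^{-b_1 N^\beta}$ once $N>4C$. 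Your observations about the stopping-time structure, about treating $N<N_0$ by enlarging $b_0$, and about comparing $\tau + N^{-\beta}\frk c_{\BB r}e^{\xi h_{\BB r}(0)}$ to $\tau_{3\BB r}$ are all fine (though the last comparison belongs to the derivation of Theorem~\ref{thm-finite-geo-quant}, not to this lemma, and needs no extra truncation on $h_{\BB r}(0)$), but the core block-by-block estimate needs to be rebuilt as above.
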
  
\begin{proof} 
Throughout the proof we write $c$ for a constant which is only allowed to depend on the metric and which may change from line to line.
\medskip

\noindent\textit{Step 1: bounding $s_{K_N}$ in terms of the $n_k$'s.}
By the first inequality in~\eqref{eqn-good-radii-max-k}, if $\mcl E_{\BB r}(a)$ occurs and $k\leq K_{N_0}-1$, then $R_{\BB r}^{\ep_k}(\mcl B_{s_k}^\bullet) \leq (\ep_k + 6\ep_k^{1/2}) \BB r$. 
That is, each point of each of the balls $B_{\rho_{\ep_k r}^{\lfloor \eta\log\ep_k^{-1} \rfloor}(z)}(z)$ for  $z\in \left( \frac{\ep_k r}{4} \BB Z^2 \right) \cap B_{\ep_k r}(\mcl B_{s_k}^\bullet)$ lies at Euclidean distance at most $(\ep_k + 6\ep_k^{1/2}) \BB r$ from $\mcl B_{s_k}^\bullet$. 
Since $k \leq K_{N_0}-1$, we have $\mcl B_{s_k}^\bullet \subset B_{3\BB r}(0)$. 
By the H\"older continuity condition~\ref{item-holder-cont} in the definition of $\mcl E_{\BB r} (a)$, if we choose $N_0$ large enough that $(N_0^{-1/4} + 6 N_0^{-1/8}) \leq a$, 
then each such ball is contained in $\mcl B_{s_k + \delta_k}^\bullet$ for $\delta_k :=  (\ep_k + 6  \ep_k^{1/2})^\chi \frk c_{\BB r} e^{\xi h_{\BB r}(0)}$. By the definition~\eqref{eqn-extra-radius} of $\sigma_{s_k , \BB r}^{\ep_k  }$ and~\eqref{eqn-iterate-time-def}, if $\mcl E_{\BB r}(a)$ occurs (and $N_0$ is chosen large enough) then
\eqb
s_{k+1}   = \sigma_{s_k , \BB r}^{\ep_k } \leq s_k + (\ep_k + 6 \ep_k^{1/2})^\chi \frk c_{\BB r} e^{\xi h_{\BB r}(0)} \leq s_k + c n_k^{-\chi/8} \frk c_{\BB r} e^{\xi h_{\BB r}(0)}   ,\quad \forall k \in [0,K_{N_0} -1]_{\BB Z} ,
\eqe
where in the last inequality we recall that $\ep_k \asymp n_k^{1/4}$. 
Summing this estimate and recalling that $s_0 = \tau $ shows that on $\mcl E_{\BB r}(a)$, 
\eqb \label{eqn-radius-upper}
s_{K_N} \leq \tau  + c \frk c_{\BB r} e^{\xi h_{\BB r}(0)}  \sum_{k=1}^{K_N-1} n_k^{-\chi/8}  ,\quad\forall N \geq N_0 .
\eqe  
\medskip

\noindent\textit{Step 2: bounding $s_{K_N}$ in terms of $k_j - k_{j-1}$.}
Fix $N\geq N_0$. 
Let $k_j$ for $j\in\BB N_0$ be as in Lemma~\ref{lem-half-count} and let $J = J_N$ be the smallest $j\in\BB N$ for which $k_{j} \geq K_N$. 
By definition, $n_{k_{j-1}} \geq 2 n_{k_j}$ for each $j\in\BB N_0$.
By iterating this, we get that $n_{k_j} \geq 2^{J-j-1} n_{k_{J-1}} \geq 2^{J-j-1} N$ for each $j \in [0,J-1]_{\BB Z}$.
Since $n_k \geq n_{k_j}/2$ for $k\in [k_j , k_{j+1} -1]_{\BB Z}$, it follows that
\eqb \label{eqn-count-lower}
n_{k_j} \geq 2^{J-j-2}N ,\quad\forall j \in [0,J-1]_{\BB Z}, \quad\forall k \in [k_j , k_{j+1}-1]_{\BB Z} .
\eqe
Since $k_J - 1 \geq K_N -1$, we can plug this into~\eqref{eqn-radius-upper} to get 
\eqb \label{eqn-radius-upper'}
s_{K_N} \leq \tau + c  N^{-\chi/8} \frk c_{\BB r} e^{\xi h_{\BB r}(0)}  \sum_{j=0}^{J-1} (k_{j+1} - k_j) 2^{-(J-j)}  .
\eqe 
\medskip

\noindent\textit{Step 3: bounding $k_j - k_{j-1}$.}
By taking unconditional expectations of both sides of~\eqref{eqn-half-count} from Lemma~\ref{lem-half-count} and then applying~\eqref{eqn-count-lower}, we get that for $M > 0$ and $j \in \BB N_0$, 
\alb
\BB P\left[ k_{j+1}  - k_j  > M ,\,  j\leq J-1 ,\, \mcl E_{\BB r}(a) \right] 
&\leq \BB E\left[  (n_{k_j}/C)^{-\wt\beta M} \right]  \notag\\
&\leq  (4C)^{ \wt\beta M  } N^{- \wt\beta M } \BB E\left[ 2^{- (J-j) \wt\beta M   }\right] .
\ale 
Now set $M = N^{\chi/16}$ and sum over all $j\in\BB N_0$ to get 
\allb
&\BB P\left[\exists j \in [0,J-1]_{\BB Z} \: \text{s.t.} \: k_{j+1}  - k_j   > N^{\chi/16} ,\, \mcl E_{\BB r}(a) \right]  \notag\\
&\qquad\qquad \leq (4C)^{   \wt\beta N^{\chi/16}  } N^{- \wt\beta  N^{\chi/16} } \BB E\left[ \sum_{j=0}^{J-1} 2^{- (J-j)\wt\beta  N^{\chi/16} }\right]  \notag \\
&\qquad\qquad \preceq (4C)^{ \wt\beta N^{\chi/16} } N^{-\wt\beta N^{\chi/16} } ,
\alle
with the implicit constant depending only on the choice of metric. 
This last quantity is bounded above by $b_0 e^{- b_1 N^\beta}$ for constants $b_0,b_1 , \beta > 0$ satisfying the conditions in Theorem~\ref{thm-finite-geo-quant}. 
We can arrange that $\beta < \chi/16$. 
Hence if $\mcl E_{\BB r}(a)$ occurs, then except on an event of probability at most $b_0 e^{-b_1 N^\beta}$, 
\eqb
k_{j+1}  - k_j   \leq  N^{\chi/16} ,\quad\forall j \in [0,J-1]_{\BB Z} .
\eqe
Plugging this bound into~\eqref{eqn-radius-upper'} shows that if $N\geq N_0$ and $\mcl E_{\BB r}(a)$ occurs, then except on an event of probability at most $b_0 e^{-b_1 N^\beta}$, 
\eqb \label{eqn-radius-upper''}
s_{K_N}  \leq \tau  + c N^{-\chi/16} \frk c_{\BB r} e^{\xi h_{\BB r}(0)}  \sum_{j=0}^{J-1} 2^{-(J-j)} 
\leq \tau  + c N^{-\beta}  \frk c_{\BB r} e^{\xi h_{\BB r}(0)} .
\eqe 
After possibly shrinking $\beta$ (in order to absorb the $c$ in~\eqref{eqn-radius-upper''} into a small power of $N^\beta$) and increasing $b_0$ (to make it so that $b_0 e^{-b_1 N^\beta} > 1$ for $N < N_0$), we obtain the statement of the lemma. 
\end{proof}

\begin{proof}[Proof of Theorem~\ref{thm-finite-geo-quant}]
Let $b_0 , b_1 , \beta >0$ be as in Lemma~\ref{lem-count-radius}. 
By condition~\ref{item-quantum-ball-compare} in the definition of $\mcl E_{\BB r}(a)$ and since $\tau\leq \tau_{2\BB r}$, if $N$ is sufficiently large that $N^{-\beta} < a$, then $s_{K_N} \leq \tau  + N^{-\beta} \frk c_{\BB r} e^{\xi h_{\BB r}(0)}$ implies that $s_{K_N}  < \tau_{3 \BB r}$.
By the definition~\eqref{eqn-finite-geo-time} of $K_N$, this implies that $\#\mcl I_{K_N} \leq N$. 
Hence, Lemma~\ref{lem-count-radius} implies that if $\mcl E_{\BB r}(a)$ occurs, then except on an event of probability at most $b_0 e^{-b_1 N^\beta}$, the number of $I\in\mcl I_0$ for which there is a leftmost $D_h$-geodesic from 0 to $\bdy\mcl B_{\tau  + N^{-\beta} \frk c_{\BB r} e^{\xi h_{\BB r}(0)} }^\bullet$ which passes through $I$ is at most $N$.  
This holds \emph{regardless of the initial choice of $\mcl I_0$}. 

Let $X_N$ be the set of points on $\bdy\mcl B_{\tau  }^\bullet$ which are hit by a leftmost $D_h$-geodesic from 0 to $\bdy\mcl B_{\tau  + N^{-\beta} \frk c_{\BB r} e^{\xi h_{\BB r}(0)} }^\bullet$.  
For a given $n_0 \in \BB N$, we choose $\mcl I_0$ to be a collection of $n_0$ disjoint half-open arcs of $\bdy\mcl B_\tau^\bullet$ which cover $\bdy\mcl B_{\tau}^\bullet$ and such that the harmonic measure from $\infty$ in $\BB C\setminus \mcl B_{\tau}^\bullet$ of each $I\in\mcl I_0$ is $1/n_0$. 
Applying the preceding paragraph to this choice of $\mcl I_0$, we find that if $\mcl E_{\BB r}(a)$ occurs, then except on an event of probability at most $b_0 e^{-b_1 N^\beta}$, the set $X_N$ can be covered by at most $N$ boundary arcs of $\mcl B_{\tau}^\bullet$ which each have harmonic measure from $\infty$ at most $1/n_0$. 
Since $n_0$ can be made arbitrarily large, this implies that on $\mcl E_{\BB r}(a)$, it holds except on en event of probability at most $b_0 e^{-b_1 N^\beta}$ that $\# X_N \leq N$. 
\end{proof}

\section{Reducing to a single geodesic}
\label{sec-one-geodesic}

In this section we will prove Theorem~\ref{thm-clsce}. 
Unlike in the case of Theorem~\ref{thm-finite-geo}, we do not prove a version which is uniform over the Euclidean scale since we do not need such a statement in~\cite{gm-uniqueness}. 
The only result from Section~\ref{sec-confluence} which we need in this section is Theorem~\ref{thm-finite-geo}.

To deduce Theorem~\ref{thm-clsce} from Theorem~\ref{thm-finite-geo} we need to reduce from finitely many points hit by geodesics to one point. 
To accomplish this, in Section~\ref{sec-one-geo-pos} we show that if $\tau_{\BB r}$ is as in~\eqref{eqn-tau_r-def}, $A>1$ is fixed, and $I\subset \bdy\mcl B_{\tau_{\BB r}}^\bullet$ is an arc chosen in a manner depending only on $( \mcl B_{\tau_{\BB r}}^\bullet  , h|_{\mcl B_{\tau_{\BB r}}^\bullet} )$, then with positive conditional probability given $( \mcl B_{\tau_{\BB r}}^\bullet  , h|_{\mcl B_{\tau_{\BB r}}^\bullet} )$, \emph{every} $D_h$-geodesic from 0 to a point outside of $B_{A \BB r}(0)$ passes through $I$ (see Lemma~\ref{lem-pos-kill} for a precise statement). 
This is proven using the Markov property of the GFF and the FKG inequality via a similar argument to the one in Section~\ref{sec-geo-kill}. 
The argument is significantly simpler, however, since the estimate we need is much less quantitative than the one in Lemma~\ref{lem-geo-kill}. See Figure~\ref{fig-one-geo} for an illustration of the argument. 

In Section~\ref{sec-one-geo-as}, we combine Theorem~\ref{thm-finite-geo} with the result of Section~\ref{sec-one-geo-pos}, applied with $I$ equal to the set of points in $x \in \bdy\mcl B_{\tau_{2\BB r}}^\bullet$ such that the leftmost $D_h$-geodesic from 0 to $x$ hits some specified point of $\bdy\mcl B_{\tau_{\BB r}}^\bullet$, to show that the following is true.
There is an $M>1$ such that with positive conditional probability given $( \mcl B_{\tau_{\BB r}}^\bullet  , h|_{\mcl B_{\tau_{\BB r}}^\bullet} )$, all of the leftmost $D_h$-geodesics from 0 to $\bdy\mcl B_{\tau_{M \BB r}}^\bullet$ hit the same point of $\bdy\mcl B_{\tau_{\BB r}}^\bullet$. Using the uniqueness of geodesics to rational points, we can improve this to say that with positive probability, \emph{all} of the geodesics from 0 to $\bdy\mcl B_{\tau_{M  \BB r }}^\bullet$ pass through the same point of $\bdy\mcl B_{\tau_{\BB r}}^\bullet$. 
Using a zero-one law argument, we then conclude the proof of Theorem~\ref{thm-clsce}.

\subsection{Killing off all but one geodesic with positive probability}
\label{sec-one-geo-pos}

The following lemma allows us to kill off all of the geodesics which do not hit a specified boundary arc $I\subset\bdy\mcl B_{\tau_{\BB r}}^\bullet$. 
For the statement, we recall the definition of the internal diameter metric $d^U$ for $U\subset\BB C$ a connected open set such that $\BB C\setminus U$ is compact, as in the discussion just after~\eqref{eqn-d^U-def}.

\begin{lem} \label{lem-pos-kill}
For each $A > 1$, $ \BB r > 0$, $\ep \in (0,(A-1)/100)$, and $p \in (0,1)$, there exists $\frk p = \frk p(A , \ep , p) > 0$ such that the following is true. 
Let $\tau_{\BB r}$ be as in~\eqref{eqn-tau_r-def}. 
Let $I\subset \bdy \mcl B_{\tau_{\BB r}}^\bullet$ be a closed boundary arc, chosen in a manner depending only on $( \mcl B_{\tau_{\BB r}}^\bullet  , h|_{\mcl B_{\tau_{\BB r}}^\bullet} )$, with the property that the $d^{\BB C\setminus \mcl B_{\tau_{\BB r}}^\bullet}$-neighborhood $\mcl B_{\ep \BB r}(\bdy\mcl B_{\tau_{\BB r}}^\bullet \setminus I ;  d^{\BB C\setminus \mcl B_{\tau_{\BB r}}^\bullet}  )$ does not disconnect $I$ from $\infty$ in $\BB C\setminus\mcl B_{\tau_{\BB r}}^\bullet$. 
With probability at least $p$, it holds with conditional probability at least $\frk p$ given $( \mcl B_{\tau_{\BB r}}^\bullet  , h|_{\mcl B_{\tau_{\BB r}}^\bullet} )$ that every $D_h$-geodesic from 0 to a point of $\BB C\setminus B_{A  \BB r }(0)$ passes through $I$.
\end{lem}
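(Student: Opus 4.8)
The plan is to build a ``funnel'' inside $\BB C\setminus\mcl B_{\tau_{\BB r}}^\bullet$ which forces all $D_h$-geodesics from $0$ to points outside $B_{A\BB r}(0)$ to pass through $I$, and to show this funnel exists with uniformly positive conditional probability given $(\mcl B_{\tau_{\BB r}}^\bullet,h|_{\mcl B_{\tau_{\BB r}}^\bullet})$ on a high-probability event. The geometric mechanism is the same one used in Section~\ref{sec-geo-kill}: a $D_h$-geodesic from $0$ to a far-away point hits $\bdy\mcl B_{\tau_{\BB r}}^\bullet$ exactly once, so to block it from entering $\mcl B_{\tau_{\BB r}}^\bullet$ through $\bdy\mcl B_{\tau_{\BB r}}^\bullet\setminus I$, it suffices to surround $\bdy\mcl B_{\tau_{\BB r}}^\bullet\setminus I$ with a region $W\subset\BB C\setminus\mcl B_{\tau_{\BB r}}^\bullet$ which is ``$D_h$-expensive to cross but $D_h$-cheap to traverse tangentially,'' i.e.\ so that from any point of $W$ one can reach $\mcl B_{\tau_{\BB r}}^\bullet$ (hence get $D_h$-close to $0$) more cheaply than one can cross from the far side of $W$ to its near side. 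Since a geodesic to a point outside $B_{A\BB r}(0)$ must either pass through $I$ or cross $W$ before first hitting $\bdy\mcl B_{\tau_{\BB r}}^\bullet$, this blocks all the ``bad'' geodesics. The hypothesis that $\mcl B_{\ep\BB r}(\bdy\mcl B_{\tau_{\BB r}}^\bullet\setminus I;d^{\BB C\setminus\mcl B_{\tau_{\BB r}}^\bullet})$ does not disconnect $I$ from $\infty$ is exactly what guarantees that such a surrounding region $W$ of Euclidean ``$d^U$-width'' roughly $\ep\BB r$ can be placed inside $\BB C\setminus\mcl B_{\tau_{\BB r}}^\bullet$ while leaving a corridor from $I$ out to $\infty$; this is where $d^U$ rather than Euclidean distance is needed (cf.\ Figure~\ref{fig-exposed-arc}).

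\textbf{Key steps.} First I would fix a deterministic annular region: let $W$ be a suitable subset of $\BB A_{\BB r,A\BB r}(0)\setminus\mcl B_{\tau_{\BB r}}^\bullet$ (more precisely, a union of dyadic squares at scale comparable to $\ep\BB r$ contained in a fixed grid, in the style of $\mcl U_r(z)$ from Section~\ref{sec-clsce-event}, chosen as a measurable function of $(\mcl B_{\tau_{\BB r}}^\bullet,h|_{\mcl B_{\tau_{\BB r}}^\bullet})$) with the following topological property on the event that the geometry is good: $W$ contains a ``shield'' separating $\bdy\mcl B_{\tau_{\BB r}}^\bullet\setminus I$ from $\infty$ inside $\BB C\setminus\mcl B_{\tau_{\BB r}}^\bullet$, together with an outer annulus $\BB A$ of the form (scaled, translated) $\BB A_{2\rho,3\rho}$ surrounding that shield, exactly as in the proof of Lemma~\ref{lem-geo-kill-pt}. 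Second, using Axioms~\ref{item-metric-translate} and~\ref{item-metric-coord} (tightness across scales and translation invariance), I would choose the geometric parameters so that with probability at least $1-(1-p)/2$, say, the following two ``good geometry'' conditions hold: (a) $B_{a\BB r}(0)\subset\mcl B_{\tau_{\BB r}}^\bullet\subset B_{\BB r}(0)$ and $\mcl B_{\ep\BB r}(\bdy\mcl B_{\tau_{\BB r}}^\bullet\setminus I;d^U)$ does not disconnect $I$ from $\infty$ (the latter is assumed), so that the desired shield region $W$ really fits; (b) $\op{diam}(\mcl B_{\tau_{\BB r}}^\bullet)$ is comparable to $\BB r$. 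Third—the probabilistic heart—I would show that, conditionally on $(\mcl B_{\tau_{\BB r}}^\bullet,h|_{\mcl B_{\tau_{\BB r}}^\bullet})$ on this good event, with conditional probability at least some $\frk p>0$ (depending only on $A,\ep,p$) the field restricted to $W$ makes the shield expensive to cross and cheap to traverse: the $D_h$-distance across $\BB A$ exceeds the sum of the $D_h$-diameter of the shield's complement-of-metric-ball region and the maximal $D_h$-distance from points of $W$ to $\mcl B_{\tau_{\BB r}}^\bullet$. This is obtained exactly as in Lemma~\ref{lem-cond-diam-small} / Lemma~\ref{lem-geo-kill-pt}: write $h|_W=\rng h^W+\frk h^W$ via the Markov property of the GFF; use Weyl scaling (Axiom~\ref{item-metric-f}) with a large negative bump function supported in the interior of $W$ to make $D_{\rng h^W-f}$-diameters tiny while keeping the laws of $\rng h^W$ and $\rng h^W-f$ mutually absolutely continuous (so the unconditional probability of the cheap-traversal event is positive, uniformly over the finitely many possible $W$'s in the fixed grid); use Axioms~\ref{item-metric-translate},~\ref{item-metric-coord} to get a lower bound on the ``expensive to cross'' event that is uniform in $\BB r$; and combine the two via the FKG inequality (Proposition~\ref{prop-fkg-metric}), since one is a non-increasing and the other a non-decreasing functional of $D_{\rng h^W}$ — or, more simply, since the ``expensive'' condition concerns $h|_{\BB A}$ which can be arranged on a region disjoint from the support of $f$, making the two conditions depend on (conditionally) independent parts of the field. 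Fourth, I would verify the blocking: if all these $D_h$-distance inequalities hold and the good geometry holds, then any $D_h$-geodesic from $0$ to a point of $\BB C\setminus B_{A\BB r}(0)$ that does \emph{not} pass through $I$ must first hit $\bdy\mcl B_{\tau_{\BB r}}^\bullet$ at a point of $\bdy\mcl B_{\tau_{\BB r}}^\bullet\setminus I$, hence must cross the shield from outside $\BB A$ inward; but then, just as in Step~2 of the proof of Lemma~\ref{lem-geo-kill-pt}, it would have been strictly cheaper to dip into $\mcl B_{\tau_{\BB r}}^\bullet$ (getting $D_h$-close to $0$) before crossing $\BB A$, contradicting minimality. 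Finally, combining the probability $\geq 1-(1-p)/2$ of good geometry with the conditional probability $\geq\frk p$ of the distance inequalities, and re-choosing $p$ appropriately, yields the statement: with probability at least $p$, the conditional probability given $(\mcl B_{\tau_{\BB r}}^\bullet,h|_{\mcl B_{\tau_{\BB r}}^\bullet})$ that every $D_h$-geodesic from $0$ to $\BB C\setminus B_{A\BB r}(0)$ passes through $I$ is at least $\frk p$.

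\textbf{Main obstacle.} I expect the main difficulty to be the measure-theoretic bookkeeping in Step~3, namely arranging the shield region $W$ as a measurable function of $(\mcl B_{\tau_{\BB r}}^\bullet,h|_{\mcl B_{\tau_{\BB r}}^\bullet})$ drawn from a \emph{finite} collection of candidate regions (so that the absolute-continuity bound and the FKG application are uniform, as in the passage $\mcl U_r(z)=r\mcl U_1(0)+z$), while simultaneously ensuring $W$ both contains a topological shield separating $\bdy\mcl B_{\tau_{\BB r}}^\bullet\setminus I$ from $\infty$ and lies at controlled Euclidean/$d^U$-distance from $I$ so that the corridor from $I$ to $\infty$ survives. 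This is precisely where the $d^U$-neighborhood hypothesis must be used carefully: one must translate ``$\mcl B_{\ep\BB r}(\bdy\mcl B_{\tau_{\BB r}}^\bullet\setminus I;d^U)$ does not disconnect $I$ from $\infty$'' into the existence of a grid-aligned region $W$ of comparable width doing the separating, possibly after using a Koebe-type distortion argument analogous to Lemmas~\ref{lem-exposed-arc} and~\ref{lem-exposed-arc-invert}. The remaining steps are close variants of arguments already carried out in Sections~\ref{sec-geo-kill} and~\ref{sec-fkg}, so I would lean on those; indeed the lemma is easier than Lemma~\ref{lem-geo-kill} because only a single positive-probability event is needed rather than a logarithmic number of independent tries, and no quantitative ($\ep^\alpha$) decay rate is required.
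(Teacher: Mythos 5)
Your overall plan---a shield of grid squares made $D_h$-cheap to traverse via a compactly supported bump function and FKG, truncation on a good-geometry event supplied by Axioms~\ref{item-metric-translate} and~\ref{item-metric-coord}, a finite collection of candidate grid-aligned shield shapes so that the absolute-continuity bound is uniform, and a geodesic-blocking argument---does match the paper's proof in structure. But there is a genuine gap in the geometric device you use for the ``expensive to cross'' barrier, and in where you place the cheap region.

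You propose an outer round annulus $\BB A$ of the form $\BB A_{2\rho,3\rho}(z)$ ``surrounding the shield, exactly as in Lemma~\ref{lem-geo-kill-pt}.'' That device was tailored to the opposite situation, where a \emph{single small arc} (contained in a ball $B_{\ep\BB r}(x)$) is to be disconnected from $\infty$. Here the region to protect is $\bdy\mcl B_{\tau_{\BB r}}^\bullet\setminus I$, which can occupy nearly all of the boundary, so any shield separating it from $\infty$ in $\BB C\setminus\mcl B_{\tau_{\BB r}}^\bullet$ must (together with $\mcl B_{\tau_{\BB r}}^\bullet$) wrap almost all the way around the ball. A Euclidean annulus $\BB A_{2\rho,3\rho}(z)$ surrounding such a shield necessarily surrounds the whole ball---including $I$---so \emph{every} $D_h$-geodesic to $\bdy\mcl B_{\tau_{\BB r}}^\bullet$ crosses it, and the ``expensive across $\BB A$'' condition cannot distinguish geodesics to $I$ from geodesics to $\bdy\mcl B_{\tau_{\BB r}}^\bullet\setminus I$. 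Relatedly, your plan places the cheap-to-traverse shield $W$ ``surrounding $\bdy\mcl B_{\tau_{\BB r}}^\bullet\setminus I$,'' so that ``dipping into the ball'' from $W$ lands you precisely on the arc you are trying to avoid.

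The paper's fix replaces the round annulus by the $d^U$-neighborhood $\mcl B^*:=\mcl B_{\ep\BB r/4}\left(\bdy\mcl B_{\tau_{\BB r}}^\bullet\setminus I;\, d^{\BB C\setminus\mcl B_{\tau_{\BB r}}^\bullet}\right)$, which serves directly as the expensive barrier: any path in $\BB C\setminus\mcl B_{\tau_{\BB r}}^\bullet$ from outside $\mcl B^*$ to $\bdy\mcl B_{\tau_{\BB r}}^\bullet\setminus I$ must have Euclidean diameter at least $\ep\BB r/4$, so tightness across scales (Axiom~\ref{item-metric-coord}) gives a lower bound $c\,\frk c_{\BB r}e^{\xi h_{\BB r}(0)}$ on its $D_h$-length. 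Crucially, by hypothesis $\mcl B^*$ does not disconnect $I$ from $\infty$, so geodesics through $I$ need not cross it. The cheap region $U$ is then chosen \emph{disjoint from $\mcl B^*$ and adjacent to $I$}: it is the union of $\delta\BB r$-grid squares hit by a path $\pi$ in $B_{(A-\ep)\BB r}(0)\setminus\mcl B_{\tau_{\BB r}}^\bullet$ which starts at $I$, disconnects $\bdy B_{\BB r}(0)$ from $\bdy B_{(A-\ep)\BB r}(0)$, and stays at $d^U$-distance $\geq\ep\BB r$ from $\bdy\mcl B_{\tau_{\BB r}}^\bullet\setminus I$. Each component of $U$ touches a square meeting $I$, so once Lemma~\ref{lem-cond-diam-pos} makes component diameters small, every point of $U$ is $D_h$-close to $I$ and hence to $0$. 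A geodesic to $\bdy\mcl B_{\tau_{\BB r}}^\bullet\setminus I$ must enter $U$ and then cross $\mcl B^*$, which already costs more than the detour to $I$. So your Steps~3 and~4 go through once you swap in $\mcl B^*$ for the annulus and attach the cheap shield to $I$ rather than to its complement; the FKG step, the bump-function absolute-continuity step, and the finite-grid uniformity are as you describe.
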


The proof of Lemma~\ref{lem-pos-kill} is similar to that of Lemma~\ref{lem-geo-kill}, but simpler since we do not need a quantitative bound on probabilities, so we only need to define one event rather than defining an event in every Euclidean annulus. See Figure~\ref{fig-one-geo} for an illustration of the proof.

\begin{figure}[t!]
 \begin{center}
\includegraphics[scale=1]{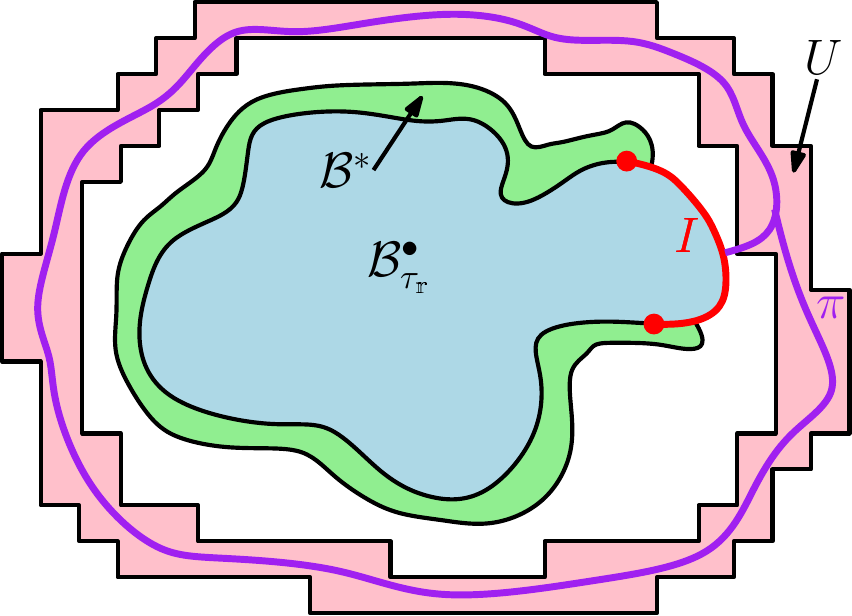}
\vspace{-0.01\textheight}
\caption{Illustration of the proof of Lemma~\ref{lem-pos-kill}. 
On the high-probability event $E_{\BB r}^U$, the $D_h$-distance from the $d^{\BB C\setminus\mcl B_{\tau_{\BB r}}^\bullet}$-neighborhood $\mcl B^* = \mcl B_{\ep \BB r/4}(\bdy\mcl B_{\tau_{\BB r}}^\bullet \setminus I ;  d^{\BB C\setminus \mcl B_{\tau_{\BB r}}^\bullet}  )$ to $\bdy\mcl B_{\tau_{\BB r}}^\bullet\setminus I$ is bounded below, the $D_h$-diameter of ever $\delta \BB r \times \delta \BB r$ square with corners in $\delta \BB r\BB Z^2$ which intersects $B_{A \BB r}(0)$ is small, and the absolute value of the harmonic part of $h|_U$ is bounded above on the set $U_{\delta \BB r/4}$ of points in $U$ lying at distance at least $\delta \BB r/4$ from $\bdy U$. 
In order to force every $D_h$-geodesic from 0 to a point of $\BB C\setminus B_{A \BB r}(0)$ to go through $I$, we consider a path $\pi$ started from $I$ which disconnects $\mcl B_{\tau_{\BB r}}^\bullet$ from $\bdy B_{A \BB r}(0)$ and let $U$ be the set of $\delta \BB r\times\delta \BB r$ squares as above which intersect $\pi$ but not $\mcl B_{\tau_{\BB r}}^\bullet$. 
Using the Markov property of the field and the FKG inequality (Lemma~\ref{lem-cond-diam-pos}), we see that with uniformly positive conditional probability given $( \mcl B_{\tau_{\BB r}}^\bullet  , h|_{\mcl B_{\tau_{\BB r}}^\bullet} )$, the $D_h$-distance from every point of $U$ to $I$ is smaller than the $D_h$-distance from $\bdy\mcl B^*$ to $\bdy\mcl B_{\tau_{\BB r}}^\bullet\setminus I$. This forces every $D_h$-geodesic to 0 which crosses $U$ to enter $\mcl B_{\tau_{\BB r}}^\bullet$ at a point of $I$. 
}\label{fig-one-geo}
\end{center}
\vspace{-1em}
\end{figure}

To lighten notation, write
\eqb \label{eqn-diam-ball-def}
\mcl B^* := \mcl B_{\ep \BB r/4}(\bdy\mcl B_{\tau_{\BB r}}^\bullet \setminus I ;  d^{\BB C\setminus \mcl B_{\tau_{\BB r}}^\bullet}  ) .
\eqe 
By Axiom~\ref{item-metric-coord}, we can find $c = c(A,\ep , p)  >0$ such that with probability at least $1 - (1-p)/3$, each path in $B_{A \BB r}(0)$ with Euclidean diameter at least $\ep \BB r/4$ has $D_h$-length at least $c \frk c_{\BB r} e^{\xi h_{\BB r}(0)}$. By the definition of $\mcl B^*$, each path in $\BB C\setminus \mcl B_{\tau_{\BB r}}^\bullet$ from $\bdy\mcl B_{\tau_{\BB r}}^\bullet \setminus I$ to a point of $\BB C\setminus (\mcl B_{\tau_{\BB r}}^\bullet \cup \mcl B^*)$ has Euclidean diameter at least $\ep \BB r/4$. 
Hence, with probability at least $1-(1-p)/3$, 
\eqb \label{eqn-pos-kill-across}
D_h\left(   \bdy\mcl B_{\tau_{\BB r}}^\bullet \setminus I , \BB C\setminus (\mcl B_{\tau_{\BB r}}^\bullet \cup \mcl B^*)  ; \BB C\setminus\mcl B_{\tau_{\BB r}}^\bullet \right) \geq c \frk c_{\BB r} e^{\xi h_{\BB r}(0)}    .
\eqe

Define the collection of $\delta \BB r\times \delta \BB r$ squares $\mcl S_{\delta \BB r}(B_{A\BB r}(0)) = \mcl S_{\delta \BB r}^0(B_{A\BB r}(0))$ with corners in $\delta \BB r\BB Z^2$ as in~\eqref{eqn-square-def} with $z=0$. 
Again by Axiom~\ref{item-metric-coord}, we can find $\delta =\delta(c,A,\ep) \in (0,\ep/100) $ such that with probability at least $1 - (1-p)/3$, 
\eqb \label{eqn-pos-kill-ball}
\sup_{u,v \in S} D_h(u,v) \leq \frac{c}{100} \frk c_{\BB r} e^{\xi h_{\BB r}(0)} ,\quad\forall S \in \mcl S_{\delta \BB r}(B_{A \BB r}(0)) .
\eqe

Let $\mcl U_{\BB r} $ be the (finite) set of sub-domains $U$ of $B_{ A \BB r}(0)$ such that $B_{ A \BB r}(0)  \setminus U$ is a finite union of sets of the form $S \cap B_{ A \BB r}(0)$ for $S \in \mcl S_{\delta \BB r}(B_{ A \BB r}(0))$. 
For $U\in\mcl U_r $, let $\frk h^U$ be the harmonic part of $h|_U$. Also let $U_{\delta \BB r/4}$ be the set of points in $U$ which lie at Euclidean distance at least $\delta \BB r/4$ from $\bdy U$. 
Since there are only finitely many sets in $\mcl U_{\BB r} $ and by the translation and scale invariance of the law of $h$, modulo additive constant, we can find $C = C(\delta , A , \ep ) > 0$ such that with probability at least $1 - (1-p)/3$, it holds simultaneously for each $U\in\mcl U_{\BB r}$ that
\eqb \label{eqn-pos-kill-harmonic}
\sup_{u   \in U_{\delta  \BB r  / 4}} |\frk h^U(u) - h_{\BB r}(0) |  \leq C    .
\eqe

For a given choice of $U\in \mcl U_{\BB r}$, let $E_{\BB r}^U$ be the event that~\eqref{eqn-pos-kill-across}, \eqref{eqn-pos-kill-ball}, and~\eqref{eqn-pos-kill-harmonic} all hold, so that 
\eqb \label{eqn-pos-kill-prob}
\BB P\left[ \bigcap_{U\in\mcl U_{\BB r}} E_{\BB r}^U \right] \geq p .
\eqe
The reason for considering $E_{\BB r}^U$ instead of $\bigcap_{U\in\mcl U_{\BB r}} E_{\BB r}^U$ is the same as in Section~\ref{sec-clsce-event}: it is easier to condition on $E_{\BB r}^U$, as explained in the following lemma.

\begin{lem} \label{lem-cond-diam-pos}
There is a constant $\frk p = \frk p(A,\ep , p) > 0$ such that the following is true.  
Also let $\mcl V(U)$ denote the set of connected components of $U$. 
On the event that $U\cap \left( \mcl B_{\tau_{\BB r}}^\bullet \cup \mcl B^* \right) = \emptyset$, a.s.\  
\eqb  \label{eqn-cond-diam-pos'} 
\BB P\left[ \max_{V\in\mcl V(U)} \sup_{u,v \in V} D_h\left( u , v \right) \leq \frac{c}{2} \frk c_{\BB r} e^{\xi h_{\BB r}(z)}    \,\big|\,h|_{\BB C\setminus U} , E_{\BB r}^U  \right] \geq \frk p .
\eqe  
\end{lem}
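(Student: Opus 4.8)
\textbf{Proof of Lemma~\ref{lem-cond-diam-pos}.}
The plan is to mimic the three-step argument used to prove Lemma~\ref{lem-cond-diam-small}, using Weyl scaling (Axiom~\ref{item-metric-f}), the Markov property of the GFF, and the FKG inequality (Proposition~\ref{prop-fkg-metric}), but working now with the whole-plane GFF and the Euclidean domain $U\in\mcl U_{\BB r}$ instead of a filled metric ball. By Axiom~\ref{item-metric-f}, the statement is unaffected by the choice of additive constant for $h$, so we may normalize $h$ so that $h_{\BB r}(0) = 0$ (or, to avoid circularity if $U$ were to contain $B_{\BB r}(0)$, normalize on some ball disjoint from $U$; but in our application $U$ is disjoint from $\mcl B_{\tau_{\BB r}}^\bullet \supset B_{a\BB r}(0)$ for the relevant $a$, so this is not an issue). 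Write $h|_U = \rng h^U + \frk h^U$ for the decomposition into the zero-boundary GFF on $U$ and the harmonic part, with $\rng h^U$ independent of $h|_{\BB C\setminus U}$, and define $D_{\rng h^U}$ as in Remark~\ref{remark-other-domains}, so that $D_{h|_U} = e^{\xi\frk h^U}\cdot D_{\rng h^U}$.

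First I would reduce~\eqref{eqn-cond-diam-pos'} to an event depending only on $\rng h^U$. Condition~\eqref{eqn-pos-kill-harmonic} in the definition of $E_{\BB r}^U$ gives $|\frk h^U|\le C$ on $U_{\delta\BB r/4}$, so if one can make $D_{\rng h^U}$-diameters of the sets $V_{\delta\BB r/2}$ (for $V\in\mcl V(U)$) smaller than $\tfrac{c}{100}e^{-\xi C}\frk c_{\BB r}$, then, combining with condition~\eqref{eqn-pos-kill-ball} to bridge across the at most two boundary squares needed to connect any point of $V$ to $V_{\delta\BB r/2}$, the event in~\eqref{eqn-cond-diam-pos'} occurs. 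Call $G^U$ this event for the zero-boundary part. Since conditions~\eqref{eqn-pos-kill-across} and~\eqref{eqn-pos-kill-harmonic} are $h|_{\BB C\setminus U}$-measurable while~\eqref{eqn-pos-kill-ball} — call it $F_{\BB r}$ — depends on $h$ only through $D_h$ restricted to squares meeting $B_{A\BB r}(0)$ and can be viewed, for fixed $h|_{\BB C\setminus U}$, as a non-increasing functional of $D_{\rng h^U}$, the conditional law of $\rng h^U$ given $h|_{\BB C\setminus U}$ and $E_{\BB r}^U$ is the conditional law of $\rng h^U$ given $F_{\BB r}$ (using independence of $\rng h^U$ from $h|_{\BB C\setminus U}$). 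So it suffices to bound $\BB P[G^U\mid F_{\BB r}]$ below by a constant depending only on $A,\ep,p$.

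Next, the unconditional bound $\BB P[G^U]\ge\frk p_0 > 0$, uniformly over $U\in\mcl U_{\BB r}$: by Axioms~\ref{item-metric-translate} and~\ref{item-metric-coord}, and since $\mcl U_{\BB r} = \BB r\,\mcl U_1$ with $\#\mcl U_1$ finite (depending only on $\delta$, hence on $A,\ep,p$), there is a constant $C'>0$ so that $D_{\rng h^U}$-diameters of the $V_{\delta\BB r/2}$ are $\le C'\frk c_{\BB r}$ with probability $\ge 1/2$; then take $f = \xi^{-1}\big(\log C' - \log(\tfrac{c}{100}e^{-\xi C})\big) g$ for a bump function $g$ identically $1$ on $U_{\delta\BB r/4}$ and compactly supported in $U$, with Dirichlet energy depending only on the rescaled domain $U^0\in\mcl U_1$. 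By Axiom~\ref{item-metric-f}, on the probability-$\ge 1/2$ event, $G^U$ holds with $\rng h^U - f$ in place of $\rng h^U$, and mutual absolute continuity of the laws of $\rng h^U$ and $\rng h^U - f$ (Radon--Nikodym derivative controlled by the Dirichlet energy of $f$) gives $\BB P[G^U]\ge\frk p_0$, uniform over the finitely many $U^0$. Finally, to put back the conditioning on $F_{\BB r}$: for fixed $h|_{\BB C\setminus U}$, both $\BB 1_{G^U}$ and $\BB 1_{F_{\BB r}}$ are non-increasing functionals of $D_{\rng h^U}$ and a.s.\ continuous at $D_{\rng h^U}$ in the sense of Proposition~\ref{prop-fkg-metric} (the relevant suprema equal their thresholds with probability zero, by Axiom~\ref{item-metric-f} and absolute continuity under smooth perturbations of the field), so Proposition~\ref{prop-fkg-metric} gives positive correlation and hence $\BB P[G^U\mid F_{\BB r}]\ge\BB P[G^U]\ge\frk p_0$. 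Combining with the first paragraph's reduction yields~\eqref{eqn-cond-diam-pos'} with $\frk p = \frk p_0$.

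The main obstacle is the bookkeeping in the first step: one must carefully verify that, on the event $\{U\cap(\mcl B_{\tau_{\BB r}}^\bullet\cup\mcl B^*)=\emptyset\}$, the event $F_{\BB r}$ (and more precisely its relevant portion) is indeed measurable with respect to a functional of $D_{\rng h^U}$ for fixed $h|_{\BB C\setminus U}$ and is monotone in the required direction — the subtlety being that $F_{\BB r}$ involves distances in squares that may lie partly inside and partly outside $U$ — and that the reduction from $D_{\rng h^U}$-diameters of the $V_{\delta\BB r/2}$ to the full statement~\eqref{eqn-cond-diam-pos'} genuinely only costs the two-square bridging from condition~\eqref{eqn-pos-kill-ball}. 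Once this is set up exactly as in the proof of Lemma~\ref{lem-cond-diam-small}, the remaining steps are essentially identical to that proof.
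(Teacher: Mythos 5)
Your proposal is correct and takes essentially the same approach as the paper: the paper's own proof simply observes that conditions~\eqref{eqn-pos-kill-across} and~\eqref{eqn-pos-kill-harmonic} are determined by $h|_{\BB C\setminus U}$ (using that $\mcl B_{\tau_{\BB r}}^\bullet$ is a local set), so conditioning on $E_{\BB r}^U$ and $h|_{\BB C\setminus U}$ reduces to conditioning on the event in~\eqref{eqn-pos-kill-ball} and $h|_{\BB C\setminus U}$, and then defers to ``exactly the same argument used to prove Lemma~\ref{lem-cond-diam-small}''. Your proof correctly reconstructs that deferred three-step argument (reduction to the zero-boundary part via Weyl scaling, an unconditional lower bound via a bump function and absolute continuity uniform over the finitely many $U^0\in\mcl U_1$, then FKG to reintroduce the conditioning), and the subtlety you flag about squares straddling $\bdy U$ is handled exactly as in the paper, by viewing $\BB 1_{F_{\BB r}}$ as a non-increasing functional of $\rng h^U$ once $h|_{\BB C\setminus U}$ is fixed.
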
 
\begin{proof}
Since $\mcl B_{\tau_{\BB r}}^\bullet$ is a local set for $h$ and $\mcl B^*$ is determined by $( \mcl B_{\tau_{\BB r}}^\bullet  , h|_{\mcl B_{\tau_{\BB r}}^\bullet} )$, the event $\{U\cap \left( \mcl B_{\tau_{\BB r}}^\bullet \cup \mcl B^*\right) = \emptyset\}$ is determined by $h|_{\BB C\setminus U}$. 
Moreover, the intersection of this event with the events in each of~\eqref{eqn-pos-kill-across} and~\eqref{eqn-pos-kill-harmonic} is also determined by $h|_{\BB C\setminus U}$. 
Hence, on the event  $U\cap \left( \mcl B_{\tau_{\BB r}}^\bullet \cup \mcl B^* \right) = \emptyset$, the conditional law of $h|_U$ given $h|_{\BB C\setminus U}$ and $E_{\BB r}^U$ is the same as its conditional law given $h|_{\BB C\setminus U}$ and the event from~\eqref{eqn-pos-kill-ball}. 
With this observation in hand, the lemma follows from the Markov property of the GFF and the FKG inequality (Proposition~\ref{prop-fkg-metric}) via exactly the same argument used to prove Lemma~\ref{lem-cond-diam-small}. 
\end{proof}

\begin{proof}[Proof of Lemma~\ref{lem-pos-kill}]
\noindent\textit{Step 1: choosing a random domain $U$.}
We first choose the domain $U$ to which we will apply Lemma~\ref{lem-cond-diam-pos}. 
The choice will depend on $\mcl B_{\tau_{\BB r}}^\bullet$ and $I$, which is why we need a lower bound for the probability of the intersection of all of the $E_{\BB r}^U$'s in~\eqref{eqn-pos-kill-prob}.

Since $\mcl B_{\ep \BB r}(\bdy\mcl B_{\tau_{\BB r}}^\bullet \setminus I ;  d^{\BB C\setminus \mcl B_{\tau_{\BB r}}^\bullet}  )$ does not disconnect $I$ from $\infty$ and $\mcl B_{\tau_{\BB r}}^\bullet\subset \ol{B_{\BB r}(0)}$, we can choose, in a manner depending only on $\mcl B_{\tau_{\BB r}}^\bullet$ and $I$, a path $\pi$ in $B_{(A-\ep) \BB r}(0)\setminus \mcl B_{\tau_{\BB r}}^\bullet$ which starts from a point of $I$, disconnects $\bdy B_{\BB r}(0)$ from $\bdy B_{(A-\ep) \BB r}(0)$, and lies at $d^{\BB C\setminus\mcl B_{\tau_{\BB r}}^\bullet}$-distance at least $\ep \BB r$ from $\bdy\mcl B_{\tau_{\BB r}}^\bullet\setminus I$. 
This path is shown in purple in Figure~\ref{fig-one-geo}. 
Let $U$ be the interior of the union of all of the $\delta \BB r\times \delta \BB r$ squares $S\in \mcl S_{\delta \BB r}(B_{A \BB r}(0))$ which intersect $\pi$ but do not intersect $\mcl B_{\tau_{\BB r}}^\bullet$.
Then $U\in\mcl U_{\BB r}$, as defined just above~\eqref{eqn-pos-kill-harmonic}. 

By definition, $U\cap \mcl B_{\tau_{\BB r}}^\bullet=\emptyset$. We claim that also $U\cap \mcl B^* =\emptyset$. Indeed, each of the $\delta \BB r\times\delta \BB r$ squares $S$ in the union defining $U$ is contained in $\BB C\setminus \mcl B_{\tau_{\BB r}}^\bullet$ and has Euclidean diameter at most $\sqrt 2 \delta \BB r < \ep \BB r / 4$. If one of these squares intersected $\mcl B^*$, then by the triangle inequality and the definition~\ref{eqn-d^U-def} of $d^{\BB C\setminus\mcl B_{\tau_{\BB r}}^\bullet}$, the $d^{\BB C\setminus\mcl B_{\tau_{\BB r}}^\bullet}$-distance from $\pi$ to $\bdy\mcl B_{\tau_{\BB r}}^\bullet\setminus I$ would be at most $\ep \BB r/2$, contrary to the definition of $\pi$. 

Hence $U\cap \left( \mcl B_{\tau_{\BB r}}^\bullet \cup \mcl B^* \right) = \emptyset$. 
Since $\mcl B_{\tau_{\BB r}}^\bullet$ is a local set for $h$ (Lemma~\ref{lem-ball-local}) and $\pi$ is determined by $(\mcl B_{\tau_{\BB r}}^\bullet, h|_{\mcl B_{\tau_{\BB r}}^\bullet})$, for $\frk U \in \mcl U_{\BB r}$, the event $\{U = \frk U\}$ is determined by $h|_{\BB C\setminus \frk U}$. 
Therefore, the bound~\eqref{eqn-cond-diam-pos'} of Lemma~\ref{lem-cond-diam-pos} holds a.s.\ for our (random) choice of $U$. 
\medskip

\noindent\textit{Step 2: bounding conditional probabilities.}
By~\eqref{eqn-pos-kill-prob}, we have $\BB P[E_{\BB r}^U] \geq p$, so Markov's inequality (applied to the random variable $\BB P[ (E_{\BB r}^U)^c \,\big|\, \mcl B_{\tau_{\BB r}}^\bullet, h|_{\mcl B_{\tau_{\BB r}}^\bullet}]$, which has mean at most $1-p$) implies that 
\eqb \label{eqn-use-pos-kill-prob}
\BB P\left[ \BB P\left[ E_{\BB r}^U \,\big|\, \mcl B_{\tau_{\BB r}}^\bullet, h|_{\mcl B_{\tau_{\BB r}}^\bullet} \right] \geq 1-(1-p)^{1/2} \right] \geq 1- (1-p)^{1/2} .
\eqe 
By the locality of $h|_{\mcl B_{\tau_{\BB r}}^\bullet}$ (Lemma~\ref{lem-ball-local}), $\sigma(\mcl B_{\tau_{\BB r}}^\bullet, h|_{\mcl B_{\tau_{\BB r}}^\bullet}) \subset \sigma(h|_{\BB C\setminus U})$.  
By Lemma~\ref{lem-cond-diam-pos} and the preceding sentence the conditional probability given $(\mcl B_{\tau_{\BB r}}^\bullet, h|_{\mcl B_{\tau_{\BB r}}^\bullet})$ and $E_{\BB r}^U$ that
\eqb \label{eqn-cond-diam-pos-event}
\max_{u,v\in \mcl V(U)} \sup_{u,v \in V} D_h\left( u , v \right) \leq \frac{c}{2} \frk c_{\BB r} e^{\xi h_{\BB r}(z)} 
\eqe 
is at least $\frk p$, where $\mcl V(U)$ is as in Lemma~\ref{lem-cond-diam-pos}. 
By this and~\eqref{eqn-pos-kill-prob} and since $p$ can be made arbitrarily close to 1, to conclude the proof of Lemma~\ref{lem-pos-kill} we only need to show that if $E_{\BB r}^U$ occurs and~\eqref{eqn-cond-diam-pos-event} holds, then every $D_h$-geodesic from 0 to a point of $\BB C\setminus B_{A \BB r}(0)$ passes through $I$.
This will be accomplished via a similar argument to the proof of Lemma~\ref{lem-geo-kill-pt}, as we now explain.
\medskip

\noindent\textit{Step 3: preventing $D_h$-geodesics from hitting $\bdy\mcl B_{\tau_{\BB r}}^\bullet\setminus I$.}
By the definitions of $\pi$ and $U$, each square $S\in \mcl S_{\delta {\BB r}}(B_{A \BB r}(0))$ hit by $\pi$ which is \emph{not} included in the union defining $U$ intersects $I$.
Note that such a square cannot intersect $\bdy\mcl B_{\tau_{\BB r}}^\bullet\setminus I $ without intersecting $I$ since then the $d^{\BB C\setminus \mcl B_{\tau_{\BB r}}^\bullet}$-distance from $\pi$ to $\bdy\mcl B_{\tau_{\BB r}}^\bullet\setminus I $ would be smaller than $2\sqrt 2\delta \BB r < \ep \BB r$, contrary to the definition of $\pi$.
Since $\pi$ is connected, it follows that each connected component of $U$ shares a boundary point with a square $S\in\mcl S_{\delta \BB r}(B_{A \BB r}(0))$ which intersects $I$.  
The event in~\eqref{eqn-pos-kill-ball} shows that this square has $D_h$-diameter at most $\frac{c}{100} \frk c_{\BB r} e^{\xi h_{\BB r}(0)} $.
This together with the event in~\eqref{eqn-cond-diam-pos-event} shows that  
\eqb \label{eqn-ball-complement-diam'}
\sup_{u \in U} D_h\left(u , I \right)  < c \frk c_{\BB r} e^{\xi h_{\BB r}(0)}  .
\eqe

By our choice of $U$, each path $P$ from a point outside of $B_{A \BB r}(0)$ to 0 which first hits $\bdy\mcl B_{\tau_{\BB r}}^\bullet$ at a point not in $I$ must pass through $U$ and then must subsequently cross from a point of $\BB C\setminus (\mcl B_*\cup \mcl B_{\tau_{\BB r}}^\bullet)$ to $\bdy\mcl B_{\tau_{\BB r}}^\bullet \setminus I$. 
By~\eqref{eqn-ball-complement-diam'}, the $D_h$-distance from the first point of $U$ hit by $P$ to 0 is strictly smaller than $\tau_{\BB r} + c \frk c_{\BB r} e^{\xi h_{\BB r}(0)} $. On the other hand,~\eqref{eqn-pos-kill-across} shows that the $D_h$-length of the segment of $P$ which crosses from $\BB C\setminus (\mcl B_*\cup \mcl B_{\tau_{\BB r}}^\bullet)$ to $\bdy\mcl B_{\tau_{\BB r}}^\bullet \setminus I$ is at least $ c \frk c_{\BB r} e^{\xi h_{\BB r}(0)} $, so the $D_h$-length of the segment of $P$ after it first hits $U$ is at least $\tau_{\BB r} +  c \frk c_{\BB r} e^{\xi h_{\BB r}(0)} $. 
Therefore, $P$ cannot be a $D_h$-geodesic. 
\end{proof}

\subsection{Killing off all but one geodesic almost surely}
\label{sec-one-geo-as}

By combining Theorem~\ref{thm-finite-geo} and Lemma~\ref{lem-pos-kill}, we get the following lemma. 

\begin{lem} \label{lem-one-point}
There exists $M > 1$ and $\frk q  > 0$, depending only on $\xi$, such that for each $\BB r> 0$, it holds with probability at least $\frk q$ that there is a single point $x\in\bdy\mcl B_{\tau_{\BB r}}^\bullet$ which is hit by every leftmost $D_h$-geodesic from 0 to $\bdy \mcl B_{\tau_{M \BB r}}^\bullet$.  
\end{lem}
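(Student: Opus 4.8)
\textbf{Proof strategy for Lemma~\ref{lem-one-point}.}
The plan is to combine Theorem~\ref{thm-finite-geo} with Lemma~\ref{lem-pos-kill} in an iterative fashion, peeling off one point of $\bdy\mcl B_{\tau_{\BB r}}^\bullet$ at a time until only one remains. First apply Theorem~\ref{thm-finite-geo} with $\tau = \tau_{\BB r}$ (so $\BB r$ plays the role of itself in the theorem, and $\tau \in [\tau_{\BB r} , \tau_{2\BB r}]$ trivially): for any fixed $t > 0$ and any $p \in (0,1)$ there is $N = N(t,p)$ such that with probability at least $p$, there are at most $N$ points of $\bdy\mcl B_{\tau_{\BB r}}^\bullet$ hit by leftmost $D_h$-geodesics from $0$ to $\bdy\mcl B_{\tau_{\BB r} + t\frk c_{\BB r} e^{\xi h_{\BB r}(0)}}^\bullet$. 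Using Axiom~\ref{item-metric-coord} (tightness across scales), choose $t$ large enough that $\tau_{\BB r} + t\frk c_{\BB r} e^{\xi h_{\BB r}(0)} \geq \tau_{2\BB r}$ with probability close to $1$; then on this event, there are at most $N$ points of $\bdy\mcl B_{\tau_{\BB r}}^\bullet$ hit by leftmost $D_h$-geodesics from $0$ to $\bdy\mcl B_{\tau_{2\BB r}}^\bullet$. Call these points $x_1,\dots,x_N$ (random, determined by $(\mcl B_{\tau_{2\BB r}}^\bullet, h|_{\mcl B_{\tau_{2\BB r}}^\bullet})$, with $N$ deterministic; pad with repeats if fewer than $N$).

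Next, the reduction to one point. For $j = 1,\dots,N$, let $I_j \subset \bdy\mcl B_{\tau_{2\BB r}}^\bullet$ be the (closed) arc consisting of points $y$ such that the leftmost $D_h$-geodesic from $0$ to $y$ hits $x_j$; by Lemma~\ref{lem-geo-arc} these are disjoint closed arcs covering (a dense subset of) $\bdy\mcl B_{\tau_{2\BB r}}^\bullet$. The key point is that $\bdy\mcl B_{\tau_{2\BB r}}^\bullet$ being covered by $N$ arcs means at least one of them — say after relabelling, $I_1$ — is ``harmonically large'': it has harmonic measure from $\infty$ at least $1/N$, which by Lemma~\ref{lem-exposed-arc-invert} (applied with $K = \mcl B_{\tau_{2\BB r}}^\bullet$, using the global regularity event $\mcl E_{2\BB r}(a)$ from Section~\ref{sec-confluence} to bound $\op{outrad}(K)/\op{inrad}(K)$) gives a uniform $\ep = \ep(N) > 0$ and an arc $I \in \{I_1,\dots,I_N\}$ which is not disconnected from $\infty$ in $\BB C\setminus\mcl B_{\tau_{2\BB r}}^\bullet$ by the $d^{\BB C\setminus\mcl B_{\tau_{2\BB r}}^\bullet}$-neighborhood $\mcl B_{\ep\op{outrad}(\mcl B_{\tau_{2\BB r}}^\bullet)}(\bdy\mcl B_{\tau_{2\BB r}}^\bullet\setminus I; d^{\BB C\setminus\mcl B_{\tau_{2\BB r}}^\bullet})$; rescaling, this gives the hypothesis of Lemma~\ref{lem-pos-kill} with $2\BB r$ in place of $\BB r$. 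Now apply Lemma~\ref{lem-pos-kill} with this $\ep$, with $A = M/2$ for a suitably large constant $M$, and with $p$ close to $1$: with probability at least $p'$ (some fixed positive constant) it holds with positive conditional probability $\frk p$ given $(\mcl B_{\tau_{2\BB r}}^\bullet, h|_{\mcl B_{\tau_{2\BB r}}^\bullet})$ that \emph{every} $D_h$-geodesic from $0$ to a point of $\BB C\setminus B_{M\BB r}(0)$ passes through $I$. Since every leftmost $D_h$-geodesic from $0$ to $\bdy\mcl B_{\tau_{M\BB r}}^\bullet$ reaches a point of $\BB C\setminus B_{M\BB r}(0)$ region — more precisely, $\bdy\mcl B_{\tau_{M\BB r}}^\bullet \not\subset B_{M\BB r}(0)$ by definition of $\tau_{M\BB r}$, and such a geodesic, restricted appropriately, is a leftmost geodesic to its endpoint which lies outside $B_{M\BB r}(0)$ — on this event every leftmost $D_h$-geodesic from $0$ to $\bdy\mcl B_{\tau_{M\BB r}}^\bullet$ passes through $I$, hence (being a leftmost geodesic to a point of $I$, which by construction hits $x_1$) passes through the single point $x_1 \in \bdy\mcl B_{\tau_{\BB r}}^\bullet$.

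Finally, combine the probabilities. The event that (i) $\tau_{\BB r} + t\frk c_{\BB r}e^{\xi h_{\BB r}(0)} \geq \tau_{2\BB r}$, (ii) there are at most $N$ points of $\bdy\mcl B_{\tau_{\BB r}}^\bullet$ hit by leftmost geodesics to $\bdy\mcl B_{\tau_{2\BB r}}^\bullet$, (iii) $\mcl E_{2\BB r}(a)$ occurs (so the inradius/outradius ratio of $\mcl B_{\tau_{2\BB r}}^\bullet$ is controlled and Lemma~\ref{lem-exposed-arc-invert} applies with a uniform $\ep$), and (iv) the conditional-probability-$\frk p$ event from Lemma~\ref{lem-pos-kill} holds — all of these have probability bounded below uniformly in $\BB r$ by the scale-invariance built into the axioms and the uniformity in all the cited lemmas. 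Taking the intersection and then conditioning, the (unconditional) probability that all leftmost $D_h$-geodesics from $0$ to $\bdy\mcl B_{\tau_{M\BB r}}^\bullet$ hit a common point of $\bdy\mcl B_{\tau_{\BB r}}^\bullet$ is at least some $\frk q > 0$ depending only on $\xi$ (equivalently on the metric, but the constants in all inputs depend only on $\gamma$, hence on $\xi$). This gives the lemma.

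\textbf{Main obstacle.} The delicate point is arranging that the single point which all geodesics to $\bdy\mcl B_{\tau_{M\BB r}}^\bullet$ pass through lies on $\bdy\mcl B_{\tau_{\BB r}}^\bullet$ rather than merely on $\bdy\mcl B_{\tau_{2\BB r}}^\bullet$: one must be careful that Lemma~\ref{lem-pos-kill} forces passage through the arc $I \subset \bdy\mcl B_{\tau_{2\BB r}}^\bullet$, and then separately that every leftmost geodesic to $I$ hits the \emph{same} point $x_1 \in \bdy\mcl B_{\tau_{\BB r}}^\bullet$ — this last fact is exactly how $I$ was defined (as the set of endpoints whose leftmost geodesic passes through $x_1$), together with the observation after Lemma~\ref{lem-leftmost-geodesic} that restrictions of leftmost geodesics are leftmost. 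A secondary technical nuisance is ensuring all the quantitative inputs — the $N$ from Theorem~\ref{thm-finite-geo}, the $\ep$ from Lemma~\ref{lem-exposed-arc-invert}, the $\frk p$ from Lemma~\ref{lem-pos-kill} — chain together with constants depending only on $\xi$ and uniformly in $\BB r$, which requires invoking the regularity event $\mcl E_{2\BB r}(a)$ to control the geometry of $\mcl B_{\tau_{2\BB r}}^\bullet$ and invoking Axiom~\ref{item-metric-coord} at each step.
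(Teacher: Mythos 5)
Your overall strategy is the same as the paper's: apply Theorem~\ref{thm-finite-geo} to get $N$ points $x_1,\dots,x_N$ of $\bdy\mcl B_{\tau_{\BB r}}^\bullet$ and the corresponding arcs $I_1,\dots,I_N$ of $\bdy\mcl B_{\tau_{2\BB r}}^\bullet$, use Lemma~\ref{lem-exposed-arc-invert} (together with Axiom~\ref{item-metric-coord}) to find a ``harmonically exposed'' arc $I_{j^*}$, and then use Lemma~\ref{lem-pos-kill} to force all geodesics through $I_{j^*}$, hence through $x_{j^*}$. However, there are two concrete errors.

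First, the direction of the inequality in the application of Theorem~\ref{thm-finite-geo} is reversed. If $s_1 \leq s_2$, then the set of points of $\bdy\mcl B_{\tau_{\BB r}}^\bullet$ hit by leftmost geodesics from $0$ to $\bdy\mcl B_{s_2}^\bullet$ is a \emph{subset} of the set hit by leftmost geodesics to $\bdy\mcl B_{s_1}^\bullet$ (restrictions of leftmost geodesics are leftmost, so going further out can only prune branches). To bound the set at radius $\tau_{2\BB r}$ you therefore need $\tau_{2\BB r} \geq \tau_{\BB r} + t\frk c_{\BB r}e^{\xi h_{\BB r}(0)}$, i.e.\ $t$ must be chosen \emph{small} and one uses Axiom~\ref{item-metric-coord} to lower-bound $\tau_{2\BB r}-\tau_{\BB r}$. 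Choosing $t$ large, as you propose, gives a bound at a radius $\geq\tau_{2\BB r}$, which implies nothing about the count at $\tau_{2\BB r}$.

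Second, and more seriously, your application of Lemma~\ref{lem-pos-kill} with $A = M/2$ (so that the Euclidean ball is $B_{M\BB r}(0)$) does not control geodesics to $\bdy\mcl B_{\tau_{M\BB r}}^\bullet$. By definition of $\tau_{M\BB r}$, the filled ball $\mcl B_{\tau_{M\BB r}}^\bullet$ \emph{barely} escapes $B_{M\BB r}(0)$; most of $\bdy\mcl B_{\tau_{M\BB r}}^\bullet$ lies strictly inside $B_{M\BB r}(0)$, and leftmost geodesics to those points may never leave $B_{M\BB r}(0)$, so Lemma~\ref{lem-pos-kill} says nothing about them. Your attempted patch --- that ``every leftmost $D_h$-geodesic from $0$ to $\bdy\mcl B_{\tau_{M\BB r}}^\bullet$ reaches a point of $\BB C\setminus B_{M\BB r}(0)$'' --- is false: only those geodesics targeted at the small piece of $\bdy\mcl B_{\tau_{M\BB r}}^\bullet$ poking outside $B_{M\BB r}(0)$ need exit that Euclidean ball. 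The correct fix (which is what the paper does) is to decouple the two scales: apply Lemma~\ref{lem-pos-kill} with a \emph{fixed} $A$ (e.g.\ $A = 3/2$ with $2\BB r$ in place of $\BB r$, giving the ball $B_{3\BB r}(0)$), and then use Axiom~\ref{item-metric-coord} to choose $M$ large enough that $\mcl B_{\tau_{M\BB r}}^\bullet \supset B_{4\BB r}(0)$ with high probability, which forces all of $\bdy\mcl B_{\tau_{M\BB r}}^\bullet$ to lie in $\BB C\setminus B_{3\BB r}(0)$ so that Lemma~\ref{lem-pos-kill} applies to every geodesic targeted there.
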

\begin{proof}  
By Theorem~\ref{thm-finite-geo} applied with $\tau = \tau_{\BB r}$ (and Axiom~\ref{item-metric-coord} to lower-bound $\tau_{2\BB r} -\tau_{\BB r}$), there is an $N\in\BB N$ such that for each $\BB r > 0$, it holds with probability at least $7/8$ that there are only $N$ points of $\bdy\mcl B_{\tau_{\BB r}}^\bullet$ which are hit by leftmost $D_h$-geodesics from 0 to $\bdy\mcl B_{\tau_{2 \BB r}}^\bullet$.   
Let $X \subset\bdy\mcl B_{\tau_{\BB r}}^\bullet$ be the set of such points, and note that $X$ is determined by $(\mcl B_{\tau_{2\BB r} }^\bullet , h|_{\tau_{2\BB r}}^\bullet)$. 
For $x\in X$, let $I_x$ be the set of $y\in\bdy\mcl B_{\tau_{2\BB r}}^\bullet$ for which there is a leftmost $D_h$-geodesic from 0 to $y$ which passes through $x$.
By Lemma~\ref{lem-geo-arc}, the $I_x$'s are disjoint arcs of $\bdy\mcl B_{\tau_{2 \BB r}}^\bullet$ and by the definition of $X$ their union is all of $\bdy\mcl B_{\tau_{2 \BB r}}^\bullet$.  

By Axiom~\ref{item-metric-coord}, there is an $a  \in (0,1)$ such that with probability at least $7/8$, $B_{a \BB r}(0) \subset \mcl B_{\tau_{\BB r}}^\bullet$. 
If this is the case, then in the notation~\eqref{eqn-outrad-inrad} we have $ \op{outrad}(\mcl B_{\tau_{\BB r}}^\bullet) / \op{inrad}(\mcl B_{\tau_{\BB r}}^\bullet)  \leq 1/a$. 
Lemma~\ref{lem-exposed-arc-invert} applied with $K = \mcl B_{\tau_{\BB r}}^\bullet$ and $\mcl I = \{I_x\}_{x\in X}$ therefore shows that there exists $\ep = \ep(a , N) > 0$ such that whenever $\# X\leq N$ and $B_{a \BB r}(0) \subset \mcl B_{\tau_{2\BB r}}^\bullet$ (which happens with probability at least $3/4$), there exists $x\in X$ such that the following is true. 
The arc $I_x$ is not disconnected from $\infty$ in $\BB C\setminus \mcl B_{\tau_{2\BB r}}^\bullet$ by $\mcl B_{\ep \BB r}(\bdy\mcl B_{\tau_{\BB r}}^\bullet \setminus I ;  d^{\BB C\setminus \mcl B_{\tau_{\BB r}}^\bullet}  )$.
Choose such an $x$ in a manner depending only on $(\mcl B_{\tau_{2\BB r} }^\bullet , h|_{\tau_{2\BB r}}^\bullet)$. 
  
By Lemma~\ref{lem-pos-kill} (applied with $2\BB r$ in place of $\BB r$ and with $A=3/2$), we can find $\frk p = \frk p(\ep ) > 0$ such that with probability at least $3/4$, it holds with conditional probability at least $\frk p$ given $(\mcl B_{\tau_{2 \BB r}}^\bullet , h|_{\mcl B_{\tau_{2 \BB r}}}^\bullet)$ that every $D_h$-geodesic from 0 to a point of $\BB C\setminus B_{3 \BB r}(0)$ passes through $I_x$. 
By the definition of $I_x$ and the estimates for the probabilities of the events above, it holds probability at least $\frk p/4$ that every leftmost $D_h$-geodesic from 0 to a point of $\BB C\setminus B_{3 \BB r}(0)$ passes through $x$. 

By Axiom~\ref{item-metric-coord}, we can find $M > 4$ such that for every $\BB r>0$, it holds with probability at least $1-\frk p / 8$ that $\mcl B_{\tau_{M \BB r}}^\bullet \supset B_{4\BB r}(0)$. 
Then with probability at least $\frk p/4 - \frk p /8 = \frk p/8$, each leftmost $D_h$-geodesic from 0 to $\bdy\mcl B_{\tau_{M \BB r}}^\bullet$ passes through $x$. 
Hence the statement of the lemma is true with $\frk q = \frk p /8$. 
\end{proof}
 
We now upgrade from the statement that all \emph{leftmost} $D_h$-geodesics from 0 to $\bdy\mcl B_{\tau_{M \BB r}}^\bullet$ hit the same point of $\bdy\mcl B_{\tau_{\BB r}}^\bullet$ to the statement that \emph{all} $D_h$-geodesics from 0 to $\bdy\mcl B_{\tau_{M \BB r}}^\bullet$ coincide until they hit $\bdy\mcl B_{\tau_{\BB r}}^\bullet$. 

\begin{lem} \label{lem-one-geo}
There exists $M > 1$ and $\frk q  > 0$, depending only on $\xi$, such that for each $\BB r> 0$, it holds with probability at least $\frk q$ that any two $D_h$-geodesics from 0 to a point of $\BB C\setminus \mcl B_{\tau_{M \BB r}}^\bullet$ coincide on the time interval $[0,\tau_{\BB r}]$. 
\end{lem}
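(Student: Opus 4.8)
The plan is to bootstrap from Lemma~\ref{lem-one-point}. Let $M>1$ and $\frk q>0$ be as in that lemma, let $\BB r>0$, and let $F$ be the event (of probability at least $\frk q$) that there is a single point $x\in\bdy\mcl B_{\tau_{\BB r}}^\bullet$ which is hit by every leftmost $D_h$-geodesic from $0$ to $\bdy\mcl B_{\tau_{M\BB r}}^\bullet$. All of the work is to upgrade "leftmost" to "all" and "passes through $x$" to "coincides on $[0,\tau_{\BB r}]$". Throughout we work on $F$. Note first that since a $D_h$-geodesic from $0$ is parametrized by $D_h$-arclength and $\bdy\mcl B_t^\bullet$ disconnects $0$ from any point of $\BB C\setminus\mcl B_t^\bullet$, any $D_h$-geodesic $P$ from $0$ to a point $w\in\BB C\setminus\mcl B_{\tau_{M\BB r}}^\bullet$ satisfies $P(t)\in\bdy\mcl B_t^\bullet$ for all $t\le\tau_{M\BB r}$; in particular $P|_{[0,\tau_{M\BB r}]}$ is a $D_h$-geodesic from $0$ to $P(\tau_{M\BB r})\in\bdy\mcl B_{\tau_{M\BB r}}^\bullet$, and similarly $P(\tau_{\BB r})\in\bdy\mcl B_{\tau_{\BB r}}^\bullet$, so $D_h(0,P(\tau_{\BB r}))=\tau_{\BB r}$ and $P|_{[0,\tau_{\BB r}]}$ is a $D_h$-geodesic from $0$ to $P(\tau_{\BB r})$.

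First I would show that on $F$, every $D_h$-geodesic $P_q$ from $0$ to a rational point $q\in\BB Q^2\setminus\mcl B_{\tau_{M\BB r}}^\bullet$ passes through $x$ at time $\tau_{\BB r}$. Indeed, $P_q|_{[0,\tau_{M\BB r}]}$ is a $D_h$-geodesic from $0$ to $y_q:=P_q(\tau_{M\BB r})\in\bdy\mcl B_{\tau_{M\BB r}}^\bullet$. Let $P_{y_q}^-$ be the leftmost $D_h$-geodesic from $0$ to $y_q$ (Lemma~\ref{lem-leftmost-geodesic}). Since $y_q\in P_q\cap P_{y_q}^-$ and $P_q$ is a geodesic to a rational point, Lemma~\ref{lem-non-cross} forces $P_q(t)=P_{y_q}^-(t)$ for all $t\in[0,D_h(0,y_q)]=[0,\tau_{M\BB r}]$; and $P_{y_q}^-(\tau_{\BB r})=x$ by Lemma~\ref{lem-one-point}, so $P_q(\tau_{\BB r})=x$. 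Consequently the $D_h$-geodesic from $0$ to $x$ is unique on $F$: fixing one rational $q\in\BB Q^2\setminus\mcl B_{\tau_{M\BB r}}^\bullet$, any $D_h$-geodesic $P'$ from $0$ to $x$ concatenated with $P_q|_{[\tau_{\BB r},D_h(0,q)]}$ is a path of $D_h$-length $D_h(0,q)$ from $0$ to $q$, hence a $D_h$-geodesic, hence equal to $P_q$ by Lemma~\ref{lem-geo-unique}; therefore $P'=P_q|_{[0,\tau_{\BB r}]}$. Write $\bar P$ for this unique $D_h$-geodesic from $0$ to $x$ (it has $D_h$-length $\tau_{\BB r}$).

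Next, for an arbitrary $y\in\bdy\mcl B_{\tau_{M\BB r}}^\bullet$, the approximation statement in Lemma~\ref{lem-leftmost-geodesic} gives sequences of rational points $q_n^\pm\in\BB Q^2\setminus\mcl B_{\tau_{M\BB r}}^\bullet$ with $P_{q_n^\pm}\to P_y^\pm$ uniformly; by the previous paragraph $P_{q_n^\pm}(\tau_{\BB r})=x$, so $P_y^\pm(\tau_{\BB r})=x$, and hence $P_y^\pm|_{[0,\tau_{\BB r}]}=\bar P$ by the uniqueness just established. Finally let $P$ be any $D_h$-geodesic from $0$ to a point $w\in\BB C\setminus\mcl B_{\tau_{M\BB r}}^\bullet$, and set $y:=P(\tau_{M\BB r})\in\bdy\mcl B_{\tau_{M\BB r}}^\bullet$. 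By Lemma~\ref{lem-leftmost-geodesic}, $P|_{[0,\tau_{M\BB r}]}$ lies weakly to the right of $P_y^-$ and weakly to the left of $P_y^+$. Since $P_y^-$ and $P_y^+$ coincide with $\bar P$ on $[0,\tau_{\BB r}]$, being weakly on both sides of this common initial segment forces $P(t)=\bar P(t)$ for $t\in[0,\tau_{\BB r}]$, i.e. $P|_{[0,\tau_{\BB r}]}=\bar P$. Thus on $F$ every $D_h$-geodesic from $0$ to a point of $\BB C\setminus\mcl B_{\tau_{M\BB r}}^\bullet$ coincides with $\bar P$ on $[0,\tau_{\BB r}]$, so any two of them coincide there; as $\BB P[F]\ge\frk q$, this proves the lemma with the same $M$ and $\frk q$.

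The main obstacle I anticipate is making the last "squeeze" rigorous: deducing from the weak betweenness of $P$ with respect to $P_y^-$ and $P_y^+$ — two geodesics that agree on $[0,\tau_{\BB r}]$ and only split afterwards — that $P$ itself agrees with them on $[0,\tau_{\BB r}]$. This rests on the planar structure of $D_h$-geodesics from $0$: they admit a consistent cyclic ordering and no two of them cross transversally (Lemma~\ref{lem-non-cross}, and the monotonicity developed around Lemmas~\ref{lem-geo-winding} and~\ref{lem-geo-arc}), so the region enclosed between $P_y^-$ and $P_y^+$ collapses to the single arc $\bar P$ along the initial segment, leaving no room for $P$ to deviate before time $\tau_{\BB r}$. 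I would phrase this carefully but it is essentially the same topological bookkeeping already used in the deduction of Theorem~\ref{thm-finite-geo0} from Theorem~\ref{thm-finite-geo}; the genuinely new content over that deduction is only that here we must also handle rightmost geodesics and geodesics whose target lies strictly outside $\mcl B_{\tau_{M\BB r}}^\bullet$, both of which are covered by the rational-approximation step above.
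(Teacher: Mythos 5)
Your proof is correct and follows essentially the same route as the paper: reduce to rational geodesics via Lemma~\ref{lem-geo-unique}/\ref{lem-non-cross}, deduce uniqueness of the geodesic from $0$ to $x$ by concatenation, approximate leftmost/rightmost geodesics by rational ones via Lemma~\ref{lem-leftmost-geodesic}, and then squeeze a general geodesic between $P_y^-$ and $P_y^+$. The "main obstacle" you flag at the end — that the betweenness of $P$ with respect to $P_y^-$ and $P_y^+$, which agree with $\bar P$ on $[0,\tau_{\BB r}]$, forces $P|_{[0,\tau_{\BB r}]}=\bar P$ — is handled at exactly the same level of explicitness in the paper (it simply asserts that every geodesic passes through $x$ once the leftmost and rightmost ones do), so your version is, if anything, slightly more careful about where the topological input is needed; the only cosmetic difference is that you establish uniqueness of the geodesic to $x$ before treating the leftmost/rightmost geodesics, whereas the paper does it afterward.
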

\begin{proof}
By Lemma~\ref{lem-one-point}, there exists $M$ and $\frk q$ as in the statement of the lemma such that for each $\BB r> 0$, it holds with probability at least $\frk q$ that there is a single point $x\in\bdy\mcl B_{\tau_{\BB r}}^\bullet$ which is hit by every leftmost $D_h$-geodesic from 0 to $\bdy \mcl B_{\tau_{M \BB r}}^\bullet$. 
Henceforth assume that this is the case. 
By the uniqueness of geodesics to points in $\BB Q^2$ (Lemma~\ref{lem-geo-unique}), every $D_h$-geodesic from 0 to a point in $\BB Q^2\setminus \mcl B_{\tau_{M \BB r}}^\bullet$ must pass through $x$. By Lemma~\ref{lem-leftmost-geodesic}, leftmost and rightmost geodesics can be approximated by geodesics to points in $\BB Q^2\setminus \mcl B_{\tau_{M \BB r}}^\bullet$, so it follows that every rightmost geodesic and every leftmost geodesic from 0 to $\bdy\mcl B_{\tau_{M \BB r}}^\bullet$ must pass through $x$. 
This implies that a.s.\ every $D_h$-geodesic from 0 to $\bdy \mcl B_{\tau_{M \BB r}}^\bullet$ must pass through $x$, so the restriction of any such geodesic to the time interval $[0,\tau_{\BB r}]$ is a $D_h$-geodesic from 0 to $x$. 

We now argue that there is only one $D_h$-geodesic from 0 to $x$, so that any two $D_h$-geodesics from 0 to $\bdy\mcl B_{\tau_{M \BB r}}^\bullet$ coincide on $[0,\tau_{\BB r}]$. To this end, choose a point $q \in \BB Q^2\setminus \mcl B_{\tau_{M \BB r}}^\bullet$.
By Lemma~\ref{lem-geo-unique}, the $D_h$-geodesic from 0 to $q$ is a.s.\ unique. If there were more than one $D_h$-geodesic from 0 to $x$, then by concatenating such geodesics with a fixed geodesic from $x$ to $q$, we would get multiple distinct geodesics from 0 to $q$, so there must be only one $D_h$-geodesic from 0 to $x$.
\end{proof}

We now conclude the proof via a zero-one law argument. 

\begin{proof}[Proof of Theorem~\ref{thm-clsce}]
By Axiom~\ref{item-metric-translate} and the translation invariance of the law of $h$, viewed modulo additive constant, we can assume without loss of generality that $z = 0$.
Fix $M$ and $\frk q$ as in Lemma~\ref{lem-one-geo} and for $\BB r >0$ let $F_{\BB r}$ be the event that any two $D_h$-geodesics from 0 to a point of $\bdy \mcl B_{\tau_{M \BB r}}^\bullet$ coincide on the time interval $[0,\tau_{\BB r}]$. Then $\BB P[F_{\BB r}] \geq \frk q$ and $F_{\BB r}$ is determined by $(\mcl B_{\tau_{M \BB r}}^\bullet, h|_{\mcl B_{\tau_{M \BB r}}^\bullet})$ and hence by $h|_{B_{M \BB r}(0)}$.  
Since the tail $\sigma$-algebra $\bigcap_{\BB r > 0} \sigma(h|_{B_{\BB r}(0)})$ is trivial, there a.s.\ exists arbitrarily small values of $\BB r> 0$ for which $F_{\BB r}$ occurs.

Since each LQG metric ball centered at 0 contains a Euclidean ball centered at 0, it is a.s.\ the case that for each $s> 0$, there exists $\BB r > 0$ for which $\mcl B_{\tau_{M \BB r}}^\bullet \subset\mcl B_s(0; D_h)$ and $F_{\BB r}$ occurs. Then any two $D_h$-geodesic from 0 to a point outside of $\mcl B_s(0;D_h)$ coincide on $[0,\tau_{\BB r}]$, so the theorem is true with $t = \tau_{\BB r}$. 
\end{proof}

\bibliography{cibiblong,cibib}

\newcommand{\etalchar}[1]{$^{#1}$}
\def\cprime{$'$}
\begin{thebibliography}{DFG{\etalchar{+}}19}

\bibitem[BBI01]{bbi-metric-geometry}
D.~Burago, Y.~Burago, and S.~Ivanov.
\newblock {\em A course in metric geometry}, volume~33 of {\em Graduate Studies
  in Mathematics}.
\newblock American Mathematical Society, Providence, RI, 2001. \MR{1835418}

\bibitem[BM17]{bet-mier-disk}
J.~Bettinelli and G.~Miermont.
\newblock Compact {B}rownian surfaces {I}: {B}rownian disks.
\newblock {\em Probab. Theory Related Fields}, 167(3-4):555--614, 2017,
  \arxiv{1507.08776}. \MR{3627425}

\bibitem[DDDF19]{dddf-lfpp}
J.~Ding, J.~Dub{\'e}dat, A.~Dunlap, and H.~Falconet.
\newblock {Tightness of Liouville first passage percolation for $\gamma \in
  (0,2)$}.
\newblock {\em ArXiv e-prints}, Apr 2019, \arxiv{1904.08021}.

\bibitem[DFG{\etalchar{+}}19]{lqg-metric-estimates}
J.~Dub{\'e}dat, H.~Falconet, E.~Gwynne, J.~Pfeffer, and X.~Sun.
\newblock {W}eak {LQG} metrics and {L}iouville first passage percolation.
\newblock {\em ArXiv e-prints}, May 2019, \arxiv{1905.00380}.

\bibitem[DG18]{dg-lqg-dim}
J.~{Ding} and E.~{Gwynne}.
\newblock {The fractal dimension of {L}iouville quantum gravity: universality,
  monotonicity, and bounds}.
\newblock {\em {C}ommunications in {M}athematical {P}hysics}, to appear, 2018,
  \arxiv{1807.01072}.

\bibitem[DS11]{shef-kpz}
B.~Duplantier and S.~Sheffield.
\newblock Liouville quantum gravity and {KPZ}.
\newblock {\em Invent. Math.}, 185(2):333--393, 2011, \arxiv{1206.0212}.
  \MR{2819163 (2012f:81251)}

\bibitem[GM19a]{gm-uniqueness}
E.~Gwynne and J.~Miller.
\newblock Existence and uniqueness of the {L}iouville quantum gravity metric
  for {$\gamma \in (0,2)$}.
\newblock {\em ArXiv e-prints}, May 2019, \arxiv{1905.00383}.

\bibitem[GM19b]{local-metrics}
E.~Gwynne and J.~Miller.
\newblock Local metrics of the {G}aussian free field.
\newblock {\em {A}nnales de {l'Institut} {F}ourier}, to appear, 2019,
  \arxiv{1905.00379}.

\bibitem[GMS18]{gms-poisson-voronoi}
E.~{Gwynne}, J.~{Miller}, and S.~{Sheffield}.
\newblock {The Tutte embedding of the Poisson-Voronoi tessellation of the
  Brownian disk converges to $\sqrt{8/3}$-Liouville quantum gravity}.
\newblock {\em ArXiv e-prints}, September 2018, \arxiv{1809.02091}.

\bibitem[GP19]{gp-kpz}
E.~{Gwynne} and J.~{Pfeffer}.
\newblock {KPZ formulas for the Liouville quantum gravity metric}.
\newblock {\em {T}ransactions of the {A}merican {M}athematical {S}ociety}, to
  appear, 2019.

\bibitem[Kah85]{kahane}
J.-P. Kahane.
\newblock Sur le chaos multiplicatif.
\newblock {\em Ann. Sci. Math. Qu\'ebec}, 9(2):105--150, 1985. \MR{829798
  (88h:60099a)}

\bibitem[{Le }10]{legall-geodesics}
J.-F. {Le Gall}.
\newblock Geodesics in large planar maps and in the {B}rownian map.
\newblock {\em Acta Math.}, 205(2):287--360, 2010, \arxiv{0804.3012}.
  \MR{2746349 (2012b:60272)}

\bibitem[{Le }13]{legall-uniqueness}
J.-F. {Le Gall}.
\newblock Uniqueness and universality of the {B}rownian map.
\newblock {\em Ann. Probab.}, 41(4):2880--2960, 2013, \arxiv{1105.4842}.
  \MR{3112934}

\bibitem[Mie13]{miermont-brownian-map}
G.~Miermont.
\newblock The {B}rownian map is the scaling limit of uniform random plane
  quadrangulations.
\newblock {\em Acta Math.}, 210(2):319--401, 2013, \arxiv{1104.1606}.
  \MR{3070569}

\bibitem[MQ18]{mq-geodesics}
J.~{Miller} and W.~{Qian}.
\newblock {The geodesics in Liouville quantum gravity are not Schramm-Loewner
  evolutions}.
\newblock {\em ArXiv e-prints}, December 2018, \arxiv{1812.03913}.

\bibitem[MS15a]{tbm-characterization}
J.~{Miller} and S.~{Sheffield}.
\newblock {An axiomatic characterization of the Brownian map}.
\newblock {\em ArXiv e-prints}, June 2015, \arxiv{1506.03806}.

\bibitem[MS15b]{lqg-tbm1}
J.~{Miller} and S.~{Sheffield}.
\newblock {Liouville quantum gravity and the Brownian map I: The QLE(8/3,0)
  metric}.
\newblock {\em Inventiones Mathematicae}, to appear, 2015, \arxiv{1507.00719}.

\bibitem[MS16a]{lqg-tbm2}
J.~{Miller} and S.~{Sheffield}.
\newblock {Liouville quantum gravity and the Brownian map II: geodesics and
  continuity of the embedding}.
\newblock {\em ArXiv e-prints}, May 2016, \arxiv{1605.03563}.

\bibitem[MS16b]{lqg-tbm3}
J.~{Miller} and S.~{Sheffield}.
\newblock {Liouville quantum gravity and the Brownian map III: the conformal
  structure is determined}.
\newblock {\em ArXiv e-prints}, August 2016, \arxiv{1608.05391}.

\bibitem[MS16c]{ig1}
J.~Miller and S.~Sheffield.
\newblock Imaginary geometry {I}: interacting {SLE}s.
\newblock {\em Probab. Theory Related Fields}, 164(3-4):553--705, 2016,
  \arxiv{1201.1496}. \MR{3477777}

\bibitem[MS16d]{qle}
J.~Miller and S.~Sheffield.
\newblock Quantum {L}oewner evolution.
\newblock {\em Duke Math. J.}, 165(17):3241--3378, 2016, \arxiv{1312.5745}.
  \MR{3572845}

\bibitem[MS17]{ig4}
J.~Miller and S.~Sheffield.
\newblock Imaginary geometry {IV}: interior rays, whole-plane reversibility,
  and space-filling trees.
\newblock {\em Probab. Theory Related Fields}, 169(3-4):729--869, 2017,
  \arxiv{1302.4738}. \MR{3719057}

\bibitem[Pit82]{pitt-positively-correlated}
L.~D. Pitt.
\newblock Positively correlated normal variables are associated.
\newblock {\em Ann. Probab.}, 10(2):496--499, 1982. \MR{665603}

\bibitem[Pol81]{polyakov-qg1}
A.~M. Polyakov.
\newblock Quantum geometry of bosonic strings.
\newblock {\em Phys. Lett. B}, 103(3):207--210, 1981. \MR{623209 (84h:81093a)}

\bibitem[RV14]{rhodes-vargas-review}
R.~Rhodes and V.~Vargas.
\newblock Gaussian multiplicative chaos and applications: {A} review.
\newblock {\em Probab. Surv.}, 11:315--392, 2014, \arxiv{1305.6221}.
  \MR{3274356}

\bibitem[She07]{shef-gff}
S.~Sheffield.
\newblock Gaussian free fields for mathematicians.
\newblock {\em Probab. Theory Related Fields}, 139(3-4):521--541, 2007,
  \arxiv{math/0312099}. \MR{2322706 (2008d:60120)}

\bibitem[SS13]{ss-contour}
O.~Schramm and S.~Sheffield.
\newblock A contour line of the continuum {G}aussian free field.
\newblock {\em Probab. Theory Related Fields}, 157(1-2):47--80, 2013,
  \arxiv{math/0605337}. \MR{3101840}

\end{thebibliography}
\bibliographystyle{hmralphaabbrv}

\end{document}